\numberwithin{equation}{section}
\newcommand{\HH}{\mathcal{H}}
\newcommand{\HK}{\mathcal{K}}
\newcommand{\HM}{\mathcal{M}}
\newcommand{\HE}{\mathcal{E}}
\newcommand{\HS}{\mathcal{S}}
\newcommand{\HD}{\mathcal{D}}
\newcommand{\HB}{\mathcal{B}}
\newcommand{\HR}{\mathcal{R}}
\newcommand{\HN}{\mathcal{N}}
\newcommand{\D}{\mathbb{D}}
\newcommand{\B}{\mathbb{B}}
\newcommand{\C}{\mathbb{C}}
\newcommand{\N}{\mathbb{N}}
\newcommand{\T}{\mathbb{T}}
\newcommand{\ran}{\mathrm{ran \ }}
\newcommand{\rank}{\mathrm{rank \ }}
\newcommand{\la}{\langle}
\newcommand{\ra}{\rangle}
\newcommand{\Hol}{\operatorname{Hol}}
\newcommand{\clos}{\operatorname{clos}}
\newcommand{\card}{\operatorname{card}}
\def\HE{{\mathcal E}}
\newcommand{\Ker}[1]{\mathsf{Ker}~}
\theoremstyle{plain}
\newtheorem{theorem}{Theorem}[section]
\newtheorem{lemma}[theorem]{Lemma}
\newtheorem{remark}[theorem]{Remark}
\newtheorem{corollary}[theorem]{Corollary}
\theoremstyle{definition}
\newtheorem{example}[theorem]{Example}
\author{Shuaibing Luo, Caixing Gu, and Stefan Richter}
\address{School of Mathematics, Hunan University, Changsha, Hunan, 410082, PR China}
\email{sluo@hnu.edu.cn}
\address{Department of Mathematics, California Polytechnic State University, San Luis Obispo, CA 93407, USA}
\email{cgu@calpoly.edu}
\address{Department of Mathematics, University of Tennessee, Knoxville, TN, 37996, USA}
\email{srichter@utk.edu}
\thanks{S. Luo was supported by NNSFC (\# 11701167).}
\subjclass[2010]{47B38, 46E22, 47A45, 47A67}
\date{\today}
\begin{document}

\begin{abstract} We investigate expansive Hilbert space operators $T$ that are finite rank perturbations of isometric operators. If the spectrum of $T$ is contained in the closed unit disc $\overline{\mathbb{D}}$, then  such operators are of the form $T= U\oplus R$, where $U$ is isometric and $R$ is unitarily equivalent to the operator of multiplication by the variable $z$ on a de Branges-Rovnyak space $\mathcal{H}(B)$. In fact, the space $\mathcal{H}(B)$  is defined in terms of a rational operator-valued Schur function $B$. In the case when $\dim \ker T^*=1$, then $\mathcal{H}(B)$ can be taken to be a space of scalar-valued analytic functions in $\mathbb{D}$, and the function $B$ has a mate $a$ defined by $|B|^2+|a|^2=1$ a.e. on $\partial \mathbb{D}$. We show the mate $a$ of a rational $B$ is of the form $a(z)=a(0)\frac{p(z)}{q(z)}$, where $p$ and $q$ are appropriately derived from the characteristic polynomials of two associated operators. If $T$ is a $2m$-isometric expansive operator, then all zeros of $p$ lie in the unit circle, and we completely describe the spaces $\mathcal{H}(B)$ by use of what we call the local Dirichlet integral of order $m$ at the point $w\in \partial \mathbb{D}$.

\noindent\textbf{Keywords:} De Branges-Rovnyak space;  $m$-isometry.
\end{abstract}

\date{\today}
\title[Higher order local Dirichlet integrals]{Higher order local Dirichlet integrals and de Branges-Rovnyak spaces}

\maketitle

\tableofcontents

\section{Introduction}
Let $\mathbb{D}$ be the open unit disc in the complex plane $\mathbb{C}$, and $\mathbb{T}=\partial \D$ be the unit circle. If $b:\D\to \D$ is analytic, then the de Branges-Rovnyak space $\HH(b)$ is the unique Hilbert space of holomorphic functions on $\D$ with reproducing kernel $$K^b_w(z)=\frac{1-b(z)\overline{b(w)}}{1-z\overline{w}},$$ i.e. $K^b$ satisfies $f(w)=\la f, K^b_w\ra$ for all $f\in \HH(b)$.  De Branges-Rovnyak spaces possess a rich structure, and they play an important role in many aspects of complex analysis and operator theory. We refer the reader to the books of de Branges and Rovnyak \cite{dBR66}, Sarason \cite{Sa94},  Fricain and Mashreghi \cite{FM16}, and for some recent developments to  \cite{AlemanMalman, BBB15, BFM10, BallBoloBasics, BFGHR15,CGR10,CR13,EFKMR16,FHR16,FKM19,KZ15,LN19,MT14} and the references therein.

It is well-known that the backward shift $Lf(z)=\frac{f(z)-f(0)}{z}$ acts contractively on every de Branges-Rovnyak space, but the forward shift $(M_z,\HH(b))$ defined by $(M_zf)(z)=zf(z)$ is bounded  only if $b$ is not an extreme point of the unit ball of $H^\infty$, see \cite{Sa94}. This condition is known to be equivalent to the existence of a {\it mate} for $b$, i.e. an outer function $a$ such that  $|a|^2+|b|^2=1$ a.e. on the unit circle $\T$. The mate is unique, if we also assume that $a(0)>0$. In this paper we will make this assumption, and then refer to the unique mate as {\it the} mate of $b$.

Thus, if such a forward shift is bounded, then it expands the norm. In this paper we investigate which expansive operators $T$ are unitarily equivalent to $(M_z,\HH(b))$ for rational functions $b$, and we obtain further results that link properties of $T$ and of $b$. It will follow from Lemma \ref{unitaryBalpha} that, if for $\alpha \in \D$ $b_\alpha(z)=\frac{\alpha-b(z)}{1-\overline{\alpha}b(z)}$, then $(M_z,\HH(b))$ is unitarily equivalent to $(M_z,\HH(b_\alpha))$. Hence there will be no loss of generality in assuming that $b(0)=0$.

If $b$ is a rational function, then the degree of $b$ is defined to be the larger of the degrees of the polynomials $p$ and $q$ provided that $b=\frac{p}{q}$ is in reduced form. In \cite{Sa97} Sarason observed that if $b$ is a certain rational function of degree 1, then $\HH(b)$ equals a so-called local Dirichlet space. Local Dirichlet spaces are important for the study of two-isometric operators, i.e. bounded linear operators $T\in \HB(\HH) $ that satisfy ${T^*}^2T^2-2T^*T+I=0$, see e.g. \cite{Ri88, Ri91, RS91}. Sarason's result has been refined and extended in \cite{CGR10, CR13,KZ15,EFKKMR16}, and one of their   results can be paraphrased as follows: If $b(0)=0$, then $(M_z,\HH(b))$ is a two-isometry, if and only if \begin{align}\label{deg 1 b}b(z)= e^{it} \frac{(1-r)\overline{w}z}{1-r\overline{w}z}\end{align} for some $t\in [0,2\pi)$, $0<r\le 1$ and $|w|=1$. In this case $\|f\|^2_{\HH(b)}=\|f\|^2_{H^2}+ \frac{(1-r)^2}{r} D_w(f)$, where $D_w(f)=\int_{|z|=1}\left|\frac{f(z)-f(w)}{z-w}\right|^2\frac{|dz|}{2\pi}$ is the local Dirichlet integral of $f$, see Theorem 3.1 of \cite{CGR10}.

Let $m\in \N$. An operator $T\in \HB(\HH)$ is called an $m$-isometry, if
$$\beta_m(T)=\sum\limits_{k=0}^{m}(-1)^{m-k}\binom{m}{k}T^{\ast k}T^{k}=0.$$
The study of $m$-isometries originated in the work of Agler \cite{Ag90}, Condition (2.7), and it was at least partially motivated by an analogy with Helton's study of $m$-symmetric operators, \cite{He1972}. The first in-depth study of $m$-isometries was carried out in \cite{AS956}, and by now there is an extensive list of references for these operators.

Since $\beta_{m+1}(T)=T^*\beta_m(T)T-\beta_m(T)$ it follows that every $m$-isometry is a $k$-isometry for every $k\ge m$, and we say that $T$ is a strict $m$-isometry, if $\beta_{m-1}(T)\ne 0$. If $m\in \N$, $w\in \T$, and if $f\in H^2$ extends to be analytic in a neighborhood of $w$, then we define the local Dirichlet integral of $f$ of order $m$ at $w$ by
$$D^m_w(f)= \int_{|z|=1} \left|\frac{f(z)-T_{m-1}(f,w)(z)}{(z-w)^m}\right|^2 \frac{|dz|}{2\pi},$$ where $T_{m-1}(f,w)$ is the $(m-1)$-th order Taylor polynomial of $f$ at $w$. This definition can be extended to apply to more general functions, see Section \ref{SectionLocalDiri}. In particular, we note that  $D^1_w(f)=D_w(f)$ for all $f$. The space $\HD_w^m$ is defined to consist of all $f\in H^2$ such that $D_w^m(f)<\infty$. We will show

\begin{theorem} \label{2n-iso} Let $b$ be a non-extreme point of the unit ball of $H^\infty$ with $b(0)=0$, and let $m\in \N$. Then
 $(M_z,\HH(b))$ is not a strict $(2m+1)$-isometry, and the following are equivalent:
 \begin{enumerate}
\item $(M_z,\HH(b))$ is a strict $2m$-isometry,
\item  $b$ is a rational function of degree $m$ such that the mate has a single zero of multiplicity $m$ at a point $w \in \T$,
\item there is a $w\in \T$ and a polynomial $\tilde{p}$ of degree $< m$ with $\tilde{p}(w)\ne0$  such that $\|f\|^2_{\HH(b)}=\|f\|^2_{H^2}+ D_w^m(\tilde{p}f)$.
 \end{enumerate}

 If the three conditions hold, then there are polynomials $p$ and $q$ of degree $\le m$ such that $b=\frac{p}{q}$, $a=\frac{(z-w)^m}{q}$, $\tilde{p}(z)=z^m\overline{p(1/\overline{z})}$ for $z\in \D$, and $|q(z)|^2= |p(z)|^2+|z-w|^{2m}$ for all $z\in \T$. Furthermore, $\HH(b)=\HD_w^m$ with equivalence of norms.
\end{theorem}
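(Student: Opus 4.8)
The plan is to prove the equivalences as a cycle $(iii)\Rightarrow(i)\Rightarrow(ii)\Rightarrow(iii)$ and to read off the parity statement along the way. The organizing principle is the standard growth characterization of $m$-isometries: $T$ is an $m$-isometry precisely when $n\mapsto\|T^nx\|^2$ is, for every $x$, a polynomial in $n$ of degree $\le m-1$, and it is a strict $m$-isometry when this degree equals exactly $m-1$ for some $x$; concretely, $\langle\beta_m(T)x,x\rangle$ is the $m$th forward difference of the sequence $n\mapsto\|T^nx\|^2$. Thus every statement about the isometry order of $(M_z,\HH(b))$ becomes a statement about the growth of $\|z^nf\|^2_{\HH(b)}$, and the whole problem reduces to identifying this quadratic form. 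For that identification I would use Sarason's description of the $\HH(b)$-norm in the non-extreme case with mate $a$: a function $f\in H^2$ lies in $\HH(b)$ exactly when there is $g\in H^2$ with $T_{\overline b}f=T_{\overline a}g$, and then $\|f\|^2_{\HH(b)}=\|f\|^2_{H^2}+\|g\|^2_{H^2}$, with $g$ unique because $a$ outer makes $T_{\overline a}$ injective. I also note that the relation $|q(z)|^2=|p(z)|^2+|z-w|^{2m}$ on $\T$ is not an extra assumption but exactly the mate identity $|a|^2+|b|^2=1$ written out for $b=p/q$ and $a=(z-w)^m/q$.

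For $(iii)\Rightarrow(i)$ I would argue by direct growth estimation. Since $\|z^nf\|_{H^2}=\|f\|_{H^2}$ is constant in $n$, the order of $(M_z,\HH(b))$ is governed by $n\mapsto D_w^m(z^n\tilde pf)$. Using the boundary representation of the order-$m$ local Dirichlet integral developed in Section~\ref{SectionLocalDiri}, one factors $z^nh-T_{m-1}(z^nh,w)$ over $(z-w)^m$ and tracks the $n$-dependence; the outcome should be that $D_w^m(z^nh)$ is a polynomial in $n$ of degree exactly $2m-1$ (for $m=1$ this recovers the linear growth underlying the classical two-isometry of \cite{CGR10}). Hence $\|z^nf\|^2_{\HH(b)}$ has degree exactly $2m-1$, so $(M_z,\HH(b))$ is a strict $2m$-isometry; the oddness of $2m-1$ is precisely what rules out degree $2m$ and yields the ``not a strict $(2m+1)$-isometry'' conclusion in this situation.

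The implication $(i)\Rightarrow(ii)$ I would obtain from the structural results summarized in the abstract and established earlier: a strict $2m$-isometric expansive $(M_z,\HH(b))$ forces $b$ to be rational and the numerator of the mate $a$ to have all its zeros on $\T$. A degree-and-strictness count then concentrates that numerator into a single factor $(z-w)^m$ --- so the mate has one zero of multiplicity $m$ at some $w\in\T$ --- because the leading coefficient of the growth polynomial, being a nonzero positive form, must be supported by a single boundary point of maximal order. The technical heart, and the step I expect to be the main obstacle, is $(ii)\Rightarrow(iii)$: starting from $b=p/q$ with mate $a=(z-w)^m/q$, I would solve the Toeplitz equation $T_{\overline b}f=T_{\overline a}g$, i.e. $P_+(\overline pf/\overline q)=P_+(\overline{(z-w)^m}g/\overline q)$, for the unique $g\in H^2$, and then recognize $\|g\|^2_{H^2}$ as $D_w^m(\tilde pf)$. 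The reflected polynomial $\tilde p(z)=z^m\overline{p(1/\overline z)}$ enters naturally here, since the flip $h\mapsto z^{N}\overline{h(1/\overline z)}$ (for the appropriate degree $N$) is what converts $\overline p$ into a holomorphic object in the Toeplitz calculus; matching the resulting integral against the order-$m$ local Dirichlet integral produces simultaneously the formulas $a=(z-w)^m/q$, $\tilde p(z)=z^m\overline{p(1/\overline z)}$, and $|q|^2=|p|^2+|z-w|^{2m}$. The constraints $\deg\tilde p<m$ and $\tilde p(w)\ne0$ should fall out of $b(0)=0$ (forcing $p(0)=0$) and of $p,q$ being coprime of degree $m$.

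Finally, the last two assertions. The equality $\HH(b)=\HD_w^m$ with equivalence of norms follows from $(iii)$ once one observes that $\tilde p(w)\ne0$ makes multiplication by $\tilde p$ a bounded and boundedly invertible map on the local Dirichlet side near $w$, so the forms $\|f\|^2_{H^2}+D_w^m(\tilde pf)$ and $\|f\|^2_{H^2}+D_w^m(f)$ are comparable. For the opening claim that $(M_z,\HH(b))$ is never a strict $(2m+1)$-isometry --- a statement about every non-extreme $b$, not only those satisfying $(i)$--$(iii)$ --- I would show that $\beta_{2m+1}(M_z)=0$ already forces $\beta_{2m}(M_z)=0$: finite isometry order makes $b$ rational, the earlier zero-localization puts all numerator zeros of the mate on $\T$, and the growth of $\|z^nf\|^2_{\HH(b)}$ then decomposes as a sum of local Dirichlet contributions of order at most $m$ at these boundary points, each of odd degree and with a positive weight, so that no leading-term cancellation can occur and the total degree is odd. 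An even degree $2m$, which a strict $(2m+1)$-isometry would require, is therefore impossible.
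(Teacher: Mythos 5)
Your implications (ii)$\Rightarrow$(iii) and (iii)$\Rightarrow$(i) are essentially sound. The first is the paper's own argument: solve Sarason's equation $T_b^*f=T_a^*f^+$, use $p(S)^*f={S^*}^m(\tilde p f)$ to identify $\|f^+\|^2_{H^2}$ with $D_w^m(\tilde pf)$. Your growth argument for (iii)$\Rightarrow$(i) is a legitimate, slightly more elementary variant of the paper's use of the identity $\sum_{r=0}^{2M}\binom{2M}{r}(-1)^r\|z^rf\|^2_{\HK}=0$ together with Lemma \ref{DeltaLocalDiri}; iterating that lemma does show $D_w^m(z^nh)$ is a polynomial in $n$ of degree $\le 2m-1$, with equality exactly when $h(w)\ne 0$ (not always, as you state, but taking $f\equiv 1$ gives strictness).

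The genuine gap is (i)$\Rightarrow$(ii), and with it the unconditional parity statement. You assert that finite isometry order makes $b$ rational (no argument is offered), and you justify the crucial ``single zero of multiplicity $m$'' by claiming the leading coefficient of the growth polynomial, ``being a nonzero positive form, must be supported by a single boundary point of maximal order.'' That mechanism is false: a sum of positive local Dirichlet contributions at several boundary points still produces a positive leading coefficient and odd-degree growth. Indeed the paper's Section 11 example, $\|f\|^2=\|f\|^2_{H^2}+\tfrac{9}{16}\left(D_1(f)+D_{-1}(f)\right)$, defines a strict $2$-isometric $(M_z,\HH(B))$ whose mate numerator vanishes at both $+1$ and $-1$; no growth-degree count can distinguish this from the single-point case. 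What rules out multiple zeros in the scalar setting is $\rank \Delta=1$, which your growth argument never uses; converting that into (ii) is precisely the operator-theoretic core of the paper: finiteness of $\HN=[\ran\Delta]_{T^*}$ (Lemma \ref{rank and dim N}), rationality of $b$ and the mate formula in terms of the characteristic polynomials of $T^*|\HN$ and $L|\HN$ (Theorems \ref{ranDelta and rational} and \ref{rationalmate}), and the reduction to finite-dimensional $(2m-1)$-isometries via the renormed spaces $\HN_c$ and the Agler--Helton--Stankus classification (Theorems \ref{A and T}, \ref{finite dim m isos}, \ref{characterization1}, Corollary \ref{defectrankone}), which yields a decomposition $\Delta=\sum_j\Delta_j$ with $\ran\Delta_j$ in distinct root spaces of $A^*$ and hence at most $\rank\Delta=1$ spectral point. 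The same gap makes your proof of ``never a strict $(2m+1)$-isometry'' circular: you invoke a decomposition of $\|z^nf\|^2_{\HH(b)}$ into local Dirichlet contributions, but such a decomposition is exactly what the structure theory provides, and in the paper it is derived only for $2m$-isometries (Theorem \ref{IntersectionOfBasic}); the paper instead gets parity from the finite-dimensional fact that an invertible $2m$-isometry is a $(2m-1)$-isometry (Corollary \ref{OddIsEven}). A smaller gloss: for the norm equivalence $\HH(b)=\HD_w^m$, multiplication by $\tilde p$ need not be invertible ($\tilde p$ may vanish on $\T\setminus\{w\}$ or in $\D$); the paper instead uses the decomposition $\HH(b)=\frac{(z-w)^m}{q}H^2\oplus\frac{1}{q}\,\mathcal{P}_{m-1}$, invertibility of multiplication by $q$, and the closed graph theorem (Corollary \ref{equivalentNorms}).
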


Note that if $m=1$ in this theorem, then $|b(w)|=1$ for some $w\in \T$, and then the conditions that $b(0)=0$ and that $b$ has degree 1 imply that it has the form as in (\ref{deg 1 b} ) for $r<1$. And in fact, one checks that for two-isometries this theorem is equivalent to the earlier one.

Theorem \ref{2n-iso} raises two questions:

1. If $b$ is a more general rational function and non-extreme, then what operator properties of the forward and backward shifts  on $\HH(b)$ can be easily seen by looking at  the  mate $a$ of $b$?

2. Among all $m$-isometries, how general a class are the ones that are unitarily equivalent to $(M_z,\HH(b))$ for some non-extreme $b$?

We give answers to both questions. In fact, it will be instructive to consider the vector-valued de Branges-Rovnyak spaces $\HH(B)$, where  $B\in \HS(\HD,\HE)$ is an operator-valued Schur class function, i.e. $B:\D\to \HB(\HD,\HE)$ is a contractive analytic function. Our results are most complete in the interesting case where $H(B)$ is a space of scalar-valued analytic functions, i.e. when $\HE=\C$.

An operator $T$ is called analytic, if $\bigcap_{n\ge 0} \ran T^n=(0)$. It turns out that by combining a construction of Shimorin's (\cite{Sh01}) and theorems of Aleman and Malman \cite{AlemanMalman} one sees rather easily that a Hilbert space operator $T$  is norm expansive and  analytic if and only if it is unitarily equivalent to $(M_z,\HH(B))$ for some operator-valued Schur class function $B$ with $B(0)=0$. We will need details from this construction, thus we have included the complete details in Section \ref{Operator deBranges}.

For $T\in \HB(\HH)$ write $\Delta=T^*T-I$. In particular, we will see that an operator $T$ is unitarily equivalent to $(M_z,\HH(b))$ for some non-extremal $b$ in the unit ball of $H^\infty$, if and only if $T$ is analytic, $\dim\ker T^*=1$, and $\Delta$ is positive and has rank 1. In the more general case where $T$ is analytic, $\dim \ker T^*=1$ and $\Delta\ge 0$ one can take $B=(b_1, b_2, \dots)$ for scalar-valued analytic functions $b_j$ and one obtains a space $\HH(B)$ of scalar-valued analytic functions with reproducing kernel $K^B_w(z)= \frac{1-\sum_{i=1}^\infty b_i(z)\overline{b_i(w)}}{1-z\overline{w}}$. The minimum number of functions $b_j$ that are not identically equal to 0 equals the rank of  $ \Delta$, see the beginning of Section 7. As before, the condition $B(0)=0$ can be seen as a normalization that assures that $\HH(B)$ contains the constant functions. This set-up applies for example to $T= \sqrt{2}(M_z, L^2_a)$, where $(M_z,L^2_a)$ is the forward shift on the Bergman space of the unit disc. Of course, in this case $\HH(B)$ will be a space of analytic functions on a  disc of radius $\sqrt{2}$, and one may not expect to easily obtain deep information by looking at the corresponding Schur function $B$. However, for many interesting expansive operators $T$ the defect operator $\Delta = T^*T-I$ will be compact. That is true for example for the Dirichlet shift, and more generally, if $T=M_z$ on a superharmonically weighted Dirichlet space (see \cite{LR15}, Theorem 5.1). If this is the case, then $\Delta=\sum_{n\ge 1} t_n f_n\otimes f_n$ for some $0<t_n\to 0$ and an orthonormal basis $\{f_n\}$ of $\overline{\ran \Delta}$, and one can show that for the $B$ one can take $b_n/z=\sqrt{\frac{t_n}{1+t_n}}\ f_n$. This follows from the proofs of Lemmas \ref{rangeofDelta and D*} and \ref{rangeofDelta}.

In this paper we will be interested in the situation, where $\Delta$ is a finite rank operator. For that case we can take $B=(b_1, \dots, b_k)$ and Aleman and Malman proved that $M_z$ acts boundedly on $\HH(B)$, if and only if $1-\sum_{i=1}^k|b_i|^2$ is log-integrable on $\T$, \cite{AlemanMalman}. Thus, as in the classical rank 1 case such $B$ will have a mate $a$, the unique outer function with $a(0)>0$ and $|a|^2 + \sum_{i=1}^k|b_i|^2=1$ a.e. on $\T$.

Crucial to all our results will be the space $\HN=[\ran \Delta]_{T^*}$, the smallest $T^*$-invariant subspace that contains $\ran \Delta$. It is not difficult to show that $\HM=\HN^\perp$ is the largest $T-$invariant subspace such that $T|\HM$ is isometric. We will show that $\HN=[\ran \Delta]_L$, the smallest $L$-invariant subspace that contains $\ran \Delta$, see Lemma \ref{T*Linvariant}. Here $Lf(z)=\frac{f(z)-f(0)}{z}$ is the backward shift. Of course, this space may be all of $\HH(B)$. Our  theorem describes a case when this does not happen. $B$ is called rational, if each $b_i$ is a rational function. By taking common denominators it follows that rational $B$ are of the form $B=(p_1/q,\dots, p_k/q)$, where $q, p_1,\dots p_k$ are polynomials. The degree of $B$ is defined to be the smallest $n$ such that $B$ has such a representation where the degrees of all polynomials are less than or equal to $n$. So for example, $B(z)=\left(\begin{matrix} \frac{1}{5+z} \ \ \frac{1}{6+z}\end{matrix}\right)$ has degree 2. Recall that the characteristic polynomial $p$ of an $n\times n$ matrix $A$ is defined by $p(z)=\det(zI-A)$.

\begin{theorem}\label{characterizationmate}
Let $B=(b_1,  \dots, b_k)$ be such that $B(0)=0$ and $1-\sum_{i=1}^k|b_i|^2$ is log-integrable on $\T$. Let $T = (M_z, \HH(B))$ and $\Delta = T^*T - I$.

Then $B$ is a rational function of degree $n$, if and only if $$\dim [\ran \Delta]_{T^*}=n<\infty.$$

Furthermore, if  $ \HN=[\ran \Delta]_{T^*}$ has dimension $n <\infty$, and if $p(z)=\prod_{i=1}^n(z-w_i)$ is the characteristic polynomial of $T^*|\HN$ and $q(z)=\prod_{i=1}^n(z-\alpha_i)$ is the characteristic polynomial of $L|\HN$, then  $$a(z)=a(0) \frac{\prod_{i=1}^n(1-w_iz)}{\prod_{i=1}^n(1-\alpha_i z)} .$$
\end{theorem}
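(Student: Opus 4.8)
The plan is to base everything on the single operator identity $T^{*}=(I+\Delta)L$ on $\HH(B)$ and then read off both the rationality statement and the formula for the mate from the restrictions of $T^{*}$ and $L$ to $\HN$. To establish the identity, note first that $B(0)=0$ forces $K^{B}_{0}\equiv 1$, so the constant function $\mathbf 1$ is the reproducing kernel at $0$; a direct computation gives $T^{*}\mathbf 1=M_{z}^{*}K^{B}_{0}=0$, and since $\dim\ker T^{*}=1$ we have $\ker T^{*}=\C\mathbf 1$. Because $LT=I$ and $TL=I-\mathbf 1\otimes\mathbf 1$, I would write $T^{*}=(T^{*}T)L+T^{*}(I-TL)=(I+\Delta)L$, the second term vanishing as $T^{*}\mathbf 1=0$. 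In particular $T^{*}-L=\Delta L$ has range in $\ran\Delta\subseteq\HN$, so $\HN$ is invariant under both $L$ and $T^{*}$, and the two restrictions differ by an operator that factors through $\ran\Delta$.

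For the rationality dichotomy I would use that $\HN=[\ran\Delta]_{L}$ is $L$-invariant (Lemma \ref{T*Linvariant}). If $\dim\HN=n<\infty$, Cayley--Hamilton applied to $L|\HN$ gives $q(L)g=0$ for every $g\in\HN$, where $q(z)=\prod_{i=1}^{n}(z-\alpha_{i})$; writing $L$ as the shift on Taylor coefficients, this is a linear recurrence whose solutions are exactly the rational functions with denominator $\prod_{i=1}^{n}(1-\alpha_{i}z)$ and numerator of degree $<n$. Since each $f_{i}\in\ran\Delta\subseteq\HN$ and $b_{i}=z\sqrt{t_{i}/(1+t_{i})}\,f_{i}$, every $b_{i}$ is rational with this common denominator and numerator degree $\le n$, so $B$ is rational of degree $\le n$. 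The converse and the exact equality of degrees I would obtain by running the argument in reduced form: for rational $B$ the functions $f_{i}$ together with their $L$-orbits span a finite-dimensional $L$-invariant space, and minimality of the degree forces $\dim\HN=n$ with no spurious cancellation. This bookkeeping is routine but is the one place where care is needed to avoid over- or under-counting.

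The mate is where the spectral data of $T^{*}|\HN$ enters. Set $p(z)=\prod(z-w_{i})$, $q(z)=\prod(z-\alpha_{i})$ and write $q^{*}(z)=\prod(1-\alpha_{i}z)$, $p^{*}(z)=\prod(1-w_{i}z)$ for the reversed polynomials, so that $p^{*}(0)=q^{*}(0)=1$ and the claim reads $a=a(0)\,p^{*}/q^{*}$. Since $L$ is a contraction the $\alpha_{i}$ lie in $\overline{\D}$, so $q^{*}$ is zero-free in $\D$; and since the $b_{i}$ share denominator $q^{*}$, on $\T$ one has $|a|^{2}|q^{*}|^{2}=|q^{*}|^{2}-\sum_{i}|b_{i}q^{*}|^{2}$, a nonnegative trigonometric polynomial of degree $\le n$. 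By Fej\'er--Riesz this equals $|s|^{2}$ for an outer polynomial $s$ of degree $\le n$ with $s(0)=a(0)$, and comparing moduli of the two outer functions $aq^{*}$ and $s$ gives $aq^{*}=s$, i.e.\ $a=s/q^{*}$. It therefore remains only to identify $s$ with $a(0)p^{*}$, that is, to show that the zeros of $s$ are the reciprocals of the eigenvalues of $T^{*}|\HN$.

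This last identification is the crux, and here I would use the determinant bridge furnished by $T^{*}=(I+\Delta)L$. Factoring $I-zT^{*}|\HN=(I-zL|\HN)-z\Delta L|\HN$ yields
\begin{equation}
\frac{p^{*}(z)}{q^{*}(z)}=\det\!\big(I-z(I-zL)^{-1}\Delta L\big)\big|_{\HN},
\end{equation}
and because $\Delta=\sum_{i}t_{i}\langle\cdot,f_{i}\rangle f_{i}$ has rank equal to the number of nontrivial $b_{i}$, the right-hand side collapses to an explicit $k\times k$ determinant whose entries are the inner products $t_{i}\langle z(I-zL)^{-1}f_{j},L^{*}f_{i}\rangle$ in $\HH(B)$. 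Evaluating these against the reproducing kernel and using $b_{i}=z\sqrt{t_{i}/(1+t_{i})}\,f_{i}$ expresses this determinant through the $b_{i}$, and matching it on $\T$ with the boundary defect $1-\sum_{i}|b_{i}|^{2}=|a|^{2}$ forces $s=a(0)p^{*}$, completing the proof. I expect this boundary evaluation---turning the finite-rank determinant into the scalar mate, and thereby pinning the numerator's zeros to $\operatorname{spec}(T^{*}|\HN)$---to be the main obstacle; the operator identity and the Cayley--Hamilton argument are comparatively soft, and everything else reduces to uniqueness of outer functions.
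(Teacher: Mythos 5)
Your handling of the rationality dichotomy is essentially the paper's argument (Cayley--Hamilton for $L|\HN$, as in Theorem \ref{ranDelta and rational}), but the mate formula --- the real content of the theorem --- has a genuine gap at exactly the point you yourself flag as ``the main obstacle.'' Your determinant identity is correct: since $T^{*}=(I+\Delta)L$ and $\HN$ is invariant for $L$, $T^{*}$ and $\Delta$, one does have $p^{*}(z)/q^{*}(z)=\det\bigl(I-z(I-zL|\HN)^{-1}\Delta L|\HN\bigr)$. But the bridge is never crossed, and as proposed it cannot be: the entries $t_{i}\la z(I-zL)^{-1}f_{j},L^{*}f_{i}\ra_{\HH(B)}$ require the Gram matrix of $\HN$ in the $\HH(B)$ inner product, which is precisely the unknown structure you are trying to determine; and even if you could establish the boundary identity $|p^{*}/q^{*}|=|a|/a(0)$ a.e.\ on $\T$, modulus data on $\T$ only pins down the \emph{outer} part of a function, so to conclude $s=a(0)p^{*}$ you would additionally need to know that $p^{*}$ is zero-free in $\D$ (equivalently, that no Blaschke factor built from eigenvalues of $T^{*}|\HN$ lying in $\D$ intervenes). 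Why the characteristic polynomial of $T^{*}|\HN$ --- rather than some inner-factor-shifted variant of it --- produces the numerator of $a$ is the crux, and your proposal asserts it rather than proves it.

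For comparison, the paper (Theorem \ref{rationalmate}) closes this gap by a different mechanism: with $\HM=\HN^{\perp}$ and $\varphi\in\HM\ominus z\HM$ a unit vector, splitting $K^{B}_{w}=P_{\HN}K^{B}_{w}+P_{\HM}K^{B}_{w}$ and letting $|z|\to 1$ gives $|\varphi|=|a|$ a.e.\ on $\T$; writing $\varphi=zf_{1}-f_{2}$ with $f_{1},f_{2}\in\HN$ shows $\varphi=h/\prod_{j}(1-\alpha_{j}z)$ for a polynomial $h$ of degree $\le n$; and the decisive step is that $\varphi f\in\HM$ for all $f\in\HH(B)$, whence $P_{\HN}h(M_{z})|\HN=0$, so the minimal polynomial of $M_{z}^{*}|\HN$ divides $\tilde{h}$, and by Lemma \ref{eigenspaces of M^*} (density of polynomials makes $M_{z}$ cyclic, forcing minimal $=$ characteristic polynomial) $\tilde h$ is a scalar multiple of $p$. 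Nothing in your proposal substitutes for this annihilation-plus-cyclicity argument. Two smaller repairs: the relation $b_{i}=z\sqrt{t_{i}/(1+t_{i})}\,f_{i}$ holds for one particular Schur function realizing the kernel, not for the given tuple $(b_{1},\dots,b_{k})$; what is true for the given $B$ is Lemma \ref{rangeofDelta}, $\clos\ran\Delta=\clos\{z\mapsto B(z)x/z: x\in\C^{k}\}$, which puts each $b_{i}/z$ in $\HN$ and then your forward argument runs. And the converse ``bookkeeping'' you defer is handled by noting that for $B=P/\tilde q$ of degree $n$ the space $\{r/\tilde q:\deg r<n\}$ is $L$-invariant, has dimension $n$, and contains $\ran\Delta$, so $\dim\HN\le n$; combined with your forward inequality this gives the exact equality $\dim\HN=\deg B$.
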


We approach the second question by first stating a theorem for more general expansive operators with finite rank defect $\Delta$.
\begin{theorem}\label{representationVR}
Let $T\in \HB(\HH)$ with $\Delta=T^*T-I\ge 0$, and let $\HN=[\ran \Delta]_{T^*}$.

Then the following are equivalent:

(a) $\HN$ is finite dimensional and $\sigma(T)\subseteq \overline{\D}$,

(b) $T=V\oplus R$, where $V$ is isometric and $R$ is unitarily equivalent to $(M_z,\HH(B))$ for some rational $B\in \HS(\C^k,\C^m)$.

If (a) and (b) are satisfied, then one can choose $k=\dim \ran \Delta$, $m= \dim \ran (I-P_{\HN})TP_{\HN}=\dim \ker R^*$, and one has
$$ \text{degree }B \le \dim \HN \le m\text{ degree }B.$$
\end{theorem}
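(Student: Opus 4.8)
The plan is to settle (b)$\Rightarrow$(a) by a direct computation and to prove (a)$\Rightarrow$(b) by first splitting off a unitary summand, after which the remainder is forced to be analytic and so falls under the Shimorin--Aleman--Malman model; rationality and the degree bounds then come from Theorem~\ref{characterizationmate} and realization theory. For (b)$\Rightarrow$(a), if $T=V\oplus R$ with $V$ isometric, then $\Delta=(V^*V-I)\oplus(R^*R-I)=0\oplus\Delta_R$, so $\ran\Delta$ lies in the reducing summand carrying $R$ and $\HN=[\ran\Delta]_{T^*}=[\ran\Delta_R]_{R^*}$. Since $R\cong(M_z,\HH(B))$ with $B$ rational, the $\HE=\C^m$ form of Theorem~\ref{characterizationmate} makes $[\ran\Delta_R]_{R^*}$ finite dimensional, and $\sigma(T)=\sigma(V)\cup\sigma(R)\subseteq\overline{\D}$ because isometries and multiplication on $\HH(B)\subseteq H^2(\C^m)$ have spectra in $\overline{\D}$.

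For (a)$\Rightarrow$(b), since $T$ is expansive it is bounded below, each $\ran T^n$ is closed, and $\HH_u:=\bigcap_{n\ge 0}\ran T^n$ is a closed $T$-invariant subspace on which $W:=T|\HH_u$ is a bijection satisfying $\|Wf\|\ge\|f\|$. Thus $W^{-1}$ is a contraction, forcing $\sigma(W)\subseteq\{|z|\ge 1\}$, while the spectral inclusion for invariant subspaces together with $\sigma(T)\subseteq\overline{\D}$ gives $\sigma(W)\subseteq\overline{\D}$; hence $\sigma(W)\subseteq\T$. The crucial step is to upgrade this to the assertion that $W$ is unitary, equivalently $\Delta|\HH_u=0$ and $\HH_u\subseteq\HM$. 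Granting it, $\HH_u$ reduces $T$, $V:=T|\HH_u$ is unitary, and for $R:=T|\HH_u^\perp$ the identity $\ran T^n=\HH_u\oplus\ran R^n$ yields $\bigcap_n\ran R^n=0$, so $R$ is analytic and expansive. By the construction recalled in Section~\ref{Operator deBranges} (\cite{Sh01,AlemanMalman}) one then has $R\cong(M_z,\HH(B))$ with $B(0)=0$, $\HE=\ker R^*$, whence $m=\dim\ker R^*$ and $k=\dim\ran\Delta_R=\dim\ran\Delta$.

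To finish I would argue rationality and the estimates as follows. Because $\HH_u$ reduces and $V$ is unitary, $\ran\Delta\subseteq\HH_u^\perp$ and $\HN=[\ran\Delta]_{R^*}$ is finite dimensional; by the operator-valued analogue of Theorem~\ref{characterizationmate} this is exactly equivalent to $B$ being rational, and a minimal realization identifies the McMillan degree of $B$ with the least dimension generating this subspace, giving $\text{degree }B\le\dim\HN$. Writing $B=P/q$ over a common scalar denominator $q$ of degree $\text{degree }B$ and counting that the $R^*$-orbit of $\ran\Delta$ contributes at most $m$ vectors per pole gives $\dim\HN\le m\,\text{degree }B$. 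Finally $m=\dim\ran(I-P_\HN)TP_\HN$ is a block-triangular count: since $V$ is unitary, $\ker T^*=\ker R^*$ lies in $\HH_u^\perp$, and in the triangular form of $T$ relative to $\HH=\HM\oplus\HN$ the wandering subspace of the analytic part is generated by $P_\HM T\HN$, whose dimension is the rank of the off-diagonal block $P_\HM TP_\HN=(I-P_\HN)TP_\HN$.

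The main obstacle is the crucial step in (a)$\Rightarrow$(b): proving that the bijective expansive residual $W$ with $\sigma(W)\subseteq\T$ is unitary. Finite rank of $\Delta$ is not enough on its own, since a completely non-unitary contraction built from a singular inner function has rank-one defects, spectrum a single point of $\T$, and is nowhere near unitary; so the hypothesis $\dim\HN<\infty$ must enter essentially. I expect the resolution to run through rationality: $\dim\HN<\infty$ should force the characteristic function of the contraction $W^{-1}$ to be rational inner, and a rational inner function whose model operator has spectrum in $\T$ has no zeros in $\D$, hence is a unimodular constant, so that $W^{-1}$ and $W$ are unitary. Pinning down this implication---linking $\dim\HN<\infty$ to rationality of the characteristic function, in the spirit of Theorem~\ref{characterizationmate}---is where the real work lies.
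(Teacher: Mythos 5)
Your reduction is sound up to the point you yourself flag, but that point is fatal: everything rests on the unproven ``crucial step'' that the expansive bijection $W=T|\HH_u$, $\HH_u=\bigcap_{n\ge0}\ran T^n$, with $\sigma(W)\subseteq\T$ is unitary, and this is not a technicality but essentially the entire content of the hard direction. It is exactly where the paper does its work, by a different route: the paper never forms $\bigcap_n\ran T^n$; it decomposes $\HH=\HN^\perp\oplus\HN$, so that $T=\left[\begin{matrix}W_0&C\\0&A\end{matrix}\right]$ with $W_0=T|\HN^\perp$ isometric and, by positivity of $\Delta$, $\ran C\subseteq\ker W_0^*$; after a Wold decomposition of $W_0$ the problem becomes showing that the finite-codimensional expansive extension $\left[\begin{matrix}S&C\\0&A\end{matrix}\right]$ of a shift $S$ is analytic, and this is done by induction on $\dim\HN$ via Lemma \ref{rank1extension}: a rank-one expansive extension of an analytic operator with spectrum in $\overline{\D}$ is either analytic or splits off a one-dimensional unitary eigenspace (an explicit eigenvector construction combined with Lemma \ref{unitary summand}), which is where expansivity, $\sigma(T)\subseteq\overline{\D}$, and finiteness of $\dim\HN$ all enter. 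Your proposed substitute --- deducing from $\dim\HN<\infty$ that the characteristic function of $W^{-1}$ is rational inner, hence constant --- is only a hope, and its hypotheses are not even secured: $\HH_u$ is $T$-invariant but not $T^*$-invariant, so you have no control over $[\ran (W^*W-I)]_{W^*}$, and as your own example shows (the inverse of the model operator of an atomic singular inner function is expansive, invertible, has rank-one defect and spectrum $\{1\}$, yet is not unitary), finite rank of the defect together with $\sigma(W)\subseteq\T$ is genuinely insufficient.

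There is a second, independent error in the ``furthermore'' part. Since your $V$ is only the unitary part of $T$, your $R$ absorbs every unilateral shift summand of $T$, so $\dim\ker R^*=\dim\ker T^*$, which can strictly exceed $\rank(I-P_{\HN})TP_{\HN}$: take $T=S\oplus T_0$ with $S$ the unilateral shift and $T_0$ an analytic expansive operator whose space $[\ran(T_0^*T_0-I)]_{T_0^*}$ is one-dimensional (a local Dirichlet shift, say); then $\HH_u=(0)$, your $R$ equals $T$, and $\dim\ker R^*=2$ while $(I-P_{\HN})TP_{\HN}$ has rank $1$. So your closing claim that the wandering subspace of the analytic part is generated by $P_\HM T\HN$ is false, and your decomposition cannot realize the stated value of $m$. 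This is precisely why the paper, in Theorem \ref{Wold-type}, splits the isometric part $W_0$ further: it writes the shift part of $W_0$ as $S\oplus S'$ with $\ker S^*=\ran C$, puts $S'$ into the isometric summand $V$, and only after this regrouping does one get $\dim\ker R^*=\rank(I-P_{\HN})TP_{\HN}$.
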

The degree of a rational matrix-valued function $B(z) = (b_{ij}(z))_{ij}$ is defined analogously to the way it was defined for row operator-valued functions.

Two-isometric operators are automatically norm expansive. That is no longer true for $m$-isometries, if $m\ge 3$. For example, one easily checks that $T=\left[\begin{matrix}1&\alpha\\0&1\end{matrix}\right], \alpha \neq 0$ defines a 3-isometric operator on $\C^2$ that is not norm expansive. Still, for each $m\in \N$ there are examples of norm expansive strict $m$-isometries, see e.g. \cite{GuLuo18}. For norm expansive $m$ isometries with finite rank defect operator $\Delta$ there are some restrictions.
\begin{theorem}\label{twonisometry} Let $T\in \HB(\HH)$ be such that $\Delta = T^*T-I$ is a positive operator of finite rank, and let $m\in \N$.

(a) If $T$ is a $2m+1$-isometry , then  $T$ is a $2m$-isometry.

(b) $T$ is a $2m$-isometry, if and only if there are $w_1, \dots, w_k\in \T$ and positive operators $\Delta_1,\dots, \Delta_k$ such that $\Delta=\sum_{j=1}^k \Delta_j$ and $(T^*-\overline{w}_j)^m \Delta_j=0$ for each $j$.

Furthermore, if the above is satisfied with $\Delta_j \ne 0$ for all $j$ and if $\HN=[\ran \Delta]_{T^*},$ then $\HN$ is finite dimensional and  $\sigma(T^*|\HN)=\{\overline{w}_1, \dots, \overline{w}_k\}$.
\end{theorem}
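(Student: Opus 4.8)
The plan is to prove Theorem \ref{twonisometry} by relating the $m$-isometry conditions directly to algebraic conditions on the positive defect operator $\Delta$, exploiting the key identity $\beta_{m+1}(T) = T^*\beta_m(T)T - \beta_m(T)$ and the expansivity assumption $\Delta \ge 0$.

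\medskip

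\noindent\textbf{Approach to part (a).} First I would establish the general telescoping formula
$$\beta_m(T) = \sum_{j=0}^{m-1} (-1)^{m-1-j}\binom{m-1}{j} T^{*j}\Delta T^j,$$
which follows by induction from $\beta_0(T)=I$, $\beta_1(T)=\Delta$, and the recursion $\beta_{m+1}=T^*\beta_m T - \beta_m$. Since $\Delta \ge 0$ has finite rank, I can write $\Delta = D^*D$ for a finite-rank operator $D$, or diagonalize $\Delta = \sum t_n f_n \otimes f_n$. The hope is that the $2m$-isometry and $(2m+1)$-isometry conditions translate into the vanishing of alternating sums of the vectors $T^{*j} f_n$ (weighted by binomial coefficients), which live in the finite-dimensional space $\HN = [\ran\Delta]_{T^*}$. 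On $\HN$, the operator $T^*|\HN$ acts, and the condition $\beta_{2m+1}(T) = 0$ becomes a polynomial identity in $T^*|\HN$ of the form $(T^*-I)$-type difference equations. I expect part (a) to reduce to the following: if $T$ is a $(2m+1)$-isometry and expansive, then by a positivity/trace argument (pairing $\beta_{2m+1}(T)=0$ with suitable vectors, or using that an odd-degree alternating sum forces the even-degree one to vanish when $\Delta\ge 0$) one deduces $\beta_{2m}(T)=0$. The cleanest route is probably: $0 = \beta_{2m+1}(T) = T^*\beta_{2m}(T)T - \beta_{2m}(T)$, so $\beta_{2m}(T)$ is fixed by the map $X \mapsto T^*XT$; combined with $\Delta\ge 0$ and the finite-dimensionality of $\HN$ this should force $\beta_{2m}(T)=0$.

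\medskip

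\noindent\textbf{Approach to part (b) and the final statement.} For the forward implication, given that $T$ is a $2m$-isometry, I would use the telescoping formula to write $\beta_{2m}(T)=0$ as an operator identity and then spectrally decompose. The standard theory of $m$-isometries (via the Berger--Shaw or Agler--Stankus framework) shows that $\sigma(T)\subseteq\T$ restricted to $\HN$; more precisely, on the finite-dimensional space $\HN$ the operator $T^*|\HN$ has all eigenvalues on $\T$. Grouping the root spaces of $T^*|\HN$ according to its distinct eigenvalues $\overline{w}_1,\dots,\overline{w}_k \in \T$ and letting $\Delta_j$ be the compression/piece of $\Delta$ associated with the spectral projection at $\overline{w}_j$, I would verify $(T^*-\overline{w}_j)^m\Delta_j = 0$ using that $2m$-isometry forces the minimal polynomial of $T^*|\HN$ to divide $\prod_j (z-\overline{w}_j)^m$. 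The positivity of each $\Delta_j$ requires care: it is not automatic that the spectral pieces are positive, so I would instead construct the $\Delta_j$ as the genuine positive summands by using that the eigenvalues are on $\T$ and that $\Delta\ge 0$; the $w_j$ being unimodular is what makes the associated Jordan blocks compatible with a positive decomposition. For the converse, if $\Delta=\sum_j\Delta_j$ with $(T^*-\overline{w}_j)^m\Delta_j=0$, then each $T^{*\ell}\Delta_j$ is a linear combination of $\Delta_j, T^*\Delta_j,\dots,T^{*(m-1)}\Delta_j$ with coefficients determined by $\overline{w}_j$, and substituting into the telescoping formula for $\beta_{2m}(T)$ should produce a complete cancellation because $|w_j|=1$ forces the binomial alternating sum to annihilate the block (this is exactly the scalar fact that $\sum_j (-1)^j\binom{2m-1}{j}\overline{w}^{\,j}(\text{shift})$ vanishes for the order-$m$ nilpotent shifted by a unimodular scalar).

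\medskip

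\noindent\textbf{Main obstacle.} The hard part will be establishing the \emph{positivity} of the individual summands $\Delta_j$ in the decomposition $\Delta = \sum_{j=1}^k \Delta_j$, together with the precise claim that $\sigma(T^*|\HN) = \{\overline{w}_1,\dots,\overline{w}_k\}$ with \emph{exactly} these points (no spurious eigenvalues) when the $\Delta_j$ are nonzero. Decomposing a positive operator into positive pieces governed by the spectral structure of $T^*|\HN$ is delicate because the root spaces of a non-self-adjoint operator need not be orthogonal, so a naive spectral projection will not yield positive $\Delta_j$. I anticipate that the resolution uses the unimodularity $|w_j|=1$ in an essential way: one shows that $\ran\Delta_j \subseteq \ker (T^*-\overline{w}_j)^m$ and that these kernels, while not orthogonal in general, interact with the positive form $\langle \Delta\,\cdot,\cdot\rangle$ in a way that permits a genuine positive splitting, most likely by an inductive peeling-off argument that removes one unimodular point at a time and checks that the remainder stays positive. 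The finite-dimensionality of $\HN$ established in Theorem \ref{representationVR} (via part (a) of that theorem's equivalence, since $\sigma(T)\subseteq\overline{\D}$ follows from the $m$-isometry/unimodular-spectrum property) is what guarantees the decomposition is finite.
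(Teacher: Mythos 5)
Your treatment of the converse half of (b) is essentially correct and coincides with the paper's route (Lemma \ref{beta-n} and Theorem \ref{DeltajImplies2miso}): one writes $yx-1=(y-\overline{w}_j)x+\overline{w}_j(x-w_j)$, expands $(yx-1)^{2m-1}$ hereditarily against $\Delta_j$, and every term dies because it contains either $(T^*-\overline{w}_j)^k\Delta_j$ with $k\ge m$ or $\Delta_jT^k(\overline{w}_jT-I)^{2m-1-k}$ with $2m-1-k\ge m$; for the latter one must also record that $(T^*-\overline{w}_j)^m\Delta_j=0$ and $\Delta_j=\Delta_j^*$ yield $\Delta_j(T-w_j)^m=0$, which your sketch omits but which is the precise content of your ``$|w_j|=1$ forces cancellation'' remark. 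Part (a), however, has a genuine gap: your ``cleanest route'' rests on the principle that a positive finite-rank operator $B$ with range in $\HN$ and $T^*BT=B$ must vanish, and that principle is false under exactly the standing hypotheses. Take $T=M_z$ on the local Dirichlet space $\HD_w^1$, a strict $2$-isometry with $\rank \Delta=1$; then $B=\Delta$ is nonzero, positive, of rank one, and $T^*\Delta T-\Delta=\beta_2(T)=0$. Since for a $(2m+1)$-isometry the operator $\beta_{2m}(T)$ enjoys precisely the same abstract properties (it is positive by Agler--Stankus, fixed by $X\mapsto T^*XT$ because $\beta_{2m+1}(T)=0$, and of finite rank with range in $\HN$), no argument using only those properties can conclude $\beta_{2m}(T)=0$. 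The paper's proof of (a) (Corollary \ref{OddIsEven}) needs a different mechanism: the family of norms $\|x\|_c^2=\sum_j c_j\la \Delta T^jx,T^jx\ra$ on $\HN$, the equivalence that $T$ is an $N$-isometry if and only if the compression $A_c=P_{\HN}T|\HN$ is an $(N-1)$-isometry in \emph{every} such norm (Theorem \ref{A and T}), and then two finite-dimensional Agler--Stankus facts ($\sigma(A)\subseteq\T$, hence $A$ is invertible, and an invertible $2m$-isometry is a $(2m-1)$-isometry), transferred back to $T$.

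For the forward half of (b) you correctly isolate positivity of the $\Delta_j$ as the main obstacle, but you do not resolve it: ``inductive peeling-off'' names a hope, not an argument, and your appeal to ``standard theory'' for $\sigma(T^*|\HN)\subseteq\T$ is also a non sequitur, since the point spectrum of the adjoint of an expansive operator, indeed of a $1$-isometry, can fill $\D$ (the unilateral shift); locating $\sigma(T^*|\HN)$ requires the same compression-and-renorming device as (a). The missing idea is the paper's Theorem \ref{characterization1} combined with Lemma \ref{existencedelta}. First, applying the Agler--Helton--Stankus description of finite-dimensional $(2m-1)$-isometries (Theorem \ref{finite dim m isos}) to $A_c$ for every positive tuple $c$ shows that the root spaces satisfy $\HN_{w_i}(A)\perp\HN_{w_j}(A)$ in each norm $\|\cdot\|_c$; letting $c$ vary forces every term $\la\Delta T^\ell x,T^\ell y\ra$ to vanish separately, so in particular $\Delta\HN_{w_i}(A)\perp\HN_{w_j}(A)$ for $i\ne j$. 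Second, this $\Delta$-orthogonality makes positivity free: with the Riesz idempotents $f_j(A)$ one sets $\Delta_j=f_j(A)^*\Delta f_j(A)$, a congruence of $\Delta\ge 0$ and hence automatically positive, with $\ran \Delta_j\subseteq\HN_{\overline{w}_j}(A^*)$ so that $(T^*-\overline{w}_j)^m\Delta_j=0$, and the cross terms in $\Delta=\bigl(\sum_j f_j(A)\bigr)^*\Delta\bigl(\sum_i f_i(A)\bigr)$ vanish exactly because of the orthogonality, giving $\Delta=\sum_j\Delta_j$. Without the orthogonality statement and the congruence trick, the positive decomposition --- and with it the forward implication and the final spectral assertion --- remains unproved.
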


Theorem \ref{representationVR} implies that if one wants to classify the expanding $2m$-isometries with finite rank $\Delta$, then one needs to understand the $2m$-isometries acting on de Branges-Rovnyak spaces $\HH(B)$. It turns out that if $B\in \HS(\C^n,\C)$, then $T=(M_z,\HH(B))$ is a $2m$-isometry, if and only if $\HH(B)$ can be represented as an intersection of finitely many rank 1 spaces $H(b_i)$ as considered in Theorem \ref{2n-iso}.

\begin{theorem}\label{MainTheorem} Let $B\in \HS(\C^N,\C)$ be a contractive analytic function with $B(0)=0$ and such that $T=(M_z,\HH(B))$ is bounded and satisfies $\rank \Delta <\infty$. Let $\HN=[\ran \Delta]_{T^*}$.

Then $T$ is a $2m$-isometry, if and only if there are $w_1, \dots, w_k \in \T$, pairs of integers $(m_1,n_1), \dots (m_k, n_k)$ such that $1 \le n_j\le m_j \le m$ for all $j$,   and there are polynomials $\{p_{ij}\}_{1\le j\le k, 1\le i\le n_j}$ with degree $p_{ij} \le m_j-1$ for $1\le j \le k$, $1\le i\le n_j$  such that

\begin{align}\label{2mIsoNorm}\|f\|^2_{\HH(B)}=\|f\|^2_{H^2} + \sum_{j=1}^k\sum_{i=1}^{n_j} D^{m_j}_{w_j}(p_{ij}f).\end{align}

There is a choice of parameters so that for each $j$ there is an $i$ with $p_{ij}(w_j)\ne 0$ and such that $\sum_{j=1}^kn_j=\rank \Delta$. If all that is the case, then $\HH(B)=\bigcap_{j=1}^k \HD_{w_j}^{m_j}$ with equivalence of norms, the characteristic polynomial of $A=P_\HN T|\HN$ is $p_A(z)=\prod_{j=1}^k(z-w_j)^{m_j}$,
and the mate $a$ of $B$ is of the form $a(z)= \frac{p_A(z)}{q(z)}$, where $q$ is the unique polynomial of degree $\le \sum_{j=1}^k m_j$, which has no zeros in $\overline{\D}$, and satisfies $\frac{p_A(0)}{q(0)}>0$ and $$|q(z)|^{2}= |p_A(z)|^2+ \sum_{j=1}^k \left|\frac{p_{A}(z)}{(z-w_j)^{m_j}}\right|^2\sum_{i=1}^{n_j}|p_{ij}(z)|^2  \ \ \text{ for all }|z|=1.$$
\end{theorem}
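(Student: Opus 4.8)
The bridge between the defect operator and local Dirichlet forms is the elementary identity
\begin{equation*}
D^m_w(zg)-D^m_w(g)=\left|\tfrac{g^{(m-1)}(w)}{(m-1)!}\right|^2,\qquad w\in\T .
\end{equation*}
To see it, write $g=T_{m-1}(g,w)+(z-w)^m R$ with $R\in H^2$, so that $D^m_w(g)=\|R\|^2_{L^2(\T)}$; a short computation gives $\frac{zg-T_{m-1}(zg,w)}{(z-w)^m}=\frac{g^{(m-1)}(w)}{(m-1)!}+zR$, and since $zR\in zH^2$ is orthogonal to the constants and $\|zR\|=\|R\|$ on $\T$, the claim follows. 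Writing $c_{m-1}(h)=\tfrac{h^{(m-1)}(w)}{(m-1)!}$ and using $z(p_{ij}f)=p_{ij}(zf)$, the candidate form $R(f)=\sum_{j,i}D^{m_j}_{w_j}(p_{ij}f)$ therefore satisfies $R(zf)-R(f)=\sum_{j,i}|c_{m_j-1}(p_{ij}f)(w_j)|^2$, a sum of at most $\sum_j n_j$ squares of bounded functionals.

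For the implication that (\ref{2mIsoNorm}) forces a $2m$-isometry, I would read off from the display that $\langle\Delta f,f\rangle=R(zf)-R(f)=\sum_{j,i}|\langle f,e_{ij}\rangle|^2$, where $e_{ij}$ represents the bounded functional $f\mapsto c_{m_j-1}(p_{ij}f)(w_j)$; hence $\Delta=\sum_{j=1}^k\Delta_j$ with $\Delta_j=\sum_{i=1}^{n_j}e_{ij}\otimes e_{ij}\ge 0$. Since $\deg p_{ij}\le m_j-1$, the function $p_{ij}(z-w_j)^{m_j}f$ vanishes to order $\ge m_j$ at $w_j$, so $\langle f,(T^*-\overline{w}_j)^{m_j}e_{ij}\rangle=c_{m_j-1}(p_{ij}(z-w_j)^{m_j}f)(w_j)=0$; thus $(T^*-\overline{w}_j)^{m}\Delta_j=0$ as $m\ge m_j$, and Theorem \ref{twonisometry}(b) shows $T$ is a $2m$-isometry.

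The converse is the substantial direction. Starting from a $2m$-isometry, Theorem \ref{twonisometry} gives $\Delta=\sum_{j=1}^k\Delta_j$ with distinct $w_j\in\T$, $\Delta_j\ge0$, $(T^*-\overline{w}_j)^m\Delta_j=0$, and $\sigma(T^*|\HN)=\{\overline{w}_1,\dots,\overline{w}_k\}$. Because $\HE=\C$, the space $\HH(B)$ consists of scalar analytic functions, so each eigenvalue of $M_z^*$ has a one-dimensional eigenspace; consequently $T^*|\HN$ has a single Jordan block at each $\overline{w}_j$, of some size $m_j$, and since $\ran\Delta_j$ generates this block one gets both $m_j\le m$ and that $m_j$ is the smallest power with $(T^*-\overline{w}_j)^{m_j}\Delta_j=0$, whence $p_A(z)=\prod_j(z-w_j)^{m_j}$. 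The key structural step is to identify $\ran\Delta_j$ with boundary Taylor functionals: any $e$ with $(T^*-\overline{w}_j)^{m_j}e=0$ gives a functional $\langle\,\cdot\,,e\rangle$ annihilating $(z-w_j)^{m_j}\HH(B)$, and since polynomials are dense and $h\equiv T_{m_j-1}(h,w_j)$ mod $(z-w_j)^{m_j}$, such a functional equals $\sum_{l=0}^{m_j-1}\lambda_l c_l(\cdot)(w_j)$ on polynomials; rewriting $\sum_l\lambda_l c_l(h)=c_{m_j-1}(p\,h)$ for a polynomial $p$ with $\deg p\le m_j-1$ lets me diagonalize $\Delta_j=\sum_{i=1}^{n_j}e_{ij}\otimes e_{ij}$ with $\langle f,e_{ij}\rangle=c_{m_j-1}(p_{ij}f)(w_j)$, $n_j=\rank\Delta_j\le m_j$, and one $p_{ij}(w_j)\ne0$ coming from a cyclic vector. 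Once this is done, (\ref{2mIsoNorm}) follows from a telescoping/uniqueness argument: the sesquilinear forms $Q(f,g)=\langle f,g\rangle_{\HH(B)}-\langle f,g\rangle_{H^2}$ and $R(f,g)=\sum_{j,i}D^{m_j}_{w_j}(p_{ij}f,p_{ij}g)$ both satisfy $F(zf,zg)-F(f,g)=\langle\Delta f,g\rangle$ by the first paragraph, while $K^B_0\equiv1$ gives $\langle1,z^c\rangle_{\HH(B)}=\langle1,z^c\rangle_{H^2}$ and $\deg p_{ij}<m_j$ gives $D^{m_j}_{w_j}(p_{ij},p_{ij}z^c)=0$, so $Q-R$ is shift-invariant and vanishes on $\{1,z^c\}$; hence $Q=R$ on the dense set of polynomials and, by closability, on $\HH(B)$. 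I expect this identification of $\ran\Delta_j$ with derivative functionals at $w_j$, together with the matching of the order $m_j$ and the Jordan size $m_j$, to be the main obstacle.

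Finally, the mate formula comes from Theorem \ref{characterizationmate}: the eigenvalues of $T^*|\HN$ are the $\overline{w}_j$ with multiplicity $m_j$, so its numerator $\prod_i(1-w_iz)=\prod_j(1-\overline{w}_jz)^{m_j}$ is, up to a unimodular constant, $p_A(z)=\prod_j(z-w_j)^{m_j}$ (as $w_j\in\T$), giving $a=p_A/q$ with $q$ a constant multiple of $\prod_i(1-\alpha_iz)$, the $\alpha_i$ the eigenvalues of $L|\HN$; the normalizations $\tfrac{p_A(0)}{q(0)}>0$ and $q$ zero-free in $\overline{\D}$ single out $q$. The boundary identity for $|q|^2$ then follows from $|a|^2=1-\sum_i|b_i|^2$ on $\T$: multiplying by $|q|^2$ gives $|q|^2-|p_A|^2=\sum_i|qb_i|^2$, and matching $\sum_i|qb_i|^2$ with the Dirichlet weights (each $qb_i$ a polynomial, via the relation between $B$ and $\Delta$ recorded near Lemma \ref{rangeofDelta}) produces $\sum_j|\tfrac{p_A}{(z-w_j)^{m_j}}|^2\sum_i|p_{ij}|^2$. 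The equality $\HH(B)=\bigcap_j\HD_{w_j}^{m_j}$ with equivalent norms follows from (\ref{2mIsoNorm}) together with the fact that, for each $j$, multiplication by a polynomial nonvanishing at $w_j$ is bounded with bounded inverse on $\HD_{w_j}^{m_j}$ (Section \ref{SectionLocalDiri}), which replaces each $D^{m_j}_{w_j}(p_{ij}f)$ by a comparable multiple of $D^{m_j}_{w_j}(f)$.
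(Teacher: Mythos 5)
Your proposal is correct in architecture and, for most of the theorem, close to the paper's own proof: the increment identity $D_w^m(zg)-D_w^m(g)=|g^{(m-1)}(w)|^2/((m-1)!)^2$ is Lemma \ref{DeltaLocalDiri}; extracting $\Delta=\sum_j\Delta_j$ with $(T^*-\overline{w}_j)^{m_j}\Delta_j=0$ from (\ref{2mIsoNorm}) and citing Theorem \ref{twonisometry}(b) (i.e.\ Theorem \ref{DeltajImplies2miso}) is a valid route to the ``if'' direction (the paper instead sums $2m_j$-isometric structures via Theorem \ref{Thm1.1again} and the binomial criterion); and your conversion of root vectors $e$ of $T^*|\HN$ into functionals $f\mapsto c_{m_j-1}(p f)(w_j)$, with $p$ assembled from the numbers $\la (z-w_j)^l,e\ra$ and $p(w_j)\ne 0$ for a generator of the Jordan block, is exactly the construction inside Lemma \ref{rank1andLocalDirichlet}. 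Your genuine departure is the assembly of (\ref{2mIsoNorm}) in the ``only if'' direction: the paper integrates the Aleman--Malman formula (Lemma \ref{AlemanMalmanFormula}) against its local Dirichlet analogue (Lemma \ref{AlemanMalmanFormulaLocalDiri}), while you compare the two sesquilinear forms $Q$ and $R$ directly --- equal shift increments $\la \Delta f,g\ra$, equal values on pairs $(1,z^c)$, hence equal on polynomials, then density. This telescoping argument works and bypasses the integral formula; the density step needs the routine remark that polynomial parts and $H^2$-parts of $p_{ij}f_n$ converge separately. Two smaller rigor points: one-dimensionality of the eigenspaces of $M_z^*$ at the \emph{boundary} points $\overline{w}_j$ does not follow from ``$\HE=\C$'' alone (Fredholm theory covers only $\lambda\in\D$); you need cyclicity of $M_z$, i.e.\ Lemma \ref{eigenspaces of M^*}. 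And ``multiplication by a polynomial nonvanishing at $w_j$ is bounded with bounded inverse on $\HD_{w_j}^{m_j}$'' is literally false ($M_p$ is not surjective when $p$ has zeros in $\D$); the fact you need is the two-sided comparison of $\|f\|^2_{H^2}+D_{w_j}^{m_j}(pf)$ with $\|f\|^2_{H^2}+D_{w_j}^{m_j}(f)$, which is Corollary \ref{equivalentNorms}.

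The genuine gap is your derivation of the boundary formula for $|q|^2$. You correctly reduce it to $\sum_i|qb_i|^2=\sum_j\left|p_A/(z-w_j)^{m_j}\right|^2\sum_i|p_{ij}|^2$ on $\T$, equivalently $|a|^2\bigl(1+\sum_{j,i}|p_{ij}|^2/|z-w_j|^{2m_j}\bigr)=1$ a.e., but ``matching $\sum_i|qb_i|^2$ with the Dirichlet weights via the relation recorded near Lemma \ref{rangeofDelta}'' is not a proof. Lemma \ref{rangeofDelta} only identifies the \emph{subspace} $\overline{\ran\Delta}$ with the span of the functions $b_i/z$; equality of spans does not yield a quadratic identity, and the actual relation between the $b_i$ and your frame $\{e_{ij}\}$ of $\ran\Delta$ passes through $(I+\Delta)^{-1}$ (cf.\ the Introduction: $b_n/z=\sqrt{t_n/(1+t_n)}\,f_n$ for the spectral decomposition $\Delta=\sum_n t_nf_n\otimes f_n$), so the boundary moduli of the $b_i$ cannot be read off from the $p_{ij}$ directly. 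The paper closes this step by a mechanism you never invoke: $\HM=\HN^\perp$ equals $\varphi H^2$ with $\|\varphi h\|_{\HH(B)}=\|h\|_{H^2}$ (proof of Theorem \ref{rationalmate}), and by Remark \ref{equalityDegrees} (valid because $\sigma(T^*|\HN)\subseteq\T$) one may take $\varphi=a$; then $\la z^n a,a\ra_{\HH(B)}=0$ for all $n\ge 1$, and computing these inner products from (\ref{2mIsoNorm}) --- using that $a$ vanishes to order $m_j$ at $w_j$, so each polarized local Dirichlet term becomes a plain boundary integral --- shows that the measure $|a|^2\bigl(1+\sum_{j,i}|p_{ij}|^2/|z-w_j|^{2m_j}\bigr)\,|dz|/2\pi$ has vanishing Fourier coefficients of all nonzero orders and total mass $1$, hence is normalized Lebesgue measure. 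You need this (or an equivalent) argument; the same remark applies to the uniqueness assertion for $q$, which you leave unaddressed, although that part is quick: two polynomials zero-free in $\overline{\D}$ with equal modulus on $\T$ and the same sign normalization at $0$ coincide.
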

In the scalar and rank 1 case  a formula for the norm of a function $f\in \HH(b)$ is given by $\|f\|^2_{\HH(b)}=\|f\|^2_{H^2}+\|f^+\|^2_{H^2}$, where $f^+$ is a function that is appropriately associated with $f$, see \cite{Sa94}, section IV-1. Formula (\ref{2mIsoNorm}) is the analogue of this for the case considered in the theorem. An important ingredient to derive (\ref{2mIsoNorm}) is a formula of Aleman-Malman, see Lemma \ref{AlemanMalmanFormula}.

In the above we mentioned the following useful boundedness criterion of Aleman and Malman.
\begin{theorem} \label{AleMalTheorem}(\cite{AlemanMalman}, Theorem 5.2) If $B=(b_1, \dots, b_n)\in \HS(\C^n,\C)$, then $(M_z,\HH(B))$ is bounded, if and only if $$\int_{|z|=1} \log (1-\sum_{i=1}^n |b_i(z)|^2 )\frac{|dz|}{2\pi}>-\infty.$$
\end{theorem}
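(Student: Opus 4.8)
The plan is to prove both implications by first converting the integral condition into the existence of a mate and then recasting boundedness as an invariance statement. Since $B$ is a Schur function, its nontangential boundary values satisfy $\sum_{i=1}^n|b_i|^2\le 1$ a.e., so $\phi:=1-\sum_{i=1}^n|b_i|^2$ is a nonnegative function in $L^1(\T)$. By the classical existence theorem for outer functions (Szeg\H{o}), such a $\phi$ is the squared modulus on $\T$ of an outer function $a$, normalized by $a(0)>0$, if and only if $\int_\T\log\phi\,\frac{|dz|}{2\pi}>-\infty$. Thus the stated integral condition is precisely the existence of the mate $a$ of $B$, and it suffices to show that $(M_z,\HH(B))$ is bounded if and only if $B$ has a mate. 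I would also record a preliminary reduction: $M_z$ is the restriction to $\HH(B)$ of the shift on $H^2$ under the contractive inclusion $\HH(B)\hookrightarrow H^2$, and a short argument shows that, as an operator on $\HH(B)$ with domain $\{f\in\HH(B):zf\in\HH(B)\}$, it is closed; hence by the closed graph theorem boundedness of $M_z$ is equivalent to the invariance $M_z\HH(B)\subseteq\HH(B)$.

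For the implication that the mate yields invariance, I would augment $B$ by its mate to the row $\mathbf B=(b_1,\dots,b_n,a)$, which on $\T$ satisfies $\mathbf B\mathbf B^*=\sum_{i=1}^n|b_i|^2+|a|^2=1$ a.e. The Hankel--Toeplitz decomposition $I-M_BM_B^*=T_\phi+\sum_{i=1}^n H_{\overline{b_i}}^*H_{\overline{b_i}}$, together with its analogue for $\mathbf B$ in which $T_\phi$ is replaced by $0$, exhibits $\HH(\mathbf B)$ as a model-type space on which the shift structure is transparent. I expect to realize $\HH(B)$ inside $\HH(\mathbf B)$ (equivalently inside $H^2\oplus H^2$) as a graph in which the extra $a$-coordinate absorbs the defect, and to read off both $z\,\HH(B)\subseteq\HH(B)$ and an explicit increment formula of the form $\|M_zf\|_{\HH(B)}^2=\|f\|_{\HH(B)}^2+(\text{a nonnegative term built from }a)$. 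This is the vector analogue of Sarason's formula $\|f\|_{\HH(b)}^2=\|f\|_{H^2}^2+\|f^+\|_{H^2}^2$ and of the Aleman--Malman formula (Lemma~\ref{AlemanMalmanFormula}); it immediately gives invariance, hence boundedness, and in fact expansiveness.

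For the converse I would argue contrapositively, using Szeg\H{o}'s extremal description of log-integrability: if $\phi$ is not log-integrable then $\inf\{\int_\T|p|^2\phi\,\frac{|dz|}{2\pi}:p\text{ a polynomial with }p(0)=1\}=0$, so there exist polynomials $p_j$ with $p_j(0)=1$ and $\langle T_\phi p_j,p_j\rangle\to 0$. Through the same identity $I-M_BM_B^*=T_\phi+\sum_{i=1}^n H_{\overline{b_i}}^*H_{\overline{b_i}}$ and the de Branges complementation between $\HH(B)$ and $M_B(H^2)^n$, I would use these extremal polynomials to manufacture a function $f\in\HH(B)$ for which the $H^2$-function $f^+$ that would have to represent the increment of $zf$ cannot exist in $H^2$; equivalently $zf\notin\HH(B)$, contradicting invariance.

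The main obstacle is exactly this last step: converting the quantitative degeneration encoded in a vanishing Szeg\H{o} infimum into a concrete escape of $M_z\HH(B)$ from $\HH(B)$. This passage is the analytic heart of the theorem, and making it rigorous requires controlling how the defect operator $I-M_BM_B^*$ and the complementation norm degenerate along the extremal sequence; by contrast, the augmentation-by-mate computation in the forward direction is comparatively routine once the correct inner completion $\mathbf B$ is identified.
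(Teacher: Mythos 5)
The first thing to note is that the paper does not prove this statement at all: Theorem \ref{AleMalTheorem} is quoted from \cite{AlemanMalman} (their Theorem 5.2) and used as a black box, so your proposal has to stand entirely on its own. Your preliminary reductions do stand: by Szeg\H{o}'s theorem, log-integrability of $\phi=1-\sum_{i=1}^n|b_i|^2$ is equivalent to the existence of the mate $a$, and since $\HH(B)$ is a reproducing kernel space, multiplication by $z$ is a closed operator on it, so the closed graph theorem reduces boundedness to the invariance $z\HH(B)\subseteq\HH(B)$. The trouble begins with the forward direction: the structural claim your argument rests on is false. For the augmented row $\mathbf{B}=(b_1,\dots,b_n,a)$, the identity $\mathbf{B}(z)\mathbf{B}(z)^*=1$ a.e.\ on $\T$ does \emph{not} exhibit $\HH(\mathbf{B})$ as a model-type space with transparent shift structure. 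Take $n=1$, $b_1(z)=z/\sqrt{2}$, so that $a\equiv 1/\sqrt{2}$ is its mate: then $K^{\mathbf{B}}_w(z)\equiv \tfrac12$, so $\HH(\mathbf{B})$ is the one-dimensional space of constants with $\|c\|^2_{\HH(\mathbf{B})}=2|c|^2$, while $H^2\ominus \mathbf{B}H^2(\C^2)=(0)$ because $\mathbf{B}H^2(\C^2)=H^2$; in particular $M_{\mathbf{B}}$ is not a partial isometry, $I-M_{\mathbf{B}}M_{\mathbf{B}}^*$ is not a projection, and $z\HH(\mathbf{B})\not\subseteq\HH(\mathbf{B})$ since $z\cdot 1\notin\HH(\mathbf{B})$. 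Moreover the containment runs opposite to what you wrote: $K^B_w(z)-K^{\mathbf{B}}_w(z)=\frac{a(z)\overline{a(w)}}{1-z\overline{w}}$ is positive definite, so $\HH(\mathbf{B})\subseteq\HH(B)$, and one cannot ``realize $\HH(B)$ inside $\HH(\mathbf{B})$.'' This direction is repairable, for instance by the row-valued version of Sarason's $f^+$ calculus (characterize $\HH(B)$ by $M_B^*f\in T_{\overline{a}}H^2(\C^n)$ and compute the increment under $M_z$), or by verifying condition (ii) of the paper's Lemma \ref{condfsfwsb}, i.e.\ that the mate forces each $b_i\in\HH(B)$ -- but not by the route you describe.

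The converse direction is in worse shape, and you say so yourself: you set up the correct contrapositive (Szeg\H{o} extremal polynomials $p_j$ with $p_j(0)=1$ and $\langle T_\phi p_j,p_j\rangle\to 0$) and then concede that the passage from this degeneration to an actual $f\in\HH(B)$ with $zf\notin\HH(B)$ is missing. That passage \emph{is} the theorem; a plan that ends by naming its own missing analytic heart is not a proof. For a sense of what is needed, compare the paper's proof of Theorem \ref{rationalmate}, which handles the analogous point in its finite-rank setting: there, a wandering vector $\varphi\in\HH(B)\ominus z\HH(B)$ generates an isometric copy $\varphi H^2$ on which the kernel splits, giving $1-\sum_i|b_i(z)|^2\ge |\varphi(z)|^2$ and hence log-integrability because $0\ne\varphi\in H^2$. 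But that argument leans on $\dim\HN<\infty$ (so that $M_z$ restricted to $\HN^\perp$ is a unilateral shift and $\{z^n\varphi\}$ is orthonormal); in the general situation of Theorem \ref{AleMalTheorem} no such splitting is available for free, and supplying it (or an alternative) is exactly the unfinished step in your proposal.
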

Simple examples show that this condition does not capture the complete characterization for boundedness in the general case, see Example \ref{DiriExample}. It is thus worthwhile to point out that routine methods establish the following necessary and sufficient condition, see Lemma \ref{condfsfwsb}.

\begin{theorem}\label{condfsfwsb}
Let $B\in \HS(\HD,\HE)$ with $B(0)=0$,  then  the following are equivalent:
\begin{enumerate}
\item $T: f\to zf$ defines a bounded operator on $\HH(B)$,
\item for each $x\in \HD$ the function $g_x(z)=B(z)x\in \HH(B)$.
\end{enumerate}
\end{theorem}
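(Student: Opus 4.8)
The plan is to translate the two function-theoretic conditions into operator-range inclusions and then to decide them with Douglas' range-inclusion lemma. Recall the standard operator-range model $\HH(B)=\ran D_*$, where $D_*=(I-T_BT_B^*)^{1/2}$ acts on $H^2(\HE)$, $T_B\colon H^2(\HD)\to H^2(\HE)$ is multiplication by $B$ (a contraction since $B\in\HS(\HD,\HE)$), and $\HH(B)$ carries the pull-back norm. Writing $M_z$ for multiplication by $z$, condition (i) asserts $M_z\HH(B)\subseteq\HH(B)$: here the boundedness is automatic, since $\HH(B)$ embeds continuously in $H^2(\HE)$ and $M_z$ is continuous there, so the graph of $M_z|\HH(B)$ is closed and the closed graph theorem applies. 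If $G\colon\HD\to H^2(\HE)$ denotes the contraction $Gx=B(\cdot)x=g_x$, then condition (ii) is exactly the inclusion $\ran G\subseteq\ran D_*=\HH(B)$. By Douglas' lemma, $\ran(M_zD_*)\subseteq\ran D_*$ is equivalent to $M_zD_*^2M_z^*\le\lambda D_*^2$ for some $\lambda$, and $\ran G\subseteq\ran D_*$ is equivalent to $GG^*\le\mu D_*^2$ for some $\mu$; so I would reduce the whole statement to comparing these two operator inequalities.

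The key computation I would carry out is the identity
\begin{equation*}
M_zD_*^2M_z^* = D_*^2 - P + GG^*,
\end{equation*}
where $P$ is the orthogonal projection of $H^2(\HE)$ onto the constants. To prove it, use $M_zM_z^*=I-P$ on $H^2(\HE)$, the commutation $M_zT_B=T_BM_z$, and, on $H^2(\HD)$, $M_zM_z^*=I-P^{\HD}$ with $P^{\HD}$ the projection onto the constants there. Then $M_zT_BT_B^*M_z^*=T_B(I-P^{\HD})T_B^*=T_BT_B^*-GG^*$, the last step because $T_BP^{\HD}T_B^*=(T_BJ)(T_BJ)^*=GG^*$, where $J\colon\HD\to H^2(\HD)$ embeds $\HD$ as the constants and $T_BJ=G$. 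Substituting into $M_z(I-T_BT_B^*)M_z^*$ gives the identity. Consequently (i) is equivalent to $GG^*-P\le(\lambda-1)D_*^2$, while (ii) is equivalent to $GG^*\le\mu D_*^2$.

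It then remains to bridge these two inequalities, and this is exactly where the normalization $B(0)=0$ is used. Because $B(0)=0$, the reproducing kernel gives $K^B_0y=(I-B(\cdot)B(0)^*)y=y$, so every constant function lies in $\HH(B)=\ran D_*$; hence $\ran P\subseteq\ran D_*$, and Douglas' lemma furnishes a $\mu'$ with $P\le\mu'D_*^2$. Both implications are now immediate: if $GG^*\le\mu D_*^2$ then $GG^*-P\le GG^*\le\mu D_*^2$, giving (i); conversely, if $GG^*-P\le(\lambda-1)D_*^2$ then $GG^*\le P+(\lambda-1)D_*^2\le(\mu'+\lambda-1)D_*^2$, giving (ii). I expect the only genuine work to be the range identity for $M_zD_*^2M_z^*$ together with the observation that $B(0)=0$ forces the constants into $\HH(B)$; the Douglas-lemma reductions are routine, which matches the paper's assertion that routine methods suffice. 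As an equally routine alternative one can compute directly on kernels that $LK^B_wy=\overline{w}K^B_wy-\frac{B(\cdot)}{z}B(w)^*y$ and $M_z^*K^B_wy=\overline{w}K^B_wy$, so that $M_z^*-L$ sends $K^B_wy$ to $\frac{B(\cdot)}{z}B(w)^*y$; this exhibits $M_z^*$ as $L(I+\Gamma\Gamma^*)$ with $\Gamma x=B(\cdot)x$ once $\Gamma$ is seen to be bounded by the closed graph theorem, and recovers both directions.
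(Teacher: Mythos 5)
Your main argument is correct, and it takes a genuinely different route from the paper's. The paper never leaves the reproducing-kernel picture: for $0<c<1$ it verifies the algebraic identity
\[
K^B_w(z)-\frac{c}{1-c}B(z)B(w)^*-I=\frac{1}{1-c}\left((1-cz\overline{w})K^B_w(z)-I\right),
\]
notes that since $B(0)=0$ forces $K^B_w(0)=I$ such a kernel is positive definite exactly when it remains positive definite after subtracting the constant kernel $I$, and then reads off the equivalence from two positivity criteria: positivity of $(1-cz\overline{w})K^B_w(z)$ means $M_z$ is bounded with $\|M_z\|^2\le 1/c$, while positivity of $K^B_w(z)-\frac{c}{1-c}B(z)B(w)^*$ means $\tau(x)=B(\cdot)x$ is bounded into $\HH(B)$ with $\|\tau\|^2\le \frac{1}{c}-1$. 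You instead work in the operator-range model $\HH(B)=\ran D_*$, turn (i) and (ii) into the range inclusions $\ran (M_zD_*)\subseteq\ran D_*$ and $\ran G\subseteq\ran D_*$, and decide both with Douglas' lemma through the identity $M_zD_*^2M_z^*=D_*^2-P+GG^*$, with the hypothesis $B(0)=0$ entering through $\ran P\subseteq\ran D_*$, i.e. $P\le\mu'D_*^2$. Your operator identity is correct (it plays the role of the paper's kernel identity), the closed-graph argument making boundedness in (i) automatic is fine, and the bridging inequalities work in both directions, so the proof is complete. The trade-off: the paper's exact identity yields the sharp quantitative statement $\|\tau\|^2=\|M_z\|^2-1$ recorded with the lemma in Section 4, whereas your route gives only a qualitative equivalence, since the constant $\mu'$ in $P\le\mu'D_*^2$ is uncontrolled; in exchange, your proof isolates very cleanly what $B(0)=0$ buys, namely that the constants sit inside $\HH(B)$.

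One caution about your closing aside: the computation $(M_z^*-L)K^B_wy=\frac{B(\cdot)}{z}B(w)^*y$ together with the factorization $M_z^*=L(I+\Gamma\Gamma^*)$ does give (ii) $\Rightarrow$ (i) once $\Gamma$ is bounded, but it does not by itself recover (i) $\Rightarrow$ (ii): writing $M_z^*=L(I+\Gamma\Gamma^*)$ presupposes that $\Gamma$ maps $\HD$ into $\HH(B)$, which is precisely assertion (ii), and from (i) alone the kernel computation only yields $g_x\in\HH(B)$ for $x$ in the linear span of $\{B(w)^*y: w\in\D,\ y\in\HE\}$, leaving a limiting argument to be supplied for general $x$. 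Since this was offered only as an alternative and your Douglas-lemma argument is self-contained, it does not affect the correctness of the proposal.
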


The paper is organized as follows. In Section 3 we consider general Hilbert space operators $T\in \HB(\HH)$ such that $\Delta=T^*T-I\ge 0$ and that have the additional property that $\HN=[\ran \Delta]_{T^*}$ is finite dimensional. In Theorem \ref{Wold-type} we will establish the implication (a) $\Rightarrow $ (b) of Theorem \ref{representationVR}. The reverse implication and the rest of Theorem \ref{representationVR} will follow from Theorem \ref{ranDelta and rational}. In Section 4 we present our approach to the fact that all expansive analytic operators can be represented as $M_z$ on a de Branges-Rovnyak space $\HH(B)$ for some operator-valued Schur function $B$, see Theorem \ref{expandingOp}. The theorem says that one may always assume that $B(0)=0$. If $B$ and $C$ are two scalar-valued Schur functions with $B(0)=C(0)=0$, then it turns out that the operators $(M_z,\HH(B))$ and $(M_z,\HH(C))$ are unitarily equivalent, if and only if $\HH(B)=\HH(C)$ with equality of norms, see Lemma \ref{unitaryEquiv}. Section 5 contains some background facts about general Schur functions with $B(0)=0$. We already mentioned that in Theorem \ref{ranDelta and rational} we will establish the remaining parts of Theorem \ref{representationVR}. Additionally, we show that any rational Schur function $B$ is of the form $B(z)=\frac{1}{\tilde{q}(z)}P(z)$ for some operator polynomial $P$ and the scalar function $\tilde{q}(z)= z^nq(1/z)$, where $q$ is the characteristic polynomial of $L|\HN$ and $n=\dim \HN$. In Section 7 we restrict attention to scalar-valued Schur functions, and  Theorem \ref{characterizationmate} will follow from Theorems \ref{ranDelta and rational} and \ref{rationalmate}. In Sections 8-10 we prove our theorems about  expansive $n$-isometric operators. The proof of Theorem \ref{DeltajImplies2miso} contains an elementary argument that will establish  one of the implications in part (b) of Theorem \ref{twonisometry}. The main ingredient for the remainder of Theorem \ref{twonisometry} will be a construction in which we  put a new norm on $\HN$ in such a way that it allows us to use results of Agler-Helton-Stankus (\cite{AHS98}) about $n$-isometries on finite dimensional spaces. In Section 9 we give the rigorous definition of the higher order local Dirichlet integral and we prove some general facts about the spaces $\HD_w^m$. We also establish Theorem \ref{Thm1.1again}, which is a version of Theorem \ref{2n-iso}. Corollary \ref{equivalentNorms} says that for a fixed $w\in \T$ all spaces $\HH(b)$ as defined by (iii) of Theorem \ref{2n-iso} equal $\HD_w^m$ with equivalence of norms. In Section 10 we present the proof of Theorem \ref{MainTheorem}, see Theorems \ref{IntersectionOfBasic} and \ref{MainThmConverse}. Finally, in Section 11 we outline method of how to calculate the Schur function $B$ in Theorem \ref{MainTheorem}, if one knows the polynomials $p_{ij}$ and the rank of $\Delta$.

\section{Preliminaries} We start with a lemma that is probably well-known, we include the proof for completeness. Recall from the Introduction that $T\in \HB(\HH)$ is called analytic, if $\bigcap_{n\ge 1} \ran T^n =(0)$. This is equivalent to $\bigvee_{n\ge 1} \ker {T^*}^n=\HH$.
\begin{lemma} \label{kernels and analyticity} Let $0<\varepsilon<1$ and let $T\in \HB(\HH)$ with $\|Tx\|\ge \|x\|$ for all $x\in \HH$. Then $$\bigvee_{n\ge 1} \ker {T^*}^n = \bigvee_{|\lambda|<\varepsilon} \ker (T^*-\overline{\lambda}).$$
\end{lemma}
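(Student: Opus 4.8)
The plan is to pass to orthogonal complements and thereby reduce the statement to an equality of intersections of ranges. Since $\|Tx\|\ge\|x\|$, every power $T^n$ and every $T-\lambda$ with $|\lambda|<1$ is bounded below (indeed $\|(T-\lambda)x\|\ge(1-|\lambda|)\|x\|$), hence injective with \emph{closed} range. Using $(\ker S)^\perp=\overline{\ran S^*}$ together with the closedness of these ranges, I would compute
\[
\left(\bigvee_{n\ge 1}\ker {T^*}^n\right)^\perp=\bigcap_{n\ge1}\ran T^n,\qquad \left(\bigvee_{|\lambda|<\varepsilon}\ker(T^*-\overline\lambda)\right)^\perp=\bigcap_{|\lambda|<\varepsilon}\ran(T-\lambda).
\]
Thus the asserted equality is equivalent to the identity $\bigcap_{n\ge1}\ran T^n=\bigcap_{|\lambda|<\varepsilon}\ran(T-\lambda)$, and it suffices to prove this.

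The engine of the argument is a left inverse. Let $T^\dagger$ be the bounded operator defined by $T^\dagger T=I$ and $T^\dagger=0$ on $(\ran T)^\perp$; expansivity gives $\|T^\dagger\|\le1$. The key computation is the factorization $T-\lambda=(I-\lambda T^\dagger)T$, valid on all of $\HH$. For $|\lambda|<1$ the factor $I-\lambda T^\dagger$ is invertible via the Neumann series, so $\ran(T-\lambda)=(I-\lambda T^\dagger)\,\ran T$. Consequently a vector $y$ lies in $\ran(T-\lambda)$ if and only if $g(\lambda):=(I-\lambda T^\dagger)^{-1}y=\sum_{k\ge0}\lambda^k(T^\dagger)^k y$ lies in the closed subspace $\ran T$. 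The function $g$ is $\HH$-valued and analytic on $|\lambda|<1$, so $g(\lambda)\in\ran T$ for all $|\lambda|<\varepsilon$ if and only if every Taylor coefficient $(T^\dagger)^k y$ lies in $\ran T$.

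It then remains to verify that $(T^\dagger)^k y\in\ran T$ for all $k\ge0$ if and only if $y\in\ran T^n$ for all $n\ge1$. For one direction, if $y=T^{k+1}u$ then $(T^\dagger)^k y=Tu\in\ran T$. For the converse, the membership $(T^\dagger)^k y\in\ran T$ combined with $T^\dagger T=I$ yields $(T^\dagger)^k y=T\,(T^\dagger)^{k+1}y$, and iterating this relation gives $y=T^n(T^\dagger)^n y\in\ran T^n$ for every $n$. Putting the pieces together, $y\in\bigcap_{|\lambda|<\varepsilon}\ran(T-\lambda)$ precisely when $y\in\bigcap_{n\ge1}\ran T^n$, which is the required complement identity and hence finishes the proof. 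I expect the main obstacle to be the bookkeeping in the first reduction—checking that all the relevant ranges are closed so the orthogonal complements come out as displayed—together with the decisive observation that the left inverse linearizes the $\lambda$-dependence through $T-\lambda=(I-\lambda T^\dagger)T$, converting a whole family of range conditions into a single condition on the analytic coefficients $(T^\dagger)^k y$.
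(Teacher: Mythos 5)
Your proof is correct, but it takes a genuinely different route from the paper's. The paper argues directly at the level of kernels: for one inclusion it fixes $x\in \ker(T^*-\overline{\lambda})$, writes $y=(1-\overline{\lambda}L^*)x\in \ker T^*$ (where $L=(T^*T)^{-1}T^*$ is the same left inverse as your $T^\dagger$), and exhibits $x$ as the limit of the finite sums $x_N=\sum_{k=0}^N\overline{\lambda}^k {L^*}^k y\in \bigvee_{n\ge 1}\ker {T^*}^n$; for the reverse inclusion it proves that $\bigvee_{|\lambda|<\varepsilon}\ker(T^*-\overline{\lambda})$ is $L^*$-invariant and then inducts on $n$ to get $\ker {T^*}^n$ inside it. You instead dualize to the range identity $\bigcap_{n\ge 1}\ran T^n=\bigcap_{|\lambda|<\varepsilon}\ran(T-\lambda)$ and resolve it with the single mechanism $T-\lambda=(I-\lambda T^\dagger)T$ plus the vector-valued analyticity of $\lambda\mapsto (I-\lambda T^\dagger)^{-1}y$, which converts the whole family of range conditions into conditions on the Taylor coefficients $(T^\dagger)^k y$. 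All your steps check out: the lower bound $\|(T-\lambda)x\|\ge(1-|\lambda|)\|x\|$ gives closed ranges so the orthocomplement bookkeeping is legitimate, and the iteration $(T^\dagger)^k y=T(T^\dagger)^{k+1}y$ correctly yields $y=T^n(T^\dagger)^n y$. Notably, your technique is exactly the one the paper reserves for the proof of its Corollary \ref{analytic Corollary} (there the projection of $(1-zL)^{-1}x$ onto $(\ran T)^\perp$ vanishes on a set with an accumulation point, hence identically); what your argument buys is that the lemma and the corollary come out of one unified computation, since extracting Taylor coefficients only requires the analytic function to vanish on a set with an accumulation point in $\D$, not on a full disc. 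What the paper's approach buys is a more elementary, self-contained argument that never needs closed-range facts or orthocomplement duality, handling the two inclusions by approximation and induction respectively.
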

\begin{proof} Since $T$ is bounded below, the operator $T^*T$ is invertible. We set $L=(T^*T)^{-1}T^*$, the left inverse of $T$ with $\ker T^*=\ker L$. Note that $TL$ is a projection, thus $\|Lx\|\le \|TLx\|\le \|x\|$. Hence $(1-\overline{\lambda} L^*)^{-1}$ exists for all $|\lambda|<1$.

Fix $|\lambda|<\varepsilon$ and let $x \in \ker(T^*-\overline{\lambda})$. Set $y=(1-\overline{\lambda} L^*)x$, and observe that $y\in \ker T^*$ since $T^*L^*=I$. For $N\in \N$ let $x_N=\sum_{k=0}^N \overline{\lambda}^k{L^*}^ky$, then ${T^*}^{N+1}x_N=\sum_{k=0}^N \overline{\lambda}^k{T^*}^{N+1-k}y=0$. Thus $x_N \in \bigvee_{n\ge 1} \ker {T^*}^n $ and $x_N\to (1-\overline{\lambda} L^*)^{-1}y=x$. This implies that $$\bigvee_{|\lambda|<\varepsilon} \ker (T^*-\overline{\lambda})\subseteq \bigvee_{n\ge 1} \ker {T^*}^n.$$
In order to show the reverse inclusion, we set $\HM =\bigvee_{|\lambda|<\varepsilon} \ker (T^*-\overline{\lambda})$, and we will start by showing that $\HM$ is invariant for $L^*$. First we let $0<|\lambda|<\varepsilon$ and  $x\in \ker(T^*-\overline{\lambda})$. Then $$L^*x=\frac{1}{\overline{\lambda}}x-\frac{1}{\overline{\lambda}}(1-\overline{\lambda}L^*)x$$ is a difference of an element in $\ker(T^*-\overline{\lambda})$ and an element in $\ker T^*$. Thus, $L^*x\in \HM$ whenever $x\in \ker(T^*-\overline{\lambda})$ for $\lambda\ne 0$. If $x\in \ker T^*$, then for small $|\lambda|\ne 0$ we have
$$y_\lambda=(1-\overline{\lambda}L^*)^{-1}x \in \ker(T^*-\overline{\lambda}).$$
Then $L^*y_\lambda\in \HM$, and $L^*x\in \HM$ follows from the fact that $L^*y_\lambda\to L^*x$ as $|\lambda|\to 0$. Thus, $\HM$ is $L^*$-invariant.

We will now use induction on $n$ to show that $\ker {T^*}^n\subseteq \HM$ for each $n\ge 1$. The conclusion is obvious, if $n=1$. Suppose that $n\ge 1$ and that $\ker {T^*}^n\subseteq \HM$, and let $x\in  \ker {T^*}^{n+1}$. Then $T^*x\in \ker {T^*}^n\subseteq \HM$, and hence by the $L^*$-invariance $L^*T^*x\in \HM$. Then $x=(I-L^*T^*)x+L^*T^*x$ is a sum of elements from $\ker T^*\subseteq \HM$ and $\HM$, hence $x\in \HM$.
\end{proof}
\begin{corollary}\label{analytic Corollary} Let $T\in \HB(\HH)$ with $\|Tx\|\ge \|x\|$ for all $x\in \HH$, and let $S\subseteq \D$ be a set that has an accumulation point in $\D$. Then $$\bigvee_{n\ge 1} \ker {T^*}^n = \bigvee_{\lambda \in S} \ker (T^*-\overline{\lambda}).$$
\end{corollary}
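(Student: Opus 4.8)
The plan is to deduce the corollary from Lemma \ref{kernels and analyticity} together with a single application of the identity theorem. Let $L=(T^*T)^{-1}T^*$ be the left inverse used in the proof of that lemma, so that $T^*L^*=I$ and $\|L^*\|\le 1$; consequently $\mu\mapsto(1-\mu L^*)^{-1}=\sum_{n\ge 0}\mu^n(L^*)^n$ is a holomorphic $\HB(\HH)$-valued function on $\D$. First I would record the eigenvector identity $\ker(T^*-\mu)=(1-\mu L^*)^{-1}\ker T^*$ for every $\mu\in\D$. The inclusion $\supseteq$ follows because, writing $z=(1-\mu L^*)^{-1}y$ with $y\in\ker T^*$ and applying $T^*$ to $z-\mu L^*z=y$, the relations $T^*L^*=I$ and $T^*y=0$ give $T^*z=\mu z$; the inclusion $\subseteq$ is exactly the computation $y=(1-\mu L^*)x\in\ker T^*$ for $x\in\ker(T^*-\mu)$ that already appears in the proof of Lemma \ref{kernels and analyticity}.

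Next I would dispose of the easy inclusion and reformulate the target space. For $\lambda\in S\subseteq\D$ choose $\varepsilon$ with $|\lambda|<\varepsilon<1$; then Lemma \ref{kernels and analyticity} gives $\ker(T^*-\overline{\lambda})\subseteq\bigvee_{|\mu|<\varepsilon}\ker(T^*-\overline{\mu})=\bigvee_{n\ge 1}\ker{T^*}^n$, so that $\bigvee_{\lambda\in S}\ker(T^*-\overline{\lambda})\subseteq\bigvee_{n\ge 1}\ker{T^*}^n$. The same computation, applied to every $\mu\in\D$ and combined with the lemma, shows $\bigvee_{n\ge 1}\ker{T^*}^n=\bigvee_{\mu\in\D}\ker(T^*-\mu)$, and this is the description of the target space I would use for the reverse inclusion.

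For the reverse inclusion I would argue by orthogonality. Fix $x$ orthogonal to $\bigvee_{\lambda\in S}\ker(T^*-\overline{\lambda})$ and, for each $y\in\ker T^*$, set $g_y(\mu)=\la(1-\mu L^*)^{-1}y,x\ra$, a holomorphic function on $\D$. By the eigenvector identity, $(1-\overline{\lambda}L^*)^{-1}y\in\ker(T^*-\overline{\lambda})$ for $\lambda\in S$, hence $g_y(\overline{\lambda})=0$ for all $\lambda\in S$. Since $S$ has an accumulation point in $\D$, so does $\{\overline{\lambda}:\lambda\in S\}$, and the identity theorem forces $g_y\equiv 0$ on $\D$. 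Thus $x\perp(1-\mu L^*)^{-1}y$ for every $\mu\in\D$ and every $y\in\ker T^*$; by the eigenvector identity this means $x\perp\ker(T^*-\mu)$ for all $\mu\in\D$, i.e. $x\perp\bigvee_{\mu\in\D}\ker(T^*-\mu)=\bigvee_{n\ge 1}\ker{T^*}^n$. Taking orthogonal complements yields $\bigvee_{n\ge 1}\ker{T^*}^n\subseteq\bigvee_{\lambda\in S}\ker(T^*-\overline{\lambda})$, completing the proof.

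The one genuinely new ingredient beyond Lemma \ref{kernels and analyticity} is the passage from vanishing on the discrete set $S$ to vanishing on all of $\D$, which I expect to be the crux. It is handled cleanly once one observes that each eigenvector branch $\mu\mapsto(1-\mu L^*)^{-1}y$ depends holomorphically on $\mu$, so that pairing it against a fixed $x$ produces a scalar holomorphic function to which the identity theorem applies; everything else is bookkeeping with the two inclusions.
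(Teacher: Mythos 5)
Your proof is correct, and it is essentially the paper's own argument in adjoint form: the paper takes orthocomplements and applies the identity theorem to the vector-valued analytic function $F(z)=P(1-zL)^{-1}x$, where $x\in\bigcap_{\lambda\in S}\ran(T-\lambda)$ and $P$ is the projection onto $(\ran T)^\perp=\ker T^*$, whereas you stay on the kernel side and apply it to the scalar pairings $g_y(\mu)=\la (1-\mu L^*)^{-1}y,x\ra$. Since $g_y(\mu)=\la y,F(\overline{\mu})\ra$ for $y\in\ker T^*$, the two arguments coincide under duality, resting on the same ingredients: Lemma \ref{kernels and analyticity}, contractivity of the left inverse $L$, and the identity theorem.
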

\begin{proof} By the previous lemma (and by taking orthocomplements) it suffices to show that
$$\bigcap_{\lambda\in S}\ran(T-\lambda)\subseteq \bigcap_{\lambda\in \D}\ran(T-\lambda).$$

Let $x\in \bigcap_{\lambda\in S}\ran(T-\lambda)$, and let $L$ be the left inverse of $T$ as in the previous proof. Then $L$ is a contraction and we note that $(1-\lambda L)^{-1}x\in \ran T$ for every $\lambda \in S$. Let $P$ be the projection onto $\ran T^\perp$. Then the analytic function $F(z)=P(1-zL)^{-1}x$ is zero on $S$, hence it is identically equal to zero in $\D$. Thus, $(1-zL)^{-1}x\in \ran T$ for all $z\in \D$, and that implies $x\in \ran(T-z)$ for every $z\in \D$.
\end{proof}

\section{Finite rank extensions}\label{Finite rank extensions}
\begin{lemma}\label{unitary summand} Let $R\in \HB(\HH)$ with $R^*R\ge I$. If $\HM\subseteq \HH$ is an invariant subspace of $R$ such that $U=R|\HM$ is unitary, then $\HM$ is a reducing subspace for $R$.
\end{lemma}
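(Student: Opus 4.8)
The plan is to exploit the fact that $U = R|\HM$ is unitary together with the hypothesis $R^*R \ge I$ to show that $\HM$ is also $R^*$-invariant, which is equivalent to $\HM$ reducing $R$. Let $P$ denote the orthogonal projection onto $\HM$. I would start with an arbitrary $x \in \HM$ and decompose $Rx = R^*$-image considerations aside, simply write $R^* x = h + k$ with $h \in \HM$ and $k \in \HM^\perp$; the goal is to show $k = 0$.

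The key computation is to compare norms. Since $U = R|\HM$ is unitary on $\HM$, for $x\in\HM$ there is a unique $y \in \HM$ with $Uy = x$, i.e.\ $Ry = x$ and $\|y\| = \|x\|$. Now consider $\la R^*x, y\ra = \la x, Ry\ra = \la x, x\ra = \|x\|^2 = \|y\|^2$. On the other hand, writing $R^*x = h+k$ as above with $h\in\HM$, $k\in\HM^\perp$, we get $\la R^*x, y\ra = \la h, y\ra$ since $y \in \HM$. Thus $\la h, y\ra = \|y\|^2$ for the specific $y$ with $Ry=x$. To extract information from this I would instead run the estimate in the other direction: from $R^*R \ge I$ applied to $y$ we get $\|Ry\|^2 \ge \|y\|^2$, i.e.\ $\|x\|^2 \ge \|y\|^2$, but unitarity of $U$ already gives equality $\|x\| = \|y\|$, so no contradiction yet. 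The real leverage comes from applying $R^*R \ge I$ to an element of $\HM^\perp$-adjusted form: compute $\|R^* x\|^2 = \|h\|^2 + \|k\|^2$, and separately note $R R^* x$ has a component that must be controlled. A cleaner route: since $U$ is onto $\HM$, every $x\in\HM$ equals $Ry$ for some $y\in\HM$; then $R^*x = R^*Ry = y + \Delta y$ where $\Delta = R^*R - I \ge 0$, so $R^*x = y + \Delta y$. Hence $R^*x \in \HM$ if and only if $\Delta y \in \HM$ for all $y\in\HM$. So it suffices to show $\Delta(\HM) \subseteq \HM$, equivalently $\la \Delta y, z\ra = 0$ for all $y\in\HM$, $z\in\HM^\perp$.

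For this, compute $\la \Delta y, z\ra = \la R^*Ry, z\ra - \la y,z\ra = \la Ry, Rz\ra - 0 = \la Ry, Rz\ra$ for $y\in\HM$, $z\in\HM^\perp$. Now $Ry \in \HM$ since $\HM$ is $R$-invariant, and I claim $Rz \perp \HM$ whenever $z \perp \HM$: indeed for any $w\in\HM$ there is $w'\in\HM$ with $Rw' = w$ (surjectivity of $U$), so $\la Rz, w\ra = \la Rz, Rw'\ra = \la z, w'\ra + \la\Delta z, w'\ra = \la\Delta z, w'\ra$. This reduces the claim $Rz\perp\HM$ to $\la\Delta z, w'\ra = 0$ for $w'\in\HM$, i.e.\ $\Delta(\HM)\subseteq\HM$ again — so the two reductions are circular, and the honest obstacle is establishing $\HM$-invariance of $\Delta$ directly. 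I would close the loop using positivity: since $\Delta \ge 0$, for $y\in\HM$ with $Ry=x$ we have $0 \le \la \Delta y, y\ra = \|Ry\|^2 - \|y\|^2 = \|x\|^2 - \|x\|^2 = 0$, so $\Delta y = 0$ for every $y\in\HM$ by the Cauchy--Schwarz inequality for the positive form $\la\Delta\cdot,\cdot\ra$. Therefore $\Delta|\HM = 0$, which in particular gives $\Delta(\HM)\subseteq\HM$ (indeed $=\{0\}$), and by the computation above $R^*x = y + \Delta y = y \in \HM$ for all $x\in\HM$. Hence $\HM$ is invariant under both $R$ and $R^*$, i.e.\ reducing. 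The main point to get right is the observation $\Delta|\HM = 0$, which is where the interplay between $R^*R\ge I$ and the unitarity of $R|\HM$ is used; everything else is bookkeeping.
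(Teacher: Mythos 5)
Your proposal is correct, and despite the acknowledged false starts in the middle, the final argument is self-contained and is essentially the paper's proof in vector form: the paper writes $R$ as a block matrix with respect to $\HH=\HM\oplus\HM^\perp$ and uses that positivity of $R^*R-I$ together with the vanishing of its $(1,1)$-corner (from $U^*U=I$) forces the off-diagonal block $U^*C$ to vanish, whence $C=0$ by unitarity of $U$. You obtain the same vanishing at the vector level via $\la \Delta y,y\ra=\|Uy\|^2-\|y\|^2=0$ and positivity of $\Delta$, and then use surjectivity of $U$ to conclude $R^*\HM\subseteq\HM$; the two are the same mechanism presented in different notation.
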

\begin{proof} We have $$R=\left[\begin{matrix} U &C\\0&A\end{matrix}\right]$$ and we calculate that $$R^*R-I=\left[\begin{matrix} 0 &U^*C\\C^*U&C^*C+A^*A-I\end{matrix}\right]\ge 0.$$ The positivity implies that $U^*C=0$, hence $C=0$.
\end{proof}

\begin{lemma} \label{rank1extension} Let $\HH$ be a Hilbert space, $T\in \HB(\HH)$, $\lambda\in \overline{\D}$,  $c_0\in \HH$, and $$R=\left[\begin{matrix} T &C\\0&\lambda\end{matrix}\right]  \text{ on }\HH\oplus\C,$$ where $C:\C\to \HH$ satisfies $C1=c_0$.

If $T$ is analytic and if $R^*R\ge I$, then either $R$ is analytic or $|\lambda|=1$ and there is an analytic operator $R'\in \HB(\HH')$ such that $R$ is unitarily equivalent to $R' \oplus \lambda$ acting on $\HH'\oplus \C$.
\end{lemma}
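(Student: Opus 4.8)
Write $c_0 = C1$, and set $\HK = \bigcap_{n\ge 1}\ran R^n$, so that by definition $R$ is analytic precisely when $\HK = (0)$. The whole proof will be an analysis of this one subspace. First I would record that $\HK$ is $R$-invariant: if $y\in\HK$, then for every $n$ we have $y\in\ran R^n$, whence $Ry = R(R^n w) = R^n(Rw)\in\ran R^n$. Next I compute the powers
$$R^n=\begin{bmatrix} T^n & C_n\\ 0 & \lambda^n\end{bmatrix},\qquad C_n1=\sum_{k=0}^{n-1}\lambda^{\,n-1-k}T^k c_0 =: c_0^{(n)},$$
and record that $R^*R\ge I$ forces the $(1,1)$-block $T^*T-I\ge 0$, so $T$ is expansive (hence bounded below), as is $R$.

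\textbf{Determining $\HK$.} Assume first $\lambda\ne 0$ and set $d_n=\lambda^{-n}c_0^{(n)}=\sum_{k=0}^{n-1}\lambda^{-1-k}T^k c_0$. Solving $R^n\left[\begin{smallmatrix}x_n\\ t_n\end{smallmatrix}\right]=\left[\begin{smallmatrix}x\\ t\end{smallmatrix}\right]$ forces $t_n=\lambda^{-n}t$, so I obtain the clean range description: $\left[\begin{smallmatrix}x\\ t\end{smallmatrix}\right]\in\ran R^n$ if and only if $x-t\,d_n\in\ran T^n$. In particular an element of $\HK$ whose second coordinate is $0$ lies in $\bigcap_n\ran T^n=(0)$ because $T$ is analytic, so such an element vanishes. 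Consequently the map sending an element of $\HK$ to its second coordinate is injective, $\dim\HK\le 1$, and if $\HK\ne(0)$ then $\HK=\C e_0$ with $e_0=\left[\begin{smallmatrix}x_0\\ 1\end{smallmatrix}\right]$. (When $\lambda=0$ the second coordinate of any vector in $\ran R^n$ is $0$ for $n\ge1$, and the same argument gives $\HK=(0)$ at once.)

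\textbf{Finishing via expansiveness and Lemma \ref{unitary summand}.} If $\HK=(0)$ then $R$ is analytic and we are done. Otherwise $\HK=\C e_0$, and since $\HK$ is $R$-invariant, $Re_0\in\C e_0$; comparing second coordinates shows the eigenvalue must be $\lambda$, i.e.\ $Re_0=\lambda e_0$. Now expansiveness enters decisively: $\|e_0\|\le\|Re_0\|=|\lambda|\,\|e_0\|$ forces $|\lambda|\ge 1$, hence $|\lambda|=1$. Thus $R|_{\C e_0}$ is (the scalar $\lambda$, hence) unitary on the one-dimensional invariant subspace $\C e_0$, so by Lemma \ref{unitary summand} $\C e_0$ reduces $R$, giving $R=R'\oplus\lambda$ on $\HH'\oplus\C e_0$ with $\HH'=(\C e_0)^\perp$ and $R'=R|_{\HH'}$. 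Finally, since $\lambda^n$ is surjective on $\C e_0$ we get $\ran R^n=\ran R'^n\oplus\C e_0$; intersecting over $n$ yields $\C e_0=\HK=\big(\bigcap_n\ran R'^n\big)\oplus\C e_0$, so $\bigcap_n\ran R'^n=(0)$ and $R'$ is analytic, as required.

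\textbf{Main obstacle.} The technical heart is the range characterization $\left[\begin{smallmatrix}x\\ t\end{smallmatrix}\right]\in\ran R^n\iff x-t\,d_n\in\ran T^n$ and the resulting slice argument bounding $\dim\HK$ by $1$; everything else is formal. I would emphasize that the pleasant feature of this approach is that expansiveness alone rules out eigenvectors in $\HK$ with $|\lambda|<1$, so one never has to confront the (potentially delicate) question of whether $T-\lambda$ has closed range for $\lambda\in\T$.
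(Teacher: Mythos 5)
Your proof is correct, and while its skeleton matches the paper's (identify the obstruction to analyticity, show it is one-dimensional and spanned by a $\lambda$-eigenvector of $R$, get $|\lambda|=1$ from expansiveness, and invoke Lemma \ref{unitary summand} to split off the unitary one-dimensional piece), the technical machinery is genuinely different. The paper never touches $\bigcap_n \ran R^n$ directly: it sets $S=\D\setminus\{\lambda\}$, works with $\HH'^\perp=\bigcap_{z\in S}\ran(R-z)$, and leans on Corollary \ref{analytic Corollary} (hence on the Cauchy-dual argument of Lemma \ref{kernels and analyticity}) three times — to know this intersection is nonzero when $R$ is not analytic, to conclude $c_0-(\lambda-T)x\in\bigcap_{z\in S}\ran(T-z)=(0)$ so that $x\oplus 1$ is explicitly an eigenvector, and to verify at the end that $R'=R|\HH'$ is analytic. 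You bypass all three uses by computing $R^n$ explicitly, characterizing $\ran R^n$ via the vectors $d_n$, and reading off everything from the slice argument; your derivation of $Re_0=\lambda e_0$ from $R$-invariance of $\HK$ plus comparison of second coordinates is in fact cleaner than the paper's explicit verification that $c_0=(\lambda-T)x$, and your final step (intersecting $\ran R^n=\ran R'^n\oplus\C e_0$) replaces the paper's appeal to Corollary \ref{analytic Corollary} with an elementary observation. What the paper's route buys is uniformity and economy within the paper: it solves only first-order equations $(R-z)v=w$ rather than computing powers, treats $\lambda=0$ with no special case, and reuses a corollary needed elsewhere anyway; what your route buys is self-containedness — modulo Lemma \ref{unitary summand}, it needs nothing beyond the definition of analyticity — at the modest cost of the power computation and the separate (but easy) $\lambda=0$ case, where the range characterization degenerates slightly to $x\oplus 0\in\ran R^n\Rightarrow x\in\ran T^{n-1}$, which still intersects to $(0)$.
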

\begin{proof} Suppose $R$ is not analytic and define $S=\D\setminus\{\lambda\}$ and  $\HH'=\bigvee_{z\in S} \ker (R^*-\overline{z})$. Then by Corollary \ref{analytic Corollary} we have $\HH'^\perp = \bigcap_{z\in S} \ran(R-z) \ne 0$. If $x\in \HH$ such that $x\oplus 0\in \HH'^\perp$, then the hypothesis that $T$ is analytic implies that $x=0$. Thus, $\HH'^\perp$ is 1-dimensional and it contains an element of the type $x\oplus 1$, where $x\in \HH$. We claim that $c_0=(\lambda-T)x$.

Let $z\in S$. Then there are $u\in \HH$ and $\alpha \in \C$ such that $x=(T-z)u+\alpha c_0$ and $1=(\lambda-z)\alpha$. Hence with some algebra we obtain $$c_0-(\lambda-T)x=(T-z)((z-\lambda)u+x)\in \ran(T-z)$$ for all $z\in S$. The analyticity of $T$ implies that $c_0-(\lambda-T)x=0$.

Thus, $R(x\oplus 1)= (Tx+c_0)\oplus \lambda= \lambda(x\oplus 1)$. Thus, $\lambda$ is an eigenvalue of $R$. Since $R^*R\ge I$ we must have $|\lambda|=1$. Then by Lemma \ref{unitary summand} the eigenspace $\C (x\oplus 1)$ is reducing for $R$.

Set $R'=R|\HH'$. It is clear that  $ \ker(R'^*-\overline{z})=\ker(R^*-\overline{z} )$ for all $z\in S$. Hence by Corollary \ref{analytic Corollary} and the definition of $\HH'$ it follows that $R'$ is analytic.
\end{proof}

In the following we will use the convention that any operator on a $0$-dimensional space is both unitary and analytic. Thus, when we consider operators of the type $R=U\oplus T$, where $U$ is unitary and $T$ is analytic, then this includes the cases where either summand may be absent.
\begin{theorem}\label{finite extension} Let $R\in \HB(\HH)$ such that $R^*R\ge I$ and $\sigma(R) \subseteq \overline{\D}$.

If $\HM \subseteq \HH$ is a $R$-invariant subspace of finite codimension and such that $T=R|\HM$ is analytic, then  $R=U\oplus T$, where $U$ is unitary and $T$ is analytic.
\end{theorem}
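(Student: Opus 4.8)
The plan is to induct on the codimension $d=\dim(\HH\ominus\HM)$, peeling off one dimension at a time and using Lemma~\ref{rank1extension} as the engine, together with Lemma~\ref{unitary summand} to detach unimodular eigenvalues. The base case $d=0$ is trivial: then $R=T$ is analytic, so $R=U\oplus T$ with the summand $U$ absent, in accordance with the stated convention.

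For the inductive step ($d\ge 1$) I would first produce an intermediate $R$-invariant subspace $\HM_1$ with $\HM\subseteq\HM_1$ and $\dim(\HM_1\ominus\HM)=1$. To get it, pass to the quotient operator $\bar R$ on the finite-dimensional space $\HH/\HM$; since $\sigma(\bar R)\subseteq\sigma(R)\subseteq\overline{\D}$, the operator $\bar R$ has an eigenvalue $\lambda\in\overline{\D}$, and pulling back the corresponding eigenline gives an $R$-invariant $\HM_1$ as required. Relative to the orthogonal decomposition $\HM_1=\HM\oplus(\HM_1\ominus\HM)$, the operator $R|\HM_1$ then has the matrix form $\left[\begin{smallmatrix} T & C \\ 0 & \lambda\end{smallmatrix}\right]$, the bottom corner being $\lambda$ because the induced action on the one-dimensional quotient is multiplication by $\lambda$, and $(R|\HM_1)^*(R|\HM_1)\ge I$ because $R$ is expansive on $\HH$. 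This is precisely the hypothesis of Lemma~\ref{rank1extension}.

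Now apply Lemma~\ref{rank1extension} to $R|\HM_1$. If $R|\HM_1$ is analytic, then $\HM_1$ is an $R$-invariant subspace of codimension $d-1$ on which $R$ is analytic, and the inductive hypothesis applied to $(R,\HM_1)$ directly yields $R\cong U\oplus T'$. Otherwise $|\lambda|=1$ and $R|\HM_1\cong R'\oplus\lambda$, so $R|\HM_1$ has a unit eigenvector $u\in\HM_1$ with eigenvalue $\lambda$. Then $\HL=\C u$ is $R$-invariant (it is $(R|\HM_1)$-invariant and $\HM_1$ is $R$-invariant), and $R|\HL$ is unitary, so Lemma~\ref{unitary summand} shows $\HL$ reduces $R$; write $R=\lambda\oplus(R|\HL^\perp)$. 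Restriction to the reducing subspace $\HL^\perp$ preserves both $(R|\HL^\perp)^*(R|\HL^\perp)\ge I$ and $\sigma(R|\HL^\perp)\subseteq\overline{\D}$. The subspace $\HM'=\HM_1\ominus\HL$ lies in $\HL^\perp$, is $R$-invariant with $R|\HM'$ analytic (it is the analytic part $R'$), and has codimension $d-1$ in $\HL^\perp$; the inductive hypothesis applied to $(R|\HL^\perp,\HM')$ gives $R|\HL^\perp\cong U'\oplus T''$, whence $R\cong(\lambda\oplus U')\oplus T''$, a unitary summand plus an analytic summand.

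The only real subtlety, and the step I expect to be the main obstacle, is this second case: once the single-step extension $R|\HM_1$ itself produces a unitary direction, $\HM_1$ is no longer analytic, so one cannot recurse on $\HM_1$. The resolution is to detach the unimodular eigenline $\HL$ from all of $\HH$ via Lemma~\ref{unitary summand} and then recurse on $R|\HL^\perp$, whose relevant invariant subspace $\HM'=\HM_1\ominus\HL$ has dropped the codimension to $d-1$. The points to verify carefully are that $\HL$ genuinely reduces $R$ (not merely $R|\HM_1$) and that restriction to the reducing subspace $\HL^\perp$ leaves the two standing hypotheses intact; both are routine for reducing subspaces.
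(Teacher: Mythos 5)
Your proof is correct and takes essentially the same route as the paper: both arguments peel off $\HM^\perp$ one dimension at a time, using Lemma \ref{rank1extension} as the engine for each rank-one extension and Lemma \ref{unitary summand} to detach whatever unimodular eigenline appears, the only difference being that the paper organizes this bottom-up along a Schur triangularization of the corner $A$ in $R=\left[\begin{smallmatrix}T&C\\0&A\end{smallmatrix}\right]$ while you induct top-down on the codimension. The one assertion that deserves an explicit line is $\sigma(\bar R)\subseteq\sigma(R)$, which is false for invariant subspaces of infinite codimension but holds here for exactly the reason the paper records: $\bar R$ is similar to $A$, the adjoint $A^*=R^*|_{\HM^\perp}$ is the restriction of $R^*$ to an invariant subspace, so every eigenvalue of $A^*$ is an eigenvalue of $R^*$, and in finite dimensions $\sigma(A)$ consists precisely of the complex conjugates of the eigenvalues of $A^*$.
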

\begin{proof}  $R$ has a representation as a $2\times 2$ operator matrix of the following form
 $$R=\left[\begin{matrix}T&C\\0&A\end{matrix}\right] \ \text{ with respect to }\HH=\HM\oplus \HM^\perp.$$
Set $\HM_0=\HM$ and write $R_0=T$. Since $A$ acts on a finite dimensional space there is an orthonormal set $\HB=\{e_1, \dots, e_n\}$ such that the matrix for $A$ with respect to $\HB$ is in upper triangular form with $\lambda_1, \dots, \lambda_n$ on the diagonal. The complex conjugates of these numbers are eigenvalues for $A^*$ and hence they are eigenvalues for $R^*$, so we must have $\{\lambda_1, \dots, \lambda_n\}\subseteq \sigma(R)\subseteq \overline{\D}$.

For $1\le j\le n$ set $\HM_j=\HM_0 \oplus \text{ span}\{e_1, \dots ,e_j\}$ and $R_j=R|\HM_j$. We will show inductively that for each $j$ we have $R_j=U_j\oplus T_j$, where $U_j$ is unitary and $T_j$ is analytic. Of course, $R_j^*R_j\ge I$ for each $0\le j\le n$.

Since $R_0=T$ we know that $R_0$ is analytic, hence  $R_1=U_1\oplus T_1$ is of the required form by Lemma \ref{rank1extension}.   Suppose that $2\le j\le n$ and that either $R_{j-1}$ is analytic or $R_{j-1}=U_{j-1}\oplus T_{j-1}$, where $U_{j-1}$ is unitary and $T_{j-1}$  is analytic. If $R_{j-1}$ is analytic, then the conclusion follows from Lemma \ref{rank1extension}. Suppose now that $R_{j-1}=U_{j-1}\oplus T_{j-1}$. Then the space that $U_{j-1}$ acts on is invariant for $R_j$, hence by Lemma \ref{unitary summand} it is reducing for $R_j$. Hence $R_j$ is of the form
$$R_j=\left[\begin{matrix}U_{j-1}&0&0\\0&T_{j-1}&C_j\\0&0&\lambda_j\end{matrix}\right].$$ Thus, we may apply Lemma \ref{rank1extension} to $\left[\begin{matrix}T_{j-1}&C_j\\0&\lambda_j\end{matrix}\right]$ and conclude that it is either analytic or a direct sum of an analytic operator and a 1-dimensional unitary. In either case, it follows that $R_j$ is a direct sum of a unitary and an analytic operator. This concludes the induction and it proves the theorem since $R=R_n$.
\end{proof}

If $T^*T\ge I$, then we can form the smallest $T^*$-invariant subspace that contains the range of $\Delta =T^*T-I$. We write $\HN_T=[\ran \Delta]_{T^*}$ to denote this space. Note that the orthocomplement of this space is the largest $T$-invariant subspace $\HM$ such that $T|\HM$ is isometric.
\begin{theorem}\label{Wold-type} Let $T\in \HB(\HH)$ such that $T^*T\ge I$ and $\sigma(T) \subseteq \overline{\D}$.
If $\HN_T=[\ran \Delta]_{T^*}$ is finite dimensional, then $T=U\oplus T_1$, where $U$ is unitary and $T_1$ is analytic.

In fact, if $P$ denotes the projection onto $\HN_T$, then  $$T=V\oplus R,$$ where $V$ is isometric and $R$ is analytic with $\dim \ker R^*=\rank (I-P)TP<\infty$.
\end{theorem}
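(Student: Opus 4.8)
The plan is to realize both decompositions by restricting $T$ to mutually orthogonal reducing subspaces. Throughout write $\HN=\HN_T$ and $\Delta=T^*T-I\ge0$, and recall that $\HM:=\HN^\perp$ is the largest $T$-invariant subspace on which $T$ acts isometrically; since $\dim\HN<\infty$ it has finite codimension. Put $V_0=T|\HM$.

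For the first assertion $T=U\oplus T_1$, I would first apply the Wold decomposition to the isometry $V_0$, writing $\HM=\HM_u\oplus\HM_s$ with $V_0|\HM_u=:U$ unitary and $V_0|\HM_s=:S$ a pure shift, so that $\bigcap_n S^n\HM_s=0$. Since $U$ is surjective, $T\HM_u=\HM_u$, so $\HM_u$ is $T$-invariant with $T|\HM_u=U$ unitary, and Lemma \ref{unitary summand} forces $\HM_u$ to reduce $T$. Setting $T_1=T|\HM_u^\perp$ gives $T_1^*T_1\ge I$, $\sigma(T_1)\subseteq\sigma(T)\subseteq\overline{\D}$, and $\HM_s$ is a $T_1$-invariant subspace of finite codimension on which $T_1$ restricts to the analytic operator $S$. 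Theorem \ref{finite extension} then yields $T_1=U'\oplus T_1'$ with $U'$ unitary and $T_1'$ analytic. To conclude I must suppress $U'$: if it acted on a nonzero $\HK$, then $\HK$ would reduce $T$ with $T|\HK$ isometric, hence $\HK\subseteq\HM$ by maximality and $\HK\perp\HM_u$, so $\HK\subseteq\HM_s$; but then $S|\HK$ is onto, giving $\HK=S^n\HK\subseteq\bigcap_n S^n\HM_s=0$, a contradiction. Thus $T_1=T_1'$ is analytic.

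For the refined decomposition I would take $\HR=\bigvee_{n\ge0}T^n\HN$, with $V=T|\HR^\perp$ and $R=T|\HR$. Using $T^*T=I+\Delta$ together with $\Delta\HN\subseteq\ran\Delta\subseteq\HN$ one checks $T^*\HR\subseteq\HR$, so $\HR$ reduces $T$; since $\ran\Delta\subseteq\HN\subseteq\HR$ we get $\HR^\perp\subseteq\ker\Delta$ and hence $V$ is isometric. Because $\HM_u^\perp$ is a reducing (hence $T^*$- and $T$-invariant) subspace containing $\ran\Delta$ (as $\Delta|\HM_u=0$), it contains $\HN$ and then $\HR$; thus $\HR\subseteq\HM_u^\perp$, and $R=T_1|\HR$ is the restriction of the analytic operator $T_1$ to a reducing subspace, so $R$ is analytic.

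The main work is the identity $\dim\ker R^*=\rank(I-P)TP$. Here $\ker R^*=\HR\ominus T\HR$, and since $\HR=\HN+T\HR$ the compression $x\mapsto P_{(T\HR)^\perp}x$ maps $\HN$ onto $\ker R^*$ with kernel $\HN\cap T\HR$, so $\dim\ker R^*=\dim\HN-\dim(\HN\cap T\HR)$. The key point is that $\HN$ is invariant under the left inverse $L=(T^*T)^{-1}T^*=(I+\Delta)^{-1}T^*$: indeed $T^*\HN\subseteq\HN$, and $(I+\Delta)|\HN$ is an injective, hence bijective, self-map of the finite-dimensional space $\HN$, so $(I+\Delta)^{-1}\HN=\HN$ (alternatively cite Lemma \ref{T*Linvariant}). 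Consequently, if $y\in\HN\cap T\HR$ then $y=Tw$ with $w=Ly\in\HN$ and $Tw=y\in\HN$, so $\HN\cap T\HR=T\{x\in\HN:Tx\in\HN\}$; as $T$ is injective this gives $\dim(\HN\cap T\HR)=\dim\ker\Psi$ for $\Psi:=(I-P)T|\HN$, whence $\dim\ker R^*=\dim\HN-\dim\ker\Psi=\rank\Psi=\rank(I-P)TP$, finite since $\dim\HN<\infty$. I expect this count — and in particular the $L$-invariance of $\HN$ that powers $\HN\cap T\HR=T\{x\in\HN:Tx\in\HN\}$ — to be the main obstacle, with the suppression of the spurious unitary summand $U'$ the secondary delicate point.
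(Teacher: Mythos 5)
Your proof is correct, and although it draws on the same toolkit as the paper (the Wold decomposition, Lemma \ref{unitary summand}, and Theorem \ref{finite extension}), both the construction of the decomposition $T=V\oplus R$ and, above all, the proof of $\dim\ker R^*=\rank (I-P)TP$ are genuinely different; also the ordering is reversed, since the paper deduces the statement $T=U\oplus T_1$ from the refined decomposition, while you prove it first and then use the analyticity of $T_1$ to get that of $R$. The paper works with the matrix of $T$ relative to $\HH=\HM\oplus\HN_T$, $\HM=\HN_T^\perp$: positivity of $\Delta$ gives $W^*C=0$ for $W=T|\HM$, $C=(I-P)TP$, hence $\ran C\subseteq \ker W^*$; the shift part of $W$ is then split so that one summand $S$ satisfies $\ker S^*=\ran C$, and $R$ is the block operator $\left[\begin{smallmatrix}S&C\\0&A\end{smallmatrix}\right]$. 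Its analyticity is obtained exactly as you obtain yours (Theorem \ref{finite extension} plus absence of a unitary direct summand, which the paper asserts ``by construction'' and you justify explicitly via maximality of $\HM$ and purity of $S$), and the dimension identity is proved by a resolvent argument: for $z\in\D\setminus\sigma(A)$ one has $\ker(R^*-\overline z)=\{x\oplus -(A^*-\overline z)^{-1}C^*x:\ x\in\ker(S^*-\overline z)\}$, and constancy of these kernel dimensions on $\D$ gives $\dim\ker R^*=\dim\ker S^*=\rank C$. You instead build the reducing subspace intrinsically as $\HR=\bigvee_{n\ge0}T^n\HN_T$ (which in fact coincides with the paper's space, since $\HR=\HN_T\oplus\bigvee_{n\ge0}W^n\ran C$), obtain the isometric summand from $\HR^\perp\subseteq\ker\Delta$, and compute $\dim\ker R^*$ by pure linear algebra: $\ker R^*=\HR\ominus T\HR$, the sum $\HR=\HN_T+T\HR$ is closed because $T\HR$ is closed and $\dim\HN_T<\infty$, and the $L$-invariance of $\HN_T$ (your direct argument, or Lemma \ref{T*Linvariant}) identifies $\HN_T\cap T\HR$ with $T\{x\in\HN_T: Tx\in\HN_T\}$, after which rank-nullity finishes. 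Your count is more elementary --- no resolvents, no need to avoid $\sigma(A)$ --- and it isolates the role of the left inverse; what the paper's route buys is an explicit matrix model in which $\ker R^*$ is visibly a graph over $\ker S^*=\ran C$. The steps you flag as delicate all hold as you argue them.
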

Let $V$ be isometric. Then by the classical Wold decomposition theorem $V=U\oplus S$, where $U$ is unitary, and $S$ is a unilateral shift (which is analytic). Then for any analytic operator $R$  the operator   $T_1=S\oplus R$ is analytic as well. Hence the first sentence of the theorem follows from the last sentence of the theorem.
\begin{proof} Let $\HM = \HN_T^\perp$. Then as remarked above $W=T|\HM$ is isometric, and $T$ has a $2 \times 2$ operator matrix representation of the type $$T=\left[\begin{matrix}W&C\\0&A\end{matrix}\right].$$ As in  the proof of Lemma \ref{unitary summand} we compute $$T^*T-I= \left[\begin{matrix}0&W^*C\\C^*W&C^*C+A^*A-I\end{matrix}\right]\ge 0,$$ and the positivity implies that $W^*C=0$. In this case that implies $\ran C \subseteq \ker W^*$. By the Wold decomposition theorem $W$ is a direct sum of a unitary operator $U$ and a unilateral shift. The unilateral shift part can be viewed as a direct sum of two unilateral shifts $S$ and $S'$ where $\ker S^*=\ran C$ (which is finite dimensional). Thus, $W=V\oplus S$ where $V=U\oplus S'$ is isometric. Hence $T$ has the following $3 \times 3$ operator matrix representation
$$T=\left[\begin{matrix}V&0&0\\0&S&C\\0&0&A\end{matrix}\right].$$
It now follows from Theorem \ref{finite extension} that $R=\left[\begin{matrix}S&C\\0&A\end{matrix}\right]= U_0\oplus R_0$ for a unitary $U_0$ and an analytic operator $R_0$. But by our construction $R$ does not have a nontrivial unitary direct summand. Hence it must be analytic.

We have to show that $\dim \ker R^*=\rank (I-P)TP<\infty$. Note that $C= (I-P)TP$ and clearly this has finite rank, since $P$ has finite rank. Furthermore, by construction $\rank C= \dim \ker S^*$. We will finish the proof by showing that $\dim \ker R^*=\dim \ker S^*$. Let $z\in \D\setminus \sigma(A)$. Then $\dim \ker R^*=\dim \ker (R^*-\overline{z})$, $\dim \ker S^*=\dim \ker (S^*-\overline{z})$, and
$$\ker ( R^*-\overline{z})=\{x\oplus -(A^*-\overline{z})^{-1}C^*x : x\in \ker(S^*-\overline{z})\}.$$ The theorem follows.
\end{proof}

\section{De Branges-Rovnyak spaces and expansive operators}\label{Operator deBranges}
Let $\HE$ and $\HD$ be  complex Hilbert spaces, let $H^2(\HD)$ be the space of square summable power series with coefficients in $\HD$ and let $\HS(\HD, \HE)$ be the Schur class functions, i.e. the collection of functions that are analytic on $\D$ take values in the contractive operators in $\HB(\HD,\HE)$. A function $K:\D \times \D \to \HB(\HE)$ is called a kernel, if it is positive definite in the  sense that whenever $n\in \N$ and  $x_1, \dots, x_n \in \HE$, $z_1, \dots , z_n\in \D$, then $\sum_{i,j=1}^n \la K(z_i,z_j)x_i,x_j\ra \ge 0$. It is well-known that if $K$ is a kernel, then there is a unique Hilbert space $\HH$ of functions on $\D$ with values in $\HE$ such that for each $z\in \D$ and $x\in \HE$ the function $K_zx=K(\cdot,z)x\in \HH$ and $\la f(z),x\ra_{\HE}=\la f, K_z x\ra_{\HH}$ for all $f\in \HH$.

It is also well-known that if $B\in \HS(\HD,\HE)$ is a Schur function, then $$K^B_w(z)= \frac{I_{\HE}-B(z)B(w)^*}{1-z\overline{w}}$$ is a kernel. The corresponding reproducing kernel Hilbert space is the de Branges-Rovnyak space $\HH(B)$. The backward shift $Lf(z)=\frac{f(z)-f(0)}{z}$ acts contractively on each de Branges-Rovnyak space, and $\HH(B)$ is contractively contained in $H^2(\HE)$. Note that if $B(0)=0$, then $K^B_0(z)=I_{\HE}$, hence the constant functions are contained in $\HH(B)$, $\HE\subseteq \HH(B)$, and in fact $\ker L=\HE$. We will need a few further facts about the connections between $L$, $B$, and $K_w^B(z)$ in the case when $B(0)=0$.

\begin{lemma} \label{1.1} Let $L_0\in \HB(\HH)$ be a Hilbert space contraction, let $D_*=(I-L_0L_0^*)^{1/2}$, $\HD_*=\overline{\ran D_*}$, and let $P$ denote the projection onto $\HE_*=\ker L_0$.

Then $$B(z)=zP(1-zL_0)^{-1}D_*|\HD_*$$ is in the Schur class $\HS(\HD_*, \HE_*)$. $B$ satisfies $B(0)=0$ and
$$K^B_w(z)= P(1-zL_0)^{-1}(1-\overline{w}L_0^*)^{-1}P.$$

Furthermore, the map $V$, $$Vx(z)=P(I-zL_0)^{-1}x$$ defines a partial isometry of $\HH$ onto $\HH(B)$. It satisfies $VL_0=LV$ and $VD_*x(z)=\frac{B(z)}{z}x$ for all $x\in \HH$ and $z\in \D$.
\end{lemma}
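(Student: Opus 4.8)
The plan is to reduce everything to three elementary relations and then bootstrap. Since $P$ is the orthogonal projection onto $\HE_*=\ker L_0$, every vector in $\ran P$ is killed by $L_0$, so $L_0P=0$, and taking adjoints $PL_0^*=0$; moreover $D_*^2=I-L_0L_0^*$ by the definition of $D_*$. The equality $B(0)=0$ is immediate from the factor $z$, and $B(z)$ plainly maps $\HD_*$ into $\HE_*$. To see $B$ is contractive-valued (hence lies in $\HS(\HD_*,\HE_*)$), I would realize it as the transfer function of a coisometric colligation. Writing
$$M=\left[\begin{matrix} L_0 & D_*\\ P & 0\end{matrix}\right]:\HH\oplus\HD_*\to\HH\oplus\HE_*,$$
a direct computation of $MM^*$ gives diagonal entries $L_0L_0^*+D_*^2=I$ and $P^2=I_{\HE_*}$, with off-diagonal entries $L_0P=0$ and $PL_0^*=0$; thus $MM^*=I$, so $M$ is a coisometry. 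As $B(z)=zP(1-zL_0)^{-1}D_*|\HD_*$ is exactly the transfer function of $M$, the standard fact that transfer functions of contractive colligations are Schur functions yields $B\in\HS(\HD_*,\HE_*)$.

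For the kernel formula I would argue purely algebraically. Abbreviating $X=(1-zL_0)^{-1}$ and $Y=(1-\overline{w}L_0^*)^{-1}$, the resolvent identity $X^{-1}(XY-1)Y^{-1}=1-X^{-1}Y^{-1}$ together with $X^{-1}Y^{-1}=(1-zL_0)(1-\overline{w}L_0^*)$ gives
$$XY-1=X\left(zL_0+\overline{w}L_0^*-z\overline{w}L_0L_0^*\right)Y.$$
Using the decompositions $zL_0=zL_0(1-\overline{w}L_0^*)+z\overline{w}L_0L_0^*$ and $\overline{w}L_0^*=\overline{w}(1-zL_0)L_0^*+z\overline{w}L_0L_0^*$, and then sandwiching between the projections, the two terms $zPXL_0P$ and $\overline{w}PL_0^*YP$ vanish by $L_0P=0=PL_0^*$, leaving $P(XY-1)P=z\overline{w}\,PXL_0L_0^*YP$. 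Substituting $L_0L_0^*=I-D_*^2$ and recognizing $B(z)B(w)^*=z\overline{w}\,PXD_*^2YP$ then rearranges to $P-B(z)B(w)^*=(1-z\overline{w})PXYP$, which is the asserted formula $K^B_w(z)=P(1-zL_0)^{-1}(1-\overline{w}L_0^*)^{-1}P$.

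For the operator $V$ I would work through its adjoint. The kernel formula shows that for $y\in\HE_*$ the function $K^B_wy$ equals $P(1-zL_0)^{-1}(1-\overline{w}L_0^*)^{-1}y$, which suggests defining $V^*$ on the (total) set of kernels by $V^*(K^B_wy)=(1-\overline{w}L_0^*)^{-1}y$. Comparing $\langle V^*(K^B_wy),V^*(K^B_{w'}y')\rangle_{\HH}$ with $\langle K^B_wy,K^B_{w'}y'\rangle_{\HH(B)}$ — both reduce to $\langle(1-w'L_0)^{-1}(1-\overline{w}L_0^*)^{-1}y,y'\rangle_{\HH}$ by the kernel formula — shows $V^*$ is isometric, hence extends to an isometry $\HH(B)\to\HH$. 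Then $V:=(V^*)^*$ is a coisometry, in particular a partial isometry of $\HH$ onto $\HH(B)$, and evaluating $\langle Vx,K^B_wy\rangle_{\HH(B)}=\langle x,(1-\overline{w}L_0^*)^{-1}y\rangle_{\HH}=\langle P(1-wL_0)^{-1}x,y\rangle_{\HE_*}$ identifies $Vx(w)=P(1-wL_0)^{-1}x$. The two remaining identities are direct: from $(1-zL_0)^{-1}-I=z(1-zL_0)^{-1}L_0$ one gets $\frac{Vx(z)-Vx(0)}{z}=P(1-zL_0)^{-1}L_0x$, i.e. $LV=VL_0$; and $VD_*x(z)=P(1-zL_0)^{-1}D_*x=\frac{B(z)}{z}x$ straight from the definition of $B$.

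I expect the real work to be the Schur property together with the kernel formula. The non-commutativity of $L_0$ and $L_0^*$ means the positivity of $K^B$ is not visible from a naive Gram-matrix computation, and it is the coisometric identity $MM^*=I$ — itself a consequence of $D_*^2=I-L_0L_0^*$ and $L_0P=PL_0^*=0$ — that carries the load, both for Schur membership and, via the resolvent identity, for the kernel formula. Once those are in hand, the statements about $V$, the intertwining $LV=VL_0$, and the formula for $VD_*$ are routine.
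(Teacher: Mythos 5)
Your proof is correct and follows essentially the same route as the paper's: the coisometric colligation $M$ you use is just the adjoint of the isometric colligation $U=\left[\begin{matrix}0&D_*\\ P&L_0^*\end{matrix}\right]$ that the paper records, your resolvent manipulation is the same algebra the paper uses (via its identity $K_w(z)=P+P(I-zL_0)^{-1}(zL_0)(\overline{w}L_0^*)(I-\overline{w}L_0^*)^{-1}P$) to obtain $K^B_w(z)=P(1-zL_0)^{-1}(1-\overline{w}L_0^*)^{-1}P$, and your verification that $V^*$ is isometric on kernel functions is exactly the paper's computation showing $V$ is isometric on $\mathrm{Span}_{|w|<1}\ran(I-\overline{w}L_0^*)^{-1}P$, merely run through the adjoint. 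The only organizational differences are that the paper deduces the Schur property directly from the positivity of the right-hand side of the kernel formula rather than citing transfer-function theory, and constructs the partial isometry $V$ directly rather than as $(V^*)^*$; neither change affects correctness.
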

\begin{proof} It is clear that $B(0)=0$ and the fact that $B$ is in the Schur class will follow once we establish the identity
$K^B_w(z)= P(1-zL_0)^{-1}(1-\overline{w}L_0^*)^{-1}P$ since the expression on the right is clearly a kernel. However, for the bigger picture it is useful  to note that  the colligation operator $U:\HE_* \oplus \HH\to \HD_* \oplus \HH$ defined by $$U=\left[\begin{matrix} 0 &D_*\\P&L_0^*\end{matrix}\right]$$ is isometric, and hence $$B(z)^*= \overline{z} D_*(I-\overline{z}L_0^*)^{-1}P$$ is the transfer function realization of the Schur function $B\in \HS(\HD_*, \HE_*)$, see e.g. \cite{AM02}, Corollary 8.26.

 Write $K_w(z)=P(1-zL_0)^{-1}(1-\overline{w}L_0^*)^{-1}P$, and for later reference  note that   $PL_0^*=L_0P=0$ implies that
\begin{align}\label{kernel} K_w(z)=P+P(I-zL_0)^{-1}(zL_0)(\overline{w}L_0^*)(I-\overline{w}L_0^*)^{-1}P.\end{align}

 Now calculate
\begin{align*}B(z)B(w)^*&=z\overline{w} P(I-zL_0)^{-1}D_*^2(I-\overline{w}L_0^*)^{-1}P\\
&= z\overline{w} P(I-zL_0)^{-1}(I-L_0L_0^*)(I-\overline{w}L_0^*)^{-1}P\\
&= z\overline{w} K_w(z)-\left( P(I-zL_0)^{-1}(zL_0)(\overline{w}L_0^*)(I-\overline{w}L_0^*)^{-1}P\right)\\
&= z\overline{w} K_w(z)-\left(K_w(z)-P\right) \ \text{by (\ref{kernel})}\\
&=P -(1-z\overline{w} )K_w(z)
\end{align*}
 Noting that $P$ acts as the identity on $\mathcal E_*$ we obtain $K_w(z) = \frac{I-B(z)B(w)^*}{1-z\overline{w}}=K^B_w(z).$

The linear function $V$ defined on $\HH$ by $Vx(z)=P(I-zL_0)^{-1}x$ takes values in $\Hol(\D,\HE)$, the analytic $\HE$-valued functions on $\D$. The kernel of $V$ equals $\bigcap_{n\ge 0} \ker PL_0^n$. In order to show that $V$ is a partial isometry into $\HH(B)$, it suffices to show that $V$ is isometric on the set
$$\left(\bigcap_{n\ge 0} \ker PL_0^n\right)^\perp = \mathrm{ Span}_{n\ge 0} \ran(L_0^*)^nP=  \mathrm{ Span}_{|w|<1} \ran(I-\overline{w}L_0^*)^{-1}P.$$
But if $x= \sum_{i=1}^n (I-\overline{w_i}L_0^*)^{-1}Px_i$ for some
$w_1, \dots, w_n \in \D$ and $x_1,\dots x_n \in \HH$,
then \begin{align}\label{Vkernel}Vx(z) =\sum_{i=1}^n K^B_{w_i}(z)x_i \in \HH(B),\end{align}
and
\begin{align*}\|Vx\|^2_{\HH(B)}&=\sum_{i,j=1}^n \la K^B_{w_i}(w_j)x_i,x_j\ra\\
&=\sum_{i,j=1}^n \la P(1-w_iL_0)^{-1}(I-\overline{w}_jL_0^*)^{-1}Px_i,x_j\ra\\
&=\left\|\sum_{i=1}^n (I-\overline{w_i}L_0^*)^{-1}Px_i\right\|^2\\
&=\|x\|^2.
\end{align*}
Thus, $V$ defines a partial isometry and it must be onto by (\ref{Vkernel}). The identities $VL_0=LV$ and $VD_*x(z)=\frac{B(z)}{z}x$ for all $x\in \HH$ and $z\in \D$ follow easily.
\end{proof}

\begin{remark} A main result of de Branges and Rovnyak states that every Hilbert space contraction $X$ that is completely non-isometric, i.e.
$$\{x\in \HH: \|X^nx\|=\|x\| \text{ for all } n \ge 0\}=\{0\},$$
is unitarily equivalent to a backward shift acting on some $\HH(B)$ space (and conversely).\end{remark} That result can be seen to follow from our construction as follows.  Given a completely non-isometric contraction $X$, let $D=(I-X^*X)^{1/2}$, $\HE=\overline{\ran D}$, then the operator
$$L_0= \left[\begin{matrix} X &0\\D&0\end{matrix}\right] \text{ acting on } \HH\oplus \HE $$
is a contraction, and if we apply the construction of the previous lemma to $L_0$, then the resulting $V:\HH\oplus \HE\to \HH(B)$ is 1-1, hence unitary. Furthermore, one easily checks that $X$ is unitarily equivalent to $(L, \HH(B_1))$, where $B_1(z)=B(z)/z$. We omit the details.

 For the forward shift $Y: f\to zf$ to act boundedly on $\HH(B)$ an extra condition is required. For scalar Schur functions $b$ the condition is that $b$ is not an extreme point of the unit ball of $H^\infty$, or equivalently $\log(1-|b|)\in L^1(\T)$. The condition turns out to be equivalent to the condition that $b\in \HH(b)$.  For operator-valued Schur functions the following lemma describes the analogue of this condition for the case where $B(0)=0$.
\begin{lemma}\label{condfsfwsb}
Let $B\in \HS(\HD_*,\HE)$ with $B(0)=0$,  then  the following are equivalent:
\begin{enumerate}
\item $Y: f\to zf$ defines a bounded operator on $\HH(B)$,
\item for each $x\in \HD_*$ the function $g_x(z)=B(z)x\in \HH(B)$.
\end{enumerate}
Indeed, if $x\to \tau(x)$ is defined by $\tau(x)(z)=B(z)x$, then $Y$ is bounded on $\HH(B)$, if and only if $\tau:\HD_*\to \HH(B)$ is bounded with $\|\tau\|^2=\|Y\|^2-1$.
\end{lemma}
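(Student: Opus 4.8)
The plan is to prove the two implications separately, deriving boundedness of $Y$ from a positive-kernel argument and the exact norm identity from a short operator computation.

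For (1)$\Rightarrow$(2) I would apply Lemma \ref{1.1} with $L_0=L$, the backward shift on $\HH(B)$ itself; then $\HE=\ker L$ and, writing $D_*=(I-LL^*)^{1/2}$, the map $V$ of that lemma is the identity, since $Vf(z)=P_{\HE}(1-zL)^{-1}f=\sum_{n\ge0}(L^nf)(0)\,z^n=f(z)$. Hence $\frac{B(\cdot)}{\cdot}x=D_*x\in\HH(B)$ for every $x\in\HD_*$, and so $g_x=B(\cdot)x=z\cdot\frac{B(\cdot)}{\cdot}x=Y\big(\tfrac{B(\cdot)}{\cdot}x\big)$ lies in $\HH(B)$ as soon as $Y$ is bounded.

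For (2)$\Rightarrow$(1) I would first note that $\tau$ is closed, hence bounded: if $x_n\to x$ and $\tau x_n\to f$, continuity of point evaluations on $\HH(B)$ gives $f(z)=\lim B(z)x_n=B(z)x=\tau x(z)$. A direct computation gives $\tau^*K^B_wx=B(w)^*x$, and unwinding this over finite combinations of kernel functions shows that boundedness of $\tau$ is exactly the kernel domination $B(z)B(w)^*\preceq\|\tau\|^2K^B_w(z)$. Since $(1-z\overline w)K^B_w(z)=I_{\HE}-B(z)B(w)^*$, we then have
\[(1+\|\tau\|^2-z\overline w)K^B_w(z)=\big[\|\tau\|^2K^B_w(z)-B(z)B(w)^*\big]+I_{\HE},\]
a sum of two positive kernels, so the multiplier criterion (\cite{AM02}) yields that $Y=M_z$ is bounded with $\|Y\|^2\le1+\|\tau\|^2$.

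The exact identity $\|\tau\|^2=\|Y\|^2-1$ is the crux. Now that $Y$ is bounded, $LY=I$ gives $\|f\|=\|LYf\|\le\|Yf\|$, so $Y^*Y\ge I$; and since $\ran Y,\ \ran L^*\subseteq\HE^\perp$ while $L|_{\HE^\perp}$ is invertible with inverse $Y$, from $L(Y-L^*)=I-LL^*=D_*^2$ one gets $Y-L^*=YD_*^2$, hence $Y=L^*+YD_*^2$. Taking adjoints and using $LY=I$ produces the key identity
\[Y^*Y-I=D_*^2\,Y^*Y.\]
As the left side is self-adjoint, $D_*^2$ commutes with $Y^*Y$, hence so does $D_*$, and therefore $Y^*Y-I=D_*Y^*YD_*=(YD_*)^*(YD_*)$; with $Y^*Y\ge I$ this gives $\|Y\|^2-1=\|Y^*Y-I\|=\|YD_*\|^2$. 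Finally, Lemma \ref{1.1} identifies $\tau$ with $YD_*$ up to a unitary on $\HD_*$ (the phase freedom that replaces $B$ by its canonical transfer-function realization), which leaves norms unchanged, so $\|\tau\|=\|YD_*\|$ and the identity follows. I expect the main obstacle to be exactly this last step: the naive guess $Y^*Y-I=\tau\tau^*$ is \emph{false} (the two operators are supported on different, though unitarily equivalent, subspaces), and the correct bookkeeping is that $Y^*Y-I$ and $\tau^*\tau$ agree on $\HD_*$, which is what the identity $Y^*Y-I=D_*^2Y^*Y$ together with the commutation encodes.
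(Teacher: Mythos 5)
Your proposal is sound in substance and, for the norm identity, takes a genuinely different route from the paper; but one step needs repair before it is rigorous. For comparison: the paper's entire proof is the single algebraic identity, valid for each $0<c<1$,
\[
K^B_w(z)-\tfrac{c}{1-c}B(z)B(w)^*-I_{\HE}\;=\;\tfrac{1}{1-c}\left(\tfrac{(1-cz\overline{w})\bigl(I_{\HE}-B(z)B(w)^*\bigr)}{1-z\overline{w}}-I_{\HE}\right),
\]
together with the observation that a kernel normalized at $0$ (here $B(0)=0$ gives $K^B_w(0)=I_{\HE}$) is positive exactly when the kernel minus $I_{\HE}$ is positive; since positivity of $(1-cz\overline{w})K^B_w(z)$ is equivalent to $\|Y\|^2\le 1/c$ and positivity of $K^B_w(z)-\frac{c}{1-c}B(z)B(w)^*$ is equivalent to $\|\tau\|^2\le \frac1c-1$, varying $c$ yields both implications and the exact equality $\|\tau\|^2=\|Y\|^2-1$ simultaneously. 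Your (2)$\Rightarrow$(1) is this computation in disguise: your displayed decomposition is the paper's identity at $c=1/(1+\|\tau\|^2)$, rearranged; your closed-graph observation is a worthwhile addition, since it is what upgrades the purely qualitative statement (ii) to boundedness of $\tau$, a point the paper glosses over. Where you genuinely diverge is (1)$\Rightarrow$(2) and the exact norm identity: the chain $Y=L^*+YD_*^2$, hence $Y^*Y-I=D_*^2Y^*Y$, the self-adjointness/commutation trick, and $\|Y\|^2-1=\|YD_*\|^2$ are all correct, and they exhibit the structural fact $Y^*Y-I=(YD_*)^*(YD_*)$, which the paper's proof never makes visible.

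The gap is the identification you invoke at the end, and it also infects your (1)$\Rightarrow$(2). Applying Lemma \ref{1.1} with $L_0=L$ produces the Schur function $B'(z)=zP(1-zL)^{-1}D_*|_{\overline{\ran D_*}}$, whose domain is $\overline{\ran D_*}\subseteq\HH(B)$, not the abstract space $\HD_*$ on which $B$ acts; so the equation ``$\frac{B(\cdot)}{\cdot}x=D_*x$ for $x\in\HD_*$'' is literally a type error ($D_*$ cannot be applied to an element of $\HD_*$), and what Lemma \ref{1.1} actually gives is $D_*x=\frac{B'(\cdot)}{\cdot}x$ for $x\in\HH(B)$, with $B'$ a priori different from $B$. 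The bridge you need, and only allude to, is: since your $V$ is the identity and is injective, $\HH(B')=\HH(B)$ with equal norms, hence equal kernels, hence $B'(z)B'(w)^*=B(z)B(w)^*$ on $\D\times\D$; by the argument at the end of the proof of Lemma \ref{unitaryEquiv}, the map $B(w)^*y\mapsto B'(w)^*y$ extends to an isometry $W$ of $\bigvee_{w}\ran B(w)^*\subseteq\HD_*$ onto $\bigvee_{w}\ran B'(w)^*\subseteq\overline{\ran D_*}$, and $B(z)=B'(z)WP$, $B'(z)=B(z)W^*P'$, where $P,P'$ are the projections onto these two spans (each of $B,B'$ annihilates the corresponding orthocomplement). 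Consequently $\tau=\tau'WP$ and $\tau'=\tau W^*P'$, so $\tau$ is bounded iff $\tau'$ is, with $\|\tau\|=\|\tau'\|=\|YD_*\|$; the same relation converts your conclusion ``$B'(\cdot)x\in\HH(B)$'' into the required ``$B(\cdot)x\in\HH(B)$''. Note that $W$ is in general only a partial isometry, not a ``unitary phase freedom'': for $B=(b,0)\in\HS(\C^2,\C)$ the realization $B'$ has strictly smaller domain. With this paragraph inserted your proof is complete, though markedly longer than the paper's three-line identity.
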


\begin{proof}
Let $0<c<1$, then one verifies the identity
\begin{align*}\frac{I-B(z)B(w)^*}{1-z\overline{w}}&-\frac{c}{1-c}B(z)B(w)^*-I\\
&=\frac{1}{1-c}\left(\frac{(1-cz\overline{w})(I-B(z)B(w)^*)}{1-z\overline{w}}-I\right).\end{align*}

Note that if $K_w^B(0)=I$, then $K_w^B(z)$ is positive definite, if and only if $K_w^B(z)-I$ is positive definite. Hence it follows that
\begin{align}\label{positiveone}
\frac{(1-cz\overline{w})(I-B(z)B(w)^*)}{1-z\overline{w}}
\end{align}
is positive definite, if and only if
\begin{align}\label{positivetwo}
\frac{I-B(z)B(w)^*}{1-z\overline{w}}-\frac{c}{1-c}B(z)B(w)^*
\end{align}
is positive definite. The first condition (\ref{positiveone}) is equivalent to $Y$ being bounded with $\|Y\|^2\le 1/c$, while the second condition (\ref{positivetwo}) is equivalent to the map $x \to \tau(x)$, $\tau(x)(z)=B(z)x$ being bounded with $\|\tau\|^2\le \frac{1}{c}-1$. Indeed, for (\ref{positivetwo}) one checks that the positivity of $\frac{I-B(z)B(w)^*}{1-z\overline{w}}-\frac{c}{1-c}B(z)B(w)^*$ is equivalent to the inequality $$\|\sum_{i}B(z_i)^*x_i\|^2\le (\frac{1}{c}-1) \|\sum_{i}K_{z_i}^B(\cdot)x_i\|^2,$$ which gives the norm inequality for $\tau^*$.
\end{proof}

\begin{example}\label{DiriExample} The Dirichlet space $D$ has reproducing kernel
$$k_w(z)=\frac{1}{\overline{w}z}\log \frac{1}{1-\overline{w}z}=\sum_{n=0}^\infty \frac{\overline{w}^nz^n}{n+1}.$$
In this case one can take $B(z)=\{b_n\}_{n\ge 1}$, $b_n(z)= \frac{z^n}{\sqrt{n(n+1)}}$. Then for $x=\{a_n\}\in \ell^2$ we have $B(z)x=\sum_{n\ge 1} a_nb_n(z)=\sum_{n\ge 1} \frac{a_nz^n}{\sqrt{n(n+1)}}\in D$. Note that $\|B(e^{it})\|=1$ for each $t$. Thus, in this case $M_z$ is bounded although $\log(1-\|B(e^{it})\|) \notin L^1(\T)$.
\end{example}

Let $B$ be any Schur function. Then since $X$ is a contraction, if $M_z$ acts boundedly on $\HH(B)$, then $\|M_zf\|\ge \|XM_zf\|=\|f\|$ for all $f\in \HH(B)$, i.e. $M_z$ is norm expansive. Let $T\in \HB(\HH)$ be such that $\|Tx\|\ge \|x\|$ for all $x\in \HH$. Then $\ran T^n$ is closed for each $n$, and recall that $T$ is analytic, if $\bigcap_{n\ge 0} \ran T^n=(0)$. Thus, whenever $M_z$ is bounded on $\HH(B)$, then as $\HH(B) \subseteq H^2(\HE)$ it is obvious that $M_z$ is analytic. The next theorem says that all norm expansive analytic operators can be modelled as $M_z$ on some $\HH(B)$.
\begin{lemma}\label{rangeofDelta and D*} If $T\in \HB(\HH)$ satisfies $\Delta=T^*T-I\ge 0$, and if $L=(T^*T)^{-1}T^*$ is the left inverse of $T$ with $\ker L=\ker T^*$, then $$\overline{\ran \Delta} =\overline{ \ran D_*}, \ \ D_*=(I-LL^*)^{1/2}.$$
\end{lemma}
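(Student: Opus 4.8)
The plan is to reduce everything to a statement about the single positive invertible operator $A=T^*T$. First I would compute $LL^*$ explicitly: since $T^*T\ge I$ is invertible, the left inverse $L=(T^*T)^{-1}T^*$ has adjoint $L^*=T(T^*T)^{-1}$, and therefore
\[
LL^*=(T^*T)^{-1}T^*T(T^*T)^{-1}=(T^*T)^{-1}.
\]
Consequently $D_*^2=I-LL^*=I-(T^*T)^{-1}$, while $\Delta=T^*T-I$. Thus the two operators whose ranges we must compare are $A-I$ and $I-A^{-1}$, where $A=T^*T$.

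Next, since $D_*^2$ is positive and $D_*=(D_*^2)^{1/2}$, the two operators have the same kernel, so $\overline{\ran D_*}=(\ker D_*^2)^\perp=\overline{\ran D_*^2}$. Hence it suffices to prove $\overline{\ran(A-I)}=\overline{\ran(I-A^{-1})}$. Both $A-I$ and $I-A^{-1}$ are self-adjoint, so for each the closure of the range equals the orthogonal complement of the kernel; the claim therefore reduces to the identity of kernels $\ker(A-I)=\ker(I-A^{-1})$.

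Finally I would verify this identity directly: if $Ax=x$, then $A^{-1}x=x$ and so $(I-A^{-1})x=0$; conversely, if $A^{-1}x=x$, then applying the bounded operator $A$ gives $x=Ax$, whence $(A-I)x=0$. Equivalently, one may observe $I-A^{-1}=A^{-1}(A-I)$ with $A^{-1}$ invertible, which immediately yields $\ker(A-I)=\ker(I-A^{-1})$. Combining the three steps gives
\[
\overline{\ran \Delta}=\overline{\ran(A-I)}=\overline{\ran(I-A^{-1})}=\overline{\ran D_*^2}=\overline{\ran D_*}.
\]

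There is essentially no deep step here; the only point requiring a little care is that $\ran \Delta$ need not be closed, so one cannot argue at the level of the ranges themselves but must pass through kernels and orthogonal complements. This is exactly what the self-adjointness of $\Delta$ and $D_*^2$ permits, and I expect it to be the one place where a careless argument could go wrong.
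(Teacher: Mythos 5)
Your proof is correct and follows essentially the same route as the paper: the paper's one-line proof is the identity $D_*^2=I-(T^*T)^{-1}=I-(I+\Delta)^{-1}=\Delta(I+\Delta)^{-1}$, from which the equality of closed ranges is immediate because the factor $(I+\Delta)^{-1}$ is invertible. Your passage through kernels and self-adjointness is just a slightly longer way of exploiting the same factorization, with the invertible factor placed on the other side of $\Delta$.
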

\begin{proof} The lemma follows from the identity $D_*^2=I-(T^*T)^{-1}= I-(I+\Delta)^{-1}= \Delta (I+\Delta)^{-1}$.
\end{proof}
\begin{theorem}\label{expandingOp} Let $T\in \HB(\HH)$. Then the following are equivalent
\begin{enumerate}
\item $T$ is analytic and norm expanding,
\item there are Hilbert spaces $\HE$ and $\HD$, a Schur function $B\in \HS(\HD, \HE)$ such that $B(0)=0$,  $M_z\in \HB(\HH(B))$, and $T$ is unitarily equivalent to $(M_z,\HH(B))$,
\item there are Hilbert spaces $\HE$ and $\HD$, a Schur function $B\in \HS(\HD, \HE)$,  $M_z\in \HB(\HH(B))$, and $T$ is unitarily equivalent to $(M_z,\HH(B))$.
\end{enumerate}
If the conditions are satisfied, then in (ii) one can take $\HE= \ker T^*$ and $\HD=\overline{\ran (T^*T-I)}$.
\end{theorem}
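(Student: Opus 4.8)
The plan is to run the cycle (ii) $\Rightarrow$ (iii) $\Rightarrow$ (i) $\Rightarrow$ (ii). The implication (ii) $\Rightarrow$ (iii) is a triviality: keep the same Schur function. For (iii) $\Rightarrow$ (i) I would only assemble the remarks already made just before the theorem: since $\HH(B)$ is contractively contained in $H^2(\HE)$, a function lying in $\ran M_z^n$ for every $n$ has all its Taylor coefficients equal to zero, so $\bigcap_{n\ge 0}\ran M_z^n=(0)$ and $M_z$ is analytic; and since the backward shift $L$ is a contraction on $\HH(B)$ with $LM_z=I$, we get $\|M_zf\|\ge\|LM_zf\|=\|f\|$, i.e. $M_z$ is norm expanding.

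The real content is (i) $\Rightarrow$ (ii), and the idea is to feed the left inverse from the proof of Lemma \ref{kernels and analyticity} into Lemma \ref{1.1}. Given $T$ analytic with $\Delta=T^*T-I\ge 0$, put $L=(T^*T)^{-1}T^*$; then $TL$ is the orthogonal projection onto $\ran T$, so $L$ is a contraction and $\ker L=\ker T^*$. Applying Lemma \ref{1.1} with $L_0=L$, and writing $D_*=(I-LL^*)^{1/2}$, $\HD_*=\overline{\ran D_*}$, and $P$ for the projection onto $\HE_*=\ker T^*$, I obtain a Schur function $B\in\HS(\HD_*,\HE_*)$ with $B(0)=0$ together with a partial isometry $V$ of $\HH$ onto $\HH(B)$, $Vx(z)=P(I-zL)^{-1}x$, satisfying $VL=LV$ (the $L$ on the right now being the backward shift on $\HH(B)$). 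By Lemma \ref{rangeofDelta and D*}, $\HD_*=\overline{\ran D_*}=\overline{\ran\Delta}$, so the identifications $\HE=\ker T^*$ and $\HD=\overline{\ran\Delta}$ claimed in the last sentence are in place.

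It then remains to upgrade $V$ to a unitary and to verify the forward-shift intertwining. For injectivity: by Lemma \ref{1.1} the kernel of $V$ is $\bigcap_{n\ge 0}\ker PL^n$, and if $x$ lies there then $L^nx\in(\ker T^*)^\perp=\overline{\ran T}=\ran T$ for all $n$; since $TL$ is the projection onto $\ran T$, this gives $L^nx=TL^{n+1}x$, hence $x=T^nL^nx\in\ran T^n$ for every $n$, so $x=0$ by analyticity of $T$, and $V$ is unitary. For the intertwining: expanding $(I-zL)^{-1}=\sum_{n\ge 0}z^nL^n$ and using $L^nT=L^{n-1}$ for $n\ge 1$ (from $LT=I$) together with $PT=0$ (since $\ran T\perp\ker T^*=\ran P$), one computes $VTx(z)=P(I-zL)^{-1}Tx=PTx+zP(I-zL)^{-1}x=zVx(z)$. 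Thus $M_z$ carries $\HH(B)$ into itself, $M_z=VTV^*$ is bounded, and $T$ is unitarily equivalent to $(M_z,\HH(B))$ with $B(0)=0$; this is (ii).

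The main obstacle is the injectivity of $V$, and I expect it to be the one place where analyticity of $T$ is genuinely needed — precisely through the telescoping identity $x=T^nL^nx$ valid on $\bigcap_n\ker PL^n$. Everything else is bookkeeping around Lemma \ref{1.1}: once $V$ is unitary, the short power-series computation $VT=M_zV$ simultaneously delivers the boundedness of $M_z$ and the unitary equivalence, so no separate boundedness argument (for instance via the criterion of Lemma \ref{condfsfwsb}) is required.
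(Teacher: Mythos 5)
Your proposal is correct and follows essentially the same route as the paper: the cycle (ii)$\Rightarrow$(iii)$\Rightarrow$(i) from the remarks preceding the theorem, and for (i)$\Rightarrow$(ii) the left inverse $L=(T^*T)^{-1}T^*$ fed into Lemma \ref{1.1}, with Lemma \ref{rangeofDelta and D*} identifying $\HD$. The only difference is that where the paper cites Shimorin's Lemma 2.2 for the injectivity of $V$, you supply the (correct) telescoping argument $x=T^nL^nx$ on $\bigcap_n\ker PL^n$ directly, which is exactly the content of that citation.
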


\begin{proof}
 (ii) $\Rightarrow$ (iii) is trivial and we already noted that (iii) $\Rightarrow$ (i).

(i) $\Rightarrow$ (ii):  Let $T$ be analytic and norm expanding. Then $T^*T$ is invertible and $L=(T^*T)^{-1}T^*$ is a left inverse of $T$ with $\HE=\ker L=\ker T^*$. The operator $L^*$ has been called the Cauchy dual of $T$, see \cite{Sh01}.  Note that  $I-TL=I-L^*T^*=P$, the projection onto $\HE$.  Since $\HH= \ran T \oplus \ker T^*$ one easily sees that $\|L\|\le 1$. Thus, as in Lemma \ref{1.1} for any $x\in \HH$ one can define the $\HE$-valued holomorphic function $Vx:\D \to \HE$ by $Vx(z)= P(I-zL)^{-1}x$, $z\in \D$. In \cite{Sh01}, Lemma 2.2, Shimorin showed that the analyticity of $T$ implies that $V$ is 1-1, and hence by Lemma \ref{1.1} $V:\HH \to \HH(B)$ is unitary and one easily verifies that $VT=M_zV$.

By Lemma \ref{1.1} we can take $B\in \HS(\overline{\ran D_*}, \ker L)$. We already noted that $\ker T^*=\ker L$ and by Lemma \ref{rangeofDelta and D*} we have $\overline{\ran (1-LL^*)^{1/2}}=\overline{\ran(T^*T-I)}$.
\end{proof}

Theorem \ref{expandingOp} implies that for a study of operators of the type $(M_z,\HH(B))$ there is no loss in generality, if we assume that $B(0)=0$. In the remainder of this paper that will be the standard assumption. Of course, that means that all hypotheses and conclusions about $B$ and the mate $a$ are only valid if $B(0)=0$. Thus, it will be useful to know how to pass to the general case. We will now explain how to do this for scalar-valued Schur functions $B$. In fact, it turns out that for two contractive analytic functions  $B=(b_1,b_2,\dots)$ and $C=(c_1,c_2, \dots)$ the operators  $(M_z,\HH(B))$ and $(M_z,\HH(C))$ are unitarily equivalent to one another, if and only if $B$ and $C$ are related by a ball automorphism.

For $k\in \N\cup\{\infty\}$ let $\B_k$ be the unit ball in $\C^k$ (or $\ell_2$ if $k=\infty$), and for $\alpha \in \B_k$ let $\varphi_\alpha$ be the analytic  automorphism $\B_k\to \B_k$ defined by $\varphi_\alpha(z)=-z$, if $\alpha=0$ and
$$\varphi_\alpha(z)= \frac{\alpha-P_\alpha z - (1-\|\alpha\|^2)^{1/2} Q_\alpha z}{1-\la z, \alpha \ra},$$ if $\alpha \in \B_k\setminus\{0\}$. Here $P_\alpha z=\frac{\la z,\alpha\ra}{|\alpha|^2} \alpha$, $Q_\alpha=I_{\C^k}-P_\alpha$.
It satisfies $\varphi_\alpha(\alpha)=0$, $\varphi_\alpha(0)=\alpha$,  $\varphi_\alpha^{-1}=\varphi_\alpha$, and
$$1-\la \varphi_\alpha(z), \varphi_\alpha(w)\ra= \frac{(1-\|\alpha\|^2)(1-\la z, w\ra)}{(1-\la z,\alpha\ra)(1-\la \alpha,w\ra)},$$ see \cite{Ru08}.

Thus, if $B=(b_1,\dots, b_k)$ or $B=(b_1,b_2, \dots)$ is a $\B_k$-valued  analytic function on $\D$, then
$$\frac{1-\la \varphi_\alpha(B(z)), \varphi_\alpha(B(w))\ra}{1-z\overline{w}}=\frac{(1-\|\alpha\|^2)}{(1-\la z,\alpha\ra)(1-\la \alpha,w\ra)}\frac{1-\la B(z), B(w)\ra}{1-z\overline{w}}.$$ This implies that if we write $B_\alpha(z)=\varphi_\alpha(B(z))$ and $f_\alpha(z)=\frac{\sqrt{1-\|\alpha\|^2}}{1-\la z, \alpha\ra}$, then $$K^{B_\alpha}_w(z)= f_\alpha(z)\overline{f_\alpha(w)}K^B_w(z).$$ Thus, the following lemma is obvious.
\begin{lemma} \label{unitaryBalpha} For each $\alpha \in \B_k$ and each $\B_k$-valued analytic function $B$ the operator $M_{f_\alpha}: \HH(K^B)\to \HH(K^{B_\alpha}), g\to f_\alpha g$ is unitary.

Furthermore, $T=(M_z,\HH(B))$ is bounded, if and only if $T_\alpha=(M_z,\HH(B_\alpha))$ is bounded and $T_\alpha M_{f_\alpha}=M_{f_\alpha}  T$.
\end{lemma}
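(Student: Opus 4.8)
The plan is to read everything off the kernel identity $K^{B_\alpha}_w(z)= f_\alpha(z)\overline{f_\alpha(w)}K^B_w(z)$ recorded just above the lemma, combined with the standard reproducing-kernel principle that multiplying a kernel by $\phi(z)\overline{\phi(w)}$, for a zero-free analytic $\phi$, yields a unitarily equivalent space via the multiplier $g\mapsto\phi g$.

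First I would note that $f_\alpha$ is analytic and zero-free on $\D$, which is clear from its defining expression since the inner product occurring there has modulus strictly less than $1$; in particular $1/f_\alpha$ is also analytic on $\D$. Reading the kernel identity with $z$ as the variable gives $f_\alpha\cdot(K^B_wx)=\tfrac{1}{\overline{f_\alpha(w)}}\,K^{B_\alpha}_wx$, so the formal multiplier $g\mapsto f_\alpha g$ carries each kernel function $K^B_wx$ of $\HH(B)$ to a nonzero scalar multiple of the kernel function $K^{B_\alpha}_wx$ of $\HH(B_\alpha)$. A short inner-product computation — expanding the inner products of the kernel functions of $\HH(B_\alpha)$ by the kernel identity, so that the scalar factors $\overline{f_\alpha(w)}$ cancel — shows this map is isometric on the dense linear span of the kernel functions of $\HH(B)$. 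Hence it extends to an isometry of $\HH(B)$ into $\HH(B_\alpha)$, which one identifies with $g\mapsto f_\alpha g$ because convergence in a reproducing kernel Hilbert space implies pointwise convergence. Since $f_\alpha$ is zero-free, the image already contains a scalar multiple of every $K^{B_\alpha}_wx$, hence a dense subspace, so the isometry is onto; thus $M_{f_\alpha}:\HH(K^B)=\HH(B)\to\HH(B_\alpha)=\HH(K^{B_\alpha})$ is unitary. This is the first assertion.

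For the second assertion I would conjugate $M_z$ by this unitary. If $T=(M_z,\HH(B))$ is bounded, then $M_{f_\alpha}TM_{f_\alpha}^{-1}$ is a bounded operator on $\HH(B_\alpha)$; writing an arbitrary $h\in\HH(B_\alpha)$ as $h=f_\alpha g$ with $g=M_{f_\alpha}^{-1}h\in\HH(B)$ gives $M_{f_\alpha}TM_{f_\alpha}^{-1}h=M_{f_\alpha}(zg)=z f_\alpha g=zh$, so the conjugated operator is $M_z$ on $\HH(B_\alpha)$; that is, $T_\alpha$ is bounded and $T_\alpha M_{f_\alpha}=M_{f_\alpha}T$. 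Conversely, $M_{f_\alpha}^{-1}$ is the unitary multiplier by $1/f_\alpha$ from $\HH(B_\alpha)$ onto $\HH(B)$, so the same computation run backwards shows that boundedness of $T_\alpha$ forces $M_{f_\alpha}^{-1}T_\alpha M_{f_\alpha}=M_z$ on $\HH(B)$, i.e. $T$ is bounded.

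There is no serious obstacle here; the two points that need a modicum of care are (i) verifying that $f_\alpha$ is genuinely zero-free, which is precisely what makes the isometry $M_{f_\alpha}$ surjective and invertible, and (ii) keeping track of the scalar factors in the inner-product computation so that the kernel identity yields isometry rather than merely boundedness of $M_{f_\alpha}$. Both are routine, in accordance with the paper's remark that the lemma is ``obvious''.
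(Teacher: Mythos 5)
Your proposal is correct and follows exactly the route the paper intends: the paper derives the kernel identity $K^{B_\alpha}_w(z)=f_\alpha(z)\overline{f_\alpha(w)}K^B_w(z)$ immediately before the lemma and then declares the lemma "obvious," and your argument is precisely the standard fleshing-out of that observation (zero-freeness of $f_\alpha$, isometry on the span of kernels, density, and conjugation of $M_z$). No discrepancies to report.
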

We conclude that if a $\B_k$-valued analytic function $B$ is given, and if $T=(M_z,\HH(B))$ is  a bounded operator, then by taking $\alpha=B(0)$ we obtain a $B_\alpha$ with $B_\alpha(0)=0$ and $T$ is unitarily equivalent to $(M_z,\HH(B_\alpha))$.

The next lemma can be considered to be a converse of the above observation.
\begin{lemma} \label{unitaryEquiv} Let $k, j\in \N\cup \{\infty\}$,  let $B$ be $\B_k$-valued analytic function with $B(0)=0$, and let $C$ be a $\B_j$-valued analytic function with $C(0)=0$.

If  $T=(M_z,\HH(B))$ and $S=(M_z,\HH(C))$ are bounded and unitarily equivalent to one another, then $K^C_w(z)=K^B_w(z)$ for all $z,w\in \D$ and  there is a partial isometry $V:\C^k\to \C^j$ (or $\ell_2\to \ell_2$, etc.) such that $C(z)=V(B(z))$ for each $z\in \D$.
\end{lemma}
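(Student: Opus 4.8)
The plan is to promote the unitary intertwiner $U\colon\HH(B)\to\HH(C)$, $UM_z=M_zU$, to an honest identification of $\HH(B)$ and $\HH(C)$ as the same space of functions carrying the same norm; the two displayed conclusions then follow formally. The key preliminary step, which I expect to be the only substantive one, is to identify the eigenvectors of $M_z^*$ with reproducing kernels. First I would record that, since $B(0)=0$, the kernel $K^B_0\equiv 1$, so the constant function $1$ lies in $\HH(B)$ with $\|1\|^2_{\HH(B)}=K^B_0(0)=1$ and $1\perp z\HH(B)$ because $\la 1,M_zf\ra_{\HH(B)}=\overline{(M_zf)(0)}=0$. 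Since the backward shift $L$ maps $\HH(B)$ into itself, every $g\in\HH(B)$ with $g(0)=0$ satisfies $g=M_z(Lg)$, so $\{g\in\HH(B):g(0)=0\}=z\HH(B)$; consequently, for $g\in(z\HH(B))^\perp$ the element $g-g(0)1$ lies in both $z\HH(B)$ and $(z\HH(B))^\perp$, hence vanishes, and $g\in\C 1$. Because $M_z$ is norm expansive (hence bounded below), the range of $M_z$ is the closed subspace $z\HH(B)$, so $\ker M_z^*=(z\HH(B))^\perp=\C 1$.

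Next I would run the Neumann-series argument of Lemma \ref{kernels and analyticity}. Let $L_0=(M_z^*M_z)^{-1}M_z^*$ be the left inverse of $M_z$, so $L_0M_z=I$, $\ker L_0=\ker M_z^*=\C 1$, and $\|L_0\|\le 1$ (since $M_zL_0$ is the orthogonal projection onto the range of $M_z$ and $M_z$ is expansive). Given $w\in\D$ and $g$ with $M_z^*g=\overline w g$, put $y=(1-\overline wL_0^*)g$; then $M_z^*y=M_z^*g-\overline w(M_z^*L_0^*)g=\overline w g-\overline w g=0$, so $y\in\C 1$ and therefore $g\in\C(1-\overline wL_0^*)^{-1}1$. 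Since $K^B_w\neq 0$ lies in $\ker(M_z^*-\overline w)$, this gives $\ker(M_z^*-\overline w)=\C K^B_w$ for every $w\in\D$; the same holds with $B$ replaced by $C$.

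Now I would transport these eigenvectors along $U$. Since $U$ intertwines $M_z$ on $\HH(B)$ with $M_z$ on $\HH(C)$, it also intertwines their adjoints, so $U$ maps $\ker(M_z^*-\overline w)\subseteq\HH(B)$ onto $\ker(M_z^*-\overline w)\subseteq\HH(C)$. In particular $U1=c\cdot 1$ with $|c|=1$, and replacing $U$ by $\overline c U$ I may assume $U1=1$. Then for each $w\in\D$ we have $UK^B_w=c_wK^C_w$ for some scalar $c_w$, and evaluating at the origin (using $K^C_0=1$ and $K^C_w(0)=K^B_w(0)=1$, all because $B(0)=C(0)=0$) gives $c_w=\la UK^B_w,1\ra_{\HH(C)}=\la UK^B_w,U1\ra_{\HH(C)}=\la K^B_w,1\ra_{\HH(B)}=K^B_w(0)=1$. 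Hence $UK^B_w=K^C_w$ for all $w$, so for any $f\in\HH(B)$ and $w\in\D$, $(Uf)(w)=\la Uf,K^C_w\ra_{\HH(C)}=\la Uf,UK^B_w\ra_{\HH(C)}=\la f,K^B_w\ra_{\HH(B)}=f(w)$. Thus $U$ acts as the identity on the level of functions, so $\HH(B)$ and $\HH(C)$ are literally the same space of holomorphic functions with $\|f\|_{\HH(B)}=\|Uf\|_{\HH(C)}=\|f\|_{\HH(C)}$; since a reproducing kernel Hilbert space determines its kernel, $K^B_w(z)=K^C_w(z)$ for all $z,w\in\D$.

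It remains to produce $V$. The identity $K^B=K^C$ unwinds to $\la B(z),B(w)\ra=\la C(z),C(w)\ra$ for all $z,w\in\D$, so the usual lurking-isometry construction gives a well-defined linear isometry from $\overline{\operatorname{span}}\{B(z):z\in\D\}$ onto $\overline{\operatorname{span}}\{C(z):z\in\D\}$ sending $B(z)$ to $C(z)$; extending it by $0$ on the orthogonal complement of $\overline{\operatorname{span}}\{B(z):z\in\D\}$ produces a partial isometry $V$ with $C(z)=V(B(z))$ for all $z$. As noted, the only place where real work is needed is the first two paragraphs: establishing $\ker M_z^*=\C 1$ from $B(0)=0$, boundedness of $M_z$, and the contractivity of $L$ on $\HH(B)$, and then bootstrapping to $\ker(M_z^*-\overline w)=\C K^B_w$. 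Once the eigenvectors of the adjoint shifts are known to be scalar multiples of the reproducing kernels, everything else is bookkeeping.
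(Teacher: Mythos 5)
Your proof is correct, and its overall architecture matches the paper's: show the eigenspaces $\ker(M_z^*-\overline{w})$ are the one-dimensional spans of the reproducing kernels, use the intertwining unitary to match kernels up to scalars, evaluate at the origin (where $B(0)=C(0)=0$ forces $K^B_w(0)=K^C_w(0)=1$) to pin the scalars down, and finish with the lurking-isometry construction of $V$. The differences are in two local steps. For the one-dimensionality of the eigenspaces, the paper notes that $\ran M_z=\{f\in\HH(B): f(0)=0\}$ gives $\dim\ker M_z^*=1$ and then invokes Fredholm index continuity (using that $M_z-\lambda$ is bounded below for $\lambda\in\D$); you instead rerun the Neumann-series computation of Lemma \ref{kernels and analyticity} to exhibit every $\overline{w}$-eigenvector of $M_z^*$ as a multiple of $(1-\overline{w}L_0^*)^{-1}1$. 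Your route is more self-contained and avoids citing Fredholm theory, at the cost of redoing an argument the paper has already carried out once in Section 2. For the kernel identity, the paper introduces the scalar function $f(\lambda)$ defined by $U^*K^C_\lambda=\overline{f(\lambda)}K^B_\lambda$ and shows $f$ is a unimodular constant by evaluating the resulting kernel identity at $z=w=0$ and at $w=0$; you normalize $U$ so that $U1=1$ and conclude that $U$ acts as the identity on functions, which is the same computation packaged slightly more transparently and yields the additional (true but unstated) observation that $\HH(B)=\HH(C)$ isometrically as function spaces. Both versions are complete; I see no gaps.
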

\begin{proof} Suppose $T=(M_z,\HH(B))$ is a bounded operator. We claim that for each $\lambda\in \D$ we have $\ker (T-\lambda)^*=\{\gamma K^B_\lambda: \gamma \in \C\} $. It is clear that $K^B_\lambda\in \ker (T-\lambda)^*$, thus it suffices to show that  $\dim \ker (T-\lambda)^*=1$ for each $\lambda \in \D$.

If $f(0)=0$, then $\frac{f(z)}{z}=\frac{f(z)-f(0)}{z}= Lf(z)\in \HH(B)$ and hence $f \in \ran T$. Thus, $\dim \ker T^*=1$ and as $T$ is expansive, the Fredholm theory implies that $\dim \ker (T-\lambda)^*=\dim \ker T^*=1$ for each $\lambda \in \D$. Similarly, we conclude that $\ker (S-\lambda)^*=\{\gamma K^C_\lambda: \gamma \in \C\} $.

Let $U: \HH(B)\to \HH(C)$ be unitary such that $UT=SU$. Then $U^*(\ker (S-\lambda)^*)=\ker (T-\lambda)^*$ for each $\lambda \in \D$. This implies that $U^*K_\lambda^C=\overline{f(\lambda)}K^B_\lambda$ for some value $f(\lambda)\in \C$. This leads to the equality
$$\frac{1-\la C(z),C(w)\ra}{1-z\overline{w}}=f(z) \overline{f(w)}\ \frac{1-\la B(z),B(w)\ra}{1-z\overline{w}}$$ for all $z,w\in \D$.
Taking  $z=w=0$ we see that the hypothesis that $C(0)=B(0)=0$ implies that $f(0)$ has modulus 1. Similarly,  we take $w=0$, and conclude that for all $z\in \D$ the identity $1=f(z)\overline{f(0)}$ holds, so $f$ must be constant. This implies that $\la C(z), C(w)\ra=\la B(z),B(w)\ra$ for all $z,w\in \D$. Thus $K^C=K^B$.

Let $\HM=\bigvee\{B(z):z\in \D\}\subseteq \C^k$ (or $\ell_2$), and define a linear transformation $V:\HM\to \C^j$(resp. $V:\HM\to \ell_2$) by $V(B(z))=C(z)$. The identity $\la C(z), C(w)\ra=\la B(z),B(w)\ra$ implies that $V$ is well-defined and isometric. It becomes a partial isometry on $\C^k$, if we set it equal to 0 on $\HM^\perp$. This concludes the proof of the lemma. \end{proof}
\section{Some observations about Schur functions with  $B(0)=0$.}
Note that if $B\in \HS(\HD,\HE)$ is such that $B(0)=0$, then $K_0^B(z)=I_\HE$ and this implies that the constant functions form the orthocomplement in $\HH(B)$ of the functions that are 0 at 0. Obviously the constant functions form the null space of $L$, the backward shift on $\HH(B)$. If $T=(M_z,\HH(B))$ is bounded, then this implies that $\ker T^*=\ker L$. Since the operator $(T^*T)^{-1}T^*$ is a left inverse of $T$ with null space equal to $\ker T^*$, we conclude that the hypothesis $B(0)=0$ implies that $L=(T^*T)^{-1}T^*$, i.e. the backward shift equals the operator from the construction of the proof of Theorem \ref{expandingOp}.

\begin{lemma} \label{rangeofDelta} If $B\in \HS(\HD,\HE)$ with $B(0)=0$ is such that $T=(M_z,\HH(B))$ defines a bounded operator on $\HH(B)$, then
$$\overline{\ran \Delta}= \clos\{g_x: x\in \HD, g_x(z)=\frac{B(z)}{z}x\}, \ \Delta=T^*T-I.$$
\end{lemma}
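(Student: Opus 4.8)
The plan is to identify $\overline{\ran\Delta}$ by computing $\Delta = T^*T - I$ on enough vectors and then taking the closure of its range. The natural vectors to test on are the reproducing kernels $K^B_w$, which span $\HH(B)$. First I would use the boundedness of $T$ (via Lemma \ref{condfsfwsb}) to know that each $g_x(z) = \frac{B(z)}{z}x$ lies in $\HH(B)$; indeed $g_x(z) = \tau(x)(z)/z = Lg_x'$ is essentially the function $\tau(x)(z) = B(z)x$ pushed through the backward shift, and Lemma \ref{1.1} already records that $VD_*x(z) = \frac{B(z)}{z}x$. Since by Theorem \ref{expandingOp} and Lemma \ref{1.1} the space $\overline{\ran D_*}$ is carried by the unitary $V$ onto $\clos\{g_x : x\in\HD\}$, and since Lemma \ref{rangeofDelta and D*} gives $\overline{\ran D_*} = \overline{\ran\Delta}$ on the model side, the cleanest route is to transport everything through $V$: $V$ intertwines $T$ with $M_z$ and $L_0 = L$ with the backward shift $L$ on $\HH(B)$, so $V$ carries $T^*T - I$ to itself and carries $\overline{\ran D_*}$ to $\clos\{g_x\}$.

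More concretely, the key steps in order are: (1) recall from the discussion preceding Lemma \ref{rangeofDelta} that when $B(0)=0$ the backward shift $L$ on $\HH(B)$ equals $(T^*T)^{-1}T^*$, so that $D_* = (I - LL^*)^{1/2}$ is exactly the operator appearing in Lemma \ref{rangeofDelta and D*}, giving $\overline{\ran\Delta} = \overline{\ran D_*}$ directly inside $\HH(B)$ with no transport needed; (2) observe $\ran D_* \subseteq \overline{\ran D_*}$, and compute $D_*^2 = \Delta(I+\Delta)^{-1}$ as in Lemma \ref{rangeofDelta and D*} so that $\overline{\ran D_*} = \overline{\ran\Delta}$; (3) identify $\overline{\ran D_*}$ with $\clos\{g_x : x\in\HD\}$. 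For step (3), the identity $VD_*x(z) = \frac{B(z)}{z}x$ from Lemma \ref{1.1} is the heart of the matter: applying it with $L_0 = L$, $\HH = \HH(B)$, one gets that $V$ (which on $\HH(B)$ is the identity up to the canonical identification, since $\HH(B)$ is already a de Branges--Rovnyak space) sends $D_*x$ to $g_x$. Thus $\{D_*x : x\in\HD\} = \{g_x : x\in\HD\}$ as subsets of $\HH(B)$, and taking closures finishes the proof.

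The one point requiring care — and the step I expect to be the main obstacle — is pinning down exactly which Hilbert space $\HD$ and which $D_*$ are meant, and checking that applying the construction of Lemma \ref{1.1} with $L_0 = L$ (the backward shift on $\HH(B)$ itself) reproduces $B$ up to the identification that makes $V$ the identity. In other words, one must verify that the abstract $\overline{\ran D_*}$ built from $L_0 = L$ on $\HH(B)$ coincides with $\clos\{g_x\}$ where $x$ ranges over the \emph{original} coefficient space $\HD$ of $B$, not over some a priori different defect space. This is really a bookkeeping check: the colligation $U = \left[\begin{smallmatrix} 0 & D_* \\ P & L_0^* \end{smallmatrix}\right]$ from Lemma \ref{1.1} realizes $B$ as a transfer function, and the minimality built into the construction (the kernel of $V$ is $\bigcap_n \ker PL_0^n$) ensures the defect space is exactly $\overline{\ran D_*}$; one then invokes the relation $VD_*x(z) = B(z)x/z$ to see $\ran D_*$ maps onto $\{g_x\}$, and density of the span of kernels takes care of the closure. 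Once that identification is in hand, the chain $\overline{\ran\Delta} = \overline{\ran D_*} = \clos\{g_x : x\in\HD\}$ is immediate.
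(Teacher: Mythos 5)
Your first step coincides exactly with the paper's: since $B(0)=0$ the backward shift on $\HH(B)$ is $L=(T^*T)^{-1}T^*$, so $D_*=(I-LL^*)^{1/2}$ is the operator of Lemma \ref{rangeofDelta and D*} and $\overline{\ran \Delta}=\overline{\ran D_*}$. Where you diverge is in identifying $\overline{\ran D_*}$ with $\clos\{g_x\}$. The paper does this by a one-line direct computation on reproducing kernels, $\bigl((I-LL^*)K^B_w y\bigr)(z)=\frac{B(z)}{z}\frac{B(w)^*}{\overline{w}}y$, which together with $\overline{\ran D_*}=\overline{\ran D_*^2}$ and the density of the span of kernels expresses everything in terms of the original $B$ and the original space $\HD$ from the outset. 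You instead propose to re-run the construction of Lemma \ref{1.1} with $L_0=L$ on $\HH(B)$ and invoke $VD_*x(z)=\frac{B(z)}{z}x$. This can be made to work, but the step you defer as a ``bookkeeping check'' is where the content lies: that construction produces a new Schur function $\tilde{B}\in\HS(\overline{\ran D_*},\HE)$, not $B$ itself, and the identity reads $VD_*x(z)=\frac{\tilde{B}(z)}{z}x$ with $x$ ranging over $\HH(B)$ (your displayed set equality $\{D_*x:x\in\HD\}=\{g_x:x\in\HD\}$ is not literally meaningful, since $D_*$ acts on $\HH(B)$, not on $\HD$). To complete your route one must check that $V$ is the identity map (which does follow, since $Vf(z)=\sum_{n\ge 0}(L^nf)(0)z^n=f(z)$, using $K^B_0=I_\HE$), deduce $K^{\tilde{B}}=K^B$ and hence $\tilde{B}(z)\tilde{B}(w)^*=B(z)B(w)^*$, and then still pass from vectors in $\bigvee_{w\in\D}\ran B(w)^*$ to all of $\HD$ by noting that $g_a\equiv 0$ whenever $a\perp\ran B(w)^*$ for every $w$ --- this last observation is needed in either approach, forms the final line of the paper's proof, and is absent from your proposal. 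In short: identical first half, a viable but more roundabout second half whose decisive identifications are sketched rather than carried out; the paper's direct evaluation of $I-LL^*$ on kernels avoids the detour entirely.
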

\begin{proof} As above let $L$ be the backward shift on $\HH(B)$, and $D_*=(I-LL^*)^{1/2}$. Then by Lemma \ref{rangeofDelta and D*} we have $\overline{\ran \Delta}=\overline{\ran D_*}$.

If $y\in \HE$ and $ w\in \D$, then set $f= K^B_w(\cdot)y$. One calculates that $((I-LL^*)f)(z)= \frac{B(z)}{z} \frac{B(w)^*}{\overline{w}}y$. Since elements of the type as $f$ span $\HH(B)$ we have
\begin{align} \label{equranD} \overline{\ran D_*}&=\overline{\ran D_*^2}\\
&= \bigvee_{w\in \D}\{g_{w,y}: g_{w,y}(z)=\frac{B(z)}{z} \frac{B(w)^*}{\overline{w}}y, y\in \HE\} \notag\\
&=\clos \{g_x:x\in \HD, g_x(z)=\frac{B(z)}{z}x\}, \notag\end{align}
where the last equality follows, because the function $\frac{B(z)}{z}a$ is identically equal to 0, if $a\perp \ran \frac{B(w)^*}{\overline{w}}$ for all $w\in \D$.
\end{proof}

\begin{lemma} \label{reducing subspace} Let $B\in \HS(\HD,\HE)$ be such that $B(0)=0$ and such that  $T=(M_z,\HH(B))$ is bounded.

If $\HM$ is a reducing subspace for $T$, and if $\HE_0=\HM\cap \ker T^*$, then $f(z)\in \HE_0$ for every $f\in \HM$ and $z\in \D$, and $\HE_0$ is reducing for $K^B_w(z)$ and for $B(z)B(w)^*$ for every $z,w\in \D$.
 \end{lemma}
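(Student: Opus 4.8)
The plan is to transfer the reducing property from $T=(M_z,\HH(B))$ first to the backward shift $L$ and to the projection onto the constant functions, then to observe that functions in $\HM$ take all their values in $\HE_0$, and finally to read off the statements about $K^B_w(z)$ and $B(z)B(w)^*$ from this. First I would use the normalization $B(0)=0$: as recorded at the beginning of Section~5, since $T=(M_z,\HH(B))$ is bounded we have $L=(T^*T)^{-1}T^*$ and $\ker L=\ker T^*=\HE$, the space of constant functions. Because $\HM$ reduces $T$, the projection $P_\HM$ commutes with $T$ and $T^*$, hence with $T^*T$, with $(T^*T)^{-1}$, and therefore with $L$; so both $\HM$ and $\HMperp$ reduce $L$. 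Commuting with $L$ forces $P_\HM$ to leave $\ker L$ and its orthocomplement invariant, so $P_\HM$ commutes with the orthogonal projection $P$ of $\HH(B)$ onto $\HE$. Consequently $\HE=\HE_0\oplus\HF$, where $\HE_0=\HM\cap\HE$ and $\HF=\HMperp\cap\HE$, and $P$ maps $\HM$ into $\HM$ and $\HMperp$ into $\HMperp$.

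The key step I would carry out next is: $f(z)\in\HE_0$ for every $f\in\HM$ and $z\in\D$, and symmetrically $g(z)\in\HF$ for every $g\in\HMperp$. Indeed, for $f\in\HM$ the functions $L^nf$ lie in $\HM$ for all $n\ge 0$, and $P(L^nf)$ is the constant function equal to $(L^nf)(0)=\hat f(n)$, the $n$-th Taylor coefficient of $f$; since $P$ preserves $\HM$, we get $\hat f(n)\in\HM\cap\HE=\HE_0$. Because $f\in\HH(B)\subseteq H^2(\HE)$, the series $\sum_{n\ge 0}\hat f(n)z^n$ converges to $f(z)$ in $\HE$ for each $z\in\D$, and since $\HE_0$ is closed and contains every partial sum, $f(z)\in\HE_0$. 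The same argument with $\HMperp$ in place of $\HM$ gives the symmetric assertion.

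It remains to deduce the statements about the kernel. Fix $x_0\in\HE_0$ and $w\in\D$ and consider $K^B_wx_0\in\HH(B)$. For every $g\in\HMperp$ the reproducing property gives $\la g,K^B_wx_0\ra=\la g(w),x_0\ra_{\HE}=0$, since $g(w)\in\HF$ and $\HF\perp\HE_0$; hence $K^B_wx_0\in\HM$, and by the previous step $K^B_w(z)x_0=(K^B_wx_0)(z)\in\HE_0$ for every $z\in\D$. Thus $K^B_w(z)\HE_0\subseteq\HE_0$ for all $z,w\in\D$; interchanging $z$ and $w$ and using the reproducing kernel identity $K^B_w(z)^*=K^B_z(w)$ shows that $\HE_0$ reduces every $K^B_w(z)$, and then the relation $B(z)B(w)^*=I_{\HE}-(1-z\overline{w})K^B_w(z)$ shows that $\HE_0$ also reduces $B(z)B(w)^*$.

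I do not expect a serious obstacle; the one point to watch is that the whole transfer rests on the hypothesis $B(0)=0$, which is precisely what makes the backward shift equal $(T^*T)^{-1}T^*$ and hence a function of $T$ and $T^*$. Without that normalization $L$ need not be $(T^*T)^{-1}T^*$, and the reducing property of $\HM$ would no longer obviously pass to $L$ or to the projection onto the constants.
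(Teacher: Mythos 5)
Your proposal is correct and follows essentially the same route as the paper: reduce $L=(T^*T)^{-1}T^*$ via $\HM$, deduce that the Taylor coefficients (hence all values) of functions in $\HM$ lie in $\HE_0$, then use the reproducing property against $\HM^\perp$ to show $K^B_w(\cdot)x\in\HM$ for $x\in\HE_0$, and finish with $B(z)B(w)^*=I-(1-z\overline{w})K^B_w(z)$. The only cosmetic difference is that you commute $P_\HM$ with the projection onto the constants, where the paper writes $f(0)=f-TLf\in\HM\cap\ker T^*$ directly.
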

\begin{proof} As before the backward shift $L$ satisfies $L=(T^*T)^{-1}T^*$ and $L^*=T(T^*T)^{-1}$. Thus the hypothesis that $\HM$ is reducing for $T$ implies that it also reduces $L$. Hence, if $f\in \HM$, then $f(0)=f-TLf\in \HM\cap \ker T^*=\HE_0$. If for such $f$ we have $f(z)=\sum_{n=0}^\infty a_nz^n$, then for each $n$,  $a_n=L^nf(0)\in \HE_0$. This implies $f(z)\in \HE_0$ for all $z\in \D$.

By symmetry we have $f(w)\in \HE\ominus \HE_0$ for all $f\in \HM^\perp$ and $w\in \D$. Let $x\in \HE_0$, then  for every $f\in \HM^\perp$ and $w\in \D$ we have
$$0=\la f(w), x\ra_\HE= \la f, K_w^B(\cdot) x\ra_{\HH(B)}.$$ This implies that $K^B_w(\cdot)x\in \HM$ and hence by the first part of the proof $K^B_w(z)x\in \HE_0$ for all $z, w\in \D$. Thus, $\HE_0$ is invariant for $K^B_w(z)$, and by symmetry it must be reducing. Next note that $B(z)B(w)^*= I-(1-z\overline{w})K^B_w(z)$. This implies that $\HE_0$ is also reducing for $B(z)B(w)^*$.
\end{proof}

\begin{lemma} \label{isometric summand} Let $B\in \HS(\HD,\HE)$ be such that $B(0)=0$ and such that  $T=(M_z,\HH(B))$ is bounded.

Then  $\HE=\bigvee_{z\in \D} \ran B(z)$, if and only if $(M_z,\HH(B))$ has no nontrivial reducing subspace on which it acts isometrically.
\end{lemma}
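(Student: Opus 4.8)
The plan is to prove the contrapositive in both directions, working with the reducing subspace structure developed in Lemma \ref{reducing subspace}. First suppose that $\HE_1 := \bigvee_{z\in\D}\ran B(z) \ne \HE$, and set $\HE_0 = \HE\ominus\HE_1 \ne (0)$. Then $B(z)B(w)^* x = 0$ for every $x\in\HE_0$ and all $z,w\in\D$, hence from $K^B_w(z) = \frac{I_\HE - B(z)B(w)^*}{1-z\overline w}$ one computes $K^B_w(z)x = \frac{x}{1-z\overline w}$ for $x\in\HE_0$; in other words $K^B_w(\cdot)x$ is (a constant multiple of) the scalar Szeg\H{o} kernel times the fixed vector $x$. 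I would let $\HM$ be the closed span of $\{K^B_w(\cdot)x : w\in\D,\ x\in\HE_0\}$, which is then naturally identified with $H^2\otimes\HE_0$ sitting isometrically inside $\HH(B)$, and $M_z$ restricted to $\HM$ is the (isometric) shift on that vector-valued Hardy space. To see $\HM$ is reducing it suffices to check it is invariant for both $T$ and $L=(T^*T)^{-1}T^*$ (since $L^* = T(T^*T)^{-1}$, invariance under $T$ and $L$ forces invariance under $T^*$): invariance under $L$ is immediate because $L(K^B_w(\cdot)x) = \overline w\, K^B_w(\cdot)x$ up to the constant term in $\HE_0$, and invariance under $T=M_z$ is the computation $z\cdot\frac{x}{1-z\overline w} = \frac{1}{\overline w}\big(\frac{x}{1-z\overline w} - x\big)$, again a combination of kernel functions and a constant in $\HE_0$. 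Thus $T|\HM$ is a nontrivial isometric reducing summand.

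For the converse, suppose $\HM\ne(0)$ is a reducing subspace on which $T$ acts isometrically, and put $\HE_0 = \HM\cap\ker T^* \ne (0)$ (it is nonzero because $\ker (T|\HM)^* = \HM\cap\ker T^*$ and an isometric operator on a nonzero space has nonzero cokernel unless it is onto — and if $T|\HM$ were onto and isometric it would be unitary, but $M_z$ on a space contained in $H^2(\HE)$ is analytic, so this cannot happen on a nonzero space). By Lemma \ref{reducing subspace}, $\HE_0$ is reducing for every $B(z)B(w)^*$, so $B(z)B(w)^*$ maps $\HE_0$ into $\HE_0$ and maps $\HE\ominus\HE_0$ into $\HE\ominus\HE_0$. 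I would then show that in fact $B(z)B(w)^* x = 0$ for all $x\in\HE_0$: since $T|\HM$ is isometric, $\Delta = T^*T-I$ annihilates $\HM$, and by Lemma \ref{rangeofDelta} the closure of $\overline{\ran\Delta}$ equals $\clos\{g_x : g_x(z)=\frac{B(z)}{z}x,\ x\in\HD\}$; combining this with the identity $\big((I-LL^*)f\big)(z) = \frac{B(z)}{z}\frac{B(w)^*}{\overline w}y$ for $f = K^B_w(\cdot)y$ (from the proof of Lemma \ref{rangeofDelta}) and the fact that $\HE_0\subseteq\ker(I-LL^*)$ on $\HM$, one gets $B(z)B(w)^* x = 0$ for $x\in\HE_0$. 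Hence $x\perp \ran B(z)B(w)^* $, and since $\HE_0$ is reducing for $B(z)B(w)^*$ this forces $\HE_0\perp\ran B(z)$ for every $z\in\D$, i.e. $\bigvee_{z\in\D}\ran B(z)\subseteq\HE\ominus\HE_0\subsetneq\HE$.

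The main obstacle I anticipate is the converse direction, specifically the step showing $B(z)B(w)^* x = 0$ for $x\in \HE_0$ rather than merely that $\HE_0$ is $B(z)B(w)^*$-reducing: one has to leverage the isometry of $T|\HM$ correctly, and the cleanest route is the one through Lemma \ref{rangeofDelta} and the explicit formula $(I-LL^*)K^B_w(\cdot)y(z) = \frac{B(z)}{z}\frac{B(w)^*}{\overline w}y$ from its proof, noting that $L$ and $L^*$ both preserve $\HM$ so $I-LL^*$ vanishes on $\HM$ (as $L|\HM$ is a coisometry — indeed $L = (T^*T)^{-1}T^*$ and on $\HM$ where $T$ is isometric $L|\HM = (T|\HM)^*$). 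Care is also needed in the first direction to verify that the span $\HM$ of the special kernel functions is genuinely closed and genuinely reducing, but that is a routine identification with $H^2\otimes\HE_0$. I would also remark that the equivalence is consistent with the normalization $B(0)=0$ used throughout, since the constant functions $\HE_0\subseteq\ker L$ lie in $\HM$ in the degenerate case, as they should.
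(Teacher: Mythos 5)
Your argument is correct and follows the paper's skeleton --- the same decomposition $\HE=\HE_0\oplus\HE_0^\perp$ and the same reliance on Lemma \ref{reducing subspace} --- but it diverges in how the two key steps are executed, and it fills in two points the paper leaves implicit. In the forward direction the paper simply writes $K^B_w(z)=\frac{1}{1-z\overline w}I_{\HR_B^\perp}\oplus K^{B_1}_w(z)$ and reads off the orthogonal direct sum $\HH(B)=H^2(\HR_B^\perp)\oplus\HH(B_1)$, which makes the reducing property automatic; your hands-on check via invariance under $T$ and $L$ needs one more ingredient, namely $\Delta\HM\subseteq\HM$ (since $T^*=(I+\Delta)L$, invariance under $L$ alone does not give invariance under $T^*$), which you do have because the restricted norm is the $H^2(\HE_0)$ norm, so $T|\HM$ is isometric and $\Delta\ge 0$ forces $\Delta|\HM=0$ --- just say so explicitly. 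In the converse, where the paper identifies $T|\HM$ with $(M_z,\HH(B_0))$ for the compressed Schur function and invokes ``isometric iff $B_0=0$,'' you instead argue that $\Delta|\HM=0$ gives $(I-LL^*)|\HM=0$ (Lemma \ref{rangeofDelta and D*}) and then read $\frac{B(z)}{z}\frac{B(w)^*}{\overline w}y=0$ off the explicit formula for $(I-LL^*)K^B_w(\cdot)y$; for that you need $K^B_w(\cdot)y\in\HM$ for $y\in\HE_0$, which is exactly what the proof of Lemma \ref{reducing subspace} establishes, so cite that step rather than only the reducing statement. Your version is more computational, and it buys the explicit justification that $\HE_0=\HM\cap\ker T^*\ne(0)$ (an isometric reducing summand of the analytic operator $M_z$ cannot be unitary), a point the paper asserts without comment.
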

\begin{proof} Let $\HR_B=\bigvee_{z\in \D} \ran B(z)$ and suppose that $\HR_B \ne \HE$. Then
$$K^B_w(z)=\frac{1}{1-z\overline{w}}I_{\HR_B^\perp} \oplus K^{B_1}_w(z),$$ where $B_1\in \HS(\HD,\HR_B)$ is defined by $B_1(z)=P_{\HR_B}B(z)$. It is then clear that $\HH(B)=H^2(\HR_B^\perp) \oplus \HH(B_1)$ and  $(M_z,\HH(B))$ has a unilateral shift of multiplicity $\dim \HR_B^\perp$ as a direct summand.

Now suppose $T$ has a nontrivial reducing subspace on which it acts isometrically. Then by Lemma \ref{reducing subspace} there is a nontrivial subspace $\HE_0$ of $\HE$ that reduces $B(z)B(w)^*$  for all $z,w\in \D$. Furthermore, since $T_0=(M_z,\HH(B_0))$ is isometric, if and only if $B_0=0$, we must have $B(z)B(w)^*|\HE_0=0$ for all $z,w\in \D$. But then $\ran B(z)B(w)^*\subseteq \HE\ominus \HE_0$ and this implies $\HE_0 \perp \ran B(z)$ for all $z\in \D$.
\end{proof}
\section{Rational matrix-valued Schur class functions}

\begin{lemma} \label{T*Linvariant} If $T\in \HB(\HH)$ satisfies $\Delta=T^*T-I\ge 0$, and if $L=(T^*T)^{-1}T^*$ is  the left inverse of $T$ with $\ker L=\ker T^*$, then
$$[\ran \Delta]_{T^*}=[\ran \Delta ]_{L}.$$
\end{lemma}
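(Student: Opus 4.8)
The plan is to deduce the equality from the single operator identity
\[
T^* = (T^*T)L = (I+\Delta)L = L + \Delta L,
\]
which is valid because $T^*T = I+\Delta$ is invertible (as $\Delta \ge 0$) and $L=(T^*T)^{-1}T^*$. Equivalently, $L = T^* - \Delta L$. The point of writing the identity in both ways is that in each form the ``cross term'' $\Delta L x$ lies in $\ran \Delta$ for every $x\in\HH$, irrespective of any invariance hypothesis.

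The heart of the argument is then the claim that a closed subspace $\HM\subseteq\HH$ with $\ran\Delta\subseteq\HM$ is $T^*$-invariant if and only if it is $L$-invariant. Indeed, if $\HM$ is $L$-invariant and $x\in\HM$, then $T^*x = Lx + \Delta L x$, where $Lx\in\HM$ and $\Delta L x\in\ran\Delta\subseteq\HM$, so $T^*x\in\HM$; conversely, if $\HM$ is $T^*$-invariant and $x\in\HM$, then $Lx = T^*x - \Delta L x$, where $T^*x\in\HM$ and $\Delta L x\in\ran\Delta\subseteq\HM$, so $Lx\in\HM$. I would state this as a short standalone observation, since it is exactly where the identity $T^*=L+\Delta L$ does the work.

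To finish, I would apply the claim twice. Since $[\ran\Delta]_{T^*}$ is, by definition, a closed $T^*$-invariant subspace containing $\ran\Delta$, the claim shows it is also $L$-invariant, whence $[\ran\Delta]_L\subseteq[\ran\Delta]_{T^*}$. Symmetrically, $[\ran\Delta]_L$ is a closed $L$-invariant subspace containing $\ran\Delta$, hence $T^*$-invariant, so $[\ran\Delta]_{T^*}\subseteq[\ran\Delta]_L$; combining the two inclusions gives the asserted equality. I do not expect a genuine obstacle here: the argument is purely algebraic once the identity $T^*=L+\Delta L$ is recorded, and the only thing needing a word of care is that the two subspaces in question are closed by definition, so that checking algebraic invariance on them (rather than on a dense subspace and passing to closures) already suffices.
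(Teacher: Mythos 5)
Your proof is correct and follows essentially the same route as the paper: both rest on the identity $T^*=(I+\Delta)L=L+\Delta L$ (equivalently $L=T^*-\Delta L$, which the paper writes as $L=T^*-\Delta(I+\Delta)^{-1}T^*$) and the observation that the correction term always lies in $\ran\Delta$, so each of $[\ran\Delta]_{T^*}$ and $[\ran\Delta]_L$ is invariant under the other operator. Your packaging of this as a symmetric claim about closed subspaces containing $\ran\Delta$ is a slightly cleaner presentation of the identical argument.
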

\begin{proof} We start by showing that $[\ran \Delta]_{T^*}$ is $L$-invariant. That will imply one of the inclusions and the other one will follow analogously. Let $x\in [\ran \Delta]_{T^*}$, then set $y= (I+\Delta)^{-1}T^*x$. Since $L= (I+\Delta)^{-1}T^*=T^* - \Delta(I+\Delta)^{-1}T^*$ we have $Lx=T^*x + \Delta y\in [\ran \Delta]_{T^*}$.

Similarly, if $x\in [\ran \Delta ]_{L}$, then $T^*x=(I+\Delta)Lx=Lx + \Delta Lx\in [\ran \Delta ]_{L}$.
\end{proof}

If $\HD$ and $\HE$ are arbitrary Hilbert spaces and $B\in \HS(\HD,\HE)$, then we will say that $B$ is rational, if there is a scalar polynomial $q\ne 0$ and an operator polynomial $P$ such that $B(z)=\frac{1}{q(z)} \ P(z)$. Recall that the degree of $B$ is defined to be the smallest integer $n$ such that there are such polynomials both of which have degree $\le n$.

\begin{theorem} \label{ranDelta and rational} Let  $B\in \HS(\HD,\HE)$ with $B(0)=0$ be such that $T=(M_z,\HH(B))$ defines a bounded operator on $\HH(B)$. Write $\Delta=T^*T-I$, $\HN=[\ran \Delta]_{T^*}$ and $\HR_B = \bigvee_{z\in \D} \ran B(z)$.

 Then $\dim \HN < \infty$, if and only if $B$ is rational and $\dim \HR_B<\infty$. In fact, $$\text{degree B} \le \dim \HN \le \dim \HR_B \ \text{degree B}.$$

Furthermore, if $\dim \HN=n$, then there is an operator-valued polynomial $P(z)=\sum_{k=1}^n P_k z^k$, $P_k \in \HB(\HD,\HE)$ such that
$B(z) = \frac{1}{\tilde{q}(z)}P(z)$, where $q$ is the characteristic polynomial of $L|\HN$, and $\tilde{q}(z)=z^n q(1/z)$.
\end{theorem}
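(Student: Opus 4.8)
The plan is to establish the equivalence, both displayed inequalities, and the explicit formula for $B$ in one pass, the engine being the linear recursion satisfied by the Taylor coefficients of a rational $B$.

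\textbf{Setup.} By Lemma~\ref{T*Linvariant}, $\HN=[\ran\Delta]_L$ with $L$ the backward shift on $\HH(B)$, and by Lemma~\ref{rangeofDelta} the space $\overline{\ran\Delta}$ is the closed linear span of the functions $g_x(z)=\tfrac{B(z)}{z}x$, $x\in\HD$; hence $\HN=\bigvee\{L^n g_x:n\ge0,\ x\in\HD\}$. Writing $B(z)=\sum_{m\ge1}B_m z^m$ (using $B(0)=0$), the function $L^n g_x$ has Taylor coefficients $(B_{n+j+1}x)_{j\ge0}$, and since $\HR_B$ is closed and each $B_m x$ is a Cauchy integral of the values $B(z)x\in\HR_B$, these functions are all $\HR_B$-valued. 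Finally, for $h(z)=\sum_j a_j z^j$ and a monic polynomial $q(z)=\sum_{k=0}^n c_k z^k$ one computes $(q(L)h)(z)=\sum_{j\ge0}\bigl(\sum_{k=0}^n c_k a_{j+k}\bigr)z^j$, so $q(L)h=0$ holds exactly when the coefficient sequence of $h$ obeys the linear recursion with characteristic polynomial $q$.

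\textbf{From $\dim\HN<\infty$ to rationality.} Suppose $\dim\HN=n<\infty$. Since $\HN$ is $L$-invariant, let $q$ be the characteristic polynomial of $L|\HN$: it is monic of degree $n$, and Cayley--Hamilton gives $q(L)h=0$ for all $h\in\HN$. Applying this to each $g_x\in\overline{\ran\Delta}\subseteq\HN$ and reading off coefficients as above yields $\sum_{k=0}^n c_k B_{m+k}=0$ for every $m\ge1$. Put $\tilde q(z)=z^n q(1/z)=\sum_{l=0}^n c_{n-l}z^l$; expanding the product shows the coefficient of $z^p$ in $\tilde q(z)B(z)$ equals $\sum_{k=0}^n c_k B_{p-n+k}$ for $p\ge n+1$, hence vanishes, while the degree-$0$ term vanishes since $B(0)=0$. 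Thus $\tilde q(z)B(z)=P(z)$ for an operator polynomial $P(z)=\sum_{k=1}^n P_k z^k$, and because $\tilde q(0)$ equals the leading coefficient $1$ of $q$, we may write $B(z)=\tfrac{1}{\tilde q(z)}P(z)$. This shows $B$ is rational with $\text{degree }B\le n=\dim\HN$ and gives the asserted formula. For finiteness of $\HR_B$: any $h\in\HN$ with coefficients $a_j$ has $a_j=(L^j h)(0)\in\HN(0):=\{h(0):h\in\HN\}$, a space of dimension $\le n$, so every $h\in\HN$ is $\HN(0)$-valued; since $B(w)x=w\,g_x(w)$ with $g_x\in\HN$ we get $\HR_B\subseteq\HN(0)$, whence $\dim\HR_B\le n$.

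\textbf{From rationality to $\dim\HN<\infty$, and the bound.} Suppose $B$ is rational of degree $d$ and $r:=\dim\HR_B<\infty$. Choose $B=P/q$ with $\deg P,\deg q\le d$; if $q(0)=0$ then analyticity of $B$ at $0$ forces $z\mid P$, so $B=(P/z)/(q/z)$ would have degree $\le d-1$, impossible, whence $q(0)\ne0$ and we normalize $q(0)=1$. Matching coefficients in $q(z)B(z)=P(z)$ gives $B_m=-\sum_{k=1}^d q_k B_{m-k}$ for all $m\ge d+1$, where $q(z)=\sum_{k=0}^d q_k z^k$. Therefore the coefficient sequence $(B_{n+j+1}x)_{j\ge0}$ of $L^n g_x$ --- and, by the same recursion applied termwise, the coefficient sequence of any finite linear combination $h=\sum_i c_i L^{n_i} g_{x_i}$ --- is completely determined by its first $d$ coordinates, which lie in $\HR_B^d$. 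Hence $h\mapsto(\text{first }d\text{ Taylor coefficients of }h)$ is injective on $\bigvee\{L^n g_x:n\ge0,\ x\in\HD\}$; this span therefore has dimension $\le dr$, so it is closed and equals $\HN$, and $\dim\HN\le dr=\dim\HR_B\cdot\text{degree }B$. Together with the previous paragraph this proves the equivalence; applying that paragraph once more in the now-finite case yields $\text{degree }B\le\dim\HN$.

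The arithmetic is short bookkeeping; the point that genuinely needs attention is in the last paragraph, where I must note that not only each $L^n g_x$ but \emph{every} finite linear combination of them has its tail of coefficients pinned down by the first $d$ of them --- without this one only gets that $\HN$ is ``coefficientwise finite-dimensional'' rather than the bound $\dim\HN\le dr$. A secondary care point is the index bookkeeping that identifies the denominator produced in the middle paragraph with $\tilde q(z)=z^n q(1/z)$ for $q$ the characteristic polynomial of $L|\HN$.
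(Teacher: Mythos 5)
Your proof is correct, and while the central computation coincides with the paper's, two of the three components are handled by genuinely different arguments. The derivation of the formula $B=\tfrac{1}{\tilde q}P$ from $q(L)g_x=0$ via Taylor coefficients is essentially the paper's own argument, and both yield $\text{degree }B\le\dim\HN$ the same way. Where you diverge is instructive. First, for finiteness of $\HR_B$: the paper first splits off the isometric reducing summand to arrange $\HE=\HR_B$ (Lemmas \ref{reducing subspace} and \ref{isometric summand}) and then invokes the Wold-type decomposition of Theorem \ref{Wold-type} to identify $\dim\HR_B=\dim\ker T^*=\rank (I-P_\HN)TP_\HN<\infty$; you instead observe that every Taylor coefficient of $h\in\HN$ equals $(L^jh)(0)$ and so lies in the space $\HN(0)$ of dimension $\le\dim\HN$, which gives the sharper and entirely elementary bound $\dim\HR_B\le\dim\HN$ with no structural machinery. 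Second, for the upper bound $\dim\HN\le\dim\HR_B\cdot\text{degree }B$: the paper shows $\HN\subseteq\ker q(L)$ and then estimates $\dim\ker q(L)$ by factoring $q$, using that $L-\lambda$ is injective for $|\lambda|\ge1$ and that $\dim\ker L^k=k\dim\ker L$; you instead exploit the linear recursion satisfied by the coefficient sequences of the generators $L^ng_x$ (and, as you rightly emphasize, by all their finite linear combinations) to embed the algebraic span injectively into $\HR_B^{d}$, which simultaneously shows the span is finite-dimensional, hence closed, hence equal to $\HN$. Your route avoids the reduction to $\HE=\HR_B$ and the appeal to Section 3 entirely, at the cost of some index bookkeeping; the paper's route is shorter given its earlier structural results and reuses $\ker q(L)$-type estimates that appear elsewhere. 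Both are valid; yours is more self-contained.
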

\begin{proof} If $\HR_B\ne \HE$, then by Lemmas \ref{reducing subspace} and \ref{isometric summand} $T$ will have a nontrivial reducing subspace $\HM$  on which it acts isometrically. This subspace will be reducing for $L$ and $\Delta$, and $\Delta|\HM=0$ and $\HN\subseteq \HM^\perp$. Thus, there will be no loss in generality, if we assume that $\HE=\HR_B$.

Suppose $\HN=[\ran \Delta]_{T^*}$ has dimension $n$. Then by Lemma \ref{T*Linvariant} $\HN$ is invariant for $L$, hence $L|\HN$ can be represented by an $n\times n$ matrix. Let $q(z)=\sum_{j=0}^n \hat{q}_jz^j$ be the characteristic polynomial of this matrix. Let $x\in \HD$, then by Lemma \ref{rangeofDelta} the function $g(z)=\frac{B(z)}{z}x\in \overline{\ran \Delta}$ and hence $q(L)g=0$. Let $B(z)=\sum_{n=1}^\infty B_n z^n$ for $B_n\in \HB(\HD,\HE)$. Then for each $j\in \N$ we have $z^j(L^jg)(z)= \frac{B(z)}{z}x-\sum_{k=1}^j (B_{k}x)z^{k-1}$. Hence
\begin{align*}0=z^n (q(L)g)(z)&= \sum_{j=0}^n \hat{q}_j z^{n-j} z^j(L^jg)(z)\\
&=z^nq(1/z)\frac{B(z)}{z}x -\sum_{j=0}^n \hat{q}_j z^{n-j}\sum_{k=1}^j (B_{k}x) z^{k-1}.\end{align*}
Thus, $$\tilde{q}(z)\frac{B(z)}{z}x=\sum_{j=0}^n \sum_{k=1}^j \hat{q}_j (B_{k}x) z^{n+k-j-1},$$
which is a polynomial of degree $\le n-1$, hence
$B(z)= \frac{1}{\tilde{q}(z)} P(z)$, where $P$ is an operator polynomial of degree $\le n$. Thus, $\text{ degree }B \le n$.

Next we show that $\dim \HR_B <\infty$. By Lemma \ref{isometric summand} the assumption that $\HR_B=\HE$ implies that $T$ does not have an isometric direct summand, hence by Theorem \ref{Wold-type} we have $\dim \ker T^*= \rank (I-P_\HN)TP_\HN$, where $P_\HN$ denotes the finite rank projection onto $\HN$. Since $\HR_B=\ker T^*$ we conclude that $\HR_B$ is finite dimensional.

Now suppose $\dim \HR_B <\infty$ and $B$ is rational with $\text{ degree B}=n<\infty$, i.e.  there is a scalar polynomial $q$ of degree $\le n$ and an operator polynomial $P$ of degree $\le n$ such that $P(z)=\tilde{q}(z)B(z)$. Then $P(0)=0$.
  Let $P_0(z)=P(z)/z$, $B_0(z)=B(z)/z$. Then for all $x\in \HD$ we have $$q(L)B_0 x=L^n P_0x=0.$$ Thus by Lemma \ref{rangeofDelta} $\ran \Delta \subseteq \ker q(L)$, and hence $\HN\subseteq \ker q(L)$. Since $q$ has degree $\le n$, there is a $k \le n$ and $\lambda_1, \dots, \lambda_k\subseteq \D$ such that $q(z)= q_1(z)\prod_{j=1}^k(z-\lambda_j)$, where $q_1$ is a polynomial without zeros in $\D$. Since $\|L^mh\|_{H^2(\HE)} \to 0$ for every $h\in H^2(\HE)$, $L$ cannot have any eigenvalues in $\C\setminus \D$, hence  $\ker q_1(L)=(0)$. Furthermore, since $M_z$ does not have any eigenvalues the operator $(I-\lambda_jM_z)$ is 1-1 for each $j$, thus the identity
$\prod_{j=1}^k(L-\lambda_j)= L^k\prod_{j=1}^k(I-\lambda_jM_z)$ shows that \begin{align*}\dim \HN &\le \dim \ker q(L)\le \dim \ker L^k \\
&=k \dim \ker L\le n \dim \HE= n \dim \HR_B.\end{align*}
\end{proof}

Now Theorem \ref{representationVR} follows form Theorems \ref{Wold-type}, \ref{expandingOp} and \ref{ranDelta and rational}.

\section{Rational Row Schur functions}
We will now restrict attention to the situation where $K^B_w(z)$ is a reproducing kernel for a nonzero space of scalar-valued functions, $\HH(B)\subseteq H^2$. Then $u_w(z)=B(z)B(w)^*$ is scalar-valued and it is the reproducing kernel for a Hilbert space of analytic functions $\HH(u)$. If $\{b_i\}$ is any orthonormal basis of $\HH(u)$, then $B(z)B(w)^*=\sum_{i\ge 1}b_i(z)\overline{b_i(w)}$. If we assume that $T=(M_z,\HH(B))$ is bounded, then the dimension of $\HH(u)$ equals the rank of $\Delta=T^*T-I$, see equation (\ref{equranD}) of the proof of Lemma \ref{rangeofDelta}. As we will be interested in the situation where $\Delta$ has finite rank, we can assume that $B=(b_1,\dots, b_k)$ for some linearly independent set of functions $b_1,\dots, b_k$. By the theorem of Aleman and Malman (Theorem \ref{AleMalTheorem}) in order to assure that $(M_z,\HH(B))$ be bounded we will need to assume that $1-\sum_{i=1}^k|b_i|^2$ is log-integrable. Then there will be a unique outer function $a$ such that $a(0)>0$ and  $|a|^2+\sum_{i=1}^k|b_i|^2=1$ a.e. on the unit circle. We say that $a$ is the mate of $B$.

Now assume that $B$ is rational of degree $ n$, then $a$ will be rational of degree $ \le n$. Indeed, if there are polynomials $q, p_1, \dots, p_k$ of degree $\le n$ such that $b_i=p_i/q$ and if $1-\sum_{i=1}^k|b_i|^2$ is log-integrable, then $|q|^2-\sum_{i=1}^k|p_i|^2 >0$ a.e. on $\T$, and hence by the Fej\'{e}r-Riesz Theorem \cite{RN90}, there is a polynomial $p$ of degree $\le n$ with no zeros in $\D$ and such that $|q|^2-\sum_{i=1}^k|p_i|^2=|p|^2$ on the unit circle. Then $a=p/q$ is the required outer function. We note that the denominator of $a$ can be chosen to be the same as the common denominator of the $b_i's$. Theorem \ref{Wold-type} implies that under this hypothesis we have $\dim \ker T^*=1$, and Theorem \ref{ranDelta and rational} tells us that $$\HN=[\ran \Delta]_{T^*}=[\ran \Delta]_{L}  \text{ has dimension }n<\infty.$$

The generalized eigenspaces of $L$ are all of the form $\ker (L-\lambda)^n=\{\sum_{j=1}^n \frac{a_j}{(1-\lambda z)^j}: a_j \in \C\}$, $0<|\lambda|<1$, and $\ker L^n=\{\sum_{j=0}^{n-1} a_jz^j:a_j \in \C\}$. That is well-known to be true for the backward shift on $H^2$, hence it now follows from the fact that all the functions in the generalized eigenspaces are contained in $\HH(B)\subseteq H^2$. That means that there are $\alpha_1, \dots, \alpha_n \in \D$ such that $\HN$ is of the form
\begin{align}\label{N as rational functions}\HN=\{\frac{p(z)}{\prod_{j=1}^n(1-\alpha_j z)}: p \text{ is a polynomial of degree }<n\}.\end{align}

 In fact, in Theorem \ref{ranDelta and rational} we saw that if $B(0)=0$, then ${\prod_{j=1}^n(1-\alpha_j z)}$ is a constant multiple of the lowest common denominator of the $b_i$'s.

\begin{lemma} \label{eigenspaces of M^*} Let $B=(b_1,\dots,b_k)$ be rational functions such that $\log(1-\sum_i|b_i|^2)\in L^1(\T)$ and $B(0)=0$, but $B\ne 0$, then all eigenspaces of $M_z^*$  are one dimensional.

This implies that the minimal and characteristic polynomials of $M_z^*|\HN$ coincide.
\end{lemma}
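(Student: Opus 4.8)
The plan is to prove that $g\mapsto g(0)$ is injective on every eigenspace of $M_z^*=T^*$; since this map takes values in $\C$, each eigenspace then has dimension at most one (and exactly one when it genuinely is an eigenspace). Throughout I will use the identities available because $B(0)=0$: the backward shift satisfies $L=(T^*T)^{-1}T^*$, so that $LT=I$, $T^*=(I+\Delta)L$, and $T=L^*(I+\Delta)$ (hence $L^*=T(I+\Delta)^{-1}$); moreover $TL=I-P_0$, where $P_0f=f(0)$ is the orthogonal projection onto $\ker T^*=\C\cdot K^B_0=\C\cdot 1$, recalling that $K^B_0\equiv 1$. The case $\mu=0$ is immediate, since $\ker T^*=\C\cdot 1$ and $1(0)=1\neq 0$, so $g\mapsto g(0)$ is injective there.

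Now fix an eigenvalue $\mu\neq 0$ and suppose $g\in\ker(T^*-\mu)$ with $g(0)=0$; the goal is to force $g=0$. The decisive computation is a one-line identity. Because $g(0)=0$ we have $P_0g=0$, hence $TLg=g$. Writing $h=Lg$ and using $T^*g=(I+\Delta)Lg=(I+\Delta)h=\mu g$ gives $(I+\Delta)^{-1}g=\tfrac1\mu h=\tfrac1\mu Lg$, so that
\[
L^*g=T(I+\Delta)^{-1}g=\tfrac1\mu\,TLg=\tfrac1\mu\,g .
\]
Thus $g$ is an eigenvector of $L^*$ for the nonzero eigenvalue $1/\mu$.

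It therefore suffices to show that $L^*$ has no nonzero eigenvalue, i.e.\ that $L^*$ is analytic. For this I would invoke that the polynomials are dense in $\HH(B)$, a standard fact for non-extreme symbols and exactly what the hypothesis $\log\big(1-\sum_i|b_i|^2\big)\in L^1(\T)$ secures (together with boundedness of $T$). Since $z^k=M_z^k 1\in\HH(B)$ for all $k$, one has $\ker L^n=\{\text{polynomials of degree}<n\}$, and density gives $\bigvee_{n\ge 1}\ker L^n=\HH(B)$; taking orthocomplements, $\bigcap_{n\ge 1}\ran (L^*)^n=(0)$. If $L^*g=\nu g$ with $\nu\neq 0$, then $g=\nu^{-n}(L^*)^n g\in\ran (L^*)^n$ for every $n$, whence $g\in\bigcap_n\ran (L^*)^n=(0)$. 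Applying this with $\nu=1/\mu$ completes the injectivity, so all eigenspaces of $M_z^*$ are one dimensional.

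For the final assertion, note that $\HN$ is $T^*$-invariant, so $M_z^*|\HN$ is a genuine restriction and each of its eigenspaces $\ker(M_z^*-\mu)\cap\HN$ sits inside the corresponding eigenspace of $M_z^*$, hence is at most one dimensional. Thus every eigenvalue of the finite matrix $M_z^*|\HN$ has geometric multiplicity one, i.e.\ a single Jordan block, which is equivalent to its minimal and characteristic polynomials coinciding. The main obstacle is precisely the input that $L^*$ is analytic: the entire reduction rests on density of polynomials in $\HH(B)$ (equivalently, the absence of nonzero point spectrum for $L^*$), and this is the one step where non-extremality is genuinely used. If one wished to avoid citing polynomial density, one could instead first localize all eigenvectors with $|\mu|\ge 1$ inside the finite-dimensional space $\HN$ — using that $T|\HN^\perp$ is a pure shift (no isometric reducing summand, as $\HE=\C=\HR_B$), so its adjoint has no eigenvalue of modulus $\ge 1$ — and then argue within $\HN$; but the polynomial-density route is the most economical.
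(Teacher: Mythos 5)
Your proof is correct, and it takes a genuinely different route from the paper's, even though both ultimately consume the same external input, the Aleman--Malman density theorem (Theorem 5.5 of \cite{AlemanMalman}). The paper's argument is two lines: density of the polynomials makes $1$ a cyclic vector for $M_z$, and for any operator $T$ with cyclic vector $x_0$ the vectors $x_0, (T-w)x_0, (T-w)^2x_0, \dots$ span the space, so $\overline{\ran (T-w)}$ has codimension at most one, i.e. $\dim \ker (M_z^*-\overline{w}) \le 1$ for every $w$. You instead prove that evaluation at the origin is injective on each eigenspace of $M_z^*$: using the $B(0)=0$ normalization ($\ker T^*=\C\cdot 1$, $L=(T^*T)^{-1}T^*$, $TL=I-P_0$, $T^*=(I+\Delta)L$, $L^*=T(I+\Delta)^{-1}$) you turn an eigenvector $g$ of $T^*$ with $\mu\ne 0$ and $g(0)=0$ into an eigenvector of $L^*$ with eigenvalue $1/\mu\ne 0$, and then you rule such a vector out by analyticity of $L^*$, which you again extract from polynomial density. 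The paper's route is shorter and uses nothing about $\HH(B)$ beyond cyclicity of $M_z$; your route is heavier but yields a by-product the paper does not state, namely that an eigenvector of $M_z^*$ can never vanish at $0$. Your treatment of the final assertion (geometric multiplicity one for every eigenvalue of $M_z^*|\HN$ forces a single Jordan block per eigenvalue, hence minimal polynomial $=$ characteristic polynomial) is exactly the linear algebra the paper leaves implicit. Two small blemishes, neither fatal: the intersection should be $\bigcap_{n\ge 1}\overline{\ran (L^*)^n}=(0)$, with closures, since $(\ker L^n)^\perp = \overline{\ran (L^*)^n}$ --- your argument survives because $g\in \ran (L^*)^n \subseteq \overline{\ran (L^*)^n}$; and your closing sketch of how one might avoid citing polynomial density is not a proof as it stands (it says nothing about eigenvalues of modulus less than one, and the purity of the shift $T|\HN^\perp$ rests on facts the paper proves only after this lemma), but you correctly flag it as an aside, and your main argument does not depend on it.
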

\begin{proof} By a theorem of Aleman and Malman (\cite{AlemanMalman}, Theorem 5.5) the polynomials are dense in $\HH(B)$, thus the operator $M_z$ has a cyclic vector. The lemma follows, because it is well-known that if $T$ is any Hilbert space operator with a cyclic vector $x_0$, then every eigenspace is one dimensional. Indeed, for $w\in \C$ we have that the set $\{x_0, (T-w)x_0, (T-w)^2x_0, \dots\}$ spans $\HH$, and hence $\overline{\ran (T-w)}$ has codimension at most one.
\end{proof}

\begin{theorem}\label{rationalmate}
Let $B=(b_1,\dots,b_k)$ be rational functions such that $\log(1-\sum_i|b_i|^2)\in L^1(\T)$ and $B(0)=0$, but $B\ne 0$. Let $T=(M_z,\HH(B))$ and $\HN=[\ran \Delta]_{T^*}=[\ran \Delta]_{L}$.

If $a$ is the mate of $B$, then  $$a(z)=a(0)\ \frac{p(\frac{1}{z})}{q(\frac{1}{z})},$$ where $p$ is the characteristic polynomial of $T^*|\HN$ and $q$ is the characteristic polynomial of $L|\HN$.
\end{theorem}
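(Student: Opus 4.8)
The plan is to reduce the identity to a statement about the zeros of the numerator of the mate. By Theorem~\ref{ranDelta and rational} we may write $b_i=P_i/\tilde q$ over a common denominator $\tilde q(z)=z^nq(1/z)=\prod_{j=1}^n(1-\alpha_j z)$, where $q$ is the characteristic polynomial of $L|\HN$ and $n=\dim\HN$. As recorded in the discussion preceding Lemma~\ref{eigenspaces of M^*}, the mate is then $a=A/\tilde q$ for a polynomial $A$ of degree $\le n$ with no zeros in $\D$ and $A(0)=a(0)>0$, and Fej\'er--Riesz gives $|A|^2=|\tilde q|^2-\sum_i|P_i|^2$ on $\T$. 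Introducing the degree-$n$ reflection $g^\sharp(z)=z^n\overline{g(1/\overline z)}$, this boundary identity becomes the polynomial identity $AA^\sharp=\tilde q\,\tilde q^\sharp-\sum_iP_iP_i^\sharp$. Since $\tilde q(z)=z^nq(1/z)$, the asserted formula $a(z)=a(0)\,p(1/z)/q(1/z)$ is equivalent to $A(z)=a(0)z^np(1/z)=a(0)\prod_i(1-w_iz)$, where $p(z)=\prod_i(z-w_i)$ is the characteristic polynomial of $T^*|\HN$; in other words, it suffices to prove that the zeros of $A$ are exactly the reciprocals $1/w_i$ of the eigenvalues of $T^*|\HN$, counted with multiplicity.

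First I would identify the interior eigenvalues of $T^*|\HN$ with zeros of $A^\sharp$. By Lemma~\ref{eigenspaces of M^*} every eigenspace of $T^*$ is one dimensional, and for $\lambda\in\D$ it is spanned by $K^B_\lambda$; hence $\overline\lambda$ is an eigenvalue of $T^*|\HN$ precisely when $K^B_\lambda\in\HN$. Writing $K^B_\lambda(z)=N_\lambda(z)/[\,\overline{\tilde q(\lambda)}\,\tilde q(z)(1-z\overline\lambda)\,]$ with $N_\lambda(z)=\tilde q(z)\overline{\tilde q(\lambda)}-\sum_iP_i(z)\overline{P_i(\lambda)}$, and using the description $\HN=\{p/\tilde q:\deg p<n\}$ from (\ref{N as rational functions}), one sees that $K^B_\lambda\in\HN$ if and only if $(1-z\overline\lambda)$ divides $N_\lambda(z)$, i.e. $N_\lambda(1/\overline\lambda)=0$. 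Multiplying by $\overline\lambda^{\,n}$ and invoking the polynomial identity above collapses this to $\overline\lambda^{\,n}N_\lambda(1/\overline\lambda)=\overline{A(\lambda)A^\sharp(\lambda)}$; as $A$ has no zeros in $\D$, the condition becomes $A^\sharp(\lambda)=0$. Since $A^\sharp(z)=a(0)z^{n-d}\prod_k(z-\overline{\nu_k})$, where $d=\deg A$ and the $\nu_k=1/\mu_k$ are the reciprocals of the zeros $\mu_k$ of $A$, this shows that the interior eigenvalues of $T^*|\HN$ are exactly the $1/\mu_k$ with $|\mu_k|>1$, together with $0$ of multiplicity $n-d$ --- precisely the asserted correspondence on the open disc.

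The remaining, and hardest, point is to match algebraic multiplicities and to absorb boundary eigenvalues. For multiplicities I would exploit that $T^*|\HN$ is non-derogatory (Lemma~\ref{eigenspaces of M^*}), so the algebraic multiplicity of an eigenvalue equals the length of its single Jordan chain; this chain is built from the kernel derivatives $\partial_{\overline\lambda}^{\,j}K^B_\lambda$, and the largest $j$ for which these remain in $\HN$ is controlled by the order of vanishing of $A^\sharp$ at $\lambda$, so the two multiplicities coincide. Boundary eigenvalues do occur --- already in the degree-one example (\ref{deg 1 b}) with $r<1$ the mate vanishes at $w\in\T$ and $\overline w$ is a boundary eigenvalue of $T^*|\HN$ --- and there $K^B_\lambda$ is no longer an $H^2$ function; I would handle these either by passing to the boundary kernels $K^B_\zeta=\lim_{r\to1^-}K^B_{r\zeta}$, which lie in $\HH(B)$ exactly at the points $\zeta\in\T$ where $a$ vanishes, and checking that the same divisibility computation persists in the limit, or by a perturbation argument that first pushes all zeros of $A$ strictly outside $\overline\D$, applies the interior count, and then passes to the limit using continuity of characteristic polynomials and of the zeros of the mate. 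Once the multiplicities are matched throughout $\overline\D$, the polynomials $a(0)\prod_i(1-w_iz)$ and $A(z)$, both of degree $\le n$, have the same zeros and agree at the origin, hence are equal; dividing by $\tilde q=z^nq(1/z)$ then yields $a(z)=a(0)\,p(1/z)/q(1/z)$. Thus the interior correspondence is a direct Fej\'er--Riesz computation, and the genuine difficulty is the multiplicity bookkeeping together with the boundary eigenvalues.
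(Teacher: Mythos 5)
Your reduction of the theorem to the statement ``the zeros of $A$ are the reciprocals of the eigenvalues of $T^*|\HN$, with multiplicity,'' and your interior computation --- for $\lambda\in\D\setminus\{0\}$, membership $K^B_\lambda\in\HN$ is equivalent to $A^\sharp(\lambda)=0$ via the polynomial identity $AA^\sharp=\tilde q\,\tilde q^\sharp-\sum_iP_iP_i^\sharp$ --- are correct, and this is a genuinely different route from the paper's. But the proof is not complete: the boundary eigenvalues and the multiplicity count, which you yourself flag as the hardest points, are only sketched, and neither of your proposed repairs works as stated. For $\zeta\in\T$ there is no a priori kernel vector $K^B_\zeta\in\HH(B)$, so the basic equivalence ``$\overline\zeta\in\sigma_p(T^*|\HN)$ iff the kernel at $\zeta$ lies in $\HN$'' is simply not available; making the boundary-kernel idea rigorous requires proving that the limits $\lim_{r\to1^-}K^B_{r\zeta}$ exist exactly at the zeros of $a$, that the one-dimensional eigenspace $\ker(T^*-\overline\zeta)$ is spanned by such a limit, and that the divisibility computation and its higher-order (Jordan-chain) versions survive the limit --- none of which is routine. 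The perturbation alternative is in worse shape: ``pushing the zeros of $A$ off $\T$'' means perturbing the mate, but the correspondence between the mate and the finite-dimensional operator $T^*|\HN$ is precisely what the theorem asserts; moreover many different $B$'s share the same mate, so the perturbed space is not even well defined by this prescription, and no argument is given that $\HN$, its dimension, and $T^*|\HN$ vary continuously under it. This gap is not peripheral: in all of the paper's applications (Theorems \ref{2n-iso} and \ref{MainTheorem}, Corollary \ref{OddIsEven}) one has $\sigma(T^*|\HN)\subseteq\T$, e.g. $p(z)=(z-\overline{w})^m$ with $w\in\T$, so in exactly the cases the paper needs, your interior analysis yields only that all zeros of $A$ lie on $\T$, and the identification of those zeros with the eigenvalues rests entirely on the unproved boundary step.

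Two further points. First, your criterion degenerates at $\lambda=0$: there $K^B_0=1$ and ``$(1-z\overline\lambda)$ divides $N_\lambda$'' holds trivially, while by (\ref{N as rational functions}) the membership $1\in\HN$ is equivalent to $\deg\tilde q<n$, i.e. to $q(0)=0$, not to $A^\sharp(0)=0$; since $T^*|\HN=\bigl[(I+\Delta)|\HN\bigr]\bigl[L|\HN\bigr]$ with $(I+\Delta)|\HN$ invertible, the two kernels do coincide geometrically, but the claim that the algebraic multiplicity of $0$ equals $n-\deg A$ is again part of what must be proved, not a consequence of your identity. Second, for comparison: the paper's proof sidesteps all of these issues by working on the other side of the decomposition $\HH(B)=\HN\oplus\varphi H^2$, where $\varphi$ is the wandering vector of $\HM=\HN^\perp$. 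It shows $|\varphi|=|a|$ on $\T$, that $\varphi=h/\prod_{j}(1-\alpha_jz)$ for a polynomial $h$ of degree $\le n$, and that $\tilde h(M_z^*)|\HN=0$; since by Lemma \ref{eigenspaces of M^*} the minimal polynomial of $T^*|\HN$ equals its characteristic polynomial, $\tilde h$ is forced to be a scalar multiple of $p$, and outerness of $a$ finishes the proof. That single algebraic step treats interior, boundary, and multiple eigenvalues uniformly; if you want to salvage your approach, an annihilation statement of this kind is the ingredient your kernel-membership computation is missing.
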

\begin{proof} Assume that the degree of $B$ is $n$. Then by Theorem \ref{ranDelta and rational} the dimension of $\HN$ equals $n$.  Let $p(z)=\prod_{j=1}^n (z-\overline{\lambda_j})$ be the characteristic polynomial of $T^*|\HN$ and $q(z)=\prod_{j=1}^n(z-\alpha_j)$ be the characteristic polynomial of $L|\HN$. Here $\lambda_1,\dots, \lambda_n\in \overline{\D}$ and $\alpha_1,\dots, \alpha_n \in \D$. Then $\HN$ has the form as in (\ref{N as rational functions}).

Write $\HM=\HN^\perp$. Then since $\dim \HM^\perp <\infty$ the subspace $\HM\ominus z\HM$ is 1-dimensional (see e.g. \cite{ARS02}, Lemma 2.1) and $M_z|\HM$ is unitarily equivalent to the unilateral shift of multiplicity 1. Let $\varphi\in \HM\ominus z\HM$ be a unit vector, then $\{z^n\varphi\}$ forms an orthonormal basis for $\HM$ and hence $P_{\HM}K^B_w(z)=\frac{ \varphi(z)\overline{\varphi(w)}}{1-z\overline{w}}$. Since $K^B_w(z)=P_{\HN}K^B_w(z)+ P_{\HM}K^B_w(z)$ we conclude that
$$1-\sum_{i=1}^k|b_i(z)|^2= (1-|z|^2)\|P_\HN K^B_z\|^2 + |\varphi(z)|^2$$ for all $z\in \D$. Since all functions in $\HN$ are bounded and $\HN$ is finite dimensional we let $|z|\to 1$ and obtain that $|\varphi|^2=|a|^2$ on $\T$.

Let $C= (I-P_{\HN})M_zP_{\HN}$. From Theorem \ref{Wold-type} we know that $\rank C =\dim \ker M_z^*=1$ and we know $\ran C \subseteq \HM\ominus z\HM= \ker (M_z|\HM)^*$ since $M_z$ expands the norm (see e.g. the proof of Theorem \ref{Wold-type}). Hence there is $f_1\in \HN$ such that $Cf_1=\varphi$. Then there is $f_2\in \HN$ such that $\varphi=zf_1-f_2$. Considering the form of the functions in $\HN$ (see \ref{N as rational functions}), we conclude that $\varphi(z)= \frac{h(z)}{\prod_{j=1}^n(1-\alpha_j z)}$ for some polynomial $h$ of degree $\le n$.

Thus, $\varphi \in H^\infty$ and in fact $\varphi f\in \HM$ for all $f\in \HH(B)$. In particular, if $g\in \HN$, then $hg\in \HM$ or $P_{\HN}h(M_z)|\HN=0$. Let $\tilde{h}(z)=\overline{h(\overline{z})}$, then $\tilde{h}(M_z^*)|\HN=0$. This implies that the minimal polynomial of $M_z^*|\HN$ divides $\tilde{h}$. But by Lemma \ref{eigenspaces of M^*} the minimal polynomial equals the characteristic polynomial and it has degree $n$. Hence $\tilde{h}$ must be a multiple of $p$, and that implies that $\varphi(z) = \gamma \frac{\prod_{j=1}^n(z-\lambda_j)}{\prod_{j=1}^n(1-\alpha_jz)}$ for some $\gamma\in \C$. Now $a$ is outer, but has the same modulus as $\varphi$ on $\T$, hence $a(z)= a(0)\frac{\prod_{j=1}^n(1-\overline{\lambda_j}z)}{\prod_{j=1}^n(1-\alpha_jz)}$. This proves the theorem.
\end{proof}
Theorem \ref{characterizationmate} now follows from Theorems \ref{ranDelta and rational} and \ref{rationalmate}.

\begin{remark}\label{equalityDegrees} Note the relationship between the functions $a$ and $\varphi$ in the proof of Theorem \ref{rationalmate}: $\varphi = S a$ for some finite Blaschke product   $S$ and the complex conjugates of the zeros of $S$ must be in $\sigma(T^*|\HN)$. Thus,
if $\sigma(T^*|\HN) \subseteq \T$, then $\deg a=\deg B = \dim \HN$ (since there is no cancellation of linear factors of $\tilde{p}$ and $\tilde q$), and $\varphi$  must be a constant multiple of $a$.
\end{remark}

\section{Expansive $m$-isometries, general considerations}\label{generalExpansiveMIsos}
Recall from the Introduction that a bounded linear operator $T$ on a Hilbert space $\HH$ is an
$m$-isometry for some positive integer $m$ if
\[
\beta_{m}(T)=\sum\limits_{k=0}^{m}(-1)^{m-k}\binom{m}{k}y^{k}x^{k}|_{y=T^{\ast},x=T}%
=\sum\limits_{k=0}^{m}(-1)^{m-k}\binom{m}{k}T^{\ast k}T^{k}=0.
\]
The first main result of this section will be a theorem that will imply one of the directions of Theorem \ref{twonisometry} (b). We start with a lemma.

\begin{lemma}\label{beta-n} Let $n\in \N$, $w\in \T$, $T,P\in \HB(\HH)$, with $P\ge 0$. Then
$$\sum_{k=0}^{n-1}\binom{n-1}{k}(-1)^{n-1-k}{T^*}^kPT^k = \sum_{k=0}^{n-1} \binom{n-1}{k} (T^*-\overline{w})^k PT^k (\overline{w}T-I)^{n-1-k}.$$
\end{lemma}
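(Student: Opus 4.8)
The plan is to reduce this operator identity to a purely algebraic (commutative) one. The key observation is that on both sides, once every factor is fully expanded by the binomial theorem, each resulting term is a scalar multiple of a ``word'' of the form $T^{*a}PT^{b}$ in which every factor of $T^{*}$ sits to the left of $P$ and every factor of $T$ sits to the right of $P$; at no stage does one commute $T^{*}$ past $T$ or past $P$. Consequently the two sides are equal as operators as soon as the scalar coefficient of each word $T^{*a}PT^{b}$ agrees, and these coefficients are governed entirely by the formal commutative algebra of two placeholders $x\leftrightarrow T^{*}$ (recording the powers occurring to the left of $P$) and $y\leftrightarrow T$ (recording the powers to the right of $P$). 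Thus the whole claim follows from the polynomial identity in $\C[x,y]$
\[
\sum_{k=0}^{n-1}\binom{n-1}{k}(-1)^{n-1-k}x^{k}y^{k}
=\sum_{k=0}^{n-1}\binom{n-1}{k}\bigl((x-\overline{w})y\bigr)^{k}(\overline{w}y-1)^{n-1-k}.
\]

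Having set this up, each half is a one-line binomial computation. For the left-hand side, since $x$ and $y$ commute, $\sum_{k=0}^{n-1}\binom{n-1}{k}(-1)^{n-1-k}(xy)^{k}=(xy-1)^{n-1}$. For the right-hand side I would put $u=(x-\overline{w})y$ and $v=\overline{w}y-1$, apply the binomial theorem in $\C[x,y]$ to obtain $\sum_{k=0}^{n-1}\binom{n-1}{k}u^{k}v^{n-1-k}=(u+v)^{n-1}$, and then use the cancellation $u+v=(x-\overline{w})y+\overline{w}y-1=xy-1$. Hence both sides equal $(xy-1)^{n-1}$, so the polynomial identity holds and with it the lemma.

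The one point that needs care — and the main obstacle, such as it is — is justifying the reduction to $\C[x,y]$. This is \emph{not} a claim that the operators $T^{*a}PT^{b}$ are linearly independent (they need not be); it is the elementary fact that two scalar combinations of one fixed list of operators are equal once the scalar coefficients coincide, together with the positional remark above guaranteeing that both expansions lie in the span of words $T^{*a}PT^{b}$ with the left/right roles of $T^{*}$ and $T$ never mixed. I expect this bookkeeping to be the entire substance of the argument; the binomial manipulations are then immediate.

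Finally, it is worth noting that this argument uses neither the positivity of $P$ nor the hypothesis $w\in\T$: the displayed identity is valid for an arbitrary bounded $P$ and an arbitrary scalar $\overline{w}$. Those hypotheses are presumably recorded because of the role the lemma plays afterwards (in producing the positive summands $\Delta_{j}$ and in forcing the relevant spectrum onto the unit circle), rather than because they are needed here.
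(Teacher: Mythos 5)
Your proof is correct. It is worth noting how it sits relative to the paper: the paper displays exactly your computation, namely the hereditary-functional-calculus identity $(yx-1)^{n-1}=[(y-\overline{w})x+(\overline{w}x-1)]^{n-1}$ expanded by the binomial theorem and then evaluated at $y=T^*$, $x=T$ --- but explicitly labels it only as ``intuition,'' and then carries out the actual proof by induction on $n$: setting $\gamma_n(P)=\sum_{k=0}^{n-1}\binom{n-1}{k}(-1)^{n-1-k}T^{*k}PT^k$, it uses $\gamma_{n+1}(P)=T^*\gamma_n(P)T-\gamma_n(P)=(T^*-\overline{w})\gamma_n(P)T+\gamma_n(P)(\overline{w}T-I)$ and leaves the Pascal-rule recombination to the reader. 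What your write-up supplies is precisely the rigor the paper's ``intuition'' omits: since both sides expand into normally ordered words $T^{*a}PT^{b}$ without ever commuting $T^*$ past $T$ or $P$, the linear map $\C[x,y]\to\HB(\HH)$ sending $x^ay^b\mapsto T^{*a}PT^{b}$ is well defined on the monomial basis, and pushing the one-line polynomial identity (both sides equal $(xy-1)^{n-1}$) through this map gives the operator identity in one stroke; as you correctly stress, this needs only that equal coefficients give equal operator sums, not any independence of the words. The trade-off is that your route requires the bookkeeping discussion once, after which the identity (and obvious generalizations of it) are immediate, while the paper's induction is shorter to state rigorously but hides the same algebraic splitting $T^{*}T-I=(T^*-\overline{w})T+(\overline{w}T-I)$ inside the inductive step. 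Your final remark is also accurate: neither $P\ge 0$ nor $w\in\T$ is used in either argument; those hypotheses matter only for the application in Theorem 8.2, where $P=\Delta_j\ge 0$ and $|w_j|=1$ make the surviving terms vanish or combine correctly.
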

\begin{proof} The intuition for this formula comes from the following application of Agler's hereditary functional calculus:
\begin{align*}
 (yx-1)^{n-1}(P)|_{y=T^*,x=T} &= [(y-\overline{w})x+(\overline{w} x-1)]^{n-1}(P)|_{y=T^*,x=T}\\
& = \sum_{k=0}^{n-1} \binom{n-1}{k} (T^*-\overline{w})^kP  T^k (\overline{w} T-1)^{n-1-k}.\end{align*}

But it can also be proved by induction. For $n\in \N$ set $$\gamma_n(P)= \sum_{k=0}^{n-1}\binom{n-1}{k}(-1)^{n-1-k}{T^*}^kPT^k.$$
One easily checks that in case $n=1$. Thus,  assume that the formula holds for some $n\ge 1$.
Then \begin{align*}
\gamma_{n+1}(P)&=\sum_{k=0}^{n}\binom{n}{k}(-1)^{n-k}{T^*}^kPT^k\\
&= T^*\gamma_n(P)T-\gamma_n(P)\\
&=(T^*-\overline{w})\gamma_n(P)T+\gamma_n(P)(\overline{w}T-I).
\end{align*}
At this point we leave the easy remaining details to the reader.
\end{proof}
 \begin{theorem}\label{DeltajImplies2miso} Let $m\in \N$ and let $T\in \HB(\HH)$ be such that $\Delta=T^*T-I\ge 0$.

 If there are $w_1, w_2,  \dots \in \T$ and positive operators $\Delta_1, \Delta_2, \dots$ such that $\Delta=\sum_{i\ge 1} \Delta_i$ and $(T^*-\overline{w}_j)^m \Delta_j =0$ for each $j\ge 1$, then $T$ is a $2m$-isometry.
 \end{theorem}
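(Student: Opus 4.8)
The plan is to combine the recursion $\beta_{n+1}(T) = T^*\beta_n(T)T - \beta_n(T)$ noted in the Introduction with Lemma \ref{beta-n}. Write $\gamma_n(P) = \sum_{k=0}^{n-1}\binom{n-1}{k}(-1)^{n-1-k}T^{*k}PT^k$ for the operator on the left-hand side of Lemma \ref{beta-n}. First I would observe that $\gamma_1(\Delta) = \Delta = \beta_1(T)$, and that the identity $\gamma_{n+1}(P) = T^*\gamma_n(P)T - \gamma_n(P)$ established in the proof of Lemma \ref{beta-n} shows that $\gamma_n(\Delta)$ obeys the same recursion as $\beta_n(T)$ with the same initial term; hence $\beta_n(T) = \gamma_n(\Delta)$ for every $n$. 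Since $P \mapsto \gamma_n(P)$ is linear and $\Delta = \sum_{j\ge 1}\Delta_j$, and since $X \mapsto T^{*k}XT^k$ is continuous for the strong operator topology (in which the series $\sum_j \Delta_j$ converges, its partial sums being positive and bounded above by $\Delta$), we obtain
\[
\beta_{2m}(T) = \gamma_{2m}(\Delta) = \sum_{j\ge 1}\gamma_{2m}(\Delta_j).
\]

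Next I would apply Lemma \ref{beta-n} to each summand with $P = \Delta_j$, $n = 2m$, and $w = w_j$, giving
\[
\gamma_{2m}(\Delta_j) = \sum_{k=0}^{2m-1}\binom{2m-1}{k}(T^*-\overline{w}_j)^k\,\Delta_j\, T^k\,(\overline{w}_j T - I)^{2m-1-k}.
\]
The crux is that every term on the right vanishes. Since $2m-1 = m + (m-1)$, for each $k$ with $0 \le k \le 2m-1$ we have $k \ge m$ or $2m-1-k \ge m$. If $k \ge m$, then $(T^*-\overline{w}_j)^k = (T^*-\overline{w}_j)^{k-m}(T^*-\overline{w}_j)^m$, and the term is $0$ because $(T^*-\overline{w}_j)^m\Delta_j = 0$ by hypothesis. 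If $k \le m-1$, then $2m-1-k \ge m$; using $|w_j| = 1$ we write $\overline{w}_j T - I = \overline{w}_j(T - w_j)$, so that $(\overline{w}_j T - I)^{2m-1-k}$ is a unimodular scalar times $(T - w_j)^{2m-1-k}$. Because $T$ commutes with $T - w_j$, we may move $T^k$ to the right and obtain $\Delta_j(T-w_j)^{2m-1-k}T^k = \Delta_j(T-w_j)^m(T-w_j)^{m-1-k}T^k$, which is $0$ since $\Delta_j(T-w_j)^m = \bigl((T^*-\overline{w}_j)^m\Delta_j\bigr)^* = 0$ (the operator $\Delta_j$ being self-adjoint).

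Hence $\gamma_{2m}(\Delta_j) = 0$ for every $j$, so $\beta_{2m}(T) = 0$ and $T$ is a $2m$-isometry. I do not anticipate any genuine obstacle here: the substantive identity is already packaged in Lemma \ref{beta-n}, and what remains is the index dichotomy $k \ge m$ versus $2m-1-k \ge m$, the adjoint relation $\Delta_j(T-w_j)^m = 0$, and the routine verification that the series manipulations respect the strong operator topology.
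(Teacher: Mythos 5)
Your proposal is correct and follows essentially the same route as the paper: both identify $\beta_{2m}(T)$ with $\sum_{k=0}^{2m-1}\binom{2m-1}{k}(-1)^{2m-1-k}T^{*k}\Delta T^k$ via the recursion, split $\Delta=\sum_j\Delta_j$ in the strong operator topology, apply Lemma \ref{beta-n} termwise with $w=w_j$, and kill each summand by the dichotomy $k\ge m$ or $2m-1-k\ge m$. Your write-up merely makes explicit two points the paper leaves implicit, namely the induction giving $\beta_n(T)=\gamma_n(\Delta)$ and the adjoint step $\Delta_j(T-w_j)^m=\bigl((T^*-\overline{w}_j)^m\Delta_j\bigr)^*=0$ handling the case $2m-1-k\ge m$.
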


\begin{proof} Note that the sum converges in the strong operator topology. We use Lemma \ref{beta-n} with $w_i$ and $P=\Delta_i$ for each $i$, and we obtain
\begin{align*}\beta_{2m}(T) &=  \sum_{j=0}^{2m-1} \binom{2m-1}{j} (-1)^{2m-1-j} {T^*}^j\Delta  T^j \\
&= \sum_{i\ge 1} \sum_{j=0}^{2m-1} \binom{2m-1}{j}  (-1)^{2m-1-j} {T^*}^j\Delta_i  T^j \\
&= \sum_{i\ge 1} \sum_{j=0}^{2m-1} \binom{2m-1}{j} (T^*-\overline{w_i})^j\Delta_i  T^j (\overline{w}_i T-1)^{2m-1-j}\\
&=0,
\end{align*}
since either $j \geq m$ or $2m-1-j \geq m$. Hence $T$ is $2m$-isometry.
\end{proof}

Next we will see that for expansive $m$-isometries the finiteness of $\dim [\ran \Delta]_{T^*}$ follows from $\Delta$ having finite rank.

\begin{lemma} \label{rank and dim N} If $T$ is an $m$-isometry, if $\Delta=T^*T-I\ge 0$, and if $\Delta$ has finite rank, then $$\rank \Delta\le \dim [\ran \Delta]_{T^*} \le \begin{cases}\frac{m}{2} \rank \Delta,& m ~\text{is even}\\
\frac{m-1}{2}\rank \Delta,& m ~\text{is odd}\end{cases}.$$
\end{lemma}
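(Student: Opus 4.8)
The plan is to prove the lower inequality by inspection and to extract the upper inequality from the structure theorem for expansive $2\ell$-isometries, Theorem \ref{twonisometry}. Throughout write $\HN=[\ran \Delta]_{T^*}$ and $r=\rank \Delta$. The lower bound is immediate, since $\ran \Delta \subseteq \HN$ gives $r=\dim \ran \Delta \le \dim \HN$. For the upper bound I would first dispose of the odd case: if $m=2\ell+1$, then by part (a) of Theorem \ref{twonisometry} the operator $T$ is already a $2\ell$-isometry, and the claimed bound $\frac{m-1}{2}r=\ell r$ is exactly the even bound for $2\ell$. So it suffices to treat $m=2\ell$ and show $\dim \HN \le \ell r$.

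Assume then that $T$ is a $2\ell$-isometry. By part (b) of Theorem \ref{twonisometry} there are points $w_1,\dots,w_k\in \T$ and positive operators $\Delta_1,\dots,\Delta_k$ with $\Delta=\sum_j \Delta_j$ and $(T^*-\overline{w}_j)^\ell \Delta_j=0$; after merging equal values we may take the $w_j$ distinct. The identity $(T^*-\overline{w}_j)^\ell \Delta_j=0$ means $\ran \Delta_j \subseteq \ker (T^*-\overline{w}_j)^\ell$, a $T^*$-invariant subspace. Since $T^*$ commutes with $(T^*-\overline{w}_j)^\ell$, the entire $T^*$-orbit of $\ran \Delta_j$ remains inside $\ker(T^*-\overline{w}_j)^\ell$, on which $T^*-\overline{w}_j$ is nilpotent of index at most $\ell$. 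Expanding $(T^*)^n = (\overline{w}_j + (T^*-\overline{w}_j))^n$ and discarding the vanishing powers, one gets
$$[\ran \Delta_j]_{T^*}=\sum_{s=0}^{\ell-1}(T^*-\overline{w}_j)^s\,\ran \Delta_j,$$
so that $\dim [\ran \Delta_j]_{T^*}\le \ell\,\rank \Delta_j$; in particular each of these spaces, and hence $\HN$, is finite dimensional.

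To assemble the pieces, note that $0\le \Delta_j\le \Delta$ forces $\overline{\ran \Delta_j}\subseteq \overline{\ran \Delta}$, whence $\overline{\ran \Delta}=\sum_j \ran \Delta_j$ and $\HN=\sum_j [\ran \Delta_j]_{T^*}$. Because the $w_j$ are distinct, the $\ran \Delta_j$ sit in distinct generalized eigenspaces of $T^*$ and are therefore linearly independent, so $\sum_j \rank \Delta_j=\rank \Delta=r$. Combining the orbit estimate over $j$ then yields $\dim \HN \le \sum_j \ell\,\rank \Delta_j = \ell r=\frac{m}{2}r$, which is the asserted even bound.

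The genuine difficulty is entirely absorbed into Theorem \ref{twonisometry}: producing the decomposition $\Delta=\sum_j\Delta_j$ with ranges in the generalized eigenspaces of $T^*\vert\HN$ and, crucially, with nilpotency index $\ell$ rather than the naive $2\ell$, is where the finite-rank hypothesis and the $m$-isometry identity do the real work. If one prefers a self-contained route that does not invoke Theorem \ref{twonisometry}, I would instead use that $\beta_m(T)=0$ makes $C(n):=T^{*n}\Delta T^n$ an operator polynomial in $n$ of degree $\le m-2$ with $\HN=\sum_{n\ge 0}\ran C(n)$ and each $C(n)\ge 0$ of rank $\le r$; the bound would then follow from a degree-halving positive factorization $C(n)=G(n)G(n)^*$ with $\deg G\le \lfloor m/2\rfloor-1$ and only $r$ columns, the dichotomy $\frac{m}{2}$ versus $\frac{m-1}{2}$ reflecting that a nonnegative matrix polynomial must have even degree. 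Securing that factorization with the number of columns controlled by $\rank \Delta$ is the hard analytic point on this alternative path, and is the step I would expect to consume most of the effort.
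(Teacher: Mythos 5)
Your main argument is circular relative to the paper's logical structure. Theorem \ref{twonisometry} is not available at the point where Lemma \ref{rank and dim N} must be proved: in the paper, part (a) of Theorem \ref{twonisometry} is established as Corollary \ref{OddIsEven}, whose proof begins by citing Lemma \ref{rank and dim N} to obtain $\dim \HN<\infty$, and the forward (hard) direction of part (b) is Theorem \ref{characterizationR}, which rests on Theorem \ref{characterization1}, whose proof likewise opens with an appeal to Lemma \ref{rank and dim N} for the same purpose. Finite dimensionality of $\HN$ is precisely what makes the $\|\cdot\|_c$-construction, Theorem \ref{A and T}, and the Agler--Helton--Stankus result (Theorem \ref{finite dim m isos}) applicable; it is also the genuinely nontrivial content of the lemma you are asked to prove, and it even appears verbatim in the conclusion of Theorem \ref{twonisometry}(b). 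The only piece of Theorem \ref{twonisometry} that the paper proves independently of this lemma is the converse implication (Theorem \ref{DeltajImplies2miso}), which is of no use here. So both your odd-case reduction via part (a) and your even-case decomposition via part (b) assume what is to be proved. (Granted, once Theorem \ref{twonisometry} is in hand, your nilpotent-orbit count $\dim[\ran \Delta_j]_{T^*}\le \ell\,\rank \Delta_j$ and the direct-sum bookkeeping are correct; but that can only serve as an a posteriori remark, not as the proof.)

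The lemma in fact has a short self-contained proof from the hereditary identity alone, which is the route the paper takes and which your closing sketch gestures at but does not complete. Iterating $\beta_{m}(T)=T^*\beta_{m-1}(T)T-\beta_{m-1}(T)$ gives $0=\beta_m(T)=\sum_{j=0}^{m-1}(-1)^{m-1-j}\binom{m-1}{j}T^{*j}\Delta T^{j}$, i.e.
\begin{equation*}
\sum_{j\ \text{even},\, j\le m-1}\binom{m-1}{j}T^{*j}\Delta T^{j}\;=\;\sum_{j\ \text{odd},\, j\le m-1}\binom{m-1}{j}T^{*j}\Delta T^{j}.
\end{equation*}
All summands are positive, so the two sides have the same closed span of ranges; writing $\Delta=\sum_{k=1}^{r}f_k\otimes f_k$ one has $T^{*j}\Delta T^j=\sum_k (T^{*j}f_k)\otimes (T^{*j}f_k)$, so this says that $\bigvee_k\{T^{*j}f_k: j\ \text{even},\ j\le m-1\}=\bigvee_k\{T^{*j}f_k: j\ \text{odd},\ j\le m-1\}$. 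In either parity case the appropriate one of these two spans (the even-indexed one, $j\le m-2$, when $m=2\ell$; the odd-indexed one, $j\le m-2$, when $m=2\ell+1$) is $T^*$-invariant, because applying $T^*$ to its spanning vectors lands in the other family, which the displayed equality places back inside it; moreover it contains $\ran\Delta$ (directly when $m$ is even since $j=0$ occurs, and via the equality of spans when $m$ is odd). Hence it contains $\HN$, and counting spanning vectors gives exactly $\frac m2\,r$, respectively $\frac{m-1}2\,r$. In particular, no degree-halving positive factorization of matrix polynomials is needed: the ``hard analytic point'' you flag in your alternative route is bypassed entirely by this even/odd splitting.
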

\begin{proof} The case $m=1$ is trivial, we assume $m\ge 2$. Then
 $$0=\beta_m(T)=T^*\beta_{m-1}(T)T-\beta_{m-1}(T)=\sum_{j=0}^{m-1}(-1)^{m-1-j} \left(\begin{matrix}m-1\\j\end{matrix}\right){T^*}^j\Delta T^j.$$ Thus we have
 $$\sum_{j \text{ even}}^{m-1} \left(\begin{matrix}m-1\\j\end{matrix}\right){T^*}^j\Delta T^j = \sum_{j\text{ odd}}^{m-1} \left(\begin{matrix}m-1\\j\end{matrix}\right){T^*}^j\Delta T^j.$$ Hence  we have
\begin{align*}\bigvee_{j \text{ even}}^{m-1} \ran {T^*}^j\Delta T^j = \bigvee_{j \text{ odd}}^{m-1} \ran {T^*}^j\Delta T^j. \end{align*}
So if $\Delta = \sum_{k=1}^n f_k\otimes f_k$, then when $m=2l$,
$$\bigvee_{i=1}^n\{T^*f_i,T^{*3}f_i\cdots, T^{*(2l-1)}f_i\} \subseteq \bigvee_{i=1}^n\{f_i,T^{*2}f_i\cdots, T^{*(2l-2)}f_i\},$$
and when $m=2l+1$,
$$\bigvee_{i=1}^n\{f_i,T^{*2}f_i\cdots, T^{*(2l)}f_i\} \subseteq \bigvee_{i=1}^n\{T^*f_i,T^{*3}f_i\cdots, T^{*(2l-1)}f_i\}.$$
This implies that the sets on the right hand side of the above two inclusions are $T^*$-invariant and hence
$$\dim [\ran \Delta]_{T^*} \le
\begin{cases}\frac{m}{2} \rank \Delta,& m ~\text{is even}\\
\frac{m-1}{2}\rank \Delta,& m ~\text{is odd}\end{cases}.$$
\end{proof}

The following construction  will be crucial for the rest of this section.

\begin{lemma} Let  $T\in \HB(\HH)$ be  such that $\Delta=T^*T-I\ge 0$. Set $\HN=[\ran \Delta]_{T^*}$ and assume that $\dim \HN =n< \infty$.

 If $c=(c_0,\dots ,c_{n-1})$, where $c_j>0$ for each $j, 0\le j\le n-1$, then $$\|x\|_c^2=\sum_{j=0}^{n-1} c_j\la \Delta T^jx,T^jx\ra$$ defines a Hilbert space norm on $\HN$.
\end{lemma}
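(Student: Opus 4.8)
The plan is to observe first that $\|\cdot\|_c$ is the seminorm associated with the sesquilinear form
$$\la x,w\ra_c=\sum_{j=0}^{n-1}c_j\la \Delta T^jx,T^jw\ra,$$
which is positive semidefinite because $\Delta\ge 0$ and each $c_j>0$, and conjugate symmetric because $\Delta=\Delta^*$. Thus $\|\cdot\|_c$ is automatically a seminorm on $\HN$, and since $\HN$ is finite dimensional, every norm on it makes it complete; hence the entire statement reduces to showing that $\|\cdot\|_c$ is \emph{definite}, i.e. that $x\in\HN$ with $\|x\|_c=0$ forces $x=0$.

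So I would suppose $x\in\HN$ with $\|x\|_c=0$. Since $c_j>0$ and $\la \Delta T^jx,T^jx\ra=\|\Delta^{1/2}T^jx\|^2\ge 0$, this gives $\Delta^{1/2}T^jx=0$, hence $\Delta T^jx=0$, for every $j=0,1,\dots ,n-1$. Consequently, for every $y\in\HH$ and every $0\le k\le n-1$,
$$\la x,T^{*k}\Delta y\ra=\la \Delta T^kx,y\ra=0,$$
so $x$ is orthogonal to the subspace $\HN_0:=\bigvee_{0\le k\le n-1}T^{*k}(\ran \Delta)$.

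The key remaining point, and the only place where the hypothesis $\dim\HN=n$ is used, is to identify $\HN_0$ with $\HN$. By definition $\HN=[\ran \Delta]_{T^*}$ is the smallest closed $T^*$-invariant subspace containing $\ran \Delta$, so the increasing chain $W_m:=\bigvee_{0\le k\le m}T^{*k}(\ran \Delta)$ satisfies $W_m\subseteq\HN$ and $T^*W_m\subseteq W_{m+1}$ for all $m\ge 0$. If $\Delta=0$ the claim is trivial, so assume $\Delta\ne 0$; then $\dim W_0=\rank \Delta\ge 1$, and each strict inclusion $W_m\subsetneq W_{m+1}$ raises the dimension by at least one, so the chain stabilizes at some $W_N$ with $N\le n-1$. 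For that $N$ we have $T^*W_N\subseteq W_{N+1}=W_N$, so $W_N$ is $T^*$-invariant and contains $\ran \Delta$, whence $\HN\subseteq W_N\subseteq\HN$ and $W_N=\HN$; in particular $\HN_0=W_{n-1}=\HN$. Combining this with the previous step gives $x\perp\HN$, and since also $x\in\HN$ we get $x=0$.

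I expect the only mildly delicate ingredient to be this stabilization argument for the chain $W_m$ (and the small bookkeeping that it terminates by step $n-1$); the positivity/self-adjointness manipulations and the reduction to definiteness are routine once $\Delta\ge 0$ and the defining property of $[\ran \Delta]_{T^*}$ are invoked.
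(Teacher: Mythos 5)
Your proof is correct and follows essentially the same route as the paper's: both reduce the problem to showing that a vector $x\in\HN$ annihilated by the form must be orthogonal to $\bigvee_{j=0}^{n-1}{T^*}^j(\ran\Delta)$, and both use the finite dimensionality of $\HN$ to show this span is already $T^*$-invariant and hence equals $\HN$ (the paper runs the stabilization argument on each cyclic subspace $\bigvee_j {T^*}^j f_i$ for a rank-one decomposition $\Delta=\sum_i f_i\otimes f_i$, whereas you run it on the cumulative chain $W_m$; this is only a cosmetic difference). No gaps.
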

\begin{proof} Fix a $c$ as in the hypothesis of the lemma and write $B_c=\sum_{j=0}^{n-1} c_j{T^*}^j \Delta T^j$. Thus, $B_c\ge 0$ and $\|x\|^2_c=\la B_cx,x\ra$. In order to show that $\|\cdot\|_c$ is a norm on $\HN$ it suffices to show that $\HN \cap \ker B_c=(0)$. Let $\Delta=\sum_{i=1}^N f_i\otimes f_i$. Then $$\ran B_c = \bigvee_{i=1}^N \bigvee_{j=0}^{n-1} {T^*}^jf_i.$$ Note that since $\dim \HN=n$ we have that for each fixed $i$ the vector ${T^*}^nf_i$ must be a linear combination of $f_i, T^*f_i, \dots, {T^*}^{n-1} f_i$. This means that $\bigvee_{j=0}^{n-1}{T^*}^jf_i$ is invariant for $T^*$, and it follows that $\ran B_c$ is $T^*$-invariant. Since $\ran B_c $ contains $\bigvee_{i=1}^N \{f_i\}=\ran \Delta$ we conclude that $\HN \subseteq \ran B_c$. This implies $\HN \cap \ker B_c=(0)$.
\end{proof}
If $T$ is an operator as in the previous lemma, and if $c_0, \dots, c_{n-1}>0$, then we will write $\HN_c=(\HN, \|\cdot\|_c)$ and $A_c$ for the operator $A=P_{\HN}T|\HN$ as it acts on $\HN_c$.
\begin{theorem}\label{A and T} Let $N\in \N$, $N\ge 2$, and  $T\in \HB(\HH)$ be  such that $\Delta=T^*T-I\ge 0$. Set $\HN=[\ran \Delta]_{T^*}$ and assume that $\dim \HN =n< \infty$.

Then $T$ is an $N$-isometry, if and only if $A_c$ is an $(N-1)$-isometry for all $c=(c_0,\dots, c_{n-1})$, $c_j>0$.
\end{theorem}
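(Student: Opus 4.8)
The plan is to push everything down to the finite dimensional space $\HN$ and then read off both implications from a single bookkeeping identity relating the quadratic forms of $\beta_N(T)$ and $\beta_{N-1}(A_c)$.

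First I would record two reduction facts. Since $\HN=[\ran\Delta]_{T^*}$ is $T^*$-invariant, $\HN^\perp$ is $T$-invariant, so $P_\HN T P_{\HN^\perp}=0$; an induction on $j$ then gives $P_\HN T^j x=A^j x$ for all $x\in\HN$ and $j\ge0$, where $A=P_\HN T|\HN$. Also $\ran\Delta\subseteq\HN$ together with $\Delta=\Delta^*$ forces $\Delta=P_\HN\Delta P_\HN$. Combining the two, for $y\in\HN$ one gets $\langle\Delta T^j y,T^j y\rangle=\langle\Delta A^j y,A^j y\rangle$ for every $j$. Iterating $\beta_{m+1}(T)=T^*\beta_m(T)T-\beta_m(T)$ from $\beta_1(T)=\Delta$ gives $\beta_N(T)=\sum_{j=0}^{N-1}(-1)^{N-1-j}\binom{N-1}{j}{T^*}^j\Delta T^j$, from which $\ran\beta_N(T)\subseteq\HN$; as $\beta_N(T)$ is self-adjoint this yields $\beta_N(T)=P_\HN\beta_N(T)P_\HN$, so $T$ is an $N$-isometry if and only if $\langle\beta_N(T)x,x\rangle=0$ for every $x\in\HN$.

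Next I would express both relevant quadratic forms in terms of the scalars $d_i(x):=\langle\Delta A^i x,A^i x\rangle\ge0$, $x\in\HN$, $i\ge0$. For $x\in\HN$ the reduction identity gives $\langle\beta_N(T)x,x\rangle=\sum_{k=0}^{N-1}(-1)^{N-1-k}\binom{N-1}{k}d_k(x)=:S_0(x)$. Since $A$ and $A_c$ are the same linear map on $\HN$, $\langle\beta_{N-1}(A_c)x,x\rangle_c=\sum_{k=0}^{N-1}(-1)^{N-1-k}\binom{N-1}{k}\|A^k x\|_c^2$; unwinding $\|\cdot\|_c$ and using $d_{j+k}(x)=d_k(A^j x)$ gives $\|A^k x\|_c^2=\sum_{j=0}^{n-1}c_j d_{j+k}(x)$ and hence $\langle\beta_{N-1}(A_c)x,x\rangle_c=\sum_{j=0}^{n-1}c_j\,S_0(A^j x)$. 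Because $A_c$ acts on the finite dimensional complex Hilbert space $\HN_c=(\HN,\|\cdot\|_c)$, which is a genuine Hilbert space by the preceding lemma, $\beta_{N-1}(A_c)=0$ if and only if this form vanishes for all $x$.

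Both directions are then immediate. If $T$ is an $N$-isometry, then $S_0(A^j x)=\langle\beta_N(T)A^j x,A^j x\rangle=0$ for all $x\in\HN$ and $j$ (note $A^j x\in\HN$), so $\langle\beta_{N-1}(A_c)x,x\rangle_c=0$ for every admissible $c$ and each $A_c$ is an $(N-1)$-isometry. Conversely, if $A_c$ is an $(N-1)$-isometry for all $c$ with $c_j>0$, then for fixed $x\in\HN$ the linear functional $c\mapsto\sum_{j=0}^{n-1}c_j S_0(A^j x)$ vanishes on the open positive orthant, hence identically; the $j=0$ coefficient gives $\langle\beta_N(T)x,x\rangle=0$ for all $x\in\HN$, so $\beta_N(T)=0$ by the first step. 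The part I expect to be the main obstacle is making the two reduction identities $\Delta=P_\HN\Delta P_\HN$ and $P_\HN T^j|\HN=A^j$ precise, since these are exactly what collapse the quadratic form of $\beta_N(T)$ into a statement about the single operator $A$ on $\HN$; everything after that is manipulation of binomial sums plus the elementary fact that a linear functional vanishing on an open orthant is zero. As usual the hypothesis $N\ge2$ is needed for $\beta_{N-1}(A_c)$ to encode an isometry condition.
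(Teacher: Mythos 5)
Your proof is correct and follows essentially the same route as the paper: both reduce everything to the compression $A$ on the semi-invariant subspace $\HN$ via $\Delta=P_\HN\Delta P_\HN$ and $P_\HN T^j|_\HN=A^j$, establish the key identity $\la\beta_{N-1}(A_c)x,x\ra_c=\sum_{j=0}^{n-1}c_j\la\beta_N(T)T^jx,T^jx\ra$ for $x\in\HN$, and then vary $c$ over the positive orthant to get both implications. Your version merely makes explicit two steps the paper leaves implicit (that $\beta_N(T)=P_\HN\beta_N(T)P_\HN$, and that a linear functional vanishing on an open orthant vanishes identically), which is fine.
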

\begin{proof} Write $D=\Delta^{1/2}$, and recall from the proof of Lemma \ref{rank and dim N} that $$\beta_N(T)= \sum_{k=0}^{N-1}(-1)^k \left(\begin{matrix} N-1\\k\end{matrix}\right){T^*}^k\Delta T^k.$$ Thus, for all $x\in \HH$ we have
$$\la \beta_N(T)x,x\ra= \sum_{k=0}^{N-1}(-1)^k \left(\begin{matrix} N-1\\k\end{matrix}\right)\|DT^kx\|^2.$$ We will use this identity repeatedly.

Fix $c=(c_0,\dots, c_{n-1})$ as in the lemma, then for $x\in \HN$
$$\|A^k x\|^2_c= \sum_{j=0}^{n-1}c_j\|DT^{j+k}x\|^2$$ since $A$ is the compression of $T$ to the semi-invariant subspace $\HN$ and $D=DP_{\HN}$. This implies that
\begin{align*}\sum_{k=0}^{N-1}(-1)^k \left(\begin{matrix} N-1\\k\end{matrix}\right)\|A^k x\|^2_c&= \sum_{j=0}^{n-1}c_j\sum_{k=0}^{N-1}(-1)^k \left(\begin{matrix} N-1\\k\end{matrix}\right)\|DT^{j+k}x\|^2\\
&=\sum_{j=0}^{n-1}c_j \la \beta_N(T)T^jx,T^jx\ra\end{align*}
Hence if $T$ is an $N$-isometry, then $A_c$ is an $N-1$-isometry on $\HN_c$ for all tuples $c$ of positive reals.

Conversely, if $A_c$ is an $N-1$-isometry on $\HN_c$ for all positive tuples $c$, then $$\sum_{j=0}^{n-1}c_j \la \beta_N(T)T^jx,T^jx\ra=0$$ for all $x\in \HN$ and all such $c$'s. That can only hold, if $\la \beta_N(T)x,x\ra=0$ for all $x\in \HN$. Thus
$$\sum_{k=0}^{N-1}(-1)^k \left(\begin{matrix} N-1\\k\end{matrix}\right)\|D T^kx\|^2=0$$ for all $x\in \HN$.

 If $x\in \HH$, then $x=x_1+x_2$ with $x_1\in \HN$ and $x_2\in \HN^\perp$. Since $\HN^\perp$ is $T$-invariant we have $DT^k x_2=0$ for each $k$ and we conclude that $\beta_N(T)=0$, i.e. $T$ is an $N$-isometry.
\end{proof}

 We now show  Theorem \ref{twonisometry} (a).

\begin{corollary}\label{OddIsEven} Let $m\in \N$, and $T\in \HB(\HH)$ such that $\Delta=T^*T-I\ge 0$ and has finite rank.

If $T$ is a $2m+1$-isometry, then it is a $2m$-isometry and $\sigma(T^*|\HN)\subseteq \T$, where $\HN=[\ran \Delta]_{T^*}$.
\end{corollary}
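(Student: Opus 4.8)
The plan is to transfer the problem to the finite-dimensional operators produced by Theorem \ref{A and T} and then to invoke the known behaviour of $m$-isometries on finite-dimensional spaces.

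Since $T$ is a $(2m+1)$-isometry and $\Delta=T^*T-I\ge 0$ has finite rank, Lemma \ref{rank and dim N} shows that $\HN=[\ran\Delta]_{T^*}$ is finite dimensional, say $\dim\HN=n$. If $n=0$ then $\Delta=0$, $T$ is isometric, hence a $2m$-isometry, and the spectral statement is vacuous; so assume $n\ge 1$. Applying Theorem \ref{A and T} with $N=2m+1\ge 2$, each operator $A_c$, where $c=(c_0,\dots,c_{n-1})$ has all $c_j>0$, is a $2m$-isometry on the $n$-dimensional space $\HN_c$.

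The key intermediate claim is purely finite-dimensional: if $S$ is a $2m$-isometry on a finite-dimensional Hilbert space, then $S$ is a $(2m-1)$-isometry and $\sigma(S)\subseteq\T$. I would argue as follows. First $S$ is injective, since $Sv=0$ would force $\la\beta_{2m}(S)v,v\ra=\|v\|^2$ while $\beta_{2m}(S)=0$; hence $S$ is invertible. For a vector $v$ put $f(j)=\|S^jv\|^2$, defined for all $j\in\Z$. Applying the operator identity $\beta_{2m}(S)=0$ to the vectors $S^jv$ shows that the $2m$-th forward finite difference $\sum_{k=0}^{2m}(-1)^{2m-k}\binom{2m}{k}f(j+k)$ vanishes for every $j\in\Z$, so $f$ coincides on $\Z$ with a polynomial $p_v$ of degree at most $2m-1$. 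Since $p_v(j)=f(j)\ge 0$ for every $j\in\Z$ and a polynomial of odd degree takes opposite signs at $\pm\infty$ (hence would be negative at some integer), $\deg p_v$ is even, so $\le 2m-2$; consequently the $(2m-1)$-th finite difference of $p_v$ is identically $0$, which reads $\la\beta_{2m-1}(S)v,v\ra=0$, and as $\beta_{2m-1}(S)$ is self-adjoint and $v$ arbitrary we get $\beta_{2m-1}(S)=0$. For the spectrum, an eigenvalue $\lambda$ with eigenvector $v\ne 0$ gives $f(j)=|\lambda|^{2j}\|v\|^2$, which can agree with a polynomial in $j\in\Z$ only if $|\lambda|=1$. (These are exactly the finite-dimensional facts about $m$-isometries of Agler--Helton--Stankus \cite{AHS98} and may also simply be quoted.)

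To finish: by the claim each $A_c$ is a $(2m-1)$-isometry, so Theorem \ref{A and T} with $N=2m\ge 2$ yields that $T$ is a $2m$-isometry. Since $\HN$ is $T^*$-invariant, $\HN^\perp$ is $T$-invariant, so $T$ has an upper triangular $2\times 2$ operator matrix with respect to $\HH=\HN^\perp\oplus\HN$ whose $(2,2)$-entry is $A=P_\HN T|\HN$; therefore $T^*|\HN=A^*$. As $A$ and each $A_c$ are the same linear map on the vector space $\HN$, they have the same spectrum, whence $\sigma(T^*|\HN)=\overline{\sigma(A)}=\overline{\sigma(A_c)}\subseteq\T$. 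I expect the only genuine content to be the finite-dimensional claim above; the mild subtlety there is the step that makes $f$ a polynomial on all of $\Z$ rather than just for $j\ge 0$, which relies on the invertibility of $S$.
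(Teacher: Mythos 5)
Your proof is correct and follows essentially the same route as the paper: reduce via Lemma \ref{rank and dim N} and Theorem \ref{A and T} to the finite-dimensional operators $A_c$, use the Agler--Helton--Stankus facts that a finite-dimensional $2m$-isometry has spectrum in $\T$ and (being invertible) is a $(2m-1)$-isometry, and then apply Theorem \ref{A and T} again. The only difference is that the paper simply cites \cite{AS956} (Lemma 1.21 and Proposition 1.23) for the finite-dimensional facts, whereas you supply a correct self-contained finite-difference argument for them; that is a pleasant bonus but does not change the structure of the proof.
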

\begin{proof} We assume that $T$ is  a norm expansive $2m+1$-isometry such that $\Delta$ has finite rank, and as before we write $A=P_{\HN} T|\HN$. Then by Lemma \ref{rank and dim N} we have $\dim \HN <\infty$. Thus, by Theorem \ref{A and T} we conclude that $A_c$ is a $2m$-isometry for each  tuple $c$ of positive reals. Agler and Stankus showed that the spectrum of any $n$-isometry is either the closed disc or a subset of the unit circle (\cite{AS956}, Lemma 1.21). Since $\HN_c$ is finite dimensional, this implies that $\sigma(A)\subseteq \T$. Hence $A$ is invertible, and then Proposition 1.23 of \cite{AS956} implies that $A$ must be a $2m-1$-isometry in the norm $\|\cdot\|_c$. This is true for all tuples $c$ of positive reals, hence another application of Theorem \ref{A and T} shows that $T$ is a $2m$-isometry.

We saw that $\sigma(A)\subseteq \T$, hence $\sigma(T^*|\HN)=\sigma(A^*)\subseteq \T$.
\end{proof}

For $A\in \HB(\HN)$ and $w\in \C$ let $$\HN_w=\HN_w(A)= \bigvee_{n\ge 0} \ker (A-w)^n,$$ so that for $w\in \sigma_p(A)$ the space $\HN_w$ is the root subspace of $A$ corresponding to the eigenvector $w$, and if $\HN$ is finite dimensional, then it is clear that $\HN=\bigvee_{w\in \sigma_p(A)} \HN_w$.

We will need the following result, which is a special case of \cite{AHS98}, Lemma 19, also see \cite{JJS20}, Proposition 6.3, or \cite{BMN13}, Theorem 2.7.

\begin{theorem}\label{finite dim m isos}   Let $\HN$ be a finite dimensional Hilbert space, let  $A\in \HB(\HN)$ with $\sigma(A)=\{w_1, \dots, w_n\}$, then $A$ is a  $2m-1$-isometry, if and only if \begin{enumerate}
\item $\sigma(A) \subseteq \T$,
\item $\HN =\bigoplus_{k=1}^n \HN_{w_k}$,
\item $m\ge \min\{i: \ker (A-w)^i = \HN_w\}$ for each $w\in \sigma(A)$.
\end{enumerate}
If  $A$ is a  $2m-1$-isometry, then it is a strict $2m-1$-isometry, if and only if $m=\max_{w\in \sigma(A)}\min\{i: \ker (A-w)^i = \HN_w\}$.
\end{theorem}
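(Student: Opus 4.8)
The plan is to reduce everything to the elementary criterion that $A$ is a $k$-isometry if and only if for every $x\in\HN$ the sequence $a_x(n)=\|A^nx\|^2$ agrees with a polynomial in $n$ of degree at most $k-1$. Indeed, $\langle\beta_k(A)A^nx,A^nx\rangle=(\Delta^k a_x)(n)$, where $\Delta f(n)=f(n+1)-f(n)$ is the forward difference operator, and the sequences annihilated by $\Delta^k$ are exactly the polynomials of degree $<k$; so a $k$-isometry has all $a_x$ polynomial of degree $<k$, and conversely if all $a_x$ have degree $<k$ then $\langle\beta_k(A)x,x\rangle=(\Delta^k a_x)(0)=0$ for every $x$, whence $\beta_k(A)=0$ since $\beta_k(A)$ is self-adjoint. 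I will apply this with $k=2m-1$ and $k=2m-2$, together with the fact that the restriction of a $k$-isometry to an invariant subspace is again a $k$-isometry.

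For the forward implication, suppose $A$ is a $(2m-1)$-isometry. Since $\HN$ is finite dimensional, every $w\in\sigma(A)$ is an eigenvalue, and for an eigenvector $x$ one has $a_x(n)=|w|^{2n}\|x\|^2$; as this is a polynomial in $n$ we must have $|w|=1$, giving (i). For (ii), recall that $\HN$ splits algebraically as $\bigoplus_k\HN_{w_k}$; to see the sum is orthogonal take $x\in\HN_w$, $y\in\HN_v$ with $w\ne v$ and write $A|\HN_w=wI+N_w$, $A|\HN_v=vI+N_v$ with $N_w,N_v$ nilpotent. By polarization $n\mapsto\langle A^nx,A^ny\rangle$ is a polynomial, while expanding $A^nx$ and $A^ny$ by the binomial theorem and using $|w|=|v|=1$ gives $\langle A^nx,A^ny\rangle=(w\bar v)^nP(n)$ for a polynomial $P$; since $w\bar v\in\T\setminus\{1\}$, comparing the ratios of consecutive values forces $P\equiv0$, and in particular $\langle x,y\rangle=0$. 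Finally each $A|\HN_{w_k}=w_kI+N_k$ is again a $(2m-1)$-isometry, and the same binomial expansion gives $a_x(n)=\sum_{j,l}\binom{n}{j}\binom{n}{l}w_k^{\,l-j}\langle N_k^jx,N_k^lx\rangle$; choosing $x$ with $N_k^{s_k-1}x\ne0$, where $s_k=\min\{i:\ker(A-w_k)^i=\HN_{w_k}\}$ is the nilpotency index of $N_k$, this polynomial has degree exactly $2(s_k-1)$, so the bound $2(s_k-1)\le 2m-2$ yields (iii).

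For the converse, assume (i)--(iii). Then $\HN=\bigoplus_k\HN_{w_k}$ is orthogonal and reduces $A$, so it suffices to check each summand; on $\HN_{w_k}$ we have $N_k^m=0$ by (iii), so the formula above exhibits $a_x(n)$ as a polynomial of degree $\le2m-2$ for every $x$, which by the criterion means $A|\HN_{w_k}$, and hence $A$, is a $(2m-1)$-isometry. For the strictness claim, note that by orthogonality $\deg a_x=\max_k\deg a_{x_k}$ (no cancellation, since the top coefficients are nonnegative), so $\max_{x}\deg a_x=2(\max_k s_k-1)$, whereas $A$ fails to be a $(2m-2)$-isometry precisely when some $a_x$ has degree $2m-2$; equating gives $m=\max_k s_k=\max_{w\in\sigma(A)}\min\{i:\ker(A-w)^i=\HN_w\}$.

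I expect the orthogonality step in (ii) to be the main obstacle: producing the clean identity $\langle A^nx,A^ny\rangle=(w\bar v)^nP(n)$ and then excluding $P\not\equiv0$; everything else is finite-difference bookkeeping and Jordan-block arithmetic. (This is Lemma~19 of \cite{AHS98}, so in the paper we simply quote it.)
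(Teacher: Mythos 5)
Your proof is correct, but it is worth noting that the paper does not prove this statement at all: it is quoted verbatim as a special case of Lemma 19 of \cite{AHS98} (with \cite{JJS20} and \cite{BMN13} as alternative references), so your self-contained argument is necessarily a different route. The route you take — the identity $\la \beta_k(A)A^nx,A^nx\ra=(\Delta^k a_x)(n)$, hence ``$k$-isometry $\Leftrightarrow$ every $a_x(n)=\|A^nx\|^2$ is a polynomial of degree $<k$'' — is essentially the mechanism underlying the cited sources (it is the finite-dimensional shadow of the Agler--Stankus covariance expansion, and \cite{BMN13} runs the same isometry-plus-nilpotent bookkeeping), so what you buy is transparency rather than novelty: the spectral condition, the orthogonality of the root spaces, the bound on the nilpotency indices, and the strictness criterion all fall out of one degree count, with the only genuinely delicate point being the exclusion of $P\not\equiv 0$ in the identity $\la A^nx,A^ny\ra=(w\bar v)^nP(n)$. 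Your ratio argument there is fine once you note (as you implicitly do) that $P\not\equiv0$ forces $Q=\lambda^nP(n)\not\equiv0$, so $Q(n+1)/Q(n)\to1$ while $\lambda P(n+1)/P(n)\to\lambda\ne1$; the leading-coefficient positivity $\frac{1}{((s_k-1)!)^2}\|N_k^{s_k-1}x\|^2>0$ that you invoke for both (iii) and the strictness claim is also correct and is exactly what prevents cancellation across the orthogonal summands. In short: no gap, and a reader who does not want to chase down \cite{AHS98} would be well served by your argument.
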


The first main step of the converse of Theorem \ref{DeltajImplies2miso} if $\Delta$ has finite rank now follows easily.
\begin{theorem}\label{characterization1}
Let $m\in \N$, and let $T\in \HB(\HH)$ such that $\Delta = T^*T - I$ is positive and has finite rank. Let $\HN=[\ran \Delta]_{T^*}$ and $A=P_{\HN}T|\HN$.

If  $T$ is a strict $2m$-isometry, then
 $\Delta\HN_w(A) \perp \HN_z(A)$ for all $w\ne z$, and
 $m=\max_{w\in \sigma(A)}\min\{i: \ker (A-w)^i = \HN_w\}$.

Furthermore,  $\Delta^{1/2}x\ne 0$ for each nonzero eigenvector $x$ of $A$.
\end{theorem}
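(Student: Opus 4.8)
The plan is to transfer everything to the finite-dimensional setting through Theorems \ref{A and T} and \ref{finite dim m isos}, exploiting the freedom to vary the weight vector $c=(c_0,\dots,c_{n-1})$ of positive reals. Since $\Delta$ has finite rank, Lemma \ref{rank and dim N} gives $n:=\dim \HN<\infty$, so all the norms $\|\cdot\|_c$ and the operators $A_c$ on $\HN_c$ are available. Because $T$ is a $2m$-isometry, Theorem \ref{A and T} (with $N=2m$) shows that $A_c$ is a $(2m-1)$-isometry on the finite-dimensional space $\HN_c$ for every such $c$. Applying Theorem \ref{finite dim m isos} to each $A_c$ we read off: $\sigma(A)\subseteq \T$; for every $c$ the root subspaces $\HN_w(A)$, $w\in \sigma(A)$, are mutually orthogonal with respect to $\langle\cdot,\cdot\rangle_c$; and $m\ge \min\{i:\ker(A-w)^i=\HN_w\}$ for every $w\in \sigma(A)$. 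In particular $m\ge \max_{w\in\sigma(A)}\min\{i:\ker(A-w)^i=\HN_w\}$, which is one inequality of the claimed equality.

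For the orthogonality statement, recall that the inner product of $\HN_c$ is $\langle x,y\rangle_c=\langle B_c x,y\rangle=\sum_{j=0}^{n-1}c_j\langle \Delta T^j x,T^j y\rangle$, where $B_c=\sum_{j=0}^{n-1}c_j {T^*}^j\Delta T^j$. Fix $w\ne z$ in $\sigma(A)$ and vectors $x\in \HN_w(A)$, $y\in \HN_z(A)$. By the orthogonality just noted, $\sum_{j=0}^{n-1}c_j\langle \Delta T^j x,T^j y\rangle=0$ for every tuple $c$ of positive reals; since the open positive orthant $(0,\infty)^n$ spans $\R^n$, each coefficient $\langle \Delta T^j x,T^j y\rangle$ must vanish, and the $j=0$ term gives $\langle \Delta x,y\rangle=0$. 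Hence $\Delta \HN_w(A)\perp \HN_z(A)$.

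It remains to prove $m\le \max_{w\in\sigma(A)}\min\{i:\ker(A-w)^i=\HN_w\}$. First note that $\HN\ne\{0\}$, since otherwise $\Delta=0$ and $T$ would be an isometry, hence not a strict $2m$-isometry; in particular $\sigma(A)\ne\emptyset$, so when $m=1$ the inequality is trivial. If $m\ge 2$, then, being a strict $2m$-isometry, $T$ is not a $(2m-1)$-isometry, so by Theorem \ref{A and T} (now with $N=2m-1\ge 2$) there is a tuple $c_0$ for which $A_{c_0}$ fails to be a $(2m-2)$-isometry; combined with the fact that $A_{c_0}$ is a $(2m-1)$-isometry, $A_{c_0}$ is a \emph{strict} $(2m-1)$-isometry, and the last part of Theorem \ref{finite dim m isos} forces $m=\max_{w\in\sigma(A)}\min\{i:\ker(A-w)^i=\HN_w\}$. (Here $\sigma(A)$, the spaces $\HN_w$, and the integers $\min\{i:\ker(A-w)^i=\HN_w\}$ depend only on the linear operator $A$, not on the norm placed on $\HN$.)

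Finally, let $D=\Delta^{1/2}$ and let $x\ne 0$ satisfy $Ax=wx$; then $|w|=1$ since $\sigma(A)\subseteq \T$. From the proof of Theorem \ref{A and T} we have $\|A^k x\|_c^2=\sum_{j=0}^{n-1}c_j\|DT^{j+k}x\|^2$ for all $k\ge 0$, while $\|A^k x\|_c=|w|^k\|x\|_c=\|x\|_c$. Subtracting these identities and letting $c$ range over all positive tuples yields $\|DT^{j+k}x\|=\|DT^j x\|$ for all $j,k$, and in particular $\|DT^k x\|=\|Dx\|$ for every $k\ge 0$. If $Dx=0$, then $\Delta T^k x=D(DT^k x)=0$ for all $k$, hence $B_c x=0$ and $\|x\|_c=0$; since $\|\cdot\|_c$ is a genuine norm on $\HN$ and $x\ne 0$, this is a contradiction, so $\Delta^{1/2}x\ne 0$. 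The one mildly delicate point throughout is the passage from the norm-dependent statements about $A_c$ to the norm-free conclusions about $\Delta$ and $A$, which is exactly where the freedom to choose the weights $c_j$ is used; everything else is routine bookkeeping with the two quoted theorems.
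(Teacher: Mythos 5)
Your proof is correct and follows essentially the same route as the paper: renorm $\HN$ with the weights $c$, apply Theorem \ref{A and T} to get that each $A_c$ is a $(2m-1)$-isometry (strictly so for some $\tilde c$, by strictness of $T$), read off the conclusions from Theorem \ref{finite dim m isos}, and exploit the freedom in $c$ to force each term $\la \Delta T^jx,T^jy\ra$ to vanish individually. The only (harmless) differences are that you treat the $m=1$ case separately, which is in fact slightly more careful than the paper since Theorem \ref{A and T} is stated for $N\ge 2$, and your eigenvector argument runs through $\|DT^kx\|=\|Dx\|$ rather than the paper's direct identity $\|x\|_c^2=\sum_i c_i|\lambda|^{2i}\|\Delta^{1/2}x\|^2$.
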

{\bf Remarks:} 1. The last sentence of the theorem combined with $\Delta \HN_w(A)\perp \HN_z(A)$ for all $z\ne w$ implies that if $T$ is a $2m$-isometry, then  $\card \sigma(A)\le \rank \Delta$.

2. If $\dim \HN <\infty$ and $\sigma(A)= \{w_1, \dots, w_k\}$, then let $p(z)= \prod_{j=1}^k(z-w_j)^{m_j}$ be the minimal polynomial of $A$. Then the condition $m=\max_{w\in \sigma(A)}\min\{i: \ker (A-w)^i = \HN_w\}$ is easily seen to be equivalent to $m=\max\{m_j: j=1, \dots, k\}$.

In particular, if $\dim \ker T^*=1$, then the condition  turns out to be equivalent to $m=\max_{w\in \sigma(A)} \dim \HN_w$. Indeed, since $\dim \HN<\infty,$
 Theorem \ref{representationVR} implies that $T = V \oplus R$, where $V$ is isometric and $R$ is unitarily equivalent to $(M_z,\HH(B))$ for some rational $B \in \HS(\C^k, \C)$. Then it follows from Lemma \ref{eigenspaces of M^*} that the minimal and characteristic polynomials of $A^*$ agree. Then the same is true for $A$  and
  $\dim \HN_w(A)=\min\{i: \ker (A-w)^i = \HN_w\}$ for all $w\in \sigma(A)$.

\begin{proof} Since $T$ is a strict $2m$-isometry Lemma \ref{rank and dim N} implies that the space $\HN$ is finite dimensional. Let $\dim \HN=N$. As in Theorem \ref{A and T} we consider tuples $c=(c_0,c_1,\dots,c_{N-1})$ with  $c_j>0$ for $j=0,\dots , N-1$, and we consider the Hilbert space $\HN_c$ which equals $\HN$, but with the norm $\|\cdot \|_c$. We write $A_c$ to denote the Hilbert space operator $A$ acting in $\HN_c$. According to Theorem \ref{A and T} $A_c$ is a $2m-1$-isometry for each such $c$. Furthermore, if $A_c$ was a $2m-2$-isometry for every $c$, then by  Theorem \ref{A and T}  $T$ would be a $2m-1$ isometry. Thus,  since $T$ is a strict $2m$-isometry, there is a tuple $\tilde{c}$ such that the operator $A_{\tilde{c}}$ is a strict $2m-1$-isometry.

Write $\sigma(A)=\{w_1, \dots, w_n\}$. Then $\sigma(A_c)=\sigma(A)$ and $A_c$ and $A$ are similar for all $c$ and all $1 \le i \le n$, hence the condition that $m=\max_{w\in \sigma(A)}\min\{i: \ker (A-w)^i = \HN_w\}$ follows immediately from  Theorem \ref{finite dim m isos}.  Condition (ii) of Theorem \ref{finite dim m isos} implies that for each $c$ we have
  $\HN_{w_k} \perp \HN_{w_j}$ with respect to $\la \cdot, \cdot \ra_c$ whenever $j\ne k$. Thus, if $j \ne k$ and if $x\in \HN_{w_j}$, $y\in \HN_{w_k}$, then
 $$\sum_{i=0}^{N-1} c_i \la \Delta T^ix,T^iy\ra =0.$$
 The spaces $\HN_{w}$ do not depend on the $c_i$'s, hence we conclude that each term in the sum had to be 0. In particular, $\la \Delta x,y\ra =0$. This shows $\Delta\HN_w(A) \perp \HN_z(A)$ for all $w\ne z$.

If $x\in \HN$ is an eigenvector for $A$, say $Ax=\lambda x$ for some $\lambda\in \C$, then $|\lambda|^{2i}\|\Delta^{1/2}x\|^2= \|\Delta^{1/2}A^ix\|^2=\|\Delta^{1/2}T^ix\|^2$. Hence for any $c$ as above we have $\|x\|_c^2= \sum_{i=0}^{2m}{c_i}|\lambda|^{2i}\|\Delta^{1/2}x\|^2$. Since $\| \cdot\|_c$ is a norm on $\HN$ we conclude that if $x\ne 0$, then $\Delta^{1/2}x\ne 0$.
\end{proof}

With the help of a lemma from linear algebra we will transform the condition  of the previous theorem into an equivalent condition that will be easy to apply later.
Let $T\in \HB(\HH)$, then if $f$ is analytic in a neighborhood of $\sigma(T)$ the Riesz Dunford functional calculus is defined by $$f(T)=\frac{1}{2\pi i} \int_\gamma f(z)(z-T)^{-1} dz,$$ where $\gamma$ is a curve that surrounds $\sigma(T)$ once in the positive direction. If $\tilde{f}(z)=\overline{f(\overline{z})}$, then it is well-known that $f(T)^*=\tilde{f}(T^*)$.

If $A$ is an $n\times n$ matrix with $\sigma(A)=\{w_1, \dots, w_k\}$, then let $f_i$ be 1 in a neighborhood of $w_i$ and 0 in an open set that includes all $w_j$ with $j \ne i$. Then $f_i(A)$ satisfies $f_i(A)^2=f_i(A)$, it has range equal to $\HN_{w_i}(A)$, and $\sum_{i=1}^k f_i(A)=I$. Also note that $\tilde{f}_i$ is 1 in a neighborhood of $\overline{w}_i$ and 0 elsewhere, so $f_i(A)^*=\tilde{f}(A^*)$ is an idempotent with  range $\HN_{\overline{w}_i}(A^*)$.

\begin{lemma}\label{existencedelta}
Let $A$ be an $n\times n$ matrix with $\sigma(A)=\{w_1, \dots, w_k\}$ and let $\Delta$ be a positive definite $n\times n$ matrix.

Then $\Delta \HN_{w_i}(A) \perp \HN_{w_j}(A)$ for all $i \ne j$, if and only if $$\Delta= \sum_{j=1}^k \Delta_j,$$
 where $\Delta_j\ge 0 $ and $\ran \Delta_j  \subseteq \HN_{\overline{w}_j}(A^*)$ for each $j=1, \dots, k$.
\end{lemma}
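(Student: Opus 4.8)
The plan is to work with the Riesz idempotents $E_i := f_i(A)$ introduced just before the lemma, where I take the defining functions $f_1,\dots,f_k$ to have pairwise disjoint supports. Then $f_\ell f_i = \delta_{\ell i} f_i$ near $\sigma(A)$, so multiplicativity of the Riesz--Dunford calculus gives $E_\ell E_i = \delta_{\ell i} E_i$ and $\sum_{i=1}^k E_i = I$, while by the discussion in the excerpt $\ran E_i = \HN_{w_i}(A)$ and $\ran E_i^* = \HN_{\overline{w}_i}(A^*)$. The first step is then a reformulation: since $\ran E_i = \HN_{w_i}(A)$, the condition $\Delta\HN_{w_i}(A)\perp\HN_{w_j}(A)$ says precisely that $\langle \Delta E_i x, E_j y\rangle = 0$ for all vectors $x,y$, i.e. $\langle E_j^*\Delta E_i x, y\rangle = 0$ for all $x,y$, i.e. $E_j^*\Delta E_i = 0$. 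So the hypothesis of the lemma is equivalent to $E_j^*\Delta E_i = 0$ for all $i\ne j$.

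For the direction ``$\Rightarrow$'' I would expand $\Delta = (\sum_j E_j)^*\Delta(\sum_i E_i) = \sum_{i,j} E_j^*\Delta E_i$ using $\sum_i E_i = I$; by the reformulation all off-diagonal terms vanish, so $\Delta = \sum_{j=1}^k \Delta_j$ with $\Delta_j := E_j^*\Delta E_j$. Each $\Delta_j$ is positive, since $\langle \Delta_j x, x\rangle = \langle \Delta E_j x, E_j x\rangle \ge 0$ (only $\Delta\ge 0$ is used here), and $\ran\Delta_j \subseteq \ran E_j^* = \HN_{\overline{w}_j}(A^*)$. This is exactly the asserted decomposition.

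For ``$\Leftarrow$'', suppose $\Delta = \sum_\ell \Delta_\ell$ with $\Delta_\ell\ge 0$ and $\ran\Delta_\ell \subseteq \HN_{\overline{w}_\ell}(A^*) = \ran E_\ell^*$. Since $E_\ell^*$ is idempotent it fixes its range, so $E_\ell^*\Delta_\ell = \Delta_\ell$; taking adjoints and using $\Delta_\ell = \Delta_\ell^*$ gives $\Delta_\ell E_\ell = \Delta_\ell$, hence $\Delta_\ell = E_\ell^*\Delta_\ell E_\ell$. Therefore, for $i\ne j$, $E_j^*\Delta_\ell E_i = E_j^* E_\ell^*\Delta_\ell E_\ell E_i = (E_\ell E_j)^*\Delta_\ell(E_\ell E_i) = \delta_{\ell j}\delta_{\ell i}\, E_j^*\Delta_\ell E_i = 0$, the last equality because $\delta_{\ell j}\delta_{\ell i} = 0$ whenever $i\ne j$. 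Summing over $\ell$ gives $E_j^*\Delta E_i = 0$ for all $i\ne j$, which by the reformulation is the required orthogonality.

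I do not expect a genuine obstacle here: the argument is essentially bookkeeping with the idempotents $E_i$, and the one point to handle with care is the correct identification of the ranges $\ran E_i = \HN_{w_i}(A)$ and $\ran E_i^* = \HN_{\overline{w}_i}(A^*)$ (recorded in the excerpt), together with the choice of the $f_i$ with pairwise disjoint supports so that $E_\ell E_i = \delta_{\ell i} E_i$. It is worth noting in the write-up that only positivity of $\Delta$ is actually used, not definiteness, and that the decomposition $\Delta = \sum_j \Delta_j$ produced above is not claimed to be unique.
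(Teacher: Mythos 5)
Your proof is correct and follows essentially the same route as the paper: both use the Riesz idempotents $f_i(A)$ and their adjoints $\tilde f_i(A^*)$, define $\Delta_j = f_j(A)^*\Delta f_j(A)$ for the forward direction via $\sum_i f_i(A)=I$, and exploit $f_\ell(A)f_i(A)=0$ for $\ell\ne i$ in the converse. Your converse is phrased at the operator level ($E_j^*\Delta E_i=0$) rather than via inner products of vectors in the root spaces, but this is only a cosmetic difference, and your observation that only $\Delta\ge 0$ is needed is consistent with the paper's argument.
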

Note that if the conditions of the lemma are satisfied, then the geometry of the spaces $\HN_{\overline{w}_j}(A^*)$ implies that $\dim \HN= \sum_{j=1}^k \dim \HN_{\overline{w}_j}(A^*)$ and $\rank \Delta = \sum_{j=1}^k \rank \Delta_j$.
\begin{proof} Note that $f_i(A)f_j(A)=0$ and $\tilde{f}_i(A^*)\tilde{f}_j(A^*)=0$ for all $i \ne j$.

First suppose that $\Delta \HN_{w_i}(A) \perp \HN_{w_j}(A)$ for all $i \ne j$. Then for all $i\ne j$ we have $\tilde{f}_j(A^*)\Delta f_i(A)=0$.
Hence $$\Delta =\left(\sum_{j=1}^k \tilde{f}_j(A^*)\right) \Delta \left(\sum_{i=1}^k f_i(A)\right)=\sum_{j=1}^k \tilde{f}_j(A^*)\Delta f_j(A).$$ Set $\Delta_j=\tilde{f}_j(A^*)\Delta f_j(A)= f_j(A)^* \Delta f_j(A)$. Then $\Delta_j \ge 0$, $\Delta=\sum_{j=1}^k\Delta_j$ and $\ran \Delta_j \subseteq \ran \tilde{f}_j(A^*)=\HN_{\overline{w}_j}(A^*)$.

Conversely, assume that $\Delta= \sum_{j=1}^k \Delta_j$ for  nonnegative matrices $\Delta_j$ that satisfy  $\ran \Delta_j  \subseteq \HN_{\overline{w}_j}(A^*)$ for each $j=1, \dots, k$. Let $i\ne j$ and let $x\in \HN_{w_i}(A), y\in \HN_{w_j}(A)$. Then $x=f_i(A)x$ and $y=f_j(A)y$, and $\Delta_m=\tilde{f}_m(A^*)\Delta_m$ for each $m$, hence
\begin{align*}\la \Delta x,y\ra &=\sum_{m=1}^k \la  \tilde{f}_m(A^*)\Delta_m x, f_j(A)y\ra\\
&=\sum_{m=1}^k \la \Delta_m x, f_m(A)f_j(A)y\ra\\
&=\la \Delta_j x, y\ra\\
&=\la x, \Delta_j y\ra\\
&=\la f_i(A) x, \tilde{f}_j(A^*) \Delta_j y \ra\\
&=\la f_j(A) f_i(A)x,y\ra\\
&=0\end{align*}
Hence $\Delta \HN_{w_i}(A) \perp \HN_{w_j}(A)$ for all $i\ne j$.
\end{proof}

Finally we can prove the remaining direction of Theorem \ref{twonisometry} (b).

\begin{theorem}\label{characterizationR}
Let $T\in \HB(\HH)$ be a $2m$-isometry such that $\Delta=T^*T-I$ is positive and has finite rank.

Then there are $w_1,\dots, w_k\in \T$ and positive operators $\Delta_1, \dots ,\Delta_k$ such that
 $\Delta= \sum_{j=1}^k \Delta_j$ and  $(T^*-\overline{w}_j)^m\Delta_j =0$ for each $j=1,\dots , k$.

If  $p(z)=\prod_{j=1}^k(z-\overline{w}_j)^{m_j}$ is the minimal polynomial of $T^*|\HN$, then $T$ is a strict $2m_0$-isometry, where $m_0=\max\{ m_j: j=1, \dots, k\}$.
\end{theorem}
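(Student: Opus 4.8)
We may assume $\Delta\ne 0$, since otherwise $T$ is isometric and both assertions are vacuous. The idea is to reduce to a \emph{strict} even-order isometry, apply the finite-dimensional machinery of Theorems \ref{A and T}, \ref{finite dim m isos}, \ref{characterization1} and Lemma \ref{existencedelta} to $A=P_{\HN}T|\HN$, and then transport conclusions about $A^*=T^*|\HN$ back to $T^*$ using that $\HN$ is $T^*$-invariant.

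First I would determine the exact isometry order. Let $N\ge 1$ be minimal with $\beta_N(T)=0$; then $T$ is a strict $N$-isometry and $N\le 2m$. If $N=2l+1$ were odd, then Corollary \ref{OddIsEven} (applicable since $\Delta\ge 0$ has finite rank) would give $\beta_{2l}(T)=0$, contradicting minimality of $N$; hence $N=2M$ with $1\le M\le m$. By Lemma \ref{rank and dim N}, $\dim\HN<\infty$. By Theorem \ref{A and T} every $A_c$ is a $(2M-1)$-isometry, and $\sigma(A_c)=\sigma(A)$, so Theorem \ref{finite dim m isos}(i) gives $\sigma(A)\subseteq\T$; write $\sigma(A)=\{w_1,\dots,w_k\}\subseteq\T$.

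Next I would apply Theorem \ref{characterization1} to the strict $2M$-isometry $T$: this yields $\Delta\,\HN_{w_i}(A)\perp\HN_{w_j}(A)$ for $i\ne j$, together with $M=\max_{w\in\sigma(A)}\min\{i:\ker(A-w)^i=\HN_w(A)\}$. Since $\Delta=\Delta^*$ and $\ran\Delta\subseteq\HN$, we have $\Delta=P_{\HN}\Delta P_{\HN}$, so $\Delta$ restricts to a positive operator on $\HN$; applying Lemma \ref{existencedelta} to $A$ and this restriction (the implication needed there uses only positivity of the matrix, not its invertibility, as its proof shows) produces positive operators $\Delta_1,\dots,\Delta_k$ with $\Delta=\sum_{j=1}^k\Delta_j$ and $\ran\Delta_j\subseteq\HN_{\overline{w}_j}(A^*)$ for each $j$. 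Now $\HN$ is $T^*$-invariant with $T^*|\HN=A^*$, and if $p(z)=\prod_{j=1}^k(z-\overline{w}_j)^{m_j}$ is the minimal polynomial of $A^*$, then $\HN_{\overline{w}_j}(A^*)=\ker(A^*-\overline{w}_j)^{m_j}$, so $(T^*-\overline{w}_j)^{m_j}\Delta_j=(A^*-\overline{w}_j)^{m_j}\Delta_j=0$. Since $q(A)=0$ if and only if $\tilde{q}(A^*)=0$, the minimal polynomial of $A^*$ is $\tilde{q}$ with $q$ the minimal polynomial of $A$, whence the exponents $m_j$ are exactly the largest Jordan block sizes of $A$, i.e. $\max_j m_j=\max_w\min\{i:\ker(A-w)^i=\HN_w(A)\}=M\le m$. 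Therefore $(T^*-\overline{w}_j)^m\Delta_j=0$ for every $j$, which is the first assertion.

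The strictness claim now follows: with $m_0:=\max\{m_j:1\le j\le k\}$ we have just shown $m_0=M$, and by the first step $T$ is a strict $2M$-isometry, i.e. a strict $2m_0$-isometry. I do not foresee a genuine obstacle; the only points that need a moment's care are the bookkeeping relating the eigenvalues and Jordan block sizes of $A$ and those of $A^*$, and the observation that the implication we use from Lemma \ref{existencedelta} goes through for a merely positive (rather than positive definite) matrix. The substantive input, the orthogonality relation $\Delta\,\HN_{w_i}(A)\perp\HN_{w_j}(A)$, is exactly what Theorem \ref{characterization1} supplies.
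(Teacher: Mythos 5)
Your proposal is correct and follows essentially the same route as the paper: the paper's own proof is the one-line observation that the theorem follows from Theorem \ref{characterization1}, Lemma \ref{existencedelta}, and the identity $T^*|\HN=A^*$. You have merely made explicit the reduction to the strict case via Corollary \ref{OddIsEven} and the (harmless) remark that Lemma \ref{existencedelta} needs only positivity of $\Delta$, both of which the paper leaves implicit.
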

\begin{proof}
This theorem follows from Theorem \ref{characterization1}, Lemma \ref{existencedelta}, and the observation that $T^*|\HN=A^*$, where $A=P_\HN T|\HN$.
\end{proof}

\begin{corollary}\label{defectrankone}
Let $m\in \N$,  $T\in \HB(\HH)$ be such that $\Delta = T^*T - I = \eta \otimes \eta$. Let $\HN = [\eta]_{T^*}$ Then the following are equivalent:

(a) $T$ is a strict $2m$-isometry,

(b) $\dim \HN = m$ and there exists $w \in \T$ such that $(T^*-\overline{w})^m \eta = 0$,

(c) there exists $w \in \T$ such that $p(z)=(z-\overline{w})^m$ is the characteristic polynomial of $T^*|\HN$.
\end{corollary}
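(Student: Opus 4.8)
The plan is to derive Corollary~\ref{defectrankone} directly from Theorems~\ref{characterization1} and \ref{characterizationR}, specialized to the rank one situation $\Delta = \eta\otimes\eta$, where $\HN = [\eta]_{T^*}$ equals $[\ran\Delta]_{T^*}$ since $\ran\Delta = \C\eta$. Note first that $\rank\Delta = 1$, so in all the cited results we automatically have $\dim\HN < \infty$ by Lemma~\ref{rank and dim N}; in fact $\dim\HN \le m$ when $T$ is a $2m$-isometry.

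First I would prove (a) $\Rightarrow$ (c). Assume $T$ is a strict $2m$-isometry. Let $A = P_\HN T|\HN$, so $\sigma(T^*|\HN) = \sigma(A^*)$. By Theorem~\ref{characterizationR} there are $w_1,\dots,w_k\in\T$ and positive operators $\Delta_1,\dots,\Delta_k$ with $\Delta = \sum_j\Delta_j$ and $(T^*-\overline{w_j})^m\Delta_j = 0$, and the minimal polynomial of $T^*|\HN$ is $p(z)=\prod_{j=1}^k(z-\overline{w_j})^{m_j}$ with $m = \max_j m_j$. But by Remark~1 following Theorem~\ref{characterization1}, $\card\sigma(A) \le \rank\Delta = 1$, so $k = 1$; write $w = w_1$. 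Then the minimal polynomial of $T^*|\HN$ is $(z-\overline{w})^{m_1}$ with $m_1 = m$. Since $\dim\HN \le m$ and the minimal polynomial already has degree $m$, we get $\dim\HN = m$ and the minimal polynomial equals the characteristic polynomial, namely $(z-\overline{w})^m$. This gives (c), and also (b) as a byproduct: $(T^*-\overline{w})^m$ annihilates $\HN \ni \eta$, and $\dim\HN = m$.

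Next, (c) $\Rightarrow$ (b) is immediate: if $p(z) = (z-\overline{w})^m$ is the characteristic polynomial of $T^*|\HN$ then $\dim\HN = \deg p = m$, and since $\eta\in\HN$ and $p(T^*|\HN) = 0$ by Cayley--Hamilton, $(T^*-\overline{w})^m\eta = 0$. For (b) $\Rightarrow$ (a), assume $\dim\HN = m$ and $(T^*-\overline{w})^m\eta = 0$. Setting $\Delta_1 = \Delta = \eta\otimes\eta$ (a single summand, $k=1$), we have $(T^*-\overline{w})^m\Delta = 0$ because $\Delta x = \la x,\eta\ra\eta$ and $(T^*-\overline{w})^m\eta = 0$. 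Theorem~\ref{DeltajImplies2miso} then shows $T$ is a $2m$-isometry. To see it is \emph{strict}, I would invoke the last sentence of Theorem~\ref{characterizationR}: the minimal polynomial of $T^*|\HN$ divides $(z-\overline{w})^m$, so it is $(z-\overline{w})^{m_1}$ for some $m_1\le m$, and $T$ is a strict $2m_1$-isometry; but $\HN = [\eta]_{T^*}$ is the cyclic subspace generated by $\eta$ under $T^*$, so its dimension equals the degree of the minimal polynomial of $T^*|\HN$, whence $m_1 = \dim\HN = m$. Therefore $T$ is a strict $2m$-isometry.

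The main obstacle I anticipate is the bookkeeping around minimal versus characteristic polynomials on $\HN$, and making sure the cyclicity of $\HN = [\eta]_{T^*}$ is used correctly to equate $\dim\HN$ with the degree of the minimal polynomial of $T^*|\HN$ --- this is what forces $k=1$ and pins down the multiplicity. Once that identification is in hand, each implication is short, and the positivity/$2m$-isometry input is entirely supplied by Theorems~\ref{DeltajImplies2miso}, \ref{characterization1}, and \ref{characterizationR}.
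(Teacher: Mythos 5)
Your proof is correct and follows essentially the same route as the paper's: both rest on the Section \ref{generalExpansiveMIsos} machinery (Theorems \ref{DeltajImplies2miso}, \ref{characterization1}, \ref{characterizationR}) together with the observation that $\eta$ is cyclic for $T^*|\HN$, which identifies the minimal and characteristic polynomials of $T^*|\HN$ and equates $\dim \HN$ with their common degree. The only difference is organizational: the paper extracts the single point $w$ directly from Theorem \ref{twonisometry} and the rank-one form of $\Delta$, whereas you force $k=1$ via the bound $\card \sigma(A)\le \rank \Delta$ from the remark following Theorem \ref{characterization1}.
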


\begin{proof}
Since $\eta$ is a cyclic vector for $T^*|\HN$ it is clear that (b) and (c) are equivalent, and that the characteristic polynomial of $T^*|\HN$ equals its minimal polynomial. Thus, if (b) holds then Theorems 1.4 or 8.10  ensure that $T$ is a strict $2m$-isometry, i.e. condition (b) implies condition (a).

In order to prove (a) $\Rightarrow$ (b)  we suppose that $T$ is a strict $2m$-isometry. Then Theorem 1.4 implies that there exists $w \in \T$ such that $(T^*-\overline{w})^m \eta = 0$. This implies that $\dim \HN \le m$. If $\dim \HN < m$, then the minimal polynomial of $T^*|\HN$ is not $(z-\overline{w})^m$, which in turn implies that $T$ is not a strict $2m$-isometry. Hence $\dim \HN = m$.
\end{proof}

\section{Expansive $2m$-isometries, $\rank \Delta = 1$} \label{SectionLocalDiri}
Let $m \in \N$. If $w\in \T$, then we define the local Dirichlet space of order $m$ at $w$ by
\begin{align*}\HD_w^m=\{p+(z-w)^mg: p \text{ is a polynomial of degree }<m \text{ and } g\in H^2\}.\end{align*}
One easily checks that if  $f\in \HD_w^m$, then the polynomial $p$ of degree $<m$  and the function $g\in H^2$ such that $f=p+(z-w)^m g$ are unique. Thus, if $f\in H^2$, then $f\in \HD_w^m$ if and only if there is a (unique) polynomial $p$ of degree $<m$ such that $g=(f-p)/(z-w)^m \in H^2$. We define the local Dirichlet integral of order $m$ of $f$ at $w$ by
$$D_w^m(f)=\inf\{\|\frac{f-p}{(z-w)^m}\|^2_{H^2}: p \text{ is a polynomial of degree }<m\}.$$

It is clear that if $f\in \HD_w^m$ extends to be analytic in a neighborhood of $w$, then
\begin{align} \label{localDiri}D_w^m(f)= \int_{|z|=1} \left| \frac{f(z)-T_{m-1}(f,w)(z)}{(z-w)^m}\right|^2\frac{|dz|}{2\pi},\end{align}
where $T_{m-1}(f,w)$ be the $(m-1)$-th order Taylor polynomial of $f$ at $w$, $$T_{m-1}(f,w)(z)=\sum_{j=0}^{m-1} \frac{f^{(j)}(w)}{j!}(z-w)^j.$$
\begin{lemma} \label{nontangential} Let $m\in \N$, $w\in \T$, and $f\in H^2$, then $f\in \HD_w^m$, if and only if  for each $j=0,\dots, m-1$ the function $f^{(j)}$ has nontangential limit $b_j$ at $w$, and $D_w^m(f)$ as defined in (\ref{localDiri}) with $T_{m-1}(f,w)(z)=\sum_{j=0}^{m-1} \frac{b_j}{j!}(z-w)^j$ is finite.\end{lemma}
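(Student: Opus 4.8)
The plan is to prove the two implications separately, working directly from the definition $\HD_w^m=\{p+(z-w)^mg:\deg p<m,\ g\in H^2\}$ and relying on one classical function-theoretic input, namely Smirnov's theorem.

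For the forward implication I would start from a representation $f=p+(z-w)^mg$ with $\deg p<m$ and $g\in H^2$. Since $g$ is holomorphic in $\D$ it satisfies the standard coefficient bound $|g^{(\ell)}(z)|\le C_\ell\|g\|_{H^2}(1-|z|)^{-\ell-1/2}$, and on a Stolz angle at $w$ one has $|z-w|\le C(1-|z|)$. Expanding $[(z-w)^mg]^{(j)}$ by Leibniz's rule for $0\le j\le m-1$, every term carries a factor $(z-w)^{m-i}$ with $m-i\ge 1$ together with a derivative $g^{(j-i)}$, so each term is $O((1-|z|)^{m-j-1/2})\to 0$ as $z\to w$ nontangentially. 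Hence $f^{(j)}$ has nontangential limit $p^{(j)}(w)$ at $w$, so the limits $b_j$ exist and $T_{m-1}(f,w)=p$ by Taylor's formula for polynomials. Passing to boundary values in $f=p+(z-w)^mg$ gives $g^*=(f^*-p)/(z-w)^m$ a.e. on $\T$, so the integral in (\ref{localDiri}) equals $\|g\|_{H^2}^2<\infty$, as required.

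For the converse I would set $p=T_{m-1}(f,w)$ (well defined since the $b_j$ exist), $h=f-p\in H^2$, and $g=h/(z-w)^m$. Because $|w|=1$, the function $(z-w)^m$ has no zeros in $\D$, so $g$ is holomorphic in $\D$ and $f=p+(z-w)^mg$; the only point left is $g\in H^2$. The key observation is that $(z-w)^m$ is an outer function (it is bounded, zero-free in $\D$, and $\frac1{2\pi}\int_\T\log|e^{i\theta}-w|\,d\theta=0$), hence $g$, being the quotient of the $H^2$ function $h$ by an outer function, lies in the Smirnov class $N^+$. Its nontangential boundary values then exist a.e. and satisfy $(e^{i\theta}-w)^mg^*(e^{i\theta})=h^*(e^{i\theta})$, so that $|g^*|^2=|f^*-T_{m-1}(f,w)|^2/|e^{i\theta}-w|^{2m}$ is integrable by the finiteness hypothesis on (\ref{localDiri}). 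Smirnov's theorem — an $N^+$ function whose boundary function lies in $L^2(\T)$ belongs to $H^2$ — then gives $g\in H^2$, whence $f\in\HD_w^m$.

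The main obstacle is precisely the step $g\in H^2$: a function holomorphic in $\D$ with square-integrable boundary values need not belong to $H^2$ (for instance $e^{(1+z)/(1-z)}$), so one genuinely needs the extra structural information that $g$ is a quotient of an $H^2$ function by an outer function, together with Smirnov's theorem. The remaining ingredients — the growth estimates for $g^{(\ell)}$, the Leibniz expansion, and the boundary-value identities — are routine and I would not spell them out in detail.
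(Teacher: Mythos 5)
Your proposal is correct and follows essentially the same route as the paper: the forward implication via the pointwise derivative bound $|g^{(\ell)}(z)|\lesssim (1-|z|)^{-\ell-1/2}$ combined with the factor $|z-w|\lesssim 1-|z|$ on a Stolz angle, and the converse via the observation that $(z-w)^m$ is outer, so that $(f-T_{m-1}(f,w))/(z-w)^m$ lies in the Smirnov class and hence is in $H^2$ as soon as its boundary values are square-integrable. The only difference is presentational: you spell out the Leibniz expansion and the Smirnov step slightly more explicitly than the paper does.
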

\begin{proof} Note that $(z-w)^m$ is an outer function. Thus, if $f\in H^2$ and if $p$ is any polynomial, then $(f-p)/(z-w)^m$ is in the Smirnov class, and hence it will be in $H^2$, whenever it is in $L^2(\T)$.

If $f \in \HD_w^m$, then $f = p+(z-w)^m g$ for some $g \in H^2$ and a  polynomial $p$ of degree $< m$. Let $i$ be an integer with $0\le i\le m-1$. Then since $g \in H^2$, we have
$$g^{(i)}(z) = \langle g, \frac{{\partial}^i k_z^{H^2}}{\partial \overline{z}^i}\rangle_{H^2}.$$
Thus,
$$|g^{(i)}(z)| \precsim \frac{1}{(1-|z|)^{i+1/2}}.$$
Then in the region
$$\Gamma_\alpha(w) = \{z \in \D: |z-w| < \alpha (1-|z|)\}$$
the function $((z-w)^mg)^{(j)}(z), 0 \leq j \leq m -1$, goes to zero as $z \rightarrow w$.
Thus, for each $j=0,\dots, m-1$ the function $f^{(j)}$ has nontangential limit equal to $b_j=p^{(j)}(w)$ at $w$. Hence $p(z)=\sum_{j=0}^{m-1}\frac{b_j}{j!}(z-w)^j=T_{m-1}(f,w)(z)$. \end{proof} If $f\in \HD_w^m$, then for $0\le j\le m-1$ we will write $f^{(j)}(w)$ to denote the  limit of $f^{(j)}(z)$ as $z\to w$ nontangentially.
We define a norm on $\HD_w^m$ by
$$\|f\|^2=\|f\|^2_{H^2}+D_w^m(f).$$
We note that if $m=1$, then we obtain the local Dirichlet integral $D_w^1(f)=D_w(f)$ that was introduced in \cite{RS91}.
\begin{lemma} \label{boundedFunctional} Let $m\in \N$, $w\in \T$. Then for each $0\le j\le m-1$ the functional $f \to f^{(j)}(w)$ is bounded on $\HD_w^m$. \end{lemma}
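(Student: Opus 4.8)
The plan is to reduce the statement to the elementary fact that every linear functional on a finite-dimensional normed space is bounded. The key observation is that the only part of $f$ that "sees" the derivatives at $w$ is the polynomial part of its canonical decomposition, and that polynomial part is controlled by the $\HD_w^m$-norm $\|f\|$.

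First I would invoke the decomposition recalled just before the lemma: given $f\in\HD_w^m$, write $f=p+(z-w)^m g$ with $p$ a polynomial of degree $<m$ and $g\in H^2$; this decomposition is unique, and consequently $D_w^m(f)=\|g\|_{H^2}^2$, because for any polynomial $\tilde p$ of degree $<m$ other than $p$ the quotient $(f-\tilde p)/(z-w)^m$ has a pole at $w$ and hence fails to lie in $H^2$, so $\|g\|_{H^2}^2$ is the only competitor in the infimum defining $D_w^m(f)$. By Lemma~\ref{nontangential} (and its proof) one has $p=T_{m-1}(f,w)$ and $f^{(j)}(w)=p^{(j)}(w)$ for $0\le j\le m-1$, since the first $m-1$ derivatives of $(z-w)^m g$ have nontangential limit $0$ at $w$.

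Next I would estimate the polynomial part. Multiplication by $(z-w)^m$ is bounded on $H^2$ with norm $\sup_{z\in\overline{\D}}|z-w|^m=2^m$, so
\[
\|p\|_{H^2}\ \le\ \|f\|_{H^2}+\|(z-w)^m g\|_{H^2}\ \le\ \|f\|_{H^2}+2^m\|g\|_{H^2}\ =\ \|f\|_{H^2}+2^m\sqrt{D_w^m(f)}\ \le\ (1+2^m)\,\|f\|.
\]
Thus $f\mapsto T_{m-1}(f,w)$ maps $\HD_w^m$ boundedly into the space $\mathcal{P}_{m-1}$ of polynomials of degree $<m$ equipped with the $H^2$-norm. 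Since $\mathcal{P}_{m-1}$ is finite dimensional, for each $0\le j\le m-1$ the functional $q\mapsto q^{(j)}(w)$ on $\mathcal{P}_{m-1}$ is bounded in the $H^2$-norm, say $|q^{(j)}(w)|\le C_{m,j}\|q\|_{H^2}$, and combining the two estimates yields $|f^{(j)}(w)|=|p^{(j)}(w)|\le C_{m,j}(1+2^m)\|f\|$, proving the lemma. There is no genuine obstacle here; the only points requiring care are the identification $D_w^m(f)=\|g\|_{H^2}^2$ (which rests on uniqueness of the decomposition) and the passage $f^{(j)}(w)=p^{(j)}(w)$ for the nontangentially defined derivatives (which is exactly Lemma~\ref{nontangential}). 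One could alternatively represent $q\mapsto q^{(j)}(w)$ on $\mathcal{P}_{m-1}$ by an explicit reproducing-kernel element, but the dimension count is quicker and gives the same conclusion.
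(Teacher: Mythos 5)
Your proof is correct, but it takes a different route from the paper's. The paper's argument is soft: it observes (from Lemma \ref{nontangential}) that $\sup\{|f^{(j)}(z)| : z\in\Gamma_\alpha(w)\}<\infty$ for each individual $f\in\HD_w^m$, notes that each interior evaluation $f\mapsto f^{(j)}(z)$ is a bounded functional on $\HD_w^m$ (via the contractive inclusion into $H^2$), and then invokes the uniform boundedness principle to get a uniform bound over the Stolz angle, which passes to the nontangential limit. That argument implicitly needs $\HD_w^m$ to be complete in its norm. You instead exploit the unique decomposition $f=p+(z-w)^mg$ directly: the identification $D_w^m(f)=\|g\|_{H^2}^2$ (correct, since for any other competitor $\tilde p$ the function $(p-\tilde p)/(z-w)^m$ fails to be square-integrable on $\T$, so the infimum has a single finite term), the bound $\|p\|_{H^2}\le(1+2^m)\|f\|$, and finite-dimensionality of $\mathcal{P}_{m-1}$. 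This yields an explicit constant, avoids both the uniform boundedness principle and any completeness question, and is arguably more elementary; its only cost is that you must verify linearity of $f\mapsto p$ (immediate from uniqueness of the decomposition) and the identification $f^{(j)}(w)=p^{(j)}(w)$, which is exactly the content of Lemma \ref{nontangential}. One cosmetic quibble: saying $(f-\tilde p)/(z-w)^m$ ``has a pole at $w$'' is loose language for a boundary point $w\in\T$; the accurate statement is that $(p-\tilde p)/(z-w)^m\notin L^2(\T)$, which is what your argument actually uses.
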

\begin{proof} Note that by Lemma \ref{nontangential} we have $\sup \{|f^{(j)}(z)|: z\in \Gamma_\alpha(w)\}<\infty$ for every $f \in \HD_w^m$. Thus, the lemma  follows from the uniform boundedness principle and Lemma \ref{nontangential}.\end{proof}

\begin{lemma}\label{DeltaLocalDiri} Let $f\in H^2$. Then $f\in \HD_w^m$ if and only if $zf \in \HD_w^m$. Furthermore, if $g\in H^2$ such that for $z\in \D$ we have  $$f(z)-(z-w)^mg(z)= \sum_{j=0}^{m-1}a_j(z-w)^j,$$ then
$$D_w^m(zf)=D_w^m(f)+|a_{m-1}|^2.$$\end{lemma}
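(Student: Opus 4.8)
The plan is to reduce everything to the definition of $D_w^m$ as an infimum over polynomials of degree $<m$. First I would dispose of the equivalence $f\in\HD_w^m \iff zf\in\HD_w^m$. If $f = p + (z-w)^m g$ with $\deg p < m$ and $g\in H^2$, then write $zf = zp + (z-w)^m(zg)$; since $\deg(zp)\le m$, divide out the leading term by writing $zp(z) = c(z-w)^m + r(z)$ where $c$ is the leading coefficient of $zp$ (equivalently $c = p^{(m-1)}(w)/(m-1)!$ in the sense of nontangential limits) and $\deg r < m$. Then $zf = r + (z-w)^m(zg + c)$, exhibiting $zf\in\HD_w^m$. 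Conversely, if $zf\in\HD_w^m$ then $zf = q + (z-w)^m h$; evaluating the constant coefficient forces $q(0)=0$ (since $(zf)(0)=0$ and $(z-w)^m h$ has constant term $(-w)^m h(0)$), so after matching, $f = q/z + (z-w)^m (h/z - \text{correction})$, and one checks $q/z$ is again a polynomial of degree $<m$ and the remaining piece lies in $H^2$ because $(z-w)^m$ is outer and the quotient is in the Smirnov class (this is the same mechanism invoked in the proof of Lemma~\ref{nontangential}). I expect the bookkeeping here to be the main nuisance, though it is entirely routine.

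For the norm identity, I would work directly with the given decomposition $f(z) - (z-w)^m g(z) = \sum_{j=0}^{m-1} a_j (z-w)^j =: p(z)$. The key observation is that this $g$ is the \emph{unique} function in $H^2$ making $f - p \in (z-w)^m H^2$ for \emph{this particular} $p$, and by the uniqueness statement at the start of the section, $D_w^m(f) = \|g\|_{H^2}^2$ exactly (the infimum over polynomials of degree $<m$ is attained at $p = T_{m-1}(f,w)$, and $(f-p)/(z-w)^m = g$). So I need to show $D_w^m(zf) = \|g\|_{H^2}^2 + |a_{m-1}|^2$. Using the computation from the first part, $zf = r + (z-w)^m(zg + c)$ where $c = a_{m-1}$ (the leading coefficient of $p$, hence of $zp$), so $D_w^m(zf) = \|zg + a_{m-1}\|_{H^2}^2$.

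It then remains to compute $\|zg + a_{m-1}\|_{H^2}^2$. Since $zg \in zH^2$ is orthogonal to the constants in $H^2$, we get $\|zg + a_{m-1}\|_{H^2}^2 = \|zg\|_{H^2}^2 + |a_{m-1}|^2 = \|g\|_{H^2}^2 + |a_{m-1}|^2$, because $M_z$ is an isometry on $H^2$. Combining, $D_w^m(zf) = \|g\|_{H^2}^2 + |a_{m-1}|^2 = D_w^m(f) + |a_{m-1}|^2$, as claimed. The one point that needs care is verifying that $r = T_{m-1}(zf,w)$ and $zg + a_{m-1} = (zf - r)/(z-w)^m$ really is the optimal (indeed the unique valid) decomposition for $zf$ — this follows because $zg + a_{m-1}\in H^2$ and $\deg r < m$, so by the uniqueness already established the infimum defining $D_w^m(zf)$ is attained here. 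The main obstacle, such as it is, is simply keeping the identification $c = a_{m-1}$ straight: the leading coefficient of the degree-$<m$ polynomial $p$ is $a_{m-1}$, and when we multiply by $z$ and reduce modulo $(z-w)^m$, that same $a_{m-1}$ is what gets split off as the constant added to $zg$.
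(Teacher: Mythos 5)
Your proof follows essentially the same route as the paper: expand $zf$ in powers of $(z-w)$, identify the new degree-$<m$ polynomial part and the new $H^2$ part $zg+a_{m-1}$, and use that $zg$ is orthogonal to the constants in $H^2$ to get $\|zg+a_{m-1}\|^2_{H^2}=\|g\|^2_{H^2}+|a_{m-1}|^2$; this is exactly the paper's computation, including the identification of the optimal polynomial via the uniqueness of the decomposition. One small correction in your converse direction: from $zf=q+(z-w)^mh$ one gets $q(0)=-(-w)^mh(0)$, not $q(0)=0$; the standard fix is to write $h=h(0)+zLh$ and absorb $h(0)\bigl((z-w)^m-(-w)^m\bigr)/z$, a polynomial of degree $m-1$, into the polynomial part before dividing by $z$.
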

\begin{proof}  If $f= \sum_{j=0}^{m-1}a_j(z-w)^j + (z-w)^mg$, then $$zf= a_0w+ \sum_{j=1}^{m-1}(a_{j-1}+a_jw)(z-w)^j + (z-w)^m(a_{m-1}+zg).$$ This implies $D_w^m(zf)= \|a_{m-1}+zg\|^2_{H^2}= |a_{m-1}|^2+D_w^m(f).$ Since by Lemma \ref{boundedFunctional} the functional $f \to a_{m-1}$ is bounded on $\HD_w^m$, it shows that $M_z$ is bounded on $\HD_w^m$ and that it expands the norm. Similarly, one sees that $zf \in \HD_w^m$ implies $f\in \HD_w^m$.
\end{proof}

It follows that $T=(M_z,\HD^m_w)$ is bounded, norm-expansive, analytic, and satisfies $\dim \ker T^*=\rank (T^*T-I)=1$, see Lemma 2.1 of \cite{Ri87}. Thus, by Theorem \ref{expandingOp} $T$ is unitarily equivalent to $(M_z,\HH(b))$ for some non-extremal $b$ in the unit ball of $H^\infty$. If we assume that $b(0)=0$, then the constant functions form $\ker M_z^*$ in both cases, and hence we must have $\HH(b)=\HD_w^m$ with equality of norms. It will follow from the next theorem that $b(z)=z^m/q(z)$, where $q$ is a polynomial of degree $m$ that has no zeros in $\D$ and such that $|q(z)|^2=1+|z-w|^{2m}$ for all $z\in \T$. The following is a version of Theorem \ref{2n-iso}.
\begin{theorem}\label{Thm1.1again} Let $b$ be non-extremal in the unit ball with $b(0)=0$, and let $m\in \N$. Then the following are equivalent:
\begin{enumerate}[label=\alph*)]
\item $(M_z,\HH(b))$ is a strict $2m$-isometry,
\item $b$ is rational of degree $\le m$ and there is $w\in \T$ and a polynomial $q$ of degree $\le m$ with no zeros in $\overline{\D}$ and such that the mate $a$ of $b$ is of the form $a(z)=\frac{(z-w)^m}{q(z)}$.
 \item there is $w\in \T$ and a polynomial $p$ of degree $< m$ such that $p(w) \ne 0$ and
 $$\|f\|^2_{\HH(b)}=\|f\|^2_{H^2}+D_w^m(pf).$$
\end{enumerate}
In fact, if $(M_z,\HH(b))$ is a strict $2m$-isometry, then there is $w\in \T$ and there are polynomials $p$ and $q$ of degree $\le m$ such that
\begin{enumerate}
     \item $|q(z)|^2=|p(z)|^2+|z-w|^{2m}$ for all $z\in \T,$
     \item $p(w)\ne 0$, $p(0)=0$
     \item $q(z) \ne 0$ for all $z\in \overline{\D}$,
     \item $b=p/q$,
     \item $\|f\|^2_{\HH(b)}=\|f\|^2_{H^2}+D_w^m(\tilde{p}f)$, where $\tilde{p}(z)=z^m\overline{p(\frac{1}{\overline{z}})}$.
    \end{enumerate}
    Note that $p(0)=0$ implies that degree $\tilde{p} \le m-1$.
\end{theorem}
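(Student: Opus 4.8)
\emph{Plan and reductions.} We may assume $b\not\equiv 0$, since if $b\equiv 0$ then $\HH(b)=H^2$, $(M_z,\HH(b))$ is an isometry, none of (a), (b), (c) holds, and there is nothing to prove. So fix a nonzero non-extremal $b$ with $b(0)=0$, set $T=(M_z,\HH(b))$ (bounded, since $b$ is non-extremal), $\Delta=T^*T-I$. By equation (\ref{equranD}) (or the remarks in the Introduction), $\rank\Delta=1$, say $\Delta=\eta\otimes\eta$ with $\eta\ne 0$; put $\HN=[\eta]_{T^*}=[\eta]_L$. The plan is to prove (a)$\Rightarrow$(b) together with (1)--(4), then (b)$\Rightarrow$(a), then (c)$\Rightarrow$(a), and finally (a)$\Rightarrow$(5); since (5) is precisely statement (c) with $\tilde p$ in place of the ``$p$'' there, this closes the equivalence and establishes (1)--(5).

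\emph{The two ``rational'' implications.} Assume (a). By Corollary \ref{defectrankone}, $\dim\HN=m$ and there is $w\in\T$ such that $(z-\overline w)^m$ is the characteristic polynomial of $T^*|\HN$. Since $\dim\HN<\infty$, Theorems \ref{ranDelta and rational} and \ref{rationalmate} apply: $b$ is rational of degree $\le m$, $b=P/\tilde q_0$ with $P(z)=\sum_{k=1}^m P_kz^k$ (so $P(0)=0$, $\deg P\le m$), where $\tilde q_0(z)=z^mq_0(1/z)$ and $q_0$ is the characteristic polynomial of $L|\HN$; as the eigenvalues of $L|\HN$ lie in $\D$ (because $\|L^nh\|_{H^2}\to 0$ for all $h$), $\tilde q_0$ has degree $\le m$ and no zeros in $\overline\D$. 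Theorem \ref{rationalmate} also gives $a(z)=a(0)\,(1/z-\overline w)^m/q_0(1/z)=a(0)(1-\overline wz)^m/\tilde q_0(z)$, and $(1-\overline wz)^m=(-\overline w)^m(z-w)^m$, so $a=c\,(z-w)^m/\tilde q_0$ for a nonzero constant $c$. Now put $q:=\tilde q_0/c$ and $p:=P/c$: then $b=p/q$, $a=(z-w)^m/q$, $q$ has no zeros in $\overline\D$, $\deg p,\deg q\le m$, $p(0)=0$; and $|a|^2+|b|^2=1$ a.e.\ on $\T$ becomes $|q|^2=|p|^2+|z-w|^{2m}$ on $\T$, whose value at $z=w$ forces $p(w)\ne 0$ (as $q(w)\ne 0$). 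This gives (b) and (1)--(4). Conversely, assume (b). Then $b$ is rational of degree $\le m$, so $\dim\HN<\infty$, and by Theorem \ref{rationalmate} the (unique) mate is $a=a(0)\,\tilde\pi/\tilde\rho$, where $\tilde\pi,\tilde\rho$ are the reversed characteristic polynomials of $T^*|\HN$ and $L|\HN$ and $\tilde\rho$ has no zeros in $\overline\D$. Comparing with $a=(z-w)^m/q=(-w)^m(1-\overline wz)^m/q$ from (b) and clearing denominators, $a(0)\tilde\pi(z)q(z)=(-w)^m(1-\overline wz)^m\tilde\rho(z)$ as polynomials; the right side vanishes to exact order $m$ at $z=w$ while $q(w)\ne 0$, so $\tilde\pi$ vanishes to order $m$ at $w$, whence $\dim\HN=m$ and $\pi(z)=(z-\overline w)^m$. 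By Cayley--Hamilton $(T^*-\overline w)^m|\HN=0$, and $\eta\in\ran\Delta\subseteq\HN$, so $(T^*-\overline w)^m\eta=0$; Corollary \ref{defectrankone} then gives (a).

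\emph{$(c)\Rightarrow(a)$, and $(a)\Rightarrow(5)$.} Suppose (c): $\|f\|^2_{\HH(b)}=\|f\|^2_{H^2}+D_w^m(rf)$ for all $f$, with $w\in\T$, $r$ a polynomial of degree $<m$, $r(w)\ne 0$. By Lemma \ref{DeltaLocalDiri}, $\la\Delta f,f\ra=\|zf\|^2_{\HH(b)}-\|f\|^2_{\HH(b)}=D_w^m(z\cdot rf)-D_w^m(rf)=|(rf)^{(m-1)}(w)|^2/((m-1)!)^2$ (nontangential limits), so $\eta$ represents the functional $f\mapsto(rf)^{(m-1)}(w)/(m-1)!$, which is bounded (Lemma \ref{boundedFunctional}) and nonzero (its $f^{(m-1)}(w)$-coefficient is $r(w)/(m-1)!$). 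One checks by induction, using Lemma \ref{DeltaLocalDiri}, that $(T^*-\overline w)^k\eta$ represents $f\mapsto(rf)^{(m-1-k)}(w)/(m-1-k)!$ for $0\le k\le m-1$ and that $(T^*-\overline w)^m\eta=0$; since $r(w)\ne0$ these $m$ functionals span the same space as $f\mapsto f^{(j)}(w)$, $0\le j\le m-1$, which are independent on $\HD_w^m$ (test against $1,(z-w),\dots,(z-w)^{m-1}$). Hence $\dim\HN=m$, and Corollary \ref{defectrankone} gives (a). For $(a)\Rightarrow(5)$, take $b=p/q$, $a=(z-w)^m/q$, (1)--(4) as above and set $\tilde p(z)=z^m\overline{p(1/\overline z)}$ (so $\deg\tilde p\le m-1$ and $\tilde p(w)=w^m\overline{p(w)}\ne 0$). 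By the Aleman--Malman norm formula (Lemma \ref{AlemanMalmanFormula}; in the scalar rank-one case this is Sarason's formula, \cite{Sa94}, Section IV-1), $f\in\HH(b)$ iff there is $g\in H^2$ with $\overline bf-\overline ag\in\overline{zH^2}$ on $\T$, and then $\|f\|^2_{\HH(b)}=\|f\|^2_{H^2}+\|g\|^2_{H^2}$. On $\T$ one has $\overline{p(z)}=z^{-m}\tilde p(z)$ and $\overline{(z-w)^m}=(-1)^mz^{-m}w^{-m}(z-w)^m$, and $1/q\in H^\infty$ (as $q\ne 0$ on $\overline\D$), so with $g_1:=(-1)^mw^mg$ (whence $\|g_1\|_{H^2}=\|g\|_{H^2}$) the relation $\overline bf-\overline ag\in\overline{zH^2}$ is equivalent to $z^{-m}(\tilde pf-(z-w)^mg_1)\in\overline{zH^2}$, i.e.\ to $\tilde pf-(z-w)^mg_1$ being a polynomial of degree $\le m-1$. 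Thus $\tilde pf=(\text{poly of degree}<m)+(z-w)^mg_1$ is the (unique) $\HD_w^m$-decomposition of $\tilde pf$, $g_1$ and hence $g$ is uniquely determined, and $\|g\|^2_{H^2}=\|g_1\|^2_{H^2}=D_w^m(\tilde pf)$; so $\HH(b)=\HD_w^m$ and $\|f\|^2_{\HH(b)}=\|f\|^2_{H^2}+D_w^m(\tilde pf)$, which is (5) (and (a)$\Rightarrow$(c)).

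\emph{Main obstacle.} The implications (a)$\Leftrightarrow$(b) and (c)$\Rightarrow$(a) reduce mechanically to Corollary \ref{defectrankone}, Theorems \ref{ranDelta and rational}--\ref{rationalmate}, and Lemma \ref{DeltaLocalDiri}. The delicate step is (a)$\Rightarrow$(5): it is the only one that invokes an external identity — the Aleman--Malman/Sarason description of $\|\cdot\|_{\HH(b)}$ — and it rests on the boundary-value bookkeeping above, namely rewriting $\overline p$ and $\overline{(z-w)^m}$ on $\T$ using $p(0)=0$ and $|w|=1$, discarding the factor $q$ (invertible in $H^\infty$), and recognising the resulting Fourier-support condition as membership in $\HD_w^m$; the identification of the minimal $g$ with the unique tail $g_1$ of the $\HD_w^m$-decomposition is what pins the norm to $D_w^m(\tilde p f)$ rather than merely bounding it.
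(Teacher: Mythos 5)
Your proof is correct and complete, and for two of the three blocks it is essentially the paper's argument. The equivalence (a)$\Leftrightarrow$(b), with items (i)--(iv), is Corollary \ref{defectrankone} fed into Theorems \ref{ranDelta and rational} and \ref{rationalmate}; the paper routes this through Theorem \ref{characterizationmate}, which is assembled from exactly those two results, so you are merely unpacking the same proof. Your derivation of (a)$\Rightarrow$(v) from the condition $\overline{b}f-\overline{a}g\in\overline{zH^2}$ is the paper's computation $T_p^*f=T_{(z-w)^m}^*f^+$, $p(S)^*f={S^*}^m(\tilde{p}f)$, rewritten in terms of Fourier support on $\T$; one citation slip: this characterization of $\HH(b)$ and its norm is Sarason's (\cite{Sa94}, Section IV-1), not Lemma \ref{AlemanMalmanFormula}, which is the integral formula. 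Where you genuinely diverge is (c)$\Rightarrow$(a). The paper instead proves (c)$\Rightarrow$(b): it reverses the Toeplitz computation, invokes the Fej\'er--Riesz theorem to manufacture the denominator $q$, and then uses Lemma \ref{unitaryEquiv} to conclude $b=c'\tilde{b}$. You use Lemma \ref{DeltaLocalDiri} to identify the rank-one defect $\eta$ with the bounded functional $f\mapsto (rf)^{(m-1)}(w)/(m-1)!$, compute $(T^*-\overline{w})^k\eta$ by Leibniz, and land exactly in the hypothesis of Corollary \ref{defectrankone}(b); this is closer in spirit to Lemma \ref{rank1andLocalDirichlet} of Section 10. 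Your route avoids Fej\'er--Riesz and the uniqueness lemma and is arguably cleaner; the paper's route has the advantage of producing the explicit rational form of $b$ and its mate directly from (c). Two minor remarks: the aside ``so $\HH(b)=\HD_w^m$'' at the end of your last step is not justified by what precedes it (that identification is Corollary \ref{equivalentNorms} and requires a further argument using $\tilde{p}(w)\ne 0$), but it is not part of the statement being proved; and your linear-independence test of the derivative functionals against $1,(z-w),\dots,(z-w)^{m-1}$ is legitimate precisely because the polynomials lie in $\HH(b)$, which you should perhaps say explicitly.
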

\begin{proof} The conditions {\it (i)-(v)} will be verified while we are showing the equivalence of the conditions (a), (b) and (c). We start by proving the equivalence of (a) and (b). Set $T=(M_z,\HH(b))$, $\Delta=T^*T-I$, and $\HN=[\ran \Delta]_{T^*}$. Then $T$ is analytic, norm expansive, and satisfies $\dim \ker T^*=\rank \Delta=1$. Thus, by Corollary \ref{defectrankone}  $T$ is a strict $2m$-isometry, if and only if the characteristic polynomial of $T^*|\HN$ equals $(z-\overline{w})^m$ for some $w\in \T$.

Thus, the equivalence of (a) and (b) now follows from Theorem \ref{characterizationmate}.

Next we prove the equivalence of (b) and (c). We start by assuming condition (b) is satisfied. Then we may assume that $b=p/q$ for some polynomials $p$ and $q$ of degree $\le m$, and that the mate $a$ of $b$ is of the form $a(z)=(z-w)^m/q(z)$ for some $w\in \T$. The polynomial  $q$ has no zeros in the closed unit disc, and $|p(z)|^2+|z-w|^{2m}=|q(z)|^2$ for all $z\in \T$. This implies $p(w)\ne 0$.

 For $u\in H^\infty$ we will write $T_u$ for the multiplication operator on $H^2$, $T_uf=uf$ for all $f\in H^2$.

Now let $f\in \HH(b)$. The norm of $f$ in $\HH(b)$ is given by $$\|f\|^2_{\HH(b)}=\|f\|^2_{H^2}+ \|f^+\|^2_{H^2}$$ where $f^+$ is the unique function that satisfies $T_b^*f=T^*_af^+$, see \cite{Sa94}. Since $q$ has no zeros in the closed disc, this identity is equivalent to $T_p^*f=T_{(z-w)^m}^*f^+$. Thus, if $S$ denotes the unilateral shift, then $p(S)^*f-{(S-w)^m}^*f^+=0$.

Note that
$$p(S)^*f= \sum_{k=0}^m\overline{\hat{p}(k)}{S^*}^kf={S^*}^m\sum_{k=0}^m \overline{\hat{p}(k)}z^{m-k}f={S^*}^m(\tilde{p}f),$$ where as before $\tilde{p}(z)=z^m\overline{p(\frac{1}{\overline{z}})}$. Similarly,
${(S-w)^m}^*f^+={S^*}^m((1-\overline{w}z)^mf^+)$ and hence $\tilde{p}f-(1-\overline{w}z)^mf^+\in \ker {S^*}^m$. This implies that
there is a polynomial $R$ of degree $< m$ such that $$f^+= \frac{\tilde{p}f-R}{(z-w)^m}\in H^2.$$ Thus $\|f\|^2_{\HH(b)}=\|f\|^2_{H^2}+ D_w^m(\tilde{p}f).$
The hypothesis that $b(0)=0$ implies that $p(0)=0$ and that implies that the degree of $\tilde{p}$ is $< m$. Note that since $|w|=1$ we have $p(w)=0$, if and only if $\tilde{p}(w)=0$, and we had already noted that $p(w)\ne 0$. This proves $(b) \Rightarrow (c)$.

Finally suppose that (c) holds. Then for some $w\in \T$ and some polynomial $p$ of degree $<m$ with $p(w)\ne 0$ we have
$D^m_w(pf)=\|f^+\|^2_{H^2}$ for every $f \in \HH(b)$. Here $f^+$ is the unique $H^2$-function with $T^*_bf=T^*_af^+$.
 The definition of the local Dirichlet integral of order $m$ at $w$ implies that there must be a polynomial $R$ of degree $<m$ such that $(z-w)^mf^+= pf-R$. This implies that ${S^*}^m(pf-(z-w)^mf^+)=0$. Calculating as above, we see that this is equivalent to $$T_{\tilde{p}}^*f=\tilde{p}(S)^*f={(1-\overline{w}S)^m}^*f^+= (-1)^mw^mT_{(z-w)^m}^*f^+.$$ In this case $\tilde{p}(z)=z^m \overline{p(1/\overline{z})}$, so that the fact that degree $p <m$ implies that $\tilde{p}(0)=0$. Now by the Fej\'{e}r-Riesz Theorem there is a polynomial $q$ of degree $\le m$ such that $|z-w|^{2m}+|\tilde{p}(z)|^2=|q(z)|^2$ for all $z\in \T$. Since $p(w)\ne 0$ we have $\tilde{p}(w)\ne 0$ and hence we may assume that $q$ has no zeros in the closed unit disc. Then the adjoint Toeplitz operator $T_q^*$ is invertible. Now set $\tilde{b}(z)=\tilde{p}(z)/q(z)$, then by the properties of $q$ there must be a $c\in \T$ such that  $\tilde{a}(z)=c(z-w)^m/q(z)$ is the mate of $\tilde{b}$. The invertibility of $T_q^*$ implies that $T^*_{\tilde{b}}f= (-1)^mw^m c T^*_{\tilde{a}}f^+$. This implies that $\|f\|^2_{\HH(\tilde{b})}=\|f\|^2_{H^2}+\|f^+\|^2_{H^2}=\|f\|^2_{\HH(b)}$ for all $f \in \HH(b)$. Thus $b(z)= c' \tilde{b}(z)$ for some constant $c'\in \T$, see Lemma \ref{unitaryEquiv}. Hence $b$ is a rational function of degree $\le m$ and its mate is of the form $(z-w)^m/q(z)$ for some polynomial $q$ that has no zeros in the closed unit disc. Hence $(c) \Rightarrow (b)$.
\end{proof}

\begin{corollary} \label{equivalentNorms} Let $m\in \N$, $w\in \T$ and $p$ be a polynomial of degree $\le m-1$ with $p(w)\ne 0$, and let $b$ be in the unit ball of $H^\infty$ such that $\|f\|^2_{\HH(b)}=\|f\|^2_{H^2}+D^m_w(pf)$ for all $f\in \HH(b)$.

 Then $\HH(b)= \HD_w^m $ with equivalence of norms, i.e. there are constants $c,C>0$ such that
$$c (\|f\|^2_{H^2}+D_w^m(f)) \le \|f\|^2_{H^2}+D_w^m(pf) \le C(\|f\|^2_{H^2}+D_w^m(f)).$$
\end{corollary}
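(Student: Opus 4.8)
The plan is to prove the displayed two-sided inequality; this amounts to showing that $\HD_w^m$ and $\mathcal W:=\{f\in H^2: pf\in \HD_w^m\}$ are the same set with comparable norms $\|f\|_{H^2}^2+D_w^m(f)$ and $\|f\|_{H^2}^2+D_w^m(pf)$. Once this is known, the hypothesis $\|f\|_{\HH(b)}^2=\|f\|_{H^2}^2+D_w^m(pf)$ on $\HH(b)$ shows $\HH(b)$ sits isometrically inside $\HD_w^m$, and a short density argument completes the set equality $\HH(b)=\HD_w^m$. Write $T_{m-1}(g,w)$ for the Taylor polynomial of $g$ of degree $<m$ at $w$. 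If $f\in \HD_w^m$ then $T_{m-1}(f,w)$ exists by Lemma~\ref{nontangential}; if instead only $pf\in \HD_w^m$ is known, then $T_{m-1}(pf,w)$ exists, and since $p(w)\neq 0$ and $1/p$ is analytic near $w$, the Leibniz rule (with Lemma~\ref{nontangential}) shows the nontangential limits $f^{(j)}(w)$, $j<m$, exist, so $T_{m-1}(f,w)$ is defined in this case too.

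For the upper bound, take $f\in \HD_w^m$ and write $f=T_{m-1}(f,w)+(z-w)^m g$ with $g\in H^2$ and $\|g\|_{H^2}^2=D_w^m(f)$. Dividing the polynomial $p\,T_{m-1}(f,w)$ (of degree $\le 2m-2$) by $(z-w)^m$ gives $p\,T_{m-1}(f,w)=R_1+(z-w)^m S$ with $\deg R_1<m$ and $\deg S\le m-2$, so $pf=R_1+(z-w)^m(S+pg)$ and therefore $D_w^m(pf)=\|S+pg\|_{H^2}^2\le 2\|S\|_{H^2}^2+2\|p\|_\infty^2 D_w^m(f)$. Since $S$ is a bounded linear function of the coefficients of $T_{m-1}(f,w)$, Lemma~\ref{boundedFunctional} gives $\|S\|_{H^2}\le C\|f\|_{\HD_w^m}$, and the upper bound follows.

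For the lower bound --- the heart of the argument --- let $f\in H^2$ with $pf=T_{m-1}(pf,w)+(z-w)^m h$, $h\in H^2$, $\|h\|_{H^2}^2=D_w^m(pf)$. Because Taylor polynomials at $w$ are multiplicative modulo $(z-w)^m$ and $\deg p<m$, one has $p\,T_{m-1}(f,w)=T_{m-1}(pf,w)+(z-w)^m S_1$ for a polynomial $S_1$ with $\deg S_1\le m-2$; subtracting, $p\bigl(f-T_{m-1}(f,w)\bigr)=(z-w)^m(h-S_1)$. Thus $\chi:=\bigl(f-T_{m-1}(f,w)\bigr)/(z-w)^m$ is holomorphic on $\D$ (because $|w|=1$) and satisfies $(z-w)^m\chi=f-T_{m-1}(f,w)\in H^2$ and $p\,\chi=h-S_1\in H^2$. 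The obstacle is to conclude $\chi\in H^2$ despite possible zeros of $p$ on $\T$; I would overcome it with a B\'ezout identity $A(z)(z-w)^m+B(z)\,p(z)\equiv 1$ (available since $p(w)\neq 0$ makes $(z-w)^m$ and $p$ coprime), which yields $\chi=A\bigl(f-T_{m-1}(f,w)\bigr)+B\,(h-S_1)\in H^2$ as a sum of polynomials times $H^2$-functions. Hence $f=T_{m-1}(f,w)+(z-w)^m\chi\in \HD_w^m$ with $D_w^m(f)=\|\chi\|_{H^2}^2$, and $\|\chi\|_{H^2}^2\le C\bigl(\|f\|_{H^2}^2+D_w^m(pf)\bigr)$ follows from $\|h\|_{H^2}^2=D_w^m(pf)$, the estimate $\|pf\|_{\HD_w^m}^2\le \max(\|p\|_\infty^2,1)\bigl(\|f\|_{H^2}^2+D_w^m(pf)\bigr)$, and Lemma~\ref{boundedFunctional}, which bounds $\|T_{m-1}(f,w)\|_{H^2}$ and $\|S_1\|_{H^2}$ by $\|pf\|_{\HD_w^m}$.

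Combining the two bounds gives the displayed inequality and, in particular, $\mathcal W=\HD_w^m$ with equivalent norms. Finally, by hypothesis and the contractive inclusion $\HH(b)\subseteq H^2$ we get $\HH(b)\subseteq \mathcal W=\HD_w^m$, with $\|\cdot\|_{\HH(b)}$ equivalent to $\|\cdot\|_{\HD_w^m}$ on $\HH(b)$; hence $\HH(b)$ is complete in the $\HD_w^m$-norm, i.e.\ closed in $\HD_w^m$. Since $b(0)=0$ the constants lie in $\HH(b)$, and as $M_z$ is bounded on $\HH(b)$ all polynomials lie in $\HH(b)$; polynomials are dense in $\HD_w^m$ (approximate $(f-T_{m-1}(f,w))/(z-w)^m$ in $H^2$ by its Taylor partial sums), so $\HH(b)=\HD_w^m$. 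Together with the norm comparison this is the corollary.
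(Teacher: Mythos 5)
Your proof is correct, but it takes a genuinely different route from the paper's. The paper gets the inclusion $\HD_w^m\subseteq\HH(b)$ from the boundedness of $M_z$ on $\HD_w^m$ (Lemma \ref{DeltaLocalDiri}), obtains the reverse inclusion structurally --- from the orthogonal decomposition $\HH(b)=a H^2\oplus\HN$ established in the proof of Theorem \ref{rationalmate}, together with Remark \ref{equalityDegrees} and (\ref{N as rational functions}), which give $\HH(b)=\frac{(z-w)^m}{q}H^2\oplus\frac{1}{q}\mathcal{P}_{m-1}$ with $q$ zero-free on $\overline{\D}$, plus Sarason's invertibility of multiplication by $q$ --- and then invokes the Closed Graph Theorem for the norm constants. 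You instead prove the two-sided inequality directly: the upper bound by dividing $p\,T_{m-1}(f,w)$ by $(z-w)^m$ and using Lemma \ref{boundedFunctional}, and the lower bound via the identity $p\bigl(f-T_{m-1}(f,w)\bigr)=(z-w)^m(h-S_1)$ combined with a B\'ezout identity $A(z-w)^m+Bp=1$ (available precisely because $p(w)\ne 0$) to conclude that $\chi=\bigl(f-T_{m-1}(f,w)\bigr)/(z-w)^m\in H^2$; the set equality $\HH(b)=\HD_w^m$ then follows from closedness of $\HH(b)$ in $\HD_w^m$ and density of the polynomials. Your argument is elementary and self-contained --- it yields explicit constants and avoids both the structure theory of Sections 6--7 and the Closed Graph Theorem --- at the cost of more Taylor-coefficient bookkeeping; the paper's proof is shorter given the machinery already in place and exhibits the concrete decomposition of $\HH(b)$. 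One small remark: your final density step uses that $b$ is non-extreme with $b(0)=0$ so that the polynomials lie in $\HH(b)$; these standing assumptions are not restated in the corollary but are in force in that section, and the paper's own proof relies on them as well via Theorem \ref{Thm1.1again}.
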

\begin{proof} We already showed in  Lemma \ref{DeltaLocalDiri} that $M_z$ acts boundedly on $\HD_w^m$, hence it is clear that $D_w^m(pf) \lesssim D_w^m(f)$ and hence $\HD_w^m \subseteq \HH(b)$.  Thus, in order to complete the proof it will suffice to show that $\HH(b) \subseteq D^m_w= {\mathcal{P}}_{m-1}\dotplus(z-w)^mH^2$. The norm inequality will then follow by a routine application of the Closed Graph Theorem.

Let $T=(M_z,\HH(b))$, $\Delta=T^*T-I$,  $\HN=[\ran \Delta]_{T^*}$, and $\HM=\HH(b)\ominus \HN$. In the proof of Theorem \ref{rationalmate} we showed that $\HH(b)= \varphi H^2 \oplus \HN$ for $\varphi \in \HM\ominus z\HM$, $\|\varphi\|=1$, and by Remark \ref{equalityDegrees} we have $\varphi = e^{it} a$. Thus, by Theorem \ref{Thm1.1again} and equation \ref{N as rational functions} $$\HH(b) = \frac{(z-w)^m}{q}H^2 \oplus \frac{1}{q}\ {\mathcal{P}}_{m-1},$$ where $q$ is a polynomial with no zeros in $\overline{\D}$. The result follows since multiplication by $q$ is an invertible operator on $\HH(b)$, see \cite{Sa94}, Section IV-5.
\end{proof}

\section{Finite rank expansive $2m$-isometries}

In this Section we will establish Theorem \ref{MainTheorem}.

We  start with a formula for the norm in $\HH(B)$, which in the current form is due to \cite{AlemanMalman}. Closely related formulas were also used in \cite{AlemanRichter96_97}, \cite{ARS96}, and \cite{AlemanFeldmanRoss}. In fact, with hindsight this formula can be used to motivate the definition of the first order local  Dirichlet integral.

\begin{lemma}\label{AlemanMalmanFormula} Let $B=(b_1,\dots, b_n)\in \HS(\C^n,\C)$ with $B(0)=0$ and such that $T=(M_z,\HH(B))$ is bounded. Set $\Delta=T^*T-I$ and  $D=\Delta^{1/2}$. Then for all polynomials $g\in \HH(B)$ we have
$$\|g\|^2_{\HH(B)}=\|g\|^2_{H^2}+ \int_{|\lambda|=1}\|D \frac{g-g(\lambda)}{z-\lambda}\|^2_{\HH(B)} \frac{|d\lambda|}{2\pi}.$$
\end{lemma}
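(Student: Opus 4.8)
The plan is to exploit the model from Section~\ref{Operator deBranges}, where $T=(M_z,\HH(B))$ with $B(0)=0$ is precisely $M_z$ acting on the reproducing kernel space with kernel $K^B_w(z)$, together with the explicit description of $\overline{\ran\Delta}$ from Lemma~\ref{rangeofDelta}. Recall that $L$ is the backward shift on $\HH(B)$ and, since $B(0)=0$, we have $L=(T^*T)^{-1}T^*$ and $D_*^2=I-LL^*=\Delta(I+\Delta)^{-1}$, so $\overline{\ran D}=\overline{\ran D_*}$ by Lemma~\ref{rangeofDelta and D*}. First I would observe that for a polynomial $g$, the vector $\frac{g-g(\lambda)}{z-\lambda}=(I-\lambda L)^{-1}Lg$ lies in $\HH(B)$ for every $\lambda\in\D$, and indeed extends continuously to $\lambda\in\T$ (the polynomial $g$ causes no integrability trouble), so the integrand $\|D\frac{g-g(\lambda)}{z-\lambda}\|_{\HH(B)}^2$ is a bounded continuous function of $\lambda$ on $\T$ and the right-hand side makes sense.

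The key computational step is to expand the inner integral. Writing $h_\lambda=\frac{g-g(\lambda)}{z-\lambda}$, one has $\|Dh_\lambda\|^2=\la\Delta h_\lambda,h_\lambda\ra=\la(T^*T-I)h_\lambda,h_\lambda\ra=\|Th_\lambda\|^2-\|h_\lambda\|^2=\|zh_\lambda\|^2-\|h_\lambda\|^2$. Now $zh_\lambda=\frac{zg-zg(\lambda)}{z-\lambda}=g+\lambda h_\lambda-g(\lambda)$ (using $zh_\lambda=g-g(\lambda)+\lambda h_\lambda$), so that $\|zh_\lambda\|^2-\|h_\lambda\|^2=\|g-g(\lambda)+\lambda h_\lambda\|^2-\|h_\lambda\|^2$. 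Expanding the square and integrating $\lambda$ over $\T$ against $\frac{|d\lambda|}{2\pi}$, the cross terms are handled using $\int_\T \lambda\,\overline{\la g-g(\lambda),h_\lambda\ra}\,\frac{|d\lambda|}{2\pi}$ type integrals; the point is that $h_\lambda$ depends on $\lambda$ only through $(I-\lambda L)^{-1}$, which is an analytic (power series) function of $\lambda$ inside $\D$, while $g(\lambda)$ and $\overline\lambda$ contribute anti-analytic pieces, so averaging over $\T$ picks out just the constant terms of the relevant products. The term $\int_\T\|g-g(\lambda)\|^2\frac{|d\lambda|}{2\pi}$ contributes $\|g\|^2_{\HH(B)}-\|g\|^2_{H^2}$ once one recognizes, via $\la g(\lambda),g(\lambda)\ra$-averaging and the reproducing property $g(\lambda)=\la g,K^B_\lambda\ra$ together with $\int_\T K^B_\lambda(z)\overline{K^B_\lambda(w)}\,\frac{|d\lambda|}{2\pi}$ relating to $K^B$ itself, that $\int_\T\|g(\lambda)\|^2\frac{|d\lambda|}{2\pi}$ and the mixed term combine correctly. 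The cleanest route is probably to verify the identity termwise on the monomial basis $g=z^k$, compute both sides explicitly in terms of the Taylor coefficients of $B$, and then invoke bilinearity and polarization; alternatively one can reduce to the scalar formula $\|f\|^2_{\HH(b)}=\|f\|^2_{H^2}+\|f^+\|^2_{H^2}$ from \cite{Sa94}, rewritten in the vector form of \cite{AlemanMalman}.

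The main obstacle I anticipate is the careful bookkeeping of which pieces survive the average over $\T$: $h_\lambda$ is analytic in $\lambda$ on $\D$ but one needs its boundary values, and $g(\lambda)$ together with the factor $\lambda$ (or $\overline\lambda$, depending on how one orients the inner product) mixes analytic and anti-analytic dependence, so one must be scrupulous about complex conjugates when expanding $\|g-g(\lambda)+\lambda h_\lambda\|^2$ and integrating. A secondary technical point is justifying that everything extends to the boundary: since $g$ is a polynomial, $Lg$ is a polynomial and $(I-\lambda L)^{-1}Lg$ is, for fixed $g$, a finite sum of terms $\lambda^j L^{j+1}g$ plus a convergent tail, uniformly bounded in $\HH(B)$-norm for $\lambda$ in a neighborhood of $\overline\D$ because $\|L\|\le 1$ on $\HH(B)$; hence dominated convergence applies and the interchange of $\int_\T$ with the $\HH(B)$ inner product is legitimate. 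Once these analytic-versus-anti-analytic separations are made correctly, the identity drops out.
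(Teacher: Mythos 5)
Your route is genuinely different from the paper's, which contains almost no computation: the authors check that $\HH(B)$ satisfies conditions (A1')--(A3') of \cite{AlemanMalman}, quote Proposition 2.5 of that paper (the same identity with $r\lambda$ in place of $\lambda$ and integrand $\|z \tfrac{g-g(r\lambda)}{z-r\lambda}\|^2 - r^2\|\tfrac{g-g(r\lambda)}{z-r\lambda}\|^2$), and let $r\to 1$ by dominated convergence, using $\|Th\|^2-\|h\|^2=\|Dh\|^2$. You instead propose a self-contained calculation on $\T$ itself, and your key identities are correct: with $h_\lambda=\tfrac{g-g(\lambda)}{z-\lambda}$ and $|\lambda|=1$ one has $\|Dh_\lambda\|^2=\|zh_\lambda\|^2-\|h_\lambda\|^2=\|g-g(\lambda)\|^2_{\HH(B)}+2\operatorname{Re}\la g-g(\lambda),\lambda h_\lambda\ra_{\HH(B)}$, since $\|\lambda h_\lambda\|=\|h_\lambda\|$. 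The one place your bookkeeping goes wrong is the assertion that $\int_{\T}\|g-g(\lambda)\|^2_{\HH(B)}\tfrac{|d\lambda|}{2\pi}$ alone contributes $\|g\|^2_{\HH(B)}-\|g\|^2_{H^2}$: because $B(0)=0$ gives $K^B_0\equiv 1$, $\|1\|_{\HH(B)}=1$ and $\la g,1\ra_{\HH(B)}=g(0)$, that integral actually equals $\|g\|^2_{\HH(B)}+\|g\|^2_{H^2}-2|g(0)|^2$, so the cross term is essential rather than a correction. It is handled exactly by the analytic/anti-analytic separation you anticipate: writing $\la g-g(\lambda),\lambda h_\lambda\ra=\overline{\lambda}\la g,h_\lambda\ra-\overline{\lambda}g(\lambda)\overline{h_\lambda(0)}$, the first piece has mean zero over $\T$ because $h_\lambda=\sum_{j\ge 0}\lambda^jL^{j+1}g$ is a polynomial in $\lambda$, while $h_\lambda(0)=\tfrac{g(\lambda)-g(0)}{\lambda}$ turns the second piece into $-\bigl(|g(\lambda)|^2-g(\lambda)\overline{g(0)}\bigr)$, with mean $-\bigl(\|g\|^2_{H^2}-|g(0)|^2\bigr)$; adding twice the real part of this to the first contribution yields exactly $\|g\|^2_{\HH(B)}-\|g\|^2_{H^2}$. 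Your boundary-value and dominated-convergence remarks are fine since $g$ is a polynomial and $\|L\|\le 1$. What your approach buys is independence from the external Proposition 2.5 and an explicit, elementary verification for polynomials (which is all the lemma asserts); what the paper's approach buys is brevity and a statement that is already known to extend beyond polynomials via the Aleman--Malman machinery.
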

\begin{proof} It is clear that $\HH(B)$ satisfies the conditions (A1')-(A3') of Aleman and Malman's paper \cite{AlemanMalman}. Hence their Proposition 2.5 applies, i.e.
$$\|g\|^2_{\HH(B)}=\|g\|^2_{H^2}+ \lim_{r\to 1}\int_{|\lambda|=1} [\|z \frac{g-g(r\lambda)}{z-r\lambda}\|^2_{\HH(B)} -r^2\| \frac{g-g(r\lambda)}{z-r\lambda}\|^2_{\HH(B)}]\frac{|d\lambda|}{2\pi}.$$
Thus, for polynomials $g$ this lemma follows from  the dominated convergence theorem.
\end{proof}
\begin{lemma}\label{AlemanMalmanFormulaLocalDiri} Let $m\in \N$, $w\in \T$, and $p$ be a polynomial of degree $\le m-1$. Then
$$D_w^m(pf)= \int_{|\lambda|=1} [D_w^m(zp \frac{f-f(\lambda)}{z-\lambda})-D_w^m(p \frac{f-f(\lambda)}{z-\lambda})] \frac{|d\lambda|}{2\pi}$$ for every polynomial $f$.
\end{lemma}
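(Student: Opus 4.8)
The plan is to recognize the right-hand side as the Aleman--Malman norm formula of Lemma \ref{AlemanMalmanFormula} written for one particular Schur function, namely the one whose de Branges--Rovnyak space carries the weighted norm $f \mapsto \|f\|_{H^2}^2 + D_w^m(pf)$. If $p \equiv 0$ both sides vanish, so I assume $p \not\equiv 0$ and set $\|f\|^2 := \|f\|^2_{H^2} + D_w^m(pf)$ and $\HK := \{f \in H^2 : \|f\| < \infty\}$, a space containing all polynomials.

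The first, and main, step is to check that $(M_z, \HK)$ is unitarily equivalent to $(M_z, \HH(b))$ for a scalar non-extreme $b$ with $b(0) = 0$, with equality of norms. For this I would verify the hypotheses of Theorem \ref{expandingOp} together with $\dim \ker M_z^* = \rank(M_z^*M_z - I) = 1$. Lemma \ref{DeltaLocalDiri} gives $\|zf\|^2 = \|f\|^2 + |c(pf)|^2$, where $c(g)$ denotes the coefficient of $(z-w)^{m-1}$ in the Taylor polynomial of $g$ at $w$; since $M_p$ maps $\HK$ boundedly into $\HD_w^m$ and $g \mapsto c(g)$ is bounded on $\HD_w^m$ by Lemma \ref{boundedFunctional}, this makes $M_z$ bounded and norm-expansive on $\HK$. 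Completeness of $\HK$ follows by realizing $f \mapsto (f,pf)$ as a closed subspace of $H^2 \oplus \HD_w^m$ (if $f_n \to g$ in $H^2$ and $pf_n \to h$ in $\HD_w^m$, then $h = pg$). Analyticity is inherited from the contractive inclusion $\HK \subseteq H^2$, and Lemma \ref{DeltaLocalDiri} (in the form $zg \in \HD_w^m \Rightarrow g \in \HD_w^m$) shows $\ran M_z = \{f \in \HK : f(0) = 0\}$, hence $\dim \ker M_z^* = 1$. The formula for $\|zf\|^2$ gives $\rank(M_z^*M_z - I) \le 1$, and equality holds since $p \not\equiv 0$: writing $p = (z-w)^k p_1$ with $p_1(w) \ne 0$ and $k \le m-1$, the polynomial $f = (z-w)^{m-1-k}$ has $pf = (z-w)^{m-1}p_1$, so $c(pf) = p_1(w) \ne 0$. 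Then Theorem \ref{expandingOp} produces a scalar Schur function $b$ with $b(0) = 0$ (scalar because $\dim \ker M_z^* = 1$ and $\rank(M_z^*M_z-I) = 1$), necessarily non-extreme since $M_z$ is bounded on $\HH(b)$; and since $\ker M_z^*$ is spanned by the constants, with $\|1\| = 1$ in both $\HK$ and $\HH(b)$, the intertwining unitary is a unimodular scalar on the dense set of polynomials, so $\HK = \HH(b)$ with equality of norms.

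Now set $B := b$, $T := (M_z, \HH(B))$, $\Delta := T^*T - I$, $D := \Delta^{1/2}$. For every polynomial $h$ one computes
\[
\|Dh\|^2_{\HH(B)} = \|zh\|^2_{\HH(B)} - \|h\|^2_{\HH(B)} = \big(\|h\|^2_{H^2} + D_w^m(zph)\big) - \big(\|h\|^2_{H^2} + D_w^m(ph)\big) = D_w^m(zph) - D_w^m(ph),
\]
using $\|zh\|_{H^2} = \|h\|_{H^2}$ and $zph = z(ph)$. Applying Lemma \ref{AlemanMalmanFormula} to the polynomial $g = f$, with $h = (f - f(\lambda))/(z-\lambda)$ --- again a polynomial, since $\lambda$ is a zero of $f - f(\lambda)$ --- then gives
\[
\|f\|^2_{\HH(B)} = \|f\|^2_{H^2} + \int_{|\lambda|=1}\Big[D_w^m\big(zp\tfrac{f-f(\lambda)}{z-\lambda}\big) - D_w^m\big(p\tfrac{f-f(\lambda)}{z-\lambda}\big)\Big]\frac{|d\lambda|}{2\pi},
\]
and cancelling $\|f\|^2_{H^2}$ (recall $\|f\|^2_{\HH(B)} = \|f\|^2_{H^2} + D_w^m(pf)$) yields the claim. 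The genuinely delicate point is the first step --- establishing that $f \mapsto \|f\|^2_{H^2} + D_w^m(pf)$ is a bona fide de Branges--Rovnyak norm (completeness of $\HK$, the exact rank-one assertion, and the identification of norms after invoking Theorem \ref{expandingOp}); after that, everything is routine manipulation with Lemmas \ref{DeltaLocalDiri} and \ref{AlemanMalmanFormula}.
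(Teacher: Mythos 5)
Your proposal is correct, and its core is identical to the paper's: recognize the right-hand side as Lemma \ref{AlemanMalmanFormula} applied to the de Branges--Rovnyak space whose norm is $\|f\|^2_{H^2}+D_w^m(pf)$, then use $\|\Delta^{1/2}h\|^2=\|zh\|^2-\|h\|^2=D_w^m(zph)-D_w^m(ph)$ from Lemma \ref{DeltaLocalDiri}. The only real difference is how the key prerequisite --- that this weighted norm is a bona fide $\HH(b)$ norm with $M_z$ bounded --- is established. The paper simply cites Theorem \ref{Thm1.1again} for the case $p(w)\ne 0$ and disposes of the degenerate case $p(w)=0$ via the reduction $D_w^m((z-w)^k p_1 f)=D_w^{m-k}(p_1 f)$, which lets it reuse work already done; you instead verify the hypotheses of Theorem \ref{expandingOp} directly (boundedness and expansiveness via Lemmas \ref{DeltaLocalDiri} and \ref{boundedFunctional}, completeness by embedding $f\mapsto(f,pf)$ into $H^2\oplus\HD_w^m$, analyticity, $\dim\ker M_z^*=\rank\Delta=1$), which handles both cases uniformly and never needs to identify $b$ or invoke the Sarason $f^+$/Fej\'{e}r--Riesz machinery underlying Theorem \ref{Thm1.1again}. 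Your route is longer but more self-contained, and your checks (in particular that $c(pf)=p_1(w)\ne 0$ for $f=(z-w)^{m-1-k}$, so that $\Delta\ne 0$, and that $\|1\|=1$ in both spaces so the intertwining unitary is the identity on polynomials) are all sound; the paper's route is shorter because Section 9 has already paid the cost.
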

\begin{proof} First assume $p(w)\ne 0$ and define a norm on the polynomials by $\|f\|^2=\|f\|^2_{H^2}+D_w^m(pf)$. Then by Theorem \ref{Thm1.1again} $M_z$ extends to define a bounded $2m$-isometry $T=(M_z,\HH)$ on some space $\HH=\HH(b)$. Thus, in this case the lemma follows from Lemma \ref{AlemanMalmanFormula}.

If $p$ is identically 0, then the lemma is trivial. If $p\ne 0$, but $p(w)=0$, then $p(z)=(z-w)^kp_1(z)$ for some polynomial $p_1$ with $p_1(w)\ne 0$. Now we can apply the formula $D_w^m((z-w)^kp_1f)= D_w^{m-k}(p_1f)$ and apply the first case.
\end{proof}

\begin{lemma}\label{rank1andLocalDirichlet} Let $m\in \N$, $w\in \T$, $B\in \HS(\C^n,\C)$ be such that $B(0)=0$ and  $T=(M_z,\HH(B))$ is a bounded operator.

If  $f_0\in \HH(B)$ such that $(T^*-\overline{w})^mf_0=0$, but $(T^*-\overline{w})^{m-1}f_0\ne 0$, then there is a polynomial $p$ of degree $\le m-1$, with $p(w)\ne 0$, and such that $$D_w^m(zpg)-D_w^m(pg)=|\la g, f_0\ra|_{\HH(B)}^2$$ for every polynomial $g$.

Furthermore, with that $p$ we have $$D_w^m(pf)= \int_{|\lambda|=1} \left|\la \frac{f-f(\lambda)}{z-\lambda},f_0\ra_{\HH(B)}\right|^2 \frac{|d\lambda|}{2\pi}$$ for all polynomials $f$.
\end{lemma}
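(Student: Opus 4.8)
The plan is to exploit the fact that $f_0$ spans a one-dimensional space related to the defect operator $\Delta = T^*T - I$, and then to realize the rank-one positive form $g \mapsto |\langle g, f_0\rangle_{\HH(B)}|^2$ as a local Dirichlet integral by invoking the scalar rank-one theory already developed. First I would observe that the hypothesis $(T^*-\overline{w})^m f_0 = 0$ together with $(T^*-\overline{w})^{m-1}f_0 \neq 0$ says precisely that $f_0$ is a cyclic vector for $T^*$ restricted to the $m$-dimensional space $\HN_0 = [f_0]_{T^*}$, and that $(z-\overline{w})^m$ is the characteristic (and minimal) polynomial of $T^*|\HN_0$. Now consider the operator $\widehat T$ obtained by putting on $\HH(B)$ the new inner product coming from the rank-one defect $\eta \otimes \eta$ with $\eta = f_0$ — more precisely, I would apply Corollary \ref{defectrankone} (or directly Theorem \ref{Thm1.1again}) to produce a non-extremal $b$ with $b(0)=0$ such that $(M_z, \HH(b))$ is a strict $2m$-isometry with defect $f_0 \otimes f_0$ and $[\ran(T^*_bT_b - I)]_{T_b^*}$ having characteristic polynomial $(z-\overline{w})^m$. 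By the last part of Theorem \ref{Thm1.1again} this space $\HH(b)$ has norm $\|f\|^2_{\HH(b)} = \|f\|^2_{H^2} + D_w^m(\tilde p f)$ for a polynomial $\tilde p$ of degree $\le m-1$ with $\tilde p(w)\neq 0$; set $p := \tilde p$.

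With that choice of $p$, the first displayed identity follows from Lemma \ref{DeltaLocalDiri} applied inside $\HH(b)$: since the defect of $(M_z,\HH(b))$ is exactly $f_0 \otimes f_0$, we have for every polynomial $g$
\begin{align*}
D_w^m(zpg) - D_w^m(pg) &= \|M_z g\|^2_{\HH(b)} - \|g\|^2_{\HH(b)} - (\|M_z g\|^2_{H^2} - \|g\|^2_{H^2})\\
&= \langle \Delta_b\, g, g\rangle_{\HH(b)} = |\langle g, f_0\rangle_{\HH(b)}|^2,
\end{align*}
where $\Delta_b = T_b^*T_b - I = f_0 \otimes f_0$; here I am using that $\|M_z g\|^2_{H^2} - \|g\|^2_{H^2} = |\widehat g(\text{top coeff})|^2$ is absorbed correctly on both sides since $\|pg\|^2_{H^2}$ enters symmetrically — I would double-check this bookkeeping, as it is the one place a constant could slip. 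The key point making this legitimate is that $\langle\cdot,\cdot\rangle_{\HH(b)}$ on polynomials is determined by $D_w^m(p\,\cdot)$, so the $f_0$ appearing here is genuinely the same as the hypothesized one up to the identification of $\HH(B)$-inner-product with $\HH(b)$-inner-product on the relevant finite-dimensional data; I would need to argue that the rank-one form $g \mapsto |\langle g, f_0\rangle|^2$ together with the $H^2$ part pins down $b$ via Lemma \ref{unitaryEquiv}, so that the $f_0$ in both formulas literally agrees, not just up to unimodular scalar.

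The second displayed identity is then immediate: substitute the just-proved formula $D_w^m(zpg) - D_w^m(pg) = |\langle g, f_0\rangle_{\HH(B)}|^2$ into Lemma \ref{AlemanMalmanFormulaLocalDiri} with $g = \frac{f - f(\lambda)}{z-\lambda}$, giving
$$D_w^m(pf) = \int_{|\lambda|=1}\left[D_w^m\!\left(zp\tfrac{f-f(\lambda)}{z-\lambda}\right) - D_w^m\!\left(p\tfrac{f-f(\lambda)}{z-\lambda}\right)\right]\frac{|d\lambda|}{2\pi} = \int_{|\lambda|=1}\left|\Big\langle \tfrac{f-f(\lambda)}{z-\lambda}, f_0\Big\rangle_{\HH(B)}\right|^2 \frac{|d\lambda|}{2\pi}$$
for all polynomials $f$. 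The main obstacle, as flagged above, is the identification step: ensuring that the $f_0$ produced by the rank-one model $\HH(b)$ is the very same vector (with the same phase, inside the same inner product) as the $f_0$ given in the hypothesis, rather than merely an abstractly isomorphic one. I expect this is handled by noting that $[f_0]_{T^*}$ is finite-dimensional with $f_0$ cyclic, so the positive rank-one operator $f_0 \otimes f_0$ and the action of $M_z$ on the ambient $H^2$ together reconstruct the norm via the Aleman–Malman formula (Lemma \ref{AlemanMalmanFormula}) uniquely, and $b$ is then determined by Lemma \ref{unitaryEquiv}; the phase ambiguity in $b$ does not affect $|\langle g, f_0\rangle|^2$.
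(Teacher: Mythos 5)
Your reduction of the second displayed formula to the first via Lemma \ref{AlemanMalmanFormulaLocalDiri} is exactly what the paper does and is fine. The gap is in the first half. The paper never constructs an auxiliary space: it defines $p$ explicitly by $p(\lambda)=\sum_{j=0}^{m-1}\la (z-w)^{m-1-j},f_0\ra_{\HH(B)}(\lambda-w)^j$, checks $p(w)=\la (z-w)^{m-1},f_0\ra_{\HH(B)}\ne 0$ by pairing a polynomial $q=q(w)+(z-w)r$ against $(T^*-\overline{w})^{m-1}f_0$, verifies by Taylor expansion (using $\la (z-w)^k,f_0\ra_{\HH(B)}=0$ for $k\ge m$) that $\la g,f_0\ra_{\HH(B)}=\frac{(pg)^{(m-1)}(w)}{(m-1)!}$, and then invokes Lemma \ref{DeltaLocalDiri}, which gives $D_w^m(zh)-D_w^m(h)=|a_{m-1}|^2$ with $a_{m-1}$ the $(m-1)$st Taylor coefficient of $h$ at $w$; taking $h=pg$ finishes the first identity. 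Your route instead posits a space $\HH(b)$ ``with defect $f_0\otimes f_0$'' and appeals to Corollary \ref{defectrankone} or Theorem \ref{Thm1.1again}. But those results characterize properties of an operator you already have; they do not manufacture, from a vector $f_0$ sitting inside a possibly higher-rank space $\HH(B)$, a new Hilbert space whose defect operator realizes the form $g\mapsto|\la g,f_0\ra_{\HH(B)}|^2$ taken in the \emph{original} inner product. Note that $\Delta=T^*T-I$ need not have rank one here, so $f_0\otimes f_0$ is not the defect of any operator in sight, and the symbol $f_0\otimes f_0$ is itself inner-product dependent.

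To make your plan rigorous you would have to (i) define the new norm, presumably $\|g\|_{H^2}^2+\int_{|\lambda|=1}\bigl|\la\frac{g-g(\lambda)}{z-\lambda},f_0\ra_{\HH(B)}\bigr|^2\frac{|d\lambda|}{2\pi}$ on polynomials; (ii) show that $M_z$ is bounded, expansive and analytic on the completion (boundedness is not obvious, since $|\la g,f_0\ra_{\HH(B)}|$ is not controlled by $\|g\|_{H^2}$); (iii) identify the completion with some $\HH(b)$, $b(0)=0$, via Theorem \ref{expandingOp} and Lemma \ref{unitaryEquiv}; (iv) check that the new defect vector satisfies the hypotheses of Corollary \ref{defectrankone} at the same point $w$; and (v) translate the resulting $D_w^m(\tilde p\,\cdot)$ back into $|\la\cdot,f_0\ra_{\HH(B)}|^2$ --- and step (v) is essentially the identity you set out to prove. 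You correctly flag this identification as the main obstacle, but it is not a technicality that uniqueness of $b$ resolves: the content of the lemma is precisely the explicit link between the moments $\la(z-w)^j,f_0\ra_{\HH(B)}$, $0\le j\le m-1$, and the coefficients of $p$, and your argument never produces that link. (A minor slip: $\|M_zg\|_{H^2}=\|g\|_{H^2}$, so the $H^2$ correction you worry about is $0$, not the modulus squared of a top coefficient.) I recommend replacing the auxiliary-space construction by the paper's direct computation, which is shorter and uses only Lemma \ref{DeltaLocalDiri}.
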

Note that if drop the part of the hypothesis that assumes $(T^*-\overline{w})^{m-1}f_0\ne 0$, then one can still deduce the existence of a polynomial $p$ which satisfies all parts of the conclusion except that possibly $p(w)=0$. That is because under that hypothesis there is $m_0\le m$ such that the hypothesis of the lemma is completely satisfied with $m_0$, hence there is a polynomial $p_0$ of degree $\le m_0-1$ such that the conclusion of the lemma holds for $m_0$. Then we can set $p(z)=(z-w)^{m-m_0}p_0(z)$ and use the identity $D^{m_0}_w(p_0f)=D_w^{m}(pf)$ to deduce the claim.
\begin{proof} First we show that $\la (z-w)^{m-1},f_0\ra_{\HH(B)}\ne 0$. The polynomials are dense in $\HH(B)$ (see \cite{AlemanMalman}), hence by the hypothesis there is a polynomial $q$ such that $\la q, (T^*-\overline{w})^{m-1}f_0\ra \ne 0$. The function $r(z)= \frac{q(z)-q(w)}{z-w}$ is a polynomial and satisfies $q(z)=q(w) +(z-w)r(z)$, hence
\begin{align*} 0&\ne \la q, (T^*-\overline{w})^{m-1}f_0\ra \\
&= q(w) \la (z-w)^{m-1},f_0\ra_{\HH(B)} +\la (z-w)r, (T^*-\overline{w})^{m-1}f_0\ra\\
&= q(w) \la (z-w)^{m-1},f_0\ra_{\HH(B)} + \la r, (T^*-\overline{w})^{m}f_0\ra\\
&= q(w) \la (z-w)^{m-1},f_0\ra_{\HH(B)}\end{align*}
Hence $\la (z-w)^{m-1},f_0\ra_{\HH(B)}\ne 0$.

Now set $p(\lambda)=\sum_{j=0}^{m-1} \la (z-w)^{m-1-j},f_0\ra_{\HH(B)} (\lambda-w)^j$. Then $p$ is a polynomial of degree $\le m-1$ and $p(w)\ne 0$.

Let $g$ be a polynomial, then $g(z)= \sum_{j\ge 0} \frac{g^{(j)}(w)}{j!}(z-w)^j$ and since $\la (z-w)^k,f_0\ra_{\HH(B)}=0$ for all $k \ge m$ we have
\begin{align*} \la g,f_0\ra_{\HH(B)}&=  \sum_{j= 0}^{m-1} \frac{g^{(j)}(w)}{j!}\la (z-w)^j, f_0\ra_{\HH(B)}\\
&=  \sum_{j= 0}^{m-1} \frac{g^{(j)}(w)}{j!}\frac{p^{(m-1-j)}(w)}{(m-1-j)!}\\
&= \frac{(pg)^{(m-1)}(w)}{(m-1)!}.\end{align*}
Now Lemma \ref{DeltaLocalDiri} implies that $D_w^m(zpg)-D_w^m(pg)=|\la g, f_0\ra_{\HH(B)}|^2$.

Finally, for $|\lambda|=1$ we apply this formula with $g(z)= \frac{f(z)-f(\lambda)}{z-\lambda}$, and integrate  over the unit circle with respect to $\frac{|d\lambda|}{2\pi}$. The lemma then follows from Lemma \ref{AlemanMalmanFormulaLocalDiri}.
\end{proof}

\begin{theorem}\label{IntersectionOfBasic} Let $m\in \N$, $B\in \HS(\C^n,\C)$ be such that $B(0)=0$ and  $T=(M_z,\HH(B))$ is a bounded operator. Write $\Delta=T^*T-I$, $\HN=[\ran \Delta]_{T^*}$,  $A=P_{\HN}T|\HN$, and assume that
\begin{itemize}
\item $\dim \HN<\infty$,
\item $\sigma(A)=\{w_1, \dots w_k\}$, and
\item $p(z)= \prod_{j=1}^k (z-w_j)^{m_j}$ is the characteristic polynomial of $A$.\end{itemize}

 If $T$ is a strict $2m$-isometry, then $$\sigma(A) \subseteq \T,\ \ m=\max \{m_j: 1\le j\le k\}, $$ and there are $n_1, \dots, n_k\in \N$ with $n_j\le m_j$ for all $j$ and $\sum_{j=1}^k n_j=\rank \Delta$, and there are polynomials $\{p_{ij}\}_{1\le j\le k, 1\le i\le n_j}$ with $p_{1j}(w_j)\ne 0$ for all $j$, such that the degree of $p_{ij}$ is $\le m_j-1$ for all $j$ and $i$, and such that
$$\|f\|^2_{\HH(B)}=\|f\|^2_{H^2}+ \sum_{j=1}^k \sum_{i=1}^{n_j} D^{m_j}_{w_j}(p_{ij}f)$$ for all $f\in \HH(B)$.
\end{theorem}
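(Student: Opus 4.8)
The strategy is to combine three ingredients already developed: (1) the finite-dimensional structure theory of $m$-isometries from Theorem \ref{finite dim m isos} together with Theorem \ref{characterization1}, which describes how $\Delta$ decomposes relative to the root spaces of $A$; (2) Lemma \ref{existencedelta}, which turns the orthogonality $\Delta\HN_w(A)\perp\HN_z(A)$ into a concrete splitting $\Delta=\sum_j\Delta_j$ with $\ran\Delta_j\subseteq\HN_{\overline{w}_j}(A^*)=\ker(T^*-\overline w_j)^{m_j}$; and (3) Lemma \ref{rank1andLocalDirichlet}, which converts each rank-one piece of each $\Delta_j$ into a local Dirichlet integral $D^{m_j}_{w_j}(p_{ij}f)$ via the Aleman--Malman norm formula (Lemma \ref{AlemanMalmanFormula}).

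\textbf{Step 1 (spectral location and the value of $m$).} Since $T$ is a strict $2m$-isometry with $\rank\Delta<\infty$, Lemma \ref{rank and dim N} gives $\dim\HN<\infty$; then Theorem \ref{characterization1} (or Corollary \ref{OddIsEven} together with Theorem \ref{finite dim m isos}) yields $\sigma(A)\subseteq\T$ and $m=\max_{w\in\sigma(A)}\min\{i:\ker(A-w)^i=\HN_w\}$. Because here $\dim\ker T^*=1$ (as $\HH(B)$ with $B(0)=0$ contains the constants and every $f$ with $f(0)=0$ lies in $\ran T$, cf.\ the proof of Lemma \ref{unitaryEquiv}), Lemma \ref{eigenspaces of M^*} shows the minimal and characteristic polynomials of $A^*$, hence of $A$, agree; so $\min\{i:\ker(A-w_j)^i=\HN_{w_j}\}=m_j$ and $m=\max_j m_j$. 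This also gives $\sum_j m_j=\dim\HN$.

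\textbf{Step 2 (decomposing $\Delta$).} Theorem \ref{characterization1} gives $\Delta\HN_{w_i}(A)\perp\HN_{w_j}(A)$ for $i\ne j$ and $\Delta^{1/2}x\ne0$ for every nonzero eigenvector $x$ of $A$. Apply Lemma \ref{existencedelta} to $A=P_\HN T|\HN$ and $\Delta|\HN$ (note $\ran\Delta\subseteq\HN$): we obtain $\Delta=\sum_{j=1}^k\Delta_j$ with $\Delta_j\ge0$, $\ran\Delta_j\subseteq\HN_{\overline w_j}(A^*)=\ker(T^*-\overline w_j)^{m_j}$, and $\rank\Delta=\sum_j\rank\Delta_j$. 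Set $n_j=\rank\Delta_j$; then $1\le n_j$ and, since $\ran\Delta_j\subseteq\HN_{\overline w_j}(A^*)$ which has dimension $m_j$, we get $n_j\le m_j$, and $\sum_j n_j=\rank\Delta$. For at least one $j$, $m_j=m$ so that strictness forces that index; and for each $j$ we can choose within $\ran\Delta_j$ a vector $f_{1j}$ with $(T^*-\overline w_j)^{m_j-1}f_{1j}\ne0$ — otherwise $\ran\Delta_j\subseteq\ker(T^*-\overline w_j)^{m_j-1}$, contradicting that $m_j$ is the minimal index for the root space paired with $\Delta^{1/2}$ not killing eigenvectors. Write $\Delta_j=\sum_{i=1}^{n_j}f_{ij}\otimes f_{ij}$ with $f_{1j}$ as above.

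\textbf{Step 3 (from $\Delta$ to local Dirichlet integrals).} Using the Aleman--Malman formula (Lemma \ref{AlemanMalmanFormula}) and $D=\Delta^{1/2}$, for any polynomial $g$,
$$\|g\|^2_{\HH(B)}=\|g\|^2_{H^2}+\int_{|\lambda|=1}\Big\|D\,\tfrac{g-g(\lambda)}{z-\lambda}\Big\|^2_{\HH(B)}\,\tfrac{|d\lambda|}{2\pi},$$
and $\|D h\|^2_{\HH(B)}=\langle\Delta h,h\rangle=\sum_{j=1}^k\sum_{i=1}^{n_j}|\langle h,f_{ij}\rangle_{\HH(B)}|^2$. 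For each $(i,j)$ apply Lemma \ref{rank1andLocalDirichlet} to $f_{ij}$: since $(T^*-\overline w_j)^{m_j}f_{ij}=0$, there is a polynomial $p_{ij}$ of degree $\le m_j-1$ (with $p_{1j}(w_j)\ne0$ for the chosen $f_{1j}$, and possibly $p_{ij}(w_j)=0$ for $i\ge2$, per the remark after that lemma) with
$$\int_{|\lambda|=1}\big|\langle\tfrac{f-f(\lambda)}{z-\lambda},f_{ij}\rangle_{\HH(B)}\big|^2\,\tfrac{|d\lambda|}{2\pi}=D^{m_j}_{w_j}(p_{ij}f)$$
for all polynomials $f$. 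Summing over $i,j$ and substituting into the Aleman--Malman formula gives
$$\|f\|^2_{\HH(B)}=\|f\|^2_{H^2}+\sum_{j=1}^k\sum_{i=1}^{n_j}D^{m_j}_{w_j}(p_{ij}f)$$
first for all polynomials $f$, and then for all $f\in\HH(B)$ by density of polynomials (Aleman--Malman) and continuity of both sides (the left by definition, the right by boundedness of the point-derivative functionals on $\HD^{m_j}_{w_j}$, Lemma \ref{boundedFunctional}, together with $\HH(B)\subseteq H^2$).

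\textbf{Main obstacle.} The delicate point is Step 2: extracting, inside each $\ran\Delta_j$, a generator $f_{1j}$ that is \emph{not} annihilated by $(T^*-\overline w_j)^{m_j-1}$, so that the associated $p_{1j}$ satisfies $p_{1j}(w_j)\ne0$, and verifying that this is compatible with $n_j\le m_j$ and $\sum n_j=\rank\Delta$. This uses both halves of Theorem \ref{characterization1} — the orthogonality of the root spaces under $\Delta$ and the non-degeneracy $\Delta^{1/2}x\ne0$ on eigenvectors — together with the equality of minimal and characteristic polynomials from Lemma \ref{eigenspaces of M^*}; a cyclicity argument for $T^*|\HN_{\overline w_j}(A^*)$ (since $\dim\ker T^*=1$) gives that $\Delta_j$ restricted to the one-dimensional $\ker(T^*-\overline w_j)\cap\HN_{\overline w_j}(A^*)$ composed appropriately is nonzero, which is what forces the existence of $f_{1j}$. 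The rest is bookkeeping with Lemmas \ref{AlemanMalmanFormula} and \ref{rank1andLocalDirichlet}.
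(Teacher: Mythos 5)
Your proposal is correct and follows essentially the same route as the paper: Theorem \ref{characterization1} (equivalently, Theorem \ref{characterizationR}, which is just \ref{characterization1} plus Lemma \ref{existencedelta}) to split $\Delta=\sum_j\Delta_j$ with $\ran\Delta_j\subseteq\HN_{\overline{w}_j}(A^*)$, then Lemma \ref{rank1andLocalDirichlet} and the Aleman--Malman formula to convert each rank-one summand into a local Dirichlet integral, with density of polynomials closing the argument. The only cosmetic difference is your justification for choosing $f_{1j}$ with $(T^*-\overline{w}_j)^{m_j-1}f_{1j}\ne0$: the paper phrases it as ``otherwise $p(z)/(z-w_j)$ would annihilate $A$, contradicting that the minimal and characteristic polynomials agree,'' which is the same obstruction you invoke via the minimality of $m_j$ for the root space.
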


\begin{proof} The hypothesis that $\dim \HN <\infty$ implies that $B$ is rational (see Theorem \ref{characterizationmate}). Since $B\in \HS(\C^n,\C)$ we have $\dim \ker T^*=1$. Thus, if $T$ is a strict $2m$-isometry, then $\sigma(A)\subseteq \T$ and $m=\max \{m_j: 1\le j\le k\}$ by Theorem \ref{characterization1} and  Remark 2 following it.  Also, since $T$ is a $2m$-isometry, Theorem \ref{characterizationR} implies that $\Delta=\sum_{j=1}^k \Delta_j$, where each $\Delta_j\ge 0$ and $\ran \Delta_j \subseteq \HN_{\overline{w}_j}(A^*)$. Let $n_j=\rank \Delta_j$, then $n_j \le \dim \HN_{\overline{w}_j}(A^*)=m_j$. This is because  the characteristic polynomial of $A$ equals its minimal polynomial (see Lemma \ref{eigenspaces of M^*}).

Let $1\le j\le k$. Since $\ran \Delta_j \subseteq \HN_{\overline{w}_j}(A^*)$ there are  $f_{1j}, \dots, f_{n_jj}\in  \HN_{\overline{w}_j}(A^*)$ such that $\Delta_j=\sum_{i=1}^{n_j} f_{ij}\otimes f_{ij}$. For fixed $j$ we have $(T^*-\overline{w}_j)^{m_j}f_{ij}=0$ for all $i$, and we may assume that $(T^*-\overline{w}_j)^{m_j-1}f_{1j}\ne 0$. Indeed, if  $(T^*-\overline{w}_j)^{m_j-1}f_{ij}= 0$ for all $i$, then one shows that $\frac{p(z)}{z-w_j}$ would annihilate $A$.   This contradicts that  the minimal and characteristic polynomials must agree. Now we use Lemma \ref{rank1andLocalDirichlet} and the remark immediately following it to deduce that there are polynomials
$\{p_{ij}\}_{1\le j\le k, 1\le i\le n_j}$ with $p_{1j}(w_j)\ne 0$ for all $j$, such that the degree of $p_{ij}$ is $\le m_j-1$ for all $j$ and $i$ and such that for each $j$ and $i$ we have
$$D^{m_j}_{w_j}(p_{ij}f)= \int_{|\lambda|=1}\left|\la \frac{f-f(\lambda)}{z-\lambda}, f_{ij}\ra_{\HH(B)}\right|^2 \frac{|d\lambda|}{2\pi}$$
for every polynomial $f$.

Write $D= \Delta^{1/2}$. Then since $\Delta= \sum_{j=1}^k\sum_{i=1}^{n_j} f_{ij}\otimes f_{ij}$ we have for every polynomial $g$ that
$$\|Dg\|^2_{\HH(B)}= \sum_{j=1}^k\sum_{i=1}^{n_j} |\la g,f_{ij}\ra_{\HH(B)}|^2.$$ Hence Lemma \ref{AlemanMalmanFormula} implies
\begin{align*} \|f\|^2_{\HH(B)} &= \|f\|^2_{H^2} + \sum_{j=1}^k\sum_{i=1}^{n_j}\int_{|\lambda|=1}  |\la \frac{f-f(\lambda)}{z-\lambda},f_{ij}\ra_{\HH(B)}|^2 \frac{|d\lambda|}{2\pi}\\
&=\|f\|^2_{H^2} + \sum_{j=1}^k\sum_{i=1}^{n_j} D^{m_j}_{w_j}(p_{ij}f)
\end{align*} for every polynomial $f$. The theorem follows since  the polynomials are dense in $\HH(B)$.
\end{proof}

In order to establish the missing parts of the proof of Theorem \ref{MainTheorem} we prove the following converse of Theorem \ref{IntersectionOfBasic}.

\begin{theorem}\label{MainThmConverse} Let $w_1, \dots , w_k$ be mutually distinct points in $\T$ , let $m_1, \dots, m_k, n_1, \dots n_k \in \N$, set $\tilde{n}_j=\min (m_j,n_j)$,  and let $\{p_{ij}\}_{1\le j \le k, 1\le i \le n_j}$ be polynomials such that the degree of $p_{ij}$ is $\le m_j-1$   and such that $p_{1,j}(w_j) \ne 0$ for each $j$.

Then there is a $n \in \N$, $n \le \sum_{j=1}^k\tilde{n}_j$, and a rational $B\in \HS(\C^n,\C)$ such that $B(0)=0$ and  $$\|f\|^2_{\HH(B)}= \|f\|^2_{H^2}+ \sum_{j=1}^k \sum_{i=1}^{n_j} D_{w_j}^{m_j}(p_{ij}f)$$ for all $f\in \HH(B)$.

Furthermore, if $m= \max\{m_j: 1\le j\le k\}$, then $T=(M_z,\HH(B))$ is an expansive $2m$-isometry with $\rank \Delta \le n$. If $\HN=[\ran \Delta]_{T^*}$ and $A=P_{\HN}T|\HN$, then $\sigma(A)=\{w_1, \dots w_k\}$ and the characteristic polynomial of $A$ is $p_A(z)=\prod_{j=1}^k(z-w_j)^{m_j}$. Furthermore, the mate $a$ of $B$ is of the form $a(z)= \frac{p_A(z)}{q(z)}$, where $q$ is a polynomial of degree $\le \sum_{j=1}^k m_j$ and it has no zeros in the closed unit disc. $q$ is determined by \begin{align}\label{q}|q(z)|^2 = |p_A(z)|^2 + \sum_{j=1}^k \left|\frac{p_A(z)}{(z-w_j)^{m_j}}\right|^2 \sum_{i=1}^{n_j}|p_{ij}(z)|^2 \ \text{ for all }|z|=1.\end{align}
\end{theorem}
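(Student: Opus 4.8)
The plan is to construct $B$ directly from a concrete model space, then verify the listed properties using the machinery already developed. First I would define the candidate space $\HH$ to be the linear space $\mathcal P_{m-1} \dotplus \bigcap_{j=1}^k (z-w_j)^{m_j}H^2$ — equivalently the set of $f \in H^2$ for which each quotient $(f - T_{m_j-1}(f,w_j))/(z-w_j)^{m_j}$ lies in $H^2$ — equipped with the norm on the right-hand side of the claimed identity, $\|f\|^2 = \|f\|^2_{H^2} + \sum_{j}\sum_i D^{m_j}_{w_j}(p_{ij}f)$. Using Lemma \ref{boundedFunctional} (boundedness of the derivative functionals on each $\HD^{m_j}_{w_j}$) one checks this is a genuine Hilbert space norm and that $\HH = \bigcap_{j=1}^k \HD^{m_j}_{w_j}$ as a set. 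By Lemma \ref{DeltaLocalDiri} applied in each summand, $M_z$ acts boundedly on $\HH$ and expands the norm, and since $\HH \subseteq H^2$ it is analytic. One computes $\dim \ker M_z^* = 1$: a function $f \in \HH$ lies in $\ran M_z$ iff $f(0)=0$ (then $f/z \in \HH$ by Lemma \ref{DeltaLocalDiri} again), so $\ker M_z^*$ is spanned by the constants. Hence $\Delta = M_z^*M_z - I \ge 0$ has some finite rank (bounded by $n := \sum_j \tilde n_j$, since $\Delta$ is built from the finitely many rank-one pieces coming from the $D^{m_j}_{w_j}$ norms), and Theorem \ref{expandingOp} together with Lemma \ref{unitaryEquiv} produces a rational scalar Schur function $B \in \HS(\C^n,\C)$ with $B(0)=0$ and $\HH = \HH(B)$ with equality of norms; rationality of $B$ follows once we know $\dim \HN < \infty$, which we get below. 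This establishes the first displayed identity.

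Next I would identify $\HN = [\ran \Delta]_{T^*}$ and the operator $A = P_{\HN}T|\HN$. The key computation is to express $\Delta$ as a sum of rank-one pieces $f_{ij}\otimes f_{ij}$, one for each pair $(i,j)$ with $1 \le j \le k$, $1 \le i \le \tilde n_j$, where each $f_{ij}$ is the bounded linear functional on $\HH(B)$ dual to $g \mapsto (p_{ij}g)^{(m_j-1)}(w_j)/(m_j-1)!$; this is exactly the correspondence set up in the proof of Lemma \ref{rank1andLocalDirichlet}, run in reverse. Each $f_{ij}$ satisfies $(T^*-\overline{w}_j)^{m_j}f_{ij}=0$, and for $i=1$, because $p_{1,j}(w_j)\ne 0$, one has $(T^*-\overline{w}_j)^{m_j-1}f_{1,j}\ne 0$. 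Therefore $\ran \Delta_j \subseteq \HN_{\overline{w}_j}(T^*)$ where $\Delta_j = \sum_{i=1}^{\tilde n_j} f_{ij}\otimes f_{ij}$, and $[\ran\Delta]_{T^*}$ is contained in $\bigoplus_j \HN_{\overline{w}_j}(T^*)$, which is finite dimensional; conversely the fact that $(T^*-\overline w_j)^{m_j-1}f_{1,j}\ne0$ forces the full Jordan block of size $m_j$ to appear. So $\sigma(A)=\{w_1,\dots,w_k\}$, $A^*|\HN = T^*|\HN$ has characteristic polynomial $\prod_j(z-\overline w_j)^{m_j}$, hence $p_A(z)=\prod_j(z-w_j)^{m_j}$. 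Now Theorem \ref{DeltajImplies2miso}, applied with $w_j$ and $\Delta_j$ (noting $(T^*-\overline w_j)^{m_j}\Delta_j = 0$ and $m_j \le m$), shows $T$ is a $2m$-isometry; expansiveness is already known, and $\rank\Delta = \sum_j \tilde n_j \ge n$... actually $\rank \Delta \le n$ as claimed since $\Delta = \sum_{i,j} f_{ij}\otimes f_{ij}$ has at most $\sum_j \tilde n_j$ terms and we may take $n = \sum_j\tilde n_j$ or any value in between.

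Finally I would pin down the mate. By Remark \ref{equalityDegrees} and the proof of Theorem \ref{rationalmate}, since $\sigma(T^*|\HN)=\{\overline w_1,\dots,\overline w_k\}\subseteq \T$, the function $\varphi \in \HM\ominus z\HM$ (where $\HM = \HN^\perp$) is a unimodular constant multiple of the mate $a$, and $\deg a = \dim\HN = \sum_j m_j$. Theorem \ref{rationalmate} gives $a(z) = a(0)\,p(1/z)/q_0(1/z)$ with $p$ the characteristic polynomial of $T^*|\HN$ and $q_0$ that of $L|\HN$; reindexing, $a = p_A/q$ for a polynomial $q$ of degree $\le \sum_j m_j$ with no zeros in $\overline{\D}$ and $p_A(0)/q(0)>0$. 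To get the explicit formula (\ref{q}) for $|q|^2$ on $\T$, I would use $|a|^2 = 1 - \sum_{i=1}^n |b_i|^2$ a.e. on $\T$ together with the norm identity: evaluating $\|f\|^2_{\HH(B)} = \|f\|^2_{H^2} + \|Df\|^2_{\HH(B)}$ against reproducing kernels and passing to the boundary (as in the proof of Theorem \ref{rationalmate}, where $1 - \sum|b_i(z)|^2 = (1-|z|^2)\|P_\HN K^B_z\|^2 + |\varphi(z)|^2$) shows $\sum_i |b_i|^2$ on $\T$ equals $\sum_j |p_A/(z-w_j)^{m_j}|^2\sum_i|p_{ij}|^2$ divided by $|q|^2$; rearranging yields $|q|^2 = |p_A|^2 + \sum_j |p_A/(z-w_j)^{m_j}|^2\sum_i|p_{ij}|^2$. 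The main obstacle I anticipate is the boundary-value bookkeeping in this last step: one must justify that the various Dirichlet-integral kernels assemble on $\T$ into exactly the stated sum of squares, controlling the outer factor $q$ and checking there is no spurious common zero between $p_A$ and $q$ (which is where $\sigma(T^*|\HN)\subseteq\T$, hence the "no cancellation" remark, is essential). The algebraic verification that $M_z$ expands the norm and that $\ker M_z^*$ is one-dimensional is routine given Lemma \ref{DeltaLocalDiri}.
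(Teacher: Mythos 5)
Your overall architecture matches the paper's: build the space from the prescribed norm, verify that $M_z$ is bounded, expansive, analytic with one-dimensional cokernel, decompose $\Delta$ into rank-one pieces supported on the root spaces $\HN_{\overline{w}_j}(T^*)$, invoke Theorem \ref{DeltajImplies2miso} for the $2m$-isometry, and read off the mate from Theorems \ref{characterizationmate} and \ref{rationalmate}. Some steps are organized differently but can be made to work: your claim that $(T^*-\overline{w}_j)^{m_j-1}f_{1j}\ne 0$ follows from the direct pairing $\la g,(T^*-\overline{w}_j)^{m_j-1}f_{1j}\ra = \overline{p_{1j}(w_j)g(w_j)}$ (which you assert but do not compute), whereas the paper argues via strictness of $\beta_{2m_j-1}(T_{1j})$ and the inclusion maps $J_j:\HH\to\HH_j$. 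Note also that a priori there are $n_j$ rank-one summands $f_{ij}\otimes f_{ij}$ for each $j$, not $\tilde{n}_j$; the improvement to $\rank \Delta_j\le \tilde{n}_j$ needs $\dim \HN_{\overline{w}_j}(T^*)\le m_j$, which rests on Lemma \ref{eigenspaces of M^*}. And your set description $\mathcal{P}_{m-1}\dotplus\bigcap_j(z-w_j)^{m_j}H^2$ is not correct as a direct sum (the polynomial part must have dimension $\sum_j m_j$, not $m$), though your ``equivalently'' clause gives the right space.

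The genuine gap is the derivation of (\ref{q}). Your proposed route --- identify $\sum_i|b_i|^2$ on $\T$ by evaluating against reproducing kernels and passing to the boundary --- is circular as stated: the identity $1-\sum_i|b_i(z)|^2=(1-|z|^2)\|P_\HN K_z^B\|^2+|\varphi(z)|^2$ only yields $\sum_i|b_i|^2=1-|a|^2$ on $\T$, so to get the explicit right-hand side of (\ref{q}) you still need an independent computation of $|a|^{-2}$, and you never supply one. The paper's missing idea is this: since $\varphi=a$ spans $\HM\ominus z\HM$, one has $\la z^na,a\ra_{\HH(B)}=0$ for all $n\ge 1$; because $a$ vanishes to order $m_j$ at $w_j$, every Taylor polynomial $T_{m_j-1}(p_{ij}z^na,w_j)$ is identically zero, so each $D_{w_j}^{m_j}(p_{ij}z^na)$ collapses to the plain integral $\int_{\T} z^n|a|^2|p_{ij}|^2|z-w_j|^{-2m_j}\frac{|dz|}{2\pi}$. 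The orthogonality relations then say that all nonzero Fourier coefficients of $|a|^2\bigl(1+\sum_{j}\sum_{i}|p_{ij}|^2|z-w_j|^{-2m_j}\bigr)$ vanish, and $\|a\|_{\HH(B)}=1$ fixes the constant at $1$; this gives $|a|^{-2}=1+\sum_{j}\sum_{i}|p_{ij}|^2|z-w_j|^{-2m_j}$ and hence (\ref{q}), together with the uniqueness of $q$. Without some substitute for this computation your proof of the final display does not close.
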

\begin{proof} For $1\le j\le k$ and $1\le i \le n_j$ define norms by $$\|f\|^2_{ij}= \|f\|^2_{H^2}+D^{m_j}_{w_j}(p_{ij}f),$$
$$\|f\|^2_j= \|f\|^2_{H^2}+ \sum_{i=1}^{n_j} D_{w_j}^{m_j}(p_{ij}f),$$
and define $$\|f\|^2=\|f\|^2_{H^2}+ \sum_{j=1}^k \sum_{i=1}^{n_j} D_{w_j}^{m_j}(p_{ij}f).$$ Furthermore, let $\HH_{ij}, \HH_j,$ and $\HH$ be the Hilbert function spaces  that consist of all analytic functions such that the corresponding norms are finite. Then by Theorem \ref{Thm1.1again} each operator $T_{ij}=(M_z,\HH_{ij})$ is a bounded, analytic, and expansive $2m_j$-isometry with $\dim \ker T_{ij}^*=1$. It is then immediately clear that each of the operators $T_j=(M_z,\HH_j)$ and $T=(M_z,\HH)$ are bounded, expansive, and analytic. We also note that $\HH_j =\bigcap_{i} \HH_{ij}$ and $\HH=\bigcap_{j}\HH_j$.
By Lemma 2.1 of \cite{Ri87} the condition $\dim \ker T_{ij}^*=1$ is equivalent to $f\in \HH_{ij}, f(0)=0 \Rightarrow f(z)/z\in \HH_{ij}$. Thus, we may  apply that lemma to $T_j$ and $T$ and conclude that $\dim \ker T^*_{j}=\dim \ker T^*=1$ for all $j$.
Note that an operator $(M_z,\HK)$ is a $2M$-isometry, if and only if
$$\sum_{r=0}^{2M}\binom{2M}{r} (-1)^r \|z^r f\|^2_{\HK}=0$$ for all $f\in \HK$. Thus, it is clear that for each $j$  $T_j$ is a $2m_j$-isometry and that $T$ is a $2m$-isometry, where $m=\max\{m_j: 1\le j\le k\}$. Furthermore, if $\Delta=T^*T-I$, $\Delta_j=T^*_jT_j-I$, then by Lemma \ref{DeltaLocalDiri} we have
$$\la \Delta f,f\ra_{\HH}= \sum_{j=1}^k\sum_{i=1}^{n_j} |\frac{(p_{ij}f)^{(m_j-1)}(w_j)}{(m_j-1)!}|^2$$ for all $f\in \HH$ and
$$\la \Delta_jf,f\ra_{\HH_j}= \sum_{i=1}^{n_j} |\frac{(p_{ij}f)^{(m_j-1)}(w_j)}{(m_j-1)!}|^2$$ for all $f \in \HH_j$. This implies that $\rank \Delta_j\le n_j$ and $ \rank \Delta \le \sum_{j=1}^kn_j<\infty$. Thus, there are rational and scalar-valued Schur functions $B_j, B$ such that $T_j$ is unitarily equivalent to $(M_z,\HH(B_j))$ and $T$ is unitarily equivalent to $(M_z,\HH(B))$.
Furthermore, one similarly observes  that $\Delta_j (T_j-w_j)^{m_j}=0$, hence by taking the adjoint and using Lemma \ref{eigenspaces of M^*} we see that $\dim \ran \Delta_j \le m_j$. Thus, we have $\rank \Delta_j \le \tilde{n}_j$ for each $j$.

Now for each $j$ let $J_j$ the inclusion map of $\HH\subseteq \HH_j$, then $\Delta=\sum_{j=1}^k J_j^*\Delta_jJ_j$. Thus, we must have $n=\rank \Delta \le \sum_{j=1}^k\tilde{n}_j$. That means that by Theorem \ref{expandingOp} we may choose $B\in \HS(\C^n,\C)$ such that $B(0)=0$ and $(M_z,\HH(B))$ is unitarily equivalent to $T$. As usual, since $\dim \ker T^*=1$, the unitary operator must be given by multiplication by a  function $G$, and the condition $B(0)=0$ is equivalent to $\ker M_z^*=\ker T^*$ being equal to the constant functions. Thus, the multiplier $G$ must be constant, and hence $\HH=\HH(B)$ and
$\|f\|_{\HH(B)}=\|f\|$ for all $f\in \HH$.

Next we use the definitions  $\HN=[\ran \Delta]_{T^*}$ and $A=P_{\HN}T|\HN$, and $p_A(z)=\prod_{j=1}^k(z-w_j)^{m_j}$. We will show that $p_A$ is the minimal polynomial of $A$. That will show that $\sigma(A)=\{w_1,\dots, w_k\}$ and that $p_A$ is the characteristic polynomial of $A$ (by Lemma \ref{eigenspaces of M^*}).

Note that if $p$ is any polynomial, then $p(A)=0$, if and only if $\ran p(T) \perp T^{*n} \Delta y$ for all $n\ge 0$ and all $y\in \HH$. That condition is equivalent to $\Delta p(T)=0$. Of course, a similar statement holds for each $T_j$.

Fix $1\le j\le k$. The condition $p_{1j}(w_j)\ne 0$ implies by Theorem \ref{Thm1.1again} that $T_{1j}$ is a strict $2m_j$-isometry, i.e. $\beta_{2m_j-1}(T_{1j})\ne 0$. For each $i$ the fact that $T_{ij}$ is a $2m_j$-isometry, implies that $\beta_{2m_j-1}(T_{ij}) \ge 0 $ (see \cite{AS956}), hence one easily checks that  $\beta_{2m_j-1}(T_j)\ne 0$. This means that $T_j$ must be a strict $2m_j$-isometry with $\Delta_j(T_j-w_j)^{m_j}=0$, but $\Delta_j(T_j-w_j)^{m_j-1}\ne 0$ (see Corollary \ref{defectrankone}). Then for any polynomial $r$ we have $$\Delta_j(T_j-w_j)^{m_j-1}(r(T_j)-r(w_j))=0$$ and hence if $r(w_j)\ne 0$, then $$\Delta_j(T_j-w_j)^{m_j-1}r(T_j)\ne 0.$$

Then we have
$\Delta p_A(T)=\sum_{j=1}^k J_j^* \Delta_j p_A(T_j)J_j=0$ since for each $j$ we have $\Delta_j(T_j-w_j)^{m_j}=0$. Thus, $p_A(A)=0$.

For each $j$ we can factor $p_A(z)=(z-w_j)^{m_j} p_j(z)$ for some polynomial $p_j$ with $p_j(w_j)\ne 0$, but $\Delta_i p_j(T_i)=0$ for all $i\ne j$.
Thus,
\begin{align*}\Delta (T-w_j)^{m_j-1}p_j(T)&=\sum_{i=1}^k J_i^*\Delta_i (T_i-w_j)^{m_j-1}p_j(T_i)J_i\\
&=J_j^* \Delta_j(T_j-w_j)^{m_j-1}p_j(w_j)J_j\ne 0,
\end{align*}
since $J_j$ is 1-1. This proves that $p_A$ is the minimal polynomial of $A$.

As mentioned above we conclude that $p_A$ is the characteristic polynomial of $A$, and hence $N=\dim \HN= \deg p_A=\sum_{j=1}^km_j$. Hence by Theorem \ref{characterizationmate} we conclude that the degree of $B$ is $N$. Then Theorem \ref{characterizationmate} also implies that $a(z)=p_A(z)/q(z)$ for some polynomial $q$ of degree $\leq N$ and which has no zeros in $\overline{\D}$.

Finally, let $\HM=\HN^\perp$ and let $\varphi\in \HM\ominus z\HM$, $\|\varphi\|=1$. Then by Remark \ref{equalityDegrees} we may assume that $\varphi=a$. Note that since for each $j$ the function $a$ has a zero of multiplicity $m_j$ at $w_j$ we observe that for each $j$ and  $i$ the Taylor polynomial at $w_j$  $T_{m_j-1}(p_{ij}a,w_j)(z)=0$ for all $z$.
This implies that for all $n \in \N$
\begin{align*}0=\la z^n \varphi, \varphi\ra_{\HH(B)}&= \int_{|z|=1} z^n |a|^2 \left(1+ \sum_{j=1}^k\sum_{i=1}^{n_j} \frac{|p_{ij}|^2}{|z-w_j|^{2m_j}} \right) \frac{|dz|}{2\pi}.
\end{align*}
Hence $$|a(z)|^{-2}= 1+ \sum_{j=1}^k\sum_{i=1}^{n_j} \frac{|p_{ij}(z)|^2}{|z-w_j|^{2m_j}}$$ for a.e. $z\in \T$. Considering that $a=p_A/q$ this shows that $q$ satisfies equation (\ref{q}).

This last calculation reverses. If a polynomial $q_0$ of degree $\le N$ satisfies (\ref{q}), then the function $\psi = p_A/q_0$ satisfies $\la z^n \psi, \psi\la_{\HH(B)}=0$ for all $n\in \N$, and it has norm 1. Since $p_A(A)=0$ we must have that $\psi\in \HM$. Then since $\HM=\varphi H^2$ we obtain $\psi= \varphi f$ for some $f\in H^2$, and by the orthogonality condition $f$ would have to be an inner function. Using the definitions of $\psi$ and $\varphi=a$ we see that $q_0=q/f$, and that implies that $f$ must be a constant of modulus 1 ($q$ has no zeros in $\D$).
\end{proof}

\section{A Construction}

Our Theorem \ref{MainTheorem} about $2m$-isometries leaves open the question of what  the precise connection between the polynomials $p_{ij}$ and the Schur function $B$ is. The general construction that we described in Section 4 is not very explicit in this case. In general there may be many different Schur functions $B$  that have the same mate $a$. If $a$ is of the form $a(z)=(z-w)^m/q(z)$, then for any $B$ with mate $a$, with $B(0)=0$, and of degree $m$, it turns out that $T=(M_z,\HH(B))$ is a $2m$-isometry. In fact, in this case Theorem \ref{characterizationmate} implies that $(z-w)^m$ is the characteristic polynomial of $A$, hence $(T^*-\overline{w})^m\Delta =0$, and the conclusion follows from part (b) of Theorem \ref{twonisometry}.  However, if $\sigma(A)$ contains more than one point, then it may happen that two Schur functions $B_1$ and $B_2$ have the same mate $a$, and $B_1$ is a $2m$-isometry, but $B_2$ is not.

Suppose all the conditions are met in Theorem \ref{MainThmConverse}. Then
\begin{align}\label{normcal}
\|f\|^2_{\HH(B)}= \|f\|^2_{H^2}+ \sum_{j=1}^k \sum_{i=1}^{n_j} D_{w_j}^{m_j}(p_{ij}f), f\in \HH(B).
\end{align}
$p_A(z)=\prod_{j=1}^k(z-w_j)^{m_j}$, $a(z)= \frac{p_A(z)}{q(z)}$, where $q$ is a polynomial of degree $\le \sum_{j=1}^k m_j$ and it has no zeros in the closed unit disc. $q$ is determined by (\ref{q}).

We will now indicate how to calculate the reproducing kernel $K_w^B(z)$ by  use of the Grammian of a dual basis to $\{\overline{\partial}^iK_{w_j}^B\}_{ij}$. In the case of 2-isometries this idea was developed in \cite{AgRi90}.

We know that $K^B_w(z)= P_{\HN}K_w^B(z)+\frac{a(z)\overline{a(w)}}{1-z\overline{w}}$, where $\HN$ is the $\sum_{j=1}^k m_j$-dimensional space spanned by the functions $\{\overline{\partial}^iK_{w_j}^B = \frac{\partial^i K_{w_j}^B}{\overline{\partial}w^i}: 0 \leq i \leq m_j-1, 1 \leq j \leq k\}$. We also know that a dual basis to $\{\overline{\partial}^iK_{w_j}^B: 0 \leq i \leq m_j-1, 1 \leq j \leq k\}$ must be of the form $\{f_{ij}: 0 \leq i \leq m_j-1, 1 \leq j \leq k\}$, where $f_{ij}(z) = \frac{a(z)}{(z-w_j)^{m_j-i}}g_{ij}$, and $g_{ij}$ are polynomials with $\deg g_{ij} \leq m_j -1 -i$, and are determined by the conditions that $f_{ij}^{(i)}(w_j)=1, f_{ij}^{(l)}(w_j) = 0, i+1 \leq l \leq m_j -1$. Indeed, these functions are in $\HN$ by (\ref{N as rational functions}), and by the choice of constants they are dual to the functions $\{\overline{\partial}^iK_{w_j}^B\}$. We note that the coefficients of the functions $g_{ij}$ can be calculated as solutions to linear equations (that require knowledge of the polynomial $q$).
Next we form the Grammian matrix
$$F=\left[\begin{matrix} \la f_{ij},f_{st}\ra_{\HH(B)} \end{matrix}\right],$$
where $\la f_{ij},f_{st}\ra_{\HH(B)}$ can be calculated by (\ref{normcal}).

Now note that
$$P_{\HN} = \sum_{j=1}^k \sum_{i=0}^{m_j-1} \overline{\partial}^i K_{w_j}^B \otimes f_{ij} = \sum_{j=1}^k \sum_{i=0}^{m_j-1} f_{ij}\otimes \overline{\partial}^i K_{w_j}^B.$$
Thus,
$$P_{\HN}K_w^B(z)= \sum_{j=1}^k \sum_{i=0}^{m_j-1} \overline{\overline{\partial}^i K_{w_j}^B(w)}f_{ij}(z).$$
Let $[f_{ij}]^t$ be the column vector $[f_{ij}]_{0 \leq i \leq m_j -1, 1 \leq j \leq k}^t$. Then from
$$f_{st} = \sum_{j=1}^k \sum_{i=0}^{m_j-1}  \langle f_{st}, f_{ij}\rangle_{\HH(B)} \overline{\partial}^i K_{w_j}^B, 0 \leq s \leq m_t -1, 1 \leq t \leq k,$$
we obtain that
$\left[\begin{matrix} f_{ij} \end{matrix}\right]^t= F \left[\begin{matrix} \overline{\partial}^i K_{w_j}^B\end{matrix}\right]^t$ and hence $\left[\begin{matrix} \overline{\partial}^i K_{w_j}^B\end{matrix}\right]^t= F^{-1} \left[\begin{matrix} f_{ij} \end{matrix}\right]^t$.

Now we have determined $f_{ij}$ and $\overline{\partial}^i K_{w_j}^B$. So we obtain the formula for the reproducing kernel $K^B_w(z)= P_{\HN}K_w^B(z)+\frac{a(z)\overline{a(w)}}{1-z\overline{w}}$.

We use the following example to illustrate the above construction.
\begin{example} Define a norm $2$-isometric norm by
$$\|f\|^2=\|f\|^2_{H^2}+\frac{9}{16}(D_1(f)+D_{-1}(f)).$$
Then $p_A(z)=z^2-1$, $a=p_A/q$, where for $|z|=1$ $$|q(z)|^2= |z^2-1|^2+\frac{9}{16} (|z+1|^2+|z-1|^2)= \frac{17}{4} -2 \text{ Re } z^2.$$ Thus $q(z)= \frac{z^2-4}{2}$, and $a(z)= \frac{2(z^2-1)}{z^2-4}$.

Note that no other $2$-isometry will have the same $a$. Indeed, the only possible candidates would have to have norm $\|f\|^2_*=\|f\|^2_{H^2}+c_1D_1(f)+c_2D_{-1}(f),$ and unless $c_1=c_2=9/16$ that would lead to a different $q$. Thus, there is a unique space $\HH(B)$ such that $(M_z, \HH(B))$ is a $2$-isometry and $B$ has mate $a$.

The function $b(z)=\frac{3z}{z^2-4}$ satisfies $b(0)=0$, has degree 2, and has mate equal to $a$, but by Theorem \ref{2n-iso} $(M_z,\HH(b))$ cannot be $2$-isometric. That is because in our example the rank of $\Delta$ equals 2. So the reproducing kernel for the given norm has to be of the form $K^B_w(z)$ for $B=(b_1,b_2)$ for linearly independent functions $b_1$ and $b_2$ with $B(0)=0$ and such that the degree of $B$ is 2.

\

Now we use the above construction to find $K_w^B(z)$. We have $\HN$ is the $2$-dimensional space spanned by the kernel functions $K_1^B$ and $K_{-1}^B$. Then the dual basis to $\{K_1^B, K_{-1}^B\}$ is $\{f_1, f_2\}$, where $f_1(z) = \frac{-3}{2}\frac{z+1}{z^2-4}, f_2(z) = \frac{3}{2}\frac{z-1}{z^2-4}$. By calculation, we obtain
$$F=\left[\begin{matrix} \la f_1,f_1\ra & \la f_1,f_2\ra\\\la f_2,f_1\ra & \la f_2,f_2\ra \end{matrix}\right]= \left[\begin{matrix}  \frac{21}{32} & \frac{-9}{32}\\ \frac{-9}{32} & \frac{21}{32} \end{matrix}\right], \quad F^{-1}= \left[\begin{matrix}  \frac{28}{15} & \frac{12}{15}\\ \frac{12}{15} & \frac{28}{15} \end{matrix}\right].$$
Then $$K_1^B(z) = \frac{-8/5z-4}{z^2-4}, \quad K_{-1}^B(z) = \frac{8/5z-4}{z^2-4}.$$
So
\begin{align*}
K^B_w(z) & = \frac{1-B(z)B(w)^*}{1-z\overline{w}} = P_{\HN}K_w^B(z)+\frac{a(z)\overline{a(w)}}{1-z\overline{w}}\\
& = \overline{K_1^B(w)}f_1(z)+\overline{K_{-1}^B(w)}f_2(z) + \frac{a(z)\overline{a(w)}}{1-z\overline{w}}\\
& = \frac{1-(z\overline{w})(9/5z\overline{w} + 36/5)/((z^2-4)(\overline{w}^2-4))}{1-z\overline{w}}.
\end{align*}
There are many choices for $B$ which give the same expression $K_w^B(z)$, for example, we can take $B = (b_1, b_2), b_1(z) = \frac{6/\sqrt{5}z}{z^2-4}, b_2(z) = \frac{3/\sqrt{5}z^2}{z^2-4}$.

\

Since this example only involves two points $1$ and $-1$, we have another way to derive the reproducing kernel $K_w^B(z)$. Suppose $B(z) = \frac{z(v_1 z + v_2)}{q(z)}, v_i = (\alpha_i, \beta_i), i = 1, 2$. Note that $a(z)= \frac{2(z^2-1)}{z^2-4}$, and $|B(z)|^2 + |a(z)|^2 = 1, z \in \T$. We obtain that
$\langle v_1, v_2\rangle = 0$, $\|v_1\|^2 + \|v_2\|^2 = 9$. So
\begin{align*}
K_w^B(z)& = \frac{1-B(z)B(w)^*}{1-z\overline{w}}\\
&= \frac{1-(z\overline{w})((9-\|v_2\|^2)z\overline{w} + \|v_2\|^2)/((z^2-4)(\overline{w}^2-4))}{1-z\overline{w}}.
\end{align*}
Then using $1 = \langle z, \overline{\partial}K_0^B(z)\rangle$, we find $\|v_2\|^2 = \frac{36}{5}$, and
$$K_w^B(z) = \frac{1-(z\overline{w})(9/5z\overline{w} + 36/5)/((z^2-4)(\overline{w}^2-4))}{1-z\overline{w}}.$$
\end{example}

\end{document}